\newtheorem{lemma}{Lemma}[section]
\newtheorem{theorem}[lemma]{Theorem}
\newtheorem{proposition}[lemma]{Proposition}
\newtheorem{corollary}[lemma]{Corollary}
\theoremstyle{definition}
\newtheorem{problem}[lemma]{Problem}
\newtheorem{remark}[lemma]{Remark}
\newtheorem{claim}[lemma]{Claim}
\newtheorem{example}[lemma]{Example}
\newtheorem{definition}[lemma]{Definition}
\newcommand{\pr}{\operatorname{pr}}
\newcommand{\odd}{\mathrm{odd}}
\newcommand{\even}{\mathrm{even}}
\newcommand{\F}{\mathcal {F}}
\newcommand{\E}{\mathcal E}
\newcommand{\Ra}{\Rightarrow}
\newcommand{\U}{\mathcal U}
\newcommand{\V}{\mathcal V}
\newcommand{\A}{\mathcal A}
\newcommand{\B}{\mathcal B}
\newcommand{\K}{\mathcal K}
\newcommand{\N}{\mathcal N}
\newcommand{\M}{\mathcal M}
\newcommand{\I}{\mathcal I}
\newcommand{\w}{\omega}
\newcommand{\IN}{\mathbb N}
\newcommand{\IZ}{\mathbb Z}
\newcommand{\IC}{\mathbb C}
\newcommand{\IH}{\mathbb H}
\newcommand{\HH}{\mathsf H}
\newcommand{\IT}{\mathbb T}
\newcommand{\IK}{\mathsf K}
\newcommand{\C}{\mathcal C}
\newcommand{\LL}{\mathcal L}
\newcommand{\End}{\mathrm{End}}
\newcommand{\Enl}{\mathrm{End}_\lambda}
\newcommand{\Aut}{\mathrm{Aut}}
\newcommand{\XX}{\mathcal X}
\newcommand{\YY}{\mathcal Y}
\newcommand{\ZZ}{\mathcal Z}
\newcommand{\Fix}{\mathrm{Fix}}
\newcommand{\IFix}{\I\mbox{-}\Fix}
\newcommand{\Iso}{\mathrm{Iso}}
\newcommand{\Tau}{\mathsf T}
\newcommand{\TI}{{\I\!\!\I}}
\newcommand{\id}{\mathrm{id}}
\newcommand{\EE}{\mathcal E}
\newcommand{\wtd}{\widetilde}
\newcommand{\pT}{\mathsf{pT}}
\newcommand{\Odd}{\mathrm{Odd}}
\newcommand{\Stab}{\mathrm{Stab}}
\newcommand{\wht}{\widehat}
\def\L{\mathcal L}
\newcommand{\la}{\langle}
\newcommand{\ra}{\rangle}
\begin{document}

\keywords{Compact right-topological semigroup, superextension of a group, semigroup of maximal linked systems, faithful representation, minimal ideal, minimal left ideal, minimal idempotent, wreath product, twinic group, twin set.}
\subjclass{Primary 20M30; 20M12; 22A15; 22A25; 54D35.}

\title{Algebra in Superextensions of Twinic Groups}

\author{Taras Banakh}
\address{Department of Mathematics\\ 
Ivan Franko National University of Lviv\\
Universytetska 1, 79000, Ukraine\\
and\\
Instytut Matematyki\\ 
Uniwersytet Humanistyczno-Przyrodniczy w Kielcach,\\
\'Swi\c etokrzyska 15, Kielce, Poland\\
E-mail: tbanakh@yahoo.com; t.o.banakh@gmail.com}

\author{Volodymyr Gavrylkiv}
\address{Faculty of Mathematics and Computer Sciences,\\
Vasyl Stefanyk Precarpathian National University\\
Shevchenko str, 57, Ivano-Frankivsk, 76025, Ukraine\\
E-mail: vgavrylkiv@yahoo.com}

\begin{abstract}
Given a group $X$ we study the algebraic structure of the compact right-topological semigroup $\lambda(X)$ consisting of maximal linked systems on $X$.
This semigroup contains the semigroup $\beta(X)$ of ultrafilters as a closed subsemigroup. We construct a faithful representation of the semigroup $\lambda(X)$ in the semigroup $\mathsf P(X)^{\mathsf P(X)}$ of all self-maps of the power-set $\mathsf P(X)$ and show that the image of $\lambda(X)$ in $\mathsf P(X)^{\mathsf P(X)}$ coincides with the semigroup $\Enl(\mathsf P(X))$ of all functions $f:\mathsf P(X)\to\mathsf P(X)$ that are equivariant, monotone and symmetric in the sense that $f(X\setminus A)=X\setminus f(A)$ for all $A\subset X$. Using this representation we
describe the minimal ideal $\IK(\lambda(X))$ and minimal left ideals of the superextension $\lambda(X)$ of a twinic group $X$. A group $X$ is called {\em twinic} if it admits a left-invariant ideal $\I\subset\mathsf P(X)$ such that $xA=_\I yA$ for any subset $A\subset X$ and points $x,y\in X$ with $xA\subset_\I X\setminus A\subset_\I yA$. The class of twinic groups includes all amenable groups and all groups with periodic commutators but does not include the free group $F_2$ with two generators.

We prove that for any twinic group $X$, there is a cardinal $m$ such that all minimal left ideals of $\lambda(X)$ are algebraically isomorphic to $$2^m\times \prod_{1\le k\le\infty}C_{2^k}^{\;q(X,C_{2^k})}\times \prod_{3\le k\le\infty}Q_{2^k}^{\;q(X,C_{2^k})}$$ for some cardinals $q(X,C_{2^k})$ and $q(X,Q_{2^k})$, $k\in\IN\cup\{\infty\}$. Here $C_{2^k}$ is the cyclic group of order $2^k$, $C_{2^\infty}$ is the quasicyclic 2-group and $Q_{2^k}$, $k\in\IN\cup\{\infty\}$, are the groups of generalized quaternions. 

If the group $X$ is abelian, then $q(X,Q_{2^k})=0$ for all $k$ and  $q(X,C_{2^k})$ is the number of subgroups $H\subset X$ with quotient $X/H$ homeomorphic to $C_{2^k}$. If $X$ is an Abelian group (admitting no epimorphism onto $C_{2^\infty}$) then each minimal left ideal of the superextension $\lambda(X)$ is algebraically (and topologically) isomorphic to the product $\prod_{1\le k\le\infty} (C_{2^k}\times 2^{2^{k-1}-k})^{q(X,C_{2^k})}$ where the cube $2^{2^{k-1}-k}$ (equal to $2^\w$ if $k=\infty$) is endowed with the  left-zero multiplication. For an abelian group $X$ all minimal left ideals of $\lambda(X)$ are metrizable if and only if $X$ has finite ranks $r_0(X)$ and $r_2(X)$ and admits no homomorphism onto the group $C_{2^\infty}\oplus C_{2^\infty}$. 

Applying this result to the group $\IZ$ of integers, we prove that each minimal left ideal of $\lambda(\IZ)$ is topologically isomorphic to $2^\w\times\prod_{k=1}^\infty C_{2^k}$. Consequently, all  subgroups in the minimal ideal $\IK(\lambda(\IZ))$ of $\lambda(\IZ)$ are profinite abelian groups. On the other hand, the superextension $\lambda(\IZ)$ contains an isomorphic topological copy of each second countable profinite topological semigroup. This results contrasts with the famous Zelenyuk's Theorem saying that the semigroup $\beta(\IZ)$ contains no finite subgroups. At the end of the paper we describe the structure of minimal left ideals of finite groups $X$ of order $|X|\le 15$.
\end{abstract}
\maketitle
\newpage

\tableofcontents
\newpage

\section{Introduction}

After discovering a topological proof of
Hindman's theorem \cite{Hind} (see \cite[p.102]{HS}, \cite{H2}), topological methods become a standard
tool in the modern combinatorics of numbers, see \cite{HS},
\cite{P}. The crucial point is that any semigroup operation 
defined on a discrete space $X$ can be extended to a
right-topological semigroup operation on $\beta(X)$, the Stone-\v
Cech compactification of $X$. The extension of the operation from
$X$ to $\beta(X)$ can be defined by the simple formula:
\begin{equation}\label{extension}
\A\circ\B=\big\{A\subset X:\{x\in X:x^{-1}A\in\B\}\in\A\big\},
\end{equation}

The Stone-\v Cech compactification $\beta(X)$ of $X$ is a
subspace of the double power-set $\mathsf P^2(X)=\mathsf P(\mathsf P(X))$, which can be identified with the Cantor discontinuum $\{0,1\}^{\mathsf P(X)}$ and endowed with the compact Hausdorff topology of the Tychonoff product.
It turns out that the formula (\ref{extension}) applied to arbitrary families $\A,\mathcal B\in\mathsf P^2(X)$ of subsets of a group $X$ still defines a binary 
operation $\circ:\mathsf P^2(X)\times \mathsf P^2(X)\to\mathsf P^2(X)$ that turns the double power-set $\mathsf P^2(X)$ into a compact Hausdorff right-topological semigroup that contains $\beta(X)$ as a closed subsemigroup.  

The semigroup $\beta(X)$ lies in a bit larger subsemigroup $\lambda(X)\subset\mathsf P^2(X)$ consisting of all maximal linked systems on $X$. We recall that a family $\mathcal L$ of subsets of $X$ is 
\begin{itemize}
\item {\em linked} if any sets $A,B\in\mathcal L$ have non-empty intersection $A\cap B\ne\emptyset$;
\item {\em maximal linked} if $\mathcal L$ coincides with each linked system $\mathcal L'$ on $X$ that contains $\mathcal L$.
\end{itemize}
 The space $\lambda(X)$ is
  well-known in  General and Categorial Topology as the {\em
 superextension}
  of $X$, see \cite{vM}, \cite{TZ}. 

The thorough study of algebraic properties of the superextensions
of groups was started in \cite{BGN} and
continued in \cite{BG2} and \cite{BG3}. In particular, in \cite{BG3} we proved that the minimal left ideals of the superextension $\lambda(\IZ)$   are metrizable topological semigroups. In this paper we shall extend this result to the superextensions $\lambda(X)$ of all finitely-generated abelian groups $X$.

The results obtained in this paper completely reveal the topological and algebraic structure of the minimal ideal and minimal left ideals of the superextension $\lambda(X)$ of a twinic group $X$. A group $X$ is defined to be {\em twinic } if it admits a left-invariant ideal $\I$ of subsets of $X$ such that for any subset $A\subset X$ with $xA\subset_\I X\setminus A\subset_\I yA$ for some $x,y\in X$ we have $xA=_\I yA$. Here the symbol $A\subset_\I B$ means that $A\setminus B\in\I$ and $A=_\I B$ means that $A\subset_\I B$ and $B\subset_\I A$. In Section~\ref{s:tg} we shall prove that the class of twinic groups contains all amenable groups and all groups with periodic commutators (in particular, all torsion groups), but does not contain the free group with two generators $F_2$.    

We need to recall the notation for some standard 2-groups. By $Q_8$ we denote the group of quaternions. It is a multiplicative subgroup $\{1,i,j,k,-1,-i,-j,-k\}$ of the algebra of quaternions $\IH$ (which contains the field of complex numbers $\IC$ as a subalgebra).

For every $k\in\w$ let $C_{2^k}=\{z\in\IC:z^{2^k}=1\}$ be the cyclic group of order $2^k$. The multiplicative subgroup $Q_{2^k}\subset\IH$ generated by the union $C_{2^{k-1}}\cup Q_8$ is called the {\em group of generalized quaternions}. The union $C_{2^\infty}=\bigcup_{k=1}^\infty C_{2^k}$ is called the {\em quasicyclic 2-group} and the union $Q_{2^\infty}=\bigcup_{k=3}^\infty Q_{2^k}$ is called {\em the infinite group of generalized quaternions}. By Theorem~\ref{BCQ}, a group $G$ is isomorphic to $C_{2^n}$ or $Q_{2^n}$ for some $n\in\IN\cup\{\infty\}$ if and only if $G$ is a 2-group with a unique 2-element subgroup.

The following theorem describing the structure of minimal left ideals of the superextesions of twinic groups can be derived from Theorem~\ref{t18.11} and Proposition~\ref{p19.1}:

\begin{theorem} For each twinic group $X$ there are cardinals $q(X,C_{2^k})$, $q(X,Q_{2^k})$, $k\in\IN\cup\{\infty\}$, such that
\begin{enumerate}
\item[\textup{(1)}] each minimal left ideal of $\lambda(X)$ is algebraically isomorphic to 
$$Z\times\prod_{1\le k\le \infty}C_{2^k}^{\;q(X,C_{2^k})}\times\prod_{3\le k\le\infty}Q_{2^k}^{\;q(X,Q_{2^k})}$$for some semigroup $Z$ of left zeros;
\item[\textup{(2)}] each maximal subgroup of the minimal ideal of $\lambda(X)$ is algebraically isomorphic to
$$\prod_{1\le k\le \infty}C_{2^k}^{\;q(X,C_{2^k})}\times\prod_{3\le k\le\infty}Q_{2^k}^{\;q(X,Q_{2^k})}.$$
\item[\textup{(3)}] If $q(X,C_{2^\infty})=q(X,Q_{2^\infty})=0$, then 
each maximal subgroup of the minimal ideal of $\lambda(X)$ is topologically isomorphic to the compact topological group
$$\prod_{1\le k<\infty}C_{2^k}^{\;q(X,C_{2^k})}\times\prod_{3\le k<\infty}Q_{2^k}^{\;q(X,Q_{2^k})}.$$
\end{enumerate}
If the group $X$ is abelian, then 
\begin{enumerate}
\item[\textup{(4)}]    $q(X,Q_{2^k})=0$ for every $k\in\IN\cup\{\infty\}$ while $q(X,C_{2^k})$ is equal to the number of subgroups $H\subset X$ such that the quotient group $X/H$ is isomorphic to $C_{2^k}$;
\item[\textup{(5)}] for every $k\in\IN$ $$q(X,C_{2^k})=\frac{|\hom(X,C_{2^k})|-|\hom(X,C_{2^{k-1}})|}{2^{k-1}},$$where $\hom(X,C_{2^k})$ is the group of homomorphisms from $X$ into $C_{2^k}$.
\end{enumerate} 
\end{theorem}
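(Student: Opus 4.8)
The plan is to assemble the five assertions from the general Rees--Suschkewitsch theory of minimal ideals, the classification Theorem~\ref{BCQ}, and the deeper structural results Theorem~\ref{t18.11} and Proposition~\ref{p19.1}. First I would reduce (1) to (2). Since $\lambda(X)$ is a compact right-topological semigroup, its minimal ideal $\IK(\lambda(X))$ exists and splits into minimal left ideals, all mutually isomorphic; by the Rees--Suschkewitsch structure theorem any single minimal left ideal $L$ is algebraically isomorphic to $E(L)\times H$, where the idempotent set $E(L)$ carries the left-zero multiplication $(e,h_1)(f,h_2)=(e,h_1h_2)$ and $H$ is any maximal subgroup of $L$. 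Putting $Z=E(L)$, assertion (1) is then immediate from (2).

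For (2) I would invoke Theorem~\ref{t18.11}, which I expect presents the maximal subgroup $H$ as a direct product indexed by an orbit structure attached to the twinic ideal $\I$, each factor being a $2$-group with a unique involution. Theorem~\ref{BCQ} then identifies every such factor with some $C_{2^k}$ or $Q_{2^k}$, $k\in\IN\cup\{\infty\}$; grouping factors by isomorphism type and reading off the multiplicities $q(X,C_{2^k})$, $q(X,Q_{2^k})$ yields the displayed product, with Proposition~\ref{p19.1} supplying exactly this cardinal bookkeeping. For the topological refinement (3), the hypothesis $q(X,C_{2^\infty})=q(X,Q_{2^\infty})=0$ forces every factor to be finite, so the product is a compact profinite group; I would then upgrade the algebraic isomorphism to a topological one by checking that the coordinate projections extracted in Theorem~\ref{t18.11} are continuous, and appealing to compactness to conclude that a continuous algebraic bijection onto a Hausdorff space is a homeomorphism.

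For the abelian case, the quaternion groups cannot occur: they are non-abelian, whereas for abelian $X$ the relevant factors correspond to the necessarily abelian quotients $X/H$, so $q(X,Q_{2^k})=0$, which is the first half of (4); the cyclic multiplicity $q(X,C_{2^k})$ should then match the number of subgroups $H\subset X$ with $X/H\cong C_{2^k}$. Assertion (5) is an elementary count built on (4): a homomorphism $X\to C_{2^k}$ fails to be surjective precisely when it factors through the unique maximal subgroup $C_{2^{k-1}}\subset C_{2^k}$, so the number of epimorphisms equals $|\hom(X,C_{2^k})|-|\hom(X,C_{2^{k-1}})|$, and dividing by $|\Aut(C_{2^k})|=2^{k-1}$ (the number of generators of $C_{2^k}$) counts the distinct kernels, giving the stated formula.

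The real difficulty is concentrated in Theorem~\ref{t18.11}: exhibiting the maximal subgroup as a product of $2$-groups and verifying that the twinic hypothesis is precisely what forces each factor to possess a unique $2$-element subgroup, so that Theorem~\ref{BCQ} may be applied factorwise. By comparison, everything downstream---the Rees splitting, the \textup{BCQ} classification, the continuity check underlying (3), and the abelian counting in (4)--(5)---is routine bookkeeping.
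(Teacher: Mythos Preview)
Your proposal is correct and matches the paper's own approach: the introduction explicitly states that this theorem ``can be derived from Theorem~\ref{t18.11} and Proposition~\ref{p19.1},'' and your plan does precisely that, including the Rees--Suschkewitsch splitting for (1), the direct invocation of Theorem~\ref{t18.11} for (2)--(3), and the epimorphism-counting argument for (5), which is exactly the content of Proposition~\ref{p19.1}. The only minor deviation is that the paper obtains (1) directly from Theorem~\ref{t18.11}(1) with an explicit $Z=2^m$, rather than deducing it from (2) via Rees--Suschkewitsch, but since the statement only asks for \emph{some} left-zero semigroup $Z$, your route is entirely adequate.
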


\smallskip

\section{Right-topological semigroups}

In this section we recall some information from \cite{HS} related
to right-topological semigroups. By definition, a
right-topological semigroup is a topological space $S$ endowed
with a semigroup operation $\ast:S\times S\to S$ such that for
every $a\in S$ the right shift $r_a:S\to S$, $r_a:x\mapsto x\ast
a$, is continuous. If the semigroup operation $\ast:S\times S\to
S$ is (separately) continuous, then $(S,\ast)$ is a ({\em semi}-){\em topological
semigroup}. A typical example of a right-topological semigroup is the semigroup $X^X$ of all self-maps of a topological space $X$ endowed with the Tychonoff product topology and the binary operation of composition of functions.

From now on, $S$ is a compact Hausdorff right-topological
semigroup. We shall recall some known information concerning
ideals in $S$, see \cite{HS}.

A non-empty subset $I$ of $S$ is called a {\em left} (resp. {\em right})
{\em ideal\/} if $SI\subset I$ (resp. $IS\subset I$). If $I$ is both
a left and right ideal in $S$, then $I$ is called an {\em ideal}
in $S$. Observe that for every $x\in S$ the set $SxS=\{sxt:s,t\in S\}$ (resp. $Sx=\{sx:s\in S\}$, $xS=\{xs:s\in S\}$) is an ideal (resp. left ideal, right ideal) in $S$.
Such an ideal is called {\em principal}. An ideal $I\subset S$ is
called {\em minimal} if any ideal of $S$ that lies in $I$
coincides with $I$. By analogy we define minimal left and right
ideals of $S$. It is easy to see that each minimal left (resp.
right) ideal $I$ is principal. Moreover, $I=Sx$ (resp. $I=xS$) for
each $x\in I$. This simple observation implies that each minimal
left ideal in $S$, being principal, is closed in $S$. By
\cite[2.6]{HS}, each left ideal in $S$ contains a minimal left ideal.
 The union $\IK(S)$ of all minimal left ideals of $S$ coincides with the minimal ideal of $S$, \cite[2.8]{HS}. 

All minimal left ideals of $S$ are mutually homeomorphic and all
maximal groups of the minimal ideal $\IK(S)$ are algebraically
isomorphic. Moreover, if two maximal groups lie in the same
minimal right ideal, then they are topologically isomorphic.

We shall need the following known fact, see Theorem 2.11(c) \cite{HS}.

\begin{proposition}\label{p2.1} For any two minimal left ideals $A,B$ of a compact right-topological semigroup $S$ and any point $b\in B$ the right shift $r_b:A\to B$, $r_b:x\mapsto xb$, is a homeomorphism.
\end{proposition}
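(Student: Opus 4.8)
The plan is to prove that $r_b\colon A\to B$ is a continuous bijection; since $A$ is compact and $B$ is Hausdorff, such a map is automatically a homeomorphism, which is exactly what we want. Continuity is immediate, as $r_b\colon A\to B$ is the restriction of the right shift $r_b\colon S\to S$, which is continuous because $S$ is right-topological; and $r_b$ does send $A$ into $B$ since $Ab\subset SB\subset B$, $B$ being a left ideal. For surjectivity I would observe that $Ab$ is nonempty and $S(Ab)=(SA)b\subset Ab$ (as $A$ is a left ideal), so $Ab$ is a left ideal of $S$ contained in the minimal left ideal $B$; minimality of $B$ then forces $Ab=B$.

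The crux, and the step I expect to be the main obstacle, is injectivity: a continuous surjection of a compact space onto a Hausdorff space need not be injective, so compactness gives nothing here and genuine algebraic information about $\IK(S)$ must be used. The plan is to exhibit a left inverse of $r_b$ assembled from right shifts. First, being a closed (hence compact) subsemigroup of $S$, the minimal left ideal $A$ contains an idempotent $e$ (every compact right-topological semigroup does, see \cite{HS}); from $Se=A$ one checks that $xe=x$ for all $x\in A$, so $e$ is a right identity of $A$. Next I would use the standard structure of the minimal ideal $\IK(S)$ as a completely simple semigroup \cite{HS}: the point $b$ lies in a maximal subgroup of $B$ whose identity is an idempotent $f\in B$, and $b$ has an inverse $b^{-1}$ there with $bb^{-1}=f$; moreover there is a matched idempotent $e\in A$ (the idempotent of $A$ lying in the same minimal right ideal as $f$) satisfying $fe=e$. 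Defining $g\colon B\to A$ by $g(z)=zb^{-1}e$ --- which indeed takes values in $A$, since $zb^{-1}\in Sb^{-1}\subset B$ and $Be\subset Se=A$ --- one computes, for every $x\in A$,
\[
g(r_b(x))=xbb^{-1}e=xfe=xe=x .
\]
Thus $g\circ r_b=\id_A$, so $r_b$ is injective, and the proof is complete.

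An equivalent way to organize the injectivity step, which pinpoints exactly where the difficulty lies, is to note that for an idempotent $e\in A$ one has $r_e\circ r_b=r_{be}$ with $be\in BA\subset A$; hence injectivity of $r_b$ follows as soon as right translations inside a minimal left ideal are injective, i.e. as soon as minimal left ideals are right-cancellative. This right-cancellativity is precisely the feature supplied by the completely simple structure of $\IK(S)$, and it is the one nontrivial input the argument cannot avoid.
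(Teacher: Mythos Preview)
Your proof is correct. The paper itself does not prove this proposition: it simply cites it as a known fact, referring to Theorem~2.11(c) of Hindman--Strauss~\cite{HS}. Your argument is essentially the standard one behind that reference --- continuity and surjectivity are immediate, and injectivity is obtained by producing a right-shift left inverse using the completely simple structure of $\IK(S)$ (group inverse of $b$ in its $\mathcal H$-class together with a suitably chosen idempotent of $A$). The alternative reduction you sketch, to right-cancellativity of a minimal left ideal, is likewise standard and equivalent.
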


This proposition implies the following corollary, see \cite[Lemma 1.1]{BG3}.

\begin{corollary}\label{c2.2} If a homomorphism $h:S\to S'$ between two compact right-topological semigroups is injective on some minimal left ideal of $S$, then $h$ is injective on each minimal left ideal of $S$.
\end{corollary}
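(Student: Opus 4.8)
The plan is to exploit Proposition~\ref{p2.1} to transport injectivity of $h$ from one minimal left ideal to another by means of right shifts. Suppose $h$ is injective on a minimal left ideal $L$ of $S$, and let $L'$ be any other minimal left ideal. I would fix an arbitrary point $c\in L$ and aim to show that the right shift $r_c:L'\to L$, $r_c:x\mapsto xc$, is a well-defined injection; once this is established, injectivity of $h$ on $L'$ follows formally.

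First I would verify that $r_c$ maps $L'$ into $L$: since $L$ is a left ideal and $c\in L$, for every $x\in L'\subset S$ we have $xc\in Sc\subset SL\subset L$. Next, to see that $h$ separates points of $L'$, take $x,y\in L'$ with $h(x)=h(y)$. Using that $h$ is a homomorphism, I compute $h(xc)=h(x)\,h(c)=h(y)\,h(c)=h(yc)$; since $xc,yc\in L$ and $h$ is injective on $L$, this yields $xc=yc$, that is $r_c(x)=r_c(y)$.

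The final step is to cancel $c$ on the right, and this is exactly where Proposition~\ref{p2.1} enters: applied to the ordered pair of minimal left ideals $L',L$ and the point $c\in L$, it asserts that $r_c:L'\to L$ is a homeomorphism, in particular injective. Hence $r_c(x)=r_c(y)$ forces $x=y$, so $h$ is injective on $L'$. As $L'$ was an arbitrary minimal left ideal, $h$ is injective on each of them.

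There is no serious obstacle once Proposition~\ref{p2.1} is available; the only point demanding care is the \emph{direction} of the shift. One must arrange the roles so that $r_c$ runs \emph{from} the target ideal $L'$ \emph{into} the ideal $L$ on which injectivity is known, multiplying on the right by a point $c$ of $L$. With this orientation the homomorphism identity pushes the hypothesis $h(x)=h(y)$ forward along $h$ to get $xc=yc$, while the homeomorphism (hence injectivity) property of $r_c$ pulls the resulting equality back to conclude $x=y$ in $L'$.
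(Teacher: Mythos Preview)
Your proof is correct and follows exactly the intended route: the paper does not spell out an argument but simply notes that the corollary follows from Proposition~\ref{p2.1} (citing \cite[Lemma 1.1]{BG3}), and your use of the right shift $r_c:L'\to L$ together with the homomorphism identity is precisely how one derives it.
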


An element $z$ of a semigroup $S$ is called a {\em right zero}
(resp. a {\em left zero}) in $S$ if $xz=z$ (resp. $zx=z$) for all
$x\in S$. It is clear that $z\in S$ is a right (left) zero in $S$
if and only if the singleton $\{z\}$ is a left (right) ideal in
$S$.

An element $e\in S$ is called an {\em idempotent} if $ee=e$. By
 Ellis's Theorem \cite[2.5]{HS}, the set $E(S)$ of idempotents of any  compact right-topological
semigroup is not empty.
For every idempotent $e$ the set
$$\mathsf H_e=\{x\in S:\exists x^{-1}\in S\;\;(xx^{-1}x=x,\;x^{-1}xx^{-1}=x^{-1},\;xx^{-1}=e=x^{-1}x)\}$$
is the largest subgroup of $S$ containing $e$.

By \cite[1.48]{HS}, for an idempotent
$e\in E(S)$ the following conditions are equivalent:
\begin{itemize}
\item $e\in \IK(S)$;
\item $\IK(S)=SeS$;
\item $Se$ is a minimal left ideal in $S$;
\item $eS$ is a minimal right ideal in $S$;
\item $eSe$ is a subgroup of $S$.
\end{itemize}
An idempotent $e$ satisfying the above equivalent conditions will be called a {\em minimal idempotent} in $S$. By \cite[1.64]{HS}, for any minimal idempotent $e\in S$ the set $\mathsf E(Se)=\mathsf E(S)\cap Se$ of idempotents of the minimal left ideal $Se$ is a semigroup of left zeros, which means that $xy=x$ for all $x,y\in \mathsf E(Se)$. By the Rees-Suschkewitsch Structure Theorem (see \cite[1.64]{HS}) the map
$$\varphi:\mathsf E(Se)\times \mathsf H_e\to Se,\; \varphi:(x,y)\mapsto xy,$$ is an algebraic isomorphism of the corresponding semigroups. If the minimal left ideal $Se$ is a topological semigroup, then $\varphi$ is a topological isomorphism.

Now we see that all the information on the algebraic (and sometimes topological) structure of the minimal left ideal $Se$ is encoded in the properties of the left zero  semigroup $\mathsf E(Se)$ and the maximal group $\mathsf H_e$.

\section{Acts and their endomorphism monoids}

In this section we survey the information on acts that will be widely used in this paper for describing the algebraic structure of minimal left ideals of the superextensions of groups.

Following the terminology of \cite{KKM} by an {\em act} we understand a set $X$ endowed with a left
action $\cdot :H\times X\to X$ of a group $H$ called the {\em structure group} of the act. The action should satisfy two axioms: $1x=x$ and $g(hx)=(gh)x$ for all $x\in X$ and $g,h\in H$. Acts with the structure group $H$ will be called {\em $H$-acts} or {\em $H$-spaces}.

An act $X$ is called {\em free} if the stabilizer $\Fix(x)=\{h\in H:hx=x\}$ of each point $x\in X$ is trivial.
For a point $x\in X$ by $[x]=\{hx:h\in H\}$ we denote its {\em orbit} and by $[X]=\{[x]:x\in X\}$ the orbit space of the act $X$. More generally, for each subset $A\subset X$ we put $[A]=\{[a]:a\in A\}$.

A function $f:X\to Y$ between two $H$-acts is called {\em equivariant} if $f(hx)=hf(x)$ for all $x\in X$ and $h\in H$.
A  function $f:X\to Y$ is called an {\em isomorphism} of the $H$-acts $X$ and $Y$ if it is bijective and equivariant. An equivariant self-map $f:X\to X$ is called an {\em endomorphism} of the $H$-act $X$. If $f$ is bijective, then $f$ is an {\em automorphism} of $X$.

The set $\End(X)$ of endomorphisms of an $H$-act $X$, endowed with the operation of composition of functions, is a monoid called the {\em endomorphism monoid} of  $X$. 

Each free $H$-act $X$ is isomorphic to the product $H\times [X]$ endowed with the action $h\cdot (x,y)=(hx,y)$. For such an act the semigroup $\End(X)$ is isomorphic to the wreath product $H\wr [X]^{[X]}$ of the group $H$ and the semigroup $[X]^{[X]}$ of all self-maps of the orbit space $[X]$.

The wreath product $H\wr A^A$ of a group $H$ and the semigroup $A^A$ of self-maps of a set $A$ is defined as the semidirect product $H^A\rtimes A^A$ of the $A$-th power of $H$ with $A^A$, endowed with the semigroup operation $(h,f)*(h',f')=(h'',f'')$ where $f''=f\circ f'$ and $h''(\alpha)=h(f'(\alpha))\cdot h'(\alpha)$ for $\alpha\in A$. For any subsemigroup $S\subset A^A$ the subset $H\wr S=\{(h,f)\in H^A\rtimes A^A:f\in S\}$ is called the {\em wreath product} of $H$ and $S$. If both $H$ and $S$ are groups, then their wreath product $H\wr S$ is a group.

Observe that the maximal subgroup of $A^A$ containing the identity self-map of $A$ coincides with the group $S_A$ of all bijective functions $f:A\to A$.

\begin{theorem}\label{t2.1} Let $H$ be a group and $X$ be a free $H$-act. Then
\begin{enumerate}
\item[\textup{(1)}]  the semigroup $\End(X)$ is isomorphic to the wreath product $H\wr [X]^{[X]}$;
\item[\textup{(2)}] the minimal ideal $\IK(\End(X))$ of \/  $\End(X)$ coincides with the set $\{f\in\End(X):\forall x\in f(X)\; f(X)\subset[x]\}$;
\item[\textup{(3)}] each minimal left ideal of $\End(X)$ is isomorphic to $H\times [X]$ where $[X]$ is endowed with the left zero multiplication;
\item[\textup{(4)}] for each idempotent $f\in\End(X)$ the maximal subgroup $\mathsf H_f\subset\End(X)$ is isomorphic to $H\wr S_{[f(X)]}$;
\item[\textup{(5)}] for each minimal idempotent $f\in \IK(\End(X))$ the maximal group $\mathsf H_f=f\cdot \End(X)\cdot f$ is isomorphic to  $H$.
\end{enumerate}
\end{theorem}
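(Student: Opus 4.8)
The plan is to transport the whole problem into the concrete semigroup $G:=H\wr[X]^{[X]}$ and to read off (2)--(5) from its internal structure. To prove (1) I would make the isomorphism explicit. Identifying the free $H$-act $X$ with $H\times[X]$ under the action $h\cdot(g,y)=(hg,y)$, equivariance forces every endomorphism $f$ to be determined by its values on the representatives $(1,y)$: writing $f(1,y)=(a_f(y),\phi_f(y))$ one obtains a function $a_f\colon[X]\to H$ together with a self-map $\phi_f\colon[X]\to[X]$, and then $f(g,y)=(g\,a_f(y),\phi_f(y))$. The assignment $f\mapsto(a_f,\phi_f)$ is a bijection onto $H\wr[X]^{[X]}=H^{[X]}\rtimes[X]^{[X]}$, and a direct computation (sensitive only to the placement of the factors coming from the left action) shows that composition of endomorphisms corresponds to the wreath operation $*$ recalled above. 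From now on I work inside $G$, writing $A:=[X]$ and letting $c_\alpha\in A^A$ denote the constant map with value $\alpha$.

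For (2) I would exploit the projection homomorphism $\pi\colon G\to A^A$, $(a,\phi)\mapsto\phi$, which is surjective. A surjective homomorphism carries the minimal ideal onto the minimal ideal, and the minimal ideal of the full transformation semigroup $A^A$ is the left-zero semigroup of all constant maps; hence $\IK(G)\subset J:=\{(a,\phi)\in G:\phi\text{ is constant}\}$. A one-line check shows $J$ is a two-sided ideal, so it remains to prove that $J$ is simple (and therefore equal to the kernel). This I would do by an explicit computation: given $u=(a,c_{\alpha_0})$ and $v=(b,c_{\beta_0})$ in $J$, taking $p=(p_0,c_{\beta_0})$ with $p_0\equiv1$ and $q=(q_0,c_\gamma)$ with $q_0=a(\gamma)^{-1}b$ yields $p*u*q=v$, so $J*u*J=J$ for every $u$. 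Unravelling the correspondence of (1), $\phi_f$ is constant precisely when the image $f(X)$ meets a single orbit, i.e. $f(X)\subset[x]$ for all $x\in f(X)$; this is exactly (2).

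For (3) I would fix an idempotent $e=(a,c_{\alpha_0})$ of $\IK(G)$; squaring shows these are precisely the pairs with $a(\alpha_0)=1$. Then $Ge$ is a minimal left ideal, and a short computation identifies it with $\{(h\cdot a,\,c_\beta):h\in H,\ \beta\in A\}$, carrying the multiplication $(h_1,\beta_1)(h_2,\beta_2)=(h_1\,a(\beta_2)\,h_2,\ \beta_1)$ in these coordinates. The change of coordinates $(h,\beta)\mapsto(a(\beta)\,h,\beta)$ absorbs the twisting factor and converts this into $(g_1,\beta_1)(g_2,\beta_2)=(g_1g_2,\beta_1)$, i.e. the product of the group $H$ with the left-zero semigroup on $[X]$. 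Since every minimal left ideal of $G$ contains an idempotent of $\IK(G)$ and is therefore of the form $Ge$, the same computation delivers (3) for all of them.

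For (4) I would regard an idempotent $f\in\End(X)$ as a retraction onto the subact $Y:=f(X)$, which is again a free $H$-act. The restriction map $\rho\colon\mathsf H_f\to\Aut(Y)$, $g\mapsto g|_Y$, into the automorphism group of the $H$-act $Y$, is the key device: it is a homomorphism, it is injective because $g\circ f=g$ gives $g=(g|_Y)\circ f$, and it is surjective because $\sigma\mapsto\sigma\circ f$ provides a preimage. Applying (1) to the free act $Y$ identifies $\Aut(Y)$ with the invertible elements of $H\wr[Y]^{[Y]}$, namely $H\wr S_{[f(X)]}$ (a pair $(b,\psi)$ is invertible iff its orbit-map $\psi$ is a bijection), which is (4). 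Statement (5) is the case of a minimal idempotent: by (2) the image $f(X)$ lies in a single orbit, so $[f(X)]$ is a point, $S_{[f(X)]}$ is trivial, and $H\wr S_{[f(X)]}\cong H$; the identity $\mathsf H_f=f\cdot\End(X)\cdot f$ is the description of the maximal group at a minimal idempotent recalled earlier. I expect the main obstacle to lie in (3)---pinning down the idempotents of the kernel and, above all, finding the coordinate change that straightens the twisted multiplication on $Ge$ into a genuine direct product---while the simplicity argument in (2) and the convention bookkeeping in the composition-versus-wreath check in (1) require care but are routine.
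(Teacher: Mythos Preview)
Your argument is correct and, for parts (1)--(3) and (5), proceeds along the same lines as the paper: transport to the wreath product via a choice of orbit representatives, identify the kernel with the pairs whose orbit-map is constant, and untwist a minimal left ideal by an explicit coordinate change. One caution on (1): with your convention $f\mapsto(a_f,\phi_f)$ the map is an \emph{anti}-homomorphism relative to the paper's operation $*$, since $a_{f_1\circ f_2}(y)=a_{f_2}(y)\,a_{f_1}(\phi_{f_2}(y))$ while $*$ produces the factors in the other order. The paper handles this by inserting an inverse, defining $f_H$ via $f(s([x]))=f_H([x])^{-1}\cdot s([f(x)])$; this is exactly the ``placement of factors'' issue you anticipate, and replacing $a_f$ by $a_f^{-1}$ repairs your map.

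Your treatment of (4) is genuinely different and cleaner. The paper argues via the projection $\pr:\End(X)\to[X]^{[X]}$, asserting that the preimage $\pr^{-1}(\mathsf H_{[f]})$ ``is isomorphic to the wreath product $H\wr\mathsf H_{[f]}$ and hence is a group'' and then invoking maximality of $\mathsf H_f$ to conclude $\mathsf H_f=\pr^{-1}(\mathsf H_{[f]})$. This step is not correct as stated when $[f(X)]\ne[X]$: the full preimage has no two-sided identity (the equation $(b,\psi)*(e_0,[f])=(b,\psi)$ forces $e_0(\alpha)=b([f](\alpha))^{-1}b(\alpha)$, which depends on $b$), so it is not a group and is strictly larger than $\mathsf H_f$. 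Your route---the restriction isomorphism $\rho:\mathsf H_f\to\Aut(f(X))$, followed by an application of (1) to the free subact $Y=f(X)$ to get $\Aut(Y)\cong H\wr S_{[f(X)]}$---avoids this difficulty entirely and gives a transparent proof.
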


\begin{proof} 1. Let $\pi:X\to[X]$, $\pi:x\mapsto[x]$, denote the orbit map and   $s:[X]\to X$ be a section of $\pi$, which means that $\pi\circ s([x])=[x]$ for all $[x]\in[X]$.

Observe that each equivariant map $f:X\to X$ induces a well-defined map $[f]:[X]\to [X]$, $[f]:[x]\mapsto [f(x)]$, of the orbit spaces. Since the action of $H$ on $X$ is free, for every orbit $[x]\in[X]$ we can find a unique point $f_H([x])\in H$ such that $f\circ s([x])=(f_H([x]))^{-1}\cdot s([f(x)])$.

We claim that the map
$$\Psi:\End(X)\to H\wr [X]^{[X]},\;\; \Psi:f\mapsto (f_H,[f]),$$
is a semigroup isomorphism.

First we check that the map $\Psi$ is a homomorphism. Pick any two equivariant functions $f,g\in\End(X)$ and consider their images $\Psi(f)=(f_H,[f])$ and $\Psi(g)=(g_H,[g])$ in $H\wr[X]^{[X]}$. Consider also the composition $f\circ g$ and its
image $\Psi(f\circ g)=((f\circ g)_H,[f\circ g])$. We claim that
$$((f\circ g)_H,[f\circ g])=(f_H,[f])*(g_H,[g])=((f_H\circ[g])\cdot g_H,[f]\circ[g]).$$
The equality $[f\circ g]=[f]\circ [g]$ is clear. To prove that $(f\circ g)_H=(f_H\circ [g])\cdot g_H$, take any orbit $[x]\in [X]$. It follows from the definition of $(f\circ g)_H([x])$ that
$$
\begin{aligned}
&((f\circ g)_H([x]))^{-1}\cdot s([f\circ g(x)])=(f\circ g)\circ s([x])=f(g\circ s([x]))=\\
&f\big((g_H([x]))^{-1}\cdot s([g(x)])\big)=(g_H([x]))^{-1}\cdot f\circ s([g(x)])=\\
&(g_H([x]))^{-1}\cdot (f_H([g(x)]))^{-1}\cdot s([f\circ g(x)])=\\
&(f_H\circ [g]([x])\cdot g_H([x]))^{-1}\cdot s([f\circ g(x)])
\end{aligned}
$$which implies the desired equality $(f\circ g)_H=(f_H\circ [g])\cdot g_H$.
\smallskip

Next, we show that the homomorphism $\Psi$ is injective. Given two  equivariant functions $f,g\in\End(X)$ with $(f_H,[f])=\Psi(f)=\Psi(g)=(g_H,[g])$, we need to show  that $f=g$. Observe that for every orbit $[x]\in[X]$ we get
$$f(s([x]))=(f_H([x]))^{-1}\cdot s\circ [f]([x]))=(g_H([x]))^{-1}\cdot s\circ[g]([x])=g(s([x])).$$
Now for each $x\in X$ we can find a unique $h\in H$ with $x=h\cdot s([x])$ and apply the equivariantness of the functions $f,g$ to conclude that
 $$f(x)=f(h\cdot s([x]))=h\cdot f(s([x]))=h\cdot g(s([x]))=g(h\cdot s([x]))=g(x).$$

Finally, we show that $\Psi$ is surjective. Given any pair $(h,g)\in H\wr [X]^{[X]}=H^{[X]}\times [X]^{[X]}$, we define an equivariant function $f\in\End(X)$ with $(h,g)=(f_H,[f])$ as follows. Given any $x\in X$ find a unique $y\in H$ with $x=y\cdot s([x])$ and let
$$f(x)=y\cdot h([x])^{-1}\cdot s(g([x])).$$
This formula determines a well-defined equivariant function $f:X\to X$ with $\Psi(f)=(h,g)$. Therefore, $\Psi:\End(X)\to H\wr[X]^{[X]}$ is a semigroup isomorphism.
\smallskip

2. Observe that the set $\mathcal I=\{f\in\End(X):\{[f(x)]:x\in X\}$ is a singleton$\}$ is a (non-empty) ideal in $\End(X)$. To show that $\mathcal I$ is the minimal ideal of the semigroup $\End(X)$, we need to check that $\mathcal I$ lies in any ideal $\mathcal J\subset \End(X)$. Take any functions $f\in\mathcal I$ and $g\in \mathcal J$. Find an orbit $[x]\in [X]$ such that $[f(z)]=[x]$ for all $z\in X$. Since the restriction $g|[x]:[x]\to[g(x)]$ is bijective and equivariant, so is its inverse $(g|[x])^{-1}:[g(x)]\to[x]$. Extend this equivariant map to any equivariant map $h:X\to X$. Then
$$f=h\circ g\circ f\in \End(X)\circ g\circ \End(X)\subset\mathcal J.$$

3. Take any idempotent $f\in \IK(\End(X))$ and consider the minimal left ideal $\End(X)\cdot f$. Fix any point $z\in f(X)$ and observe that $f([x])=[z]$ for all $x\in X$ according to the preceding item. It follows that the set $Z=f^{-1}(z)$ meets each orbit $[x]$, $x\in X$, at a single point. So, we can define a unique section $s:[X]\to Z\subset X$ of the orbit map $X\to [X]$ such that $f\circ s([X])=\{z\}$.

To each equivariant map $g\in\End(X)$ assign a unique element $g_H\in H$ such that $g(x)=g_H^{-1}\cdot s([g(x)])$.
It is easy to check that the map
$$\Phi:\End(X)\cdot f\to H\times [X],\;\Phi:g\mapsto (g_H,[g]([x])),$$
is a semigroup homomorphism where the orbit space $[X]$ is endowed with the left zero multiplication.
\smallskip

4. Take any idempotent $f\in \End(X)$ and consider the surjective semigroup homomorphism
$\pr:\End(X)\to[X]^{[X]}$, $\pr:g\mapsto[g]$. It follows that $[f]$ is an idempotent of the semigroup $[X]^{[X]}$ and the image $\pr(\HH_f)$ of the maximal group $\mathsf H_f$ is a subgroup of $[X]^{[X]}$. It is easy to see that the maximal subgroup $\mathsf H_{[f]}$ of the idempotent $[f]$ in $[X]^{[X]}$ coincides with $S_{[f(X)]}\cdot [f]$. The preimage $\pr^{-1}(\mathsf H_{[f]})$ of the maximal subgroup $\mathsf H_{[f]}=S_{[f(X)]}\cdot f$ is isomorphic to the wreath product $H\wr \mathsf H_{[f]}$ and hence is a group. Now the maximality of $\mathsf H_{f}$ guarantees that $\mathsf H_{f}=\pr^{-1}(\mathsf H_{[f]})$ and hence $\mathsf H_f$ is isomorphic to $H\wr S_{[f(X)]}$.
\smallskip

5. If $f\in \IK(\End(X))$ is a minimal idempotent, then the set $[f(X)]=\{[f(x)]:x\in X\}$ is a singleton by the second item. By the preceding item the maximal group $\mathsf H_f$ is isomorphic to $H\wr S_{[f(X)]}$, which is isomorphic to the group $H$ since $[f(X)]$ is a singleton.
\end{proof}

For each group $X$ the power-set $\mathsf P(X)$ will be considered as an $X$-act endowed with the left action $$\cdot:X\times \mathsf P(X)\to\mathsf P(X),\;\;\cdot:(x,A)\mapsto xA=\{xa:a\in A\},$$of the group $X$.
This $X$-act $\mathsf P(X)$ and its endomorphism monoid $\End(\mathsf P(X))$ will  play a crucial role in our considerations.

\section{The function representation of the semigroup $\mathsf P^2(X)$}

In this section given a group $X$ we construct a topological isomorphism 
$$\Phi:\mathsf P^2(X)\to\End(\mathsf P(X))$$
called the {\em function representation} of the semigroup $\mathsf P^2(X)$ in the endomorphism monoid of the $X$-act $\mathsf P(X)$.
We recall that the double power-set $\mathsf P^2(X)=\mathsf P(\mathsf P(X))$ of the group $X$ is endowed with the binary operation
$$\A\circ\B=\big\{A\subset X:\{x\in X:x^{-1}A\in\B\}\in\A\big\}.$$

The isomorphism $\Phi$ assigns to each family $\A$ of subsets of $X$ the function 
$$\Phi_\A:\mathsf P(X)\to\mathsf P(X),\;\;\Phi_\A:A\mapsto\{x\in X:x^{-1}A\in\A\},$$called the {\em function representation} of  $\A$.

In the following theorem by $e$ we denote the neutral element of the group $X$.

\begin{theorem}\label{t4.1} For any group $X$ the map $\Phi:\mathsf P^2(X)\to\End(\mathsf P(X))$
is a topological isomorphism with inverse $\Phi^{-1}:\varphi\mapsto \{A\subset X:e\in\varphi(A)\}$.
\end{theorem}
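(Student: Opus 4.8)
The plan is to verify four things in turn: that each $\Phi_\A$ really is an endomorphism of the $X$-act $\mathsf P(X)$, so that $\Phi$ maps $\mathsf P^2(X)$ into $\End(\mathsf P(X))$; that $\Phi$ is a semigroup homomorphism; that the assignment $\Psi:\varphi\mapsto\{A\subset X:e\in\varphi(A)\}$ is a two-sided inverse of $\Phi$ (which simultaneously gives injectivity, surjectivity onto $\End(\mathsf P(X))$, and identifies $\Phi^{-1}$); and finally that $\Phi$ is a homeomorphism. The first two are pure computations with the defining formulas, bijectivity follows once the two compositions are checked, and the topological part will be cheap since the relevant preimages are clopen.

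First I would record equivariance. For $g\in X$ and $A\subset X$ one finds $\Phi_\A(gA)=\{x:x^{-1}gA\in\A\}$, which equals $g\,\Phi_\A(A)=\{gy:y^{-1}A\in\A\}$ after the substitution $x=gy$; thus $\Phi_\A\in\End(\mathsf P(X))$. The single identity I will isolate and reuse is
$$x^{-1}\Phi_\B(A)=\{x^{-1}y:y^{-1}A\in\B\}=\{z:z^{-1}x^{-1}A\in\B\}=\Phi_\B(x^{-1}A),$$
i.e. equivariance rewritten as a statement about left-translating the subset $\Phi_\B(A)\subset X$. The homomorphism property $\Phi_{\A\circ\B}=\Phi_\A\circ\Phi_\B$ then follows: unwinding $\circ$, a set $x^{-1}A$ lies in $\A\circ\B$ exactly when $\{w:w^{-1}x^{-1}A\in\B\}=\Phi_\B(x^{-1}A)$ belongs to $\A$, so $\Phi_{\A\circ\B}(A)=\{x:\Phi_\B(x^{-1}A)\in\A\}$, whereas $(\Phi_\A\circ\Phi_\B)(A)=\{x:x^{-1}\Phi_\B(A)\in\A\}$, and the displayed identity makes the two agree. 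This is the step I expect to be the \emph{main obstacle}: one must keep straight three nested copies of the defining formula and the fact that $x^{-1}(\cdot)$ acts on a \emph{subset} of $X$, and it is precisely here that invertibility in the group is used.

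Next I would prove bijectivity through the explicit inverse. On one hand $\Psi(\Phi_\A)=\{A:e\in\Phi_\A(A)\}=\{A:A\in\A\}=\A$, so $\Psi\circ\Phi=\id$. On the other hand, fixing $\varphi\in\End(\mathsf P(X))$ and setting $\A=\Psi(\varphi)$, I compute $\Phi_\A(A)=\{x:x^{-1}A\in\A\}=\{x:e\in\varphi(x^{-1}A)\}$, and equivariance $\varphi(x^{-1}A)=x^{-1}\varphi(A)$ converts the condition $e\in\varphi(x^{-1}A)$ into $x\in\varphi(A)$; hence $\Phi_\A(A)=\varphi(A)$ for every $A$, giving $\Phi\circ\Psi=\id$. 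In particular the range of $\Phi$ is exactly $\End(\mathsf P(X))$ and $\Phi$ is a bijection with inverse $\Psi$.

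Finally the topology. Since $\mathsf P^2(X)=\{0,1\}^{\mathsf P(X)}$ is compact Hausdorff and $\mathsf P(X)^{\mathsf P(X)}$ is Hausdorff, it suffices to show $\Phi$ continuous and then invoke that a continuous bijection from a compact space onto a Hausdorff space is a homeomorphism. Continuity is immediate: a subbasic clopen set of $\mathsf P(X)^{\mathsf P(X)}$ has the form $\{f:x\in f(A)\}$ (or its complement) for fixed $A\subset X$ and $x\in X$, and
$$\Phi^{-1}\big(\{f:x\in f(A)\}\big)=\{\A:x\in\Phi_\A(A)\}=\{\A:x^{-1}A\in\A\}$$
is a subbasic clopen set of $\mathsf P^2(X)$. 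Hence $\Phi$ is continuous, and the above yields that $\Phi$ is a topological isomorphism with the stated inverse.
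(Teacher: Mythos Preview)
Your proof is correct and follows essentially the same approach as the paper: verify equivariance of $\Phi_\A$, compute the homomorphism identity $\Phi_{\A\circ\B}=\Phi_\A\circ\Phi_\B$ via the equivariance relation $x^{-1}\Phi_\B(A)=\Phi_\B(x^{-1}A)$, identify the inverse map $\Psi$, and then use continuity on subbasic clopen sets together with compactness. The only cosmetic difference is that the paper checks injectivity and surjectivity separately, whereas you package both into the verification that $\Psi$ is a two-sided inverse; the underlying computations are identical.
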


\begin{proof} First observe that for any family $\A\in\mathsf P^2(X)$ the function $\Phi_\A$ is equivariant, because
$$\Phi_{\A}(xA)=\{y\in X:y^{-1}xA\in{\A}\}=\{xz\in X:z^{-1}A \in{\A}\}=x\,\Phi_{\A}(A)$$for any $x\in X$ and $A\subset X$. Thus the map $\Phi:\mathsf P^2(X)\to\End(\mathsf P(X))$ is well-defined.

To prove that $\Phi$ is a semigroup homomorphism, take two families $\XX,\YY\in \mathsf P^2(X)$ and let
$\ZZ=\XX\circ\YY$. We need to check that
$\Phi_\ZZ(A)=\Phi_\XX\circ\Phi_\YY(A)$ for every
$A\subset X$. Observe that
$$
\begin{aligned}
\Phi_\ZZ(A)&=\{z\in X:z^{-1}A\in\ZZ\}=\{z\in X:\{x\in
X:x^{-1}z^{-1}A\in\YY\}\in\XX\}=\\
&=\{z\in X:\Phi_{\YY}(z^{-1}A)\in \XX\}=\{z\in
X:z^{-1}\Phi_\YY(A)\in\XX\}=\\
&=\Phi_\XX(\Phi_\YY(A))=\Phi_\XX\circ\Phi_\YY(A).
\end{aligned}
$$
\smallskip

To see that the map $\Phi$ is injective, take any two distinct families $\A,\B\in \mathsf P^2(X)$. Without loss of generality,
$\A\setminus \B$ contains some set $A\subset X$. It follows that
$e\in \Phi_\A(A)$ but $e\notin\Phi_\B(A)$ and hence
$\Phi_\A\ne\Phi_\B$.
\smallskip

To see that the map $\Phi$ is surjective, take any equivariant function $\varphi:\mathsf P(X)\to\mathsf P(X)$ and consider the family $\A=\{A\subset X:e\in\varphi(A)\}$.
It follows that for every $A\in\mathsf P(X)$
$$
\begin{aligned}
\Phi_{\A}(A)&=\{x\in X\colon x^{-1}A\in\A\}=\{x\in X\colon e\in \varphi(x^{-1}A)\}=\\
&=\{x\in X\colon e\in x^{-1}\varphi(A)\}=\{x\in X\colon x\in \varphi(A)\}=\varphi(A).
\end{aligned}$$

To prove that $\Phi:\mathsf P^2(X)\to\End(\mathsf P(X))\subset \mathsf P(X)^{\mathsf P(X)}$ is
continuous we first define a convenient subbase of the topology
on the spaces $\mathsf P(X)$ and $\mathsf P(X)^{\mathsf P(X)}$.
The product topology of $\mathsf P(X)$ is generated by the
subbase consisting of the sets
$$x^+=\{A\subset X:x\in A\}\mbox{ and }x^-=\{A\subset X:x\notin A\}$$ where $x\in X$. On the other hand, the product topology on $\mathsf P(X)^{\mathsf P(X)}$ is generated by the subbase consisting of the sets
$$
\langle x,A\rangle^+=\{f\in\mathsf P(X)^{\mathsf P(X)}:x\in f(A)\}\mbox{ and }
\langle x,A\rangle^-=\{f\in\mathsf P(X)^{\mathsf P(X)}:x\notin f(A)\}
$$where $A\in\mathsf P(X)$ and $x\in X$.

Now observe that the preimage
$$\Phi^{-1}(\langle x,A\rangle^+)=\{\A\in \mathsf P^2(X):x\in \Phi_\A(A)\}=\{\A\in \mathsf P^2(X):x^{-1}A\in\A\}$$is open in $\mathsf P^2(X)$. The same is true for the preimage
$$\Phi^{-1}(\langle x,A\rangle^-)=\{\A\in \mathsf P^2(X):x\notin \Phi_\A(A)\}=\{\A\in \mathsf P^2(X):x^{-1}A\notin\A\}$$which also is open in $\mathsf P^2(X)$.

Since the spaces $\mathsf P^2(X)\cong\{0,1\}^{\mathsf P(X)}$ and $\End(\mathsf P(X))\subset\mathsf P(X)^{\mathsf P(X)}$ are compact and Hausdorff, the continuity of the map $\Phi$ implies the continuity of its inverse $\Phi^{-1}$. Consequently, $\Phi:\mathsf P^2(X)\to\End(\mathsf P(X))$ is a topological isomorphism of compact right-topological semigroups.   
\end{proof}

\begin{remark} The functions representations $\Phi_\A$ of some families $\A\subset\mathsf P(X)$ have transparent topological interpretations. For example, if $\A$ is the filter of neighborhoods of the identity element $e$ of a left-topological group $X$ and $\A^\perp=\{B\subset X:\forall A\in\A\;\;(B\cap A\ne\emptyset)\}$, then  for any subset $B\subset X$ the set $\Phi_{\A}(B)$ coincides with the interior of the set $B$ while $\Phi_{\A^\perp}(B)$ with the closure of $B$ in $X$! 
\end{remark}

Theorem~\ref{t4.1} has a strategical importance because it allows us to translate (usually difficult) problems concerning the structure of the semigroup $\mathsf P^2(X)$ to (usually more tractable) problems about the endomorphism monoid $\End(\mathsf P(X))$. In particular, Theorem~\ref{t4.1} implies ``for free'' that the binary operation on $\mathsf P^2(X)$ is associative and right-topological and hence $\mathsf P^2(X)$ indeed is a compact right-topological semigroup.

Now let us investigate the interplay between the properties of a family $\A\in\mathsf P^2(X)$ and those of its function representation $\Phi_\A$.

Let us define a family $\A\subset\mathsf P(X)$ to be 
\begin{itemize}
\item {\em monotone} if for any subsets $A\subset B\subset X$ the inclusion $A\in\A$ implies $B\in\A$;
\item {\em left-invariant} if for any $A\in\A$ and $x\in X$ we get $xA\in\A$.
\end{itemize}

Respectively, a function $\varphi:\mathsf P(X)\to\mathsf P(X)$ is called
\begin{itemize}
\item {\em monotone} if  $\varphi(A)\subset\varphi(B)$ for any subsets $A\subset B\subset X$;
\item {\em symmetric} if $\varphi(X\setminus A)=X\setminus\varphi(A)$ for every $A\subset X$.
\end{itemize}

\begin{proposition}\label{p4.3} For an equivariant function $\varphi\in \End(\mathsf P(X))$ the family $\Phi^{-1}(\varphi)=\{A\subset X:e\in\varphi(A)\}$ is
\begin{enumerate}
\item[\textup{(1)}] monotone if and only if $\varphi$ is monotone;
\item[\textup{(2)}] left-invariant if and only if $\varphi(\mathsf P(X))\subset\{\emptyset,X\}$;
\item[\textup{(3)}] maximal linked if and only if $\varphi$ is monotone and symmetric.
\end{enumerate}
\end{proposition}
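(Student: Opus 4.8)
Throughout, write $\A=\Phi^{-1}(\varphi)=\{A\subset X:e\in\varphi(A)\}$. By Theorem~\ref{t4.1} we have $\Phi_\A=\varphi$, so that the basic relation
$$x\in\varphi(A)\iff x^{-1}A\in\A$$
holds for all $x\in X$ and $A\subset X$; in particular $A\in\A\iff e\in\varphi(A)$. I shall use repeatedly the equivariance $\varphi(xA)=x\varphi(A)$ together with the fact that left translation by $x^{-1}$ is a bijection of $X$, so that $x^{-1}(X\setminus A)=X\setminus x^{-1}A$.

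Parts (1) and (2) are direct definition chases. For (1): if $\varphi$ is monotone and $A\subset B$ with $A\in\A$, then $e\in\varphi(A)\subset\varphi(B)$, so $B\in\A$; conversely, if $\A$ is monotone and $A\subset B$, then for $x\in\varphi(A)$ we have $x^{-1}A\in\A$ and $x^{-1}A\subset x^{-1}B$, whence $x^{-1}B\in\A$ and $x\in\varphi(B)$. For (2) I would use equivariance to pass from the single point $e$ to all of $X$: if $\varphi(\mathsf P(X))\subset\{\emptyset,X\}$ and $A\in\A$, then $e\in\varphi(A)$ forces $\varphi(A)=X$, so $\varphi(xA)=x\varphi(A)=X\ni e$ and $xA\in\A$; conversely, if $\A$ is left-invariant and $\varphi(A)\ne\emptyset$, pick $x\in\varphi(A)$ so that $x^{-1}A\in\A$, and for arbitrary $y\in X$ left-invariance gives $(y^{-1}x)\,x^{-1}A=y^{-1}A\in\A$, i.e.\ $y\in\varphi(A)$, so $\varphi(A)=X$.

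Part (3) is the substantial one, and I would split it. First I would record the purely combinatorial fact that $\A\subset\mathsf P(X)$ is maximal linked if and only if it is monotone and \emph{self-dual}, meaning that for every $A\subset X$ exactly one of $A$ and $X\setminus A$ belongs to $\A$. The forward implication uses that any maximal linked system is monotone (adjoining a superset of a member keeps the system linked) together with the facts that $A$ and $X\setminus A$ are disjoint, hence not both in $\A$, and that if $A\notin\A$ then maximality supplies some $B\in\A$ disjoint from $A$, i.e.\ $B\subset X\setminus A$, forcing $X\setminus A\in\A$ by monotonicity. The reverse implication checks directly that a monotone self-dual family is linked (two disjoint members would put $A$ and $X\setminus A$ in $\A$) and maximal (any strictly larger linked system would contain some $A$ with $X\setminus A$ already present). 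With this characterization the monotone half is exactly part (1), so it remains to prove that self-duality of $\A$ is equivalent to symmetry of $\varphi$. Evaluating $\varphi(X\setminus A)=X\setminus\varphi(A)$ at $e$ gives at once $X\setminus A\in\A\iff A\notin\A$, which is self-duality; conversely, applying self-duality to $x^{-1}A$ and using $x^{-1}(X\setminus A)=X\setminus x^{-1}A$,
$$x\in\varphi(X\setminus A)\iff X\setminus x^{-1}A\in\A\iff x^{-1}A\notin\A\iff x\notin\varphi(A),$$
so $\varphi(X\setminus A)=X\setminus\varphi(A)$. Combining, $\A$ is maximal linked iff $\A$ is monotone and self-dual iff $\varphi$ is monotone and symmetric.

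I expect the main obstacle to be the combinatorial characterization of maximal linked systems and, above all, the passage between the pointwise condition seen by $\A$ (which only tests membership of the neutral element $e$) and the global symmetry of $\varphi$ (an assertion at every $x\in X$). Equivariance is precisely what bridges this gap, and carrying it correctly through the chain of equivalences displayed above is the crux of the argument.
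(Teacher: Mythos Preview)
Your proof is correct and follows essentially the same approach as the paper. The only cosmetic difference is that you isolate the characterization ``maximal linked $\Leftrightarrow$ monotone and self-dual'' as a separate combinatorial lemma, whereas the paper weaves the same computations directly into the two directions of (3); the underlying arguments (and in particular the passage from the pointwise condition at $e$ to the global symmetry of $\varphi$ via the basic relation $x\in\varphi(A)\iff x^{-1}A\in\A$) are identical.
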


\begin{proof} Let $\A=\Phi^{-1}(\varphi)$. 

1. If $\varphi$ is monotone, then for any sets $A\subset B$ with $A\in\A$ we get $e\in\varphi(A)\subset\varphi(B)$ and hence $B\in\A$, which means that the family $\A$ is monotone.

Now assume conversely that the family $\A$ is monotone and take any sets $A\subset B\subset X$. Note that for any $x\in X$ with $xA\in\A$ we get $xB\in\A$. Then$$\varphi(A)=\{x\in X:x^{-1}A\in \A\}\subset \{x\in X:x^{-1}B\in\A\}=\varphi(B),$$witnessing that the function $\varphi$ is monotone. 
\smallskip

2. If the family $\A$ is left-invariant, then for each $A\in\A$ we get $\varphi(A)=\{x\in X:x^{-1}A\in\A\}=X$ and for each $A\notin\A$ we get $\varphi(A)=\{x\in X:x^{-1}A\in\A\}=\emptyset$. 

Now assume conversely that $\varphi(\mathsf P(X))\subset\{\emptyset,X\}$. Then for each $A\in\A$ we get $e\in\varphi(A)=X$ and then for each $x\in X$, the equivariance of $\varphi$ guarantees that $\varphi(xA)=x\varphi(A)=xX=X\ni e$ and thus  $xA\in\A$, witnessing that the family $\A$ is invariant.
\smallskip

3. Assume that the family $\A$ is maximal linked. By the maximality, $\A$ is monotone. Consequently, its function representation $\varphi$ is monotone. The maximal linked property of $\A$ guarantees that for any subset $A\subset X$ we get $(A\in\A)\Leftrightarrow (X\setminus A\notin\A)$. Then 
$$
\begin{aligned}
\varphi(X\setminus A)&=\{x\in X:x^{-1}(X\setminus A)\in\A\}=\{x\in X:X\setminus x^{-1}A\in\A\}=\\
&=\{x\in X:x^{-1}A\notin\A\}=X\setminus \{x\in X:x^{-1}A\in\A\}=X\setminus \varphi(A),
\end{aligned}
$$
which means that  the function $\varphi$ is symmetric.

Now assuming that the function $\varphi$ is monotone and symmetric, we shall show that the family $\A=\Phi^{-1}(\varphi)$ is maximal linked. The statement (1) guarantees that $\A$ is monotone. Assuming that $\A$ is not linked, we could find two disjoint sets $A,B\in\A$. Since $\A$ is monotone, we can assume that $B=X\setminus A$. Then $e\in\varphi(A)\cap\varphi(X\setminus A)$, which is impossible as $\varphi(X\setminus A)=X\setminus\varphi(A)$. Thus $\A$ is linked. To show that $\A$ is maximal linked, it suffices to check that for each subset $A\subset X$ either $A$ or $X\setminus A$ belongs to $\A$. Since $\varphi(X\setminus A)=X\setminus \varphi(A)$, either $\varphi(A)$ or $\varphi(X\setminus A)$ contains the neutral element $e$ of the group $X$. In the first case $A\in\A$ and in the second case $X\setminus A\in\A$.
\end{proof}

Let us recall that the aim of this paper is the description of the structure of minimal left ideals of the superextension $\lambda(X)$ of a group $X$. 
Instead of the semigroup $\lambda(X)$ it will be more convenient to consider its isomorphic copy
$$\End_\lambda(\mathsf P(X))=\Phi(\lambda(X))\subset\End(\mathsf P(X))$$ called the {\em function representation of} $\lambda(X)$.

Proposition~\ref{p4.3} implies

\begin{corollary}\label{c4.4} The function representation $\End_\lambda(\mathsf P(X))$ of $\lambda(X)$ consists of equivariant monotone symmetric functions $\varphi:\mathsf P(X)\to\mathsf P(X)$.
\end{corollary}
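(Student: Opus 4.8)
The plan is to read off the corollary directly from Proposition~\ref{p4.3}(3), using the bijectivity of the function representation $\Phi$ established in Theorem~\ref{t4.1}. First I would note that equivariance comes for free: by the very definition of $\End(\mathsf P(X))$ as the endomorphism monoid of the $X$-act $\mathsf P(X)$, every element of $\End_\lambda(\mathsf P(X))\subset\End(\mathsf P(X))$ is already equivariant. So the only properties that genuinely need to be matched up are monotonicity and symmetry, and the whole task reduces to identifying the image $\End_\lambda(\mathsf P(X))=\Phi(\lambda(X))$ inside $\End(\mathsf P(X))$.

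The central step is a short chain of equivalences. By Theorem~\ref{t4.1}, $\Phi$ is a bijection of $\mathsf P^2(X)$ onto $\End(\mathsf P(X))$ whose inverse sends $\varphi$ to the family $\Phi^{-1}(\varphi)=\{A\subset X:e\in\varphi(A)\}$. Consequently, for an equivariant $\varphi\in\End(\mathsf P(X))$ we have $\varphi\in\End_\lambda(\mathsf P(X))=\Phi(\lambda(X))$ if and only if $\Phi^{-1}(\varphi)$ lies in $\lambda(X)$, i.e. if and only if $\Phi^{-1}(\varphi)$ is a maximal linked system on $X$. Now Proposition~\ref{p4.3}(3) asserts exactly that, for equivariant $\varphi$, the family $\Phi^{-1}(\varphi)$ is maximal linked precisely when $\varphi$ is monotone and symmetric. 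Stitching the two equivalences together yields
$$\End_\lambda(\mathsf P(X))=\{\varphi\in\End(\mathsf P(X)):\varphi\mbox{ is monotone and symmetric}\},$$
which is the assertion of the corollary.

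I do not expect any real obstacle here, since all the substantive work was already carried out in Proposition~\ref{p4.3}: the corollary is a packaging of part (3) of that proposition. The only point that requires care is bookkeeping the direction of the correspondence, namely that the bijectivity of $\Phi$ lets one pass freely between the statement ``$\varphi$ belongs to the image of $\lambda(X)$'' and the statement ``$\Phi^{-1}(\varphi)$ is maximal linked''; once this is spelled out, the claim follows immediately.
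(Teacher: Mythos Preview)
Your proposal is correct and follows exactly the paper's approach: the corollary is stated immediately after Proposition~\ref{p4.3} with the one-line justification ``Proposition~\ref{p4.3} implies,'' and your argument simply unpacks that implication via the bijectivity of $\Phi$ from Theorem~\ref{t4.1} together with part~(3) of Proposition~\ref{p4.3}. If anything, your write-up is more explicit than the paper's, since you spell out both directions of the equality rather than merely citing the proposition.
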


In order to describe the structure of minimal left ideals of the semigroup $\End_\lambda(\mathsf P(X))$ we shall look for a relatively small subfamily $\mathsf F\subset\mathsf P(X)$ such that the restriction operator 
$$R_{\mathsf F}:\End_\lambda(\mathsf P(X))\to\mathsf P(X)^{\mathsf F},\;\;R_{\mathsf F}:\varphi\mapsto\varphi|\mathsf F,$$
is injective on each minimal left ideal of the semigroup $\Enl(\mathsf P(X))$.

Then the composition 
$$\Phi_{\mathsf F}=R_{\mathsf F}\circ \Phi:\lambda(X)\to\mathsf P(X)^{\mathsf F}$$will be injective on each minimal left ideal of the semigroup $\lambda(X)$. 
By Proposition~\ref{c2.2}, a homomorphism between semigroups is injective on each minimal left ideal if it is injective on some minimal left ideal. Such a special minimal left ideal of the semigroup $\lambda(X)$ will be found in the left ideal of the form $\lambda^\I(X)$ for a suitable left-invariant ideal $\I$ of subsets of the group $X$.

A family $\I$ of subsets of $X$ is called an {\em ideal} on $X$ if
\begin{itemize}
\item $X\notin \I$;
\item $A\cup B\in\I$ for any $A,B\in\I$;
\item for any $A\in \I$ and $B\subset A$ we get $B\in\I$.
\end{itemize}
Such an ideal $\I$ is called {\em left-invariant} (reps. {\em right-invariant}) if $xA\in\I$ (resp. $Ax\in\I$) for all $A\in\I$ and $x\in X$. An ideal $\I$ will be called {\em invariant} if it is both left-invariant and right-invariant.

The smallest ideal on $X$ is the trivial ideal $\{\emptyset\}$ containing only the empty set. The smallest non-trivial left-invariant ideal on an infinite group $X$ is the ideal $[X]^{<\w}$ of finite subsets of $X$.
This ideal is invariant. 
From now on we shall assume that $\I$ is a left-invariant ideal on a group $X$.

For subsets $A,B\subset X$ we write 
\begin{itemize}
\item $A\subset_\I B$ if $A\setminus B\in\I$, and
\item $A=_\I B$ if $A\subset_\I B$ and $B\subset_\I A$. 
\end{itemize}
The definition of the ideal $\I$ implies that $=_\I$ is an equivalence relation on $\mathsf P(X)$. For a subset $A\subset X$ its equivalence class $\bar{\bar A}^\I=\{B\subset X:B=_\I A\}$ is called the {\em $\I$-saturation} of $A$.

A family $\A$ of subsets of $X$ is defined to be {\em $\I$-saturated} if 
$\bar{\bar A}^\I\subset\A$ for any $A\in \A$.  Let us observe that a monotone family $\A\subset\mathsf P(X)$ is $\I$-saturated if and only if for any $A\in\A$ and $B\in\I$ we get $A\setminus B\in\A$.

Respectively, a function $\varphi:\mathsf P(X)\to\mathsf P(X)$ is called {\em $\I$-saturated} if $\varphi(A)=\varphi(B)$ for any subsets $A=_\I B$ of $X$.

\begin{proposition}\label{p4.5} A family $\A\subset\mathsf P(X)$ is $\I$-saturated if and only if its function representation $\Phi_\A:\mathsf P(X)\to\mathsf P(X)$ is $\I$-saturated.
\end{proposition}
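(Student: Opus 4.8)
The plan is to pass back and forth through the inversion formula of Theorem~\ref{t4.1}, which recovers a family from its function representation via the equivalence $A\in\A\Leftrightarrow e\in\Phi_\A(A)$, and to exploit the left-invariance of the ideal $\I$ in order to transport the relation $=_\I$ by elements of the group. So the first thing I would isolate is the single auxiliary observation that drives both implications: if $A=_\I B$, then $x^{-1}A=_\I x^{-1}B$ for every $x\in X$. This follows because $x^{-1}A\setminus x^{-1}B=x^{-1}(A\setminus B)$ lies in $\I$ (as $A\setminus B\in\I$ and $\I$ is left-invariant), and the symmetric computation handles $x^{-1}B\setminus x^{-1}A$. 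Thus left translation by any group element preserves the equivalence $=_\I$.

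For the forward implication, I would assume $\A$ is $\I$-saturated and take subsets $A=_\I B$. To prove $\Phi_\A(A)=\Phi_\A(B)$, pick $x\in\Phi_\A(A)$, so that $x^{-1}A\in\A$. The auxiliary observation gives $x^{-1}A=_\I x^{-1}B$, and the $\I$-saturatedness of $\A$ then yields $x^{-1}B\in\A$, i.e. $x\in\Phi_\A(B)$. Hence $\Phi_\A(A)\subset\Phi_\A(B)$, and by symmetry equality, so $\Phi_\A$ is $\I$-saturated as a function.

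For the converse, I would assume $\Phi_\A$ is $\I$-saturated and take $A\in\A$ together with an arbitrary $B=_\I A$. By Theorem~\ref{t4.1} the membership $A\in\A$ is equivalent to $e\in\Phi_\A(A)$. Since $\Phi_\A$ is $\I$-saturated we have $\Phi_\A(A)=\Phi_\A(B)$, whence $e\in\Phi_\A(B)$ and therefore $B\in\A$. As this holds for every $B$ in the $\I$-saturation $\bar{\bar A}^\I$, the family $\A$ is $\I$-saturated.

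The argument is essentially routine, so I do not expect a genuine obstacle; the only delicate point is the correct invocation of the left-invariance of $\I$ in the auxiliary observation. This is the single place where the hypothesis on $\I$ enters, and it is exactly what makes translation by $x^{-1}$ compatible with $=_\I$; once that compatibility is in hand, the inversion formula of Theorem~\ref{t4.1} takes care of all the remaining bookkeeping.
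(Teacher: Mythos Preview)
Your proof is correct and follows essentially the same approach as the paper: both directions use the left-invariance of $\I$ to transport $=_\I$ under left shifts, then appeal to the equivalence $A\in\A\Leftrightarrow e\in\Phi_\A(A)$ from Theorem~\ref{t4.1}. The only cosmetic difference is that in the forward direction you prove one inclusion and invoke symmetry, whereas the paper writes the two set descriptions side by side and observes they coincide.
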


\begin{proof} Assume that $\A$ is $\I$-saturated and take two subsets $A=_\I B$ of $X$. We need to show that $\Phi_\A(A)=\Phi_\A(B)$. The left-invariance of the ideal $\I$ implies that for every $x\in X$ we get $xA=_\I xB$ and hence $(xA\in\A)\;\Leftrightarrow\;(xB\in \A)$.
Then $$\Phi_\A(A)=\{x\in X:x^{-1}A\in\A\}=\{x\in X:x^{-1}B\in\A\}=\Phi_\A(B).$$

Now assume conversely that the function representation $\Phi_\A$ is $\I$-saturated and take any subsets $A=_\I B$ with $A\in\A$. Then $e\in\Phi_\A(A)=\Phi_\A(B)$, which implies that $B\in\A$.
\end{proof}

For an left-invariant ideal $\I$ on a group $X$ let $\lambda^\I(X)\subset\lambda(X)$ be the subspace of $\I$-saturated maximal linked systems on $X$ and $\Enl^\I(\mathsf P(X))\subset\Enl(\mathsf P(X))$ be the subspace consisting of $\I$-saturated monotone symmetric endomorphisms of the $X$-act $\mathsf P(X)$. It is clear that for any functions $f,g:\mathsf P(X)\to\mathsf P(X)$ the composition $f\circ g$ is $\I$-saturated provided so is the function $g$.
This trivial remark (and Lemma~\ref{maxfree} below) imply:

\begin{proposition} For any ideal $\I$ the function representation $\Phi:\lambda^\I(X)\to\Enl^\I(\mathsf P(X))$ is a topological isomorphism between the closed left ideals $\lambda^\I(X)$ and $\Enl^\I(\mathsf P(X))$ 
of the semigroups $\lambda(X)$ and $\Enl(\mathsf P(X))$, respectively.
\end{proposition}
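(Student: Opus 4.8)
The plan is to deduce everything from the global topological isomorphism $\Phi:\mathsf P^2(X)\to\End(\mathsf P(X))$ of Theorem~\ref{t4.1} by restricting it to the relevant subspaces. First I would verify that $\Phi$ maps $\lambda^\I(X)$ bijectively onto $\Enl^\I(\mathsf P(X))$. By Corollary~\ref{c4.4}, $\Phi$ already carries $\lambda(X)$ onto $\Enl(\mathsf P(X))$; and by Proposition~\ref{p4.5} a family $\A\in\lambda(X)$ is $\I$-saturated if and only if its representation $\Phi_\A$ is $\I$-saturated. Combining these two facts gives
$$\Phi(\lambda^\I(X))=\{\Phi_\A:\A\in\lambda(X),\ \A\text{ is }\I\text{-saturated}\}=\Enl^\I(\mathsf P(X)),$$
so the restriction $\Phi|\lambda^\I(X)$ is a bijection onto $\Enl^\I(\mathsf P(X))$. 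Being the restriction of the homeomorphism $\Phi$ to a subspace, it is automatically a homeomorphism onto its image.

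Next I would check that $\Enl^\I(\mathsf P(X))$ is a closed left ideal of $\Enl(\mathsf P(X))$ and then transport these properties back through $\Phi$. For the left-ideal property, take any $f\in\Enl(\mathsf P(X))$ and $g\in\Enl^\I(\mathsf P(X))$: the composition $f\circ g$ is equivariant, monotone and symmetric because $\Enl(\mathsf P(X))$ is a subsemigroup, and it is $\I$-saturated by the trivial remark that $(f\circ g)(A)=(f\circ g)(B)$ whenever $g(A)=g(B)$, i.e. whenever $A=_\I B$; hence $f\circ g\in\Enl^\I(\mathsf P(X))$. For closedness I would work in the product topology of $\mathsf P(X)^{\mathsf P(X)}$: for each fixed pair $A=_\I B$ the condition $\varphi(A)=\varphi(B)$ is the intersection over $x\in X$ of the clopen conditions $(x\in\varphi(A))\Leftrightarrow(x\in\varphi(B))$, hence closed, and $\Enl^\I(\mathsf P(X))$ is the intersection of $\Enl(\mathsf P(X))$ with all these closed sets as $(A,B)$ ranges over $\I$-equivalent pairs. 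Since $\Phi$ is a topological semigroup isomorphism and $\lambda(X)$ is closed in the compact space $\mathsf P^2(X)$, the corresponding statements for $\lambda^\I(X)\subset\lambda(X)$ follow at once.

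Finally, for the assembling step: a left ideal is in particular a subsemigroup, so $\Phi$ restricts to a semigroup isomorphism between $\lambda^\I(X)$ and $\Enl^\I(\mathsf P(X))$, and by the first paragraph this restriction is also a homeomorphism; hence it is a topological isomorphism. The one ingredient that is not purely formal is the \emph{non-emptiness} of these families, since a left ideal is by definition non-empty and one must exhibit at least one $\I$-saturated maximal linked system. I expect this existence to be the only genuine obstacle, and it is exactly what the forthcoming Lemma~\ref{maxfree} supplies; with that input the proof reduces to the bookkeeping above.
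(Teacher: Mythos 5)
Your proposal is correct and follows essentially the same route as the paper: the paper derives this proposition directly from Theorem~\ref{t4.1}, Proposition~\ref{p4.5}, the "trivial remark" that $f\circ g$ is $\I$-saturated whenever $g$ is (giving the left-ideal property), and Lemma~\ref{maxfree} for non-emptiness — exactly the ingredients you assemble. Your added verification of closedness in the product topology is a detail the paper leaves implicit, but it is consistent with its argument.
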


The following lemma (combined with Zorn's Lemma) implies that the sets $\lambda^\I(X)$ and $\Enl^\I(\mathsf P(X))$ are not empty.

\begin{lemma}\label{maxfree} Each maximal $\I$-saturated linked system $\LL$ on $X$ is maximal linked.
\end{lemma}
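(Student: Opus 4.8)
The plan is to reduce the statement to the standard combinatorial characterisation of maximal linked systems and then to exploit $\I$-saturation to manufacture two disjoint members of $\LL$. Recall that a linked system $\LL$ is maximal linked precisely when it is \emph{complementary}, i.e. for every $A\subset X$ at least one of $A$, $X\setminus A$ belongs to $\LL$; indeed, if this holds and $\LL'\supsetneq\LL$ were linked, any $C\in\LL'\setminus\LL$ would force $X\setminus C\in\LL\subset\LL'$, producing two disjoint members of $\LL'$. So it suffices to prove that the maximal $\I$-saturated linked system $\LL$ is complementary, and I would argue this by contradiction, assuming $A\notin\LL$ and $X\setminus A\notin\LL$ for some $A\subset X$. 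Two preliminary observations are used throughout: since $\LL$ is linked, $\emptyset\notin\LL$; and since $\LL$ is $\I$-saturated, no member of $\LL$ can lie in $\I$ (otherwise such a member would be $=_\I\emptyset$ and saturation would drag $\emptyset$ into $\LL$).

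First I would form the two $\I$-saturated enlargements $\LL_1=\LL\cup\bar{\bar A}^\I$ and $\LL_2=\LL\cup\bar{\bar{X\setminus A}}^\I$. Each is $\I$-saturated (a union of $\I$-saturated families) and strictly larger than $\LL$, because $A\in\bar{\bar A}^\I\setminus\LL$ and $X\setminus A\in\bar{\bar{X\setminus A}}^\I\setminus\LL$; hence, by maximality of $\LL$ among $\I$-saturated linked systems, neither $\LL_1$ nor $\LL_2$ is linked. Before extracting witnesses I would dispose of the degenerate case $A\in\I$ (and symmetrically $X\setminus A\in\I$): here $X\setminus A=_\I X$, and one checks that $\LL\cup\bar{\bar X}^\I$ is linked, since any member $=_\I X$ is cofinite modulo $\I$ and therefore meets every $C\in\LL$ (as $C\notin\I$); by maximality this gives $\bar{\bar X}^\I\subset\LL$, forcing $X\setminus A\in\LL$, a contradiction. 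Thus I may assume $A\notin\I$ and $X\setminus A\notin\I$, and then two members of $\bar{\bar A}^\I$ (resp. of $\bar{\bar{X\setminus A}}^\I$) can never be disjoint, because two $=_\I$-equivalent disjoint sets would both lie in $\I$, whence $A\in\I$.

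With this reduction the non-linkedness of $\LL_1$ must come from a \emph{cross} pair: some $C_1\in\LL$ and some $B_1=_\I A$ with $C_1\cap B_1=\emptyset$, whence $C_1\cap A=_\I C_1\cap B_1=\emptyset$, i.e. $C_1\cap A\in\I$. Symmetrically $\LL_2$ yields $C_2\in\LL$ with $C_2\setminus A\in\I$. Now comes the decisive use of saturation: since $C_1=_\I C_1\setminus A$ and $C_2=_\I C_2\cap A$, both $C_1\setminus A$ and $C_2\cap A$ lie in $\LL$; but $C_1\setminus A\subset X\setminus A$ and $C_2\cap A\subset A$ are disjoint, contradicting the linkedness of $\LL$. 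I expect the main obstacle to be precisely the bookkeeping that guarantees the failure of linkedness is a cross pair rather than an internal clash inside a single saturation class — this is exactly where the hypotheses $A\notin\I$ and $X\setminus A\notin\I$ (secured by the degenerate-case analysis) are essential, and where $\I$-saturation is invoked twice: once to exclude internal disjoint pairs and once to clean the witnesses $C_1,C_2$ into genuinely disjoint members of $\LL$.
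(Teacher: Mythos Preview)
Your proof is correct. Both your argument and the paper's hinge on the same mechanism: enlarge $\LL$ by the $\I$-saturation class $\bar{\bar A}^\I$, use maximality to force this enlargement to be non-linked, and then use $\I$-saturation to replace a member $C$ of $\LL$ by $C\setminus A$ (or $C\cap A$) without leaving $\LL$. The difference is in the framing. The paper uses the characterisation ``$\LL$ is maximal linked iff every set meeting all members of $\LL$ already lies in $\LL$'': starting from such an $A$ it shows $\bar{\bar A}^\I\cup\LL$ is linked directly, and the only case split needed is to observe $A\notin\I$. You instead use the complementary characterisation (for every $A$, one of $A$, $X\setminus A$ lies in $\LL$), which forces you to run the enlargement argument on both $A$ and $X\setminus A$, handle the degenerate cases $A\in\I$ and $X\setminus A\in\I$ separately, and then splice the two witnesses $C_1,C_2$ into a single disjoint pair. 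The paper's route is shorter and avoids the symmetry bookkeeping, but your version makes the role of each hypothesis very explicit.
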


\begin{proof} We need to show that each set $A\subset X$ that meets all sets $L\in\LL$ belongs to $\LL$. We claim that $A\notin\I$. Otherwise, taking any subset $L\in\LL$, we get $L\setminus A=_\I L$ and hence $L\setminus A$ belongs to $\LL$, which is not possible as $L\setminus A$ misses the set $A$. Since $A\notin\I$, the $\I$-saturated family $\bar{\bar A}^\I$ is linked.

We claim that the $\I$-saturated family $\bar{\bar A}^\I\cup\LL$ is linked. Assuming the converse, we would find two disjoint sets $A'\in\bar{\bar A}^\I$ and $L\in\LL$. Then $L\cap A=_\I L\cap A'=\emptyset$ and hence the set $L\setminus A=_\I L$ belongs to $\LL$, which is not possible as  this set misses $A$.

Now we see that the family $\bar{\bar A}^\I\cup\LL$, being $\I$-saturated and linked, coincides with the maximal $\I$-saturated linked system $\LL$. Then $A\in \bar{\bar A}^\I\cup\LL=\LL$.
\end{proof}

Given a subfamily $\mathsf F\subset\mathsf P(X)$ consider the restriction operator $$R_{\mathsf F}:\mathsf P(X)^{\mathsf P(X)}\to\mathsf P(X)^{\mathsf F},\;\;R_{\mathsf F}:f\mapsto f|\mathsf F,$$ and let $\Enl(\mathsf F)=R_{\mathsf F}(\Enl(\mathsf P(X)))$ and $\Enl^\I(\mathsf F)=R_{\mathsf F}(\Enl^\I(\mathsf P(X))$ for a left-invariant ideal $\I$ on $X$. The space $\Enl(\mathsf F)$ is compact and Hausdorff as a continuous image of a compact Hausdorff space.

A subfamily $\mathsf F\subset\mathsf P(X)$ is called {\em $\lambda$-invariant} if $\Phi_\LL(\mathsf F)\subset\mathsf F$ for each maximal linked system $\LL\in\lambda(X)$. By Corollary~\ref{c4.4}, $\mathsf F$ is $\lambda$-invariant if and only if $f(\mathsf F)\subset\mathsf F$ for each equivariant monotone symmetric function $f:\mathsf P(X)\to \mathsf P(X)$.   

If a family $\mathsf F\subset\mathsf P(X)$ is $\lambda$-invariant, then the space $\Enl(\mathsf F)\subset\mathsf F^{\mathsf F}$ is a compact right-topological semigroup with respect to the operation of composition of functions and the restriction operator $R_{\mathsf F}:\Enl(\mathsf P(X))\to\Enl(\mathsf F)$ is a surjective continuous semigroup homomorphism. In this case the composition
$$\Phi_{\mathsf F}=R_{\mathsf F}\circ\Phi:\lambda(X)\to\Enl(\mathsf F)$$also is a surjective continuous semigroup homomorphism and $\Enl^\I(\mathsf F)=\Phi_{\mathsf F}(\lambda^\I(X))$ is a left ideal in the semigroup $\Enl(\mathsf F)$.

In the following proposition we characterize functions that belong to the space $\Enl^\I(\mathsf F)$ for an $\I$-saturated left-invariant symmetric subfamily $\mathsf F\subset\mathsf P(X)$.
A family $\mathsf F\subset\mathsf P(X)$ is called
 {\em symmetric} if for each set $A\in \mathsf F$ the complement $X\setminus A\in\mathsf F$.

\begin{theorem}\label{t4.8} For a left-invariant ideal $\I$ on a group $X$ and a $\I$-saturated symmetric left-invariant family $\mathsf F\subset\mathsf P(X)$, a  function $\varphi:\mathsf F\to\mathsf P(X)$ belongs to $\Enl^\I(\mathsf F)$ if and only if $\varphi$ is equivariant, symmetric, monotone, and $\I$-saturated.
\end{theorem}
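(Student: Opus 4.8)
The plan is to prove the two implications separately, with essentially all the content in the sufficiency direction, which amounts to an extension problem. Necessity is routine: if $\varphi=\psi|\mathsf F$ for some $\psi\in\Enl^\I(\mathsf P(X))$, then each of the four properties of $\psi$ restricts to $\mathsf F$, and the closure hypotheses on $\mathsf F$ are exactly what makes the restricted conditions meaningful. Left-invariance of $\mathsf F$ ensures $xA\in\mathsf F$ so that equivariance passes to $\varphi$; symmetry of $\mathsf F$ ensures $X\setminus A\in\mathsf F$ so that symmetry passes down; $\I$-saturation of $\mathsf F$ ensures $A=_\I B$ keeps both sets in $\mathsf F$; and monotonicity needs no closure at all. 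So I would dispose of this direction in a line.

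For sufficiency, suppose $\varphi:\mathsf F\to\mathsf P(X)$ is equivariant, symmetric, monotone, and $\I$-saturated. The goal is to produce a maximal linked $\I$-saturated system $\LL$ on $X$ with $\Phi_\LL|\mathsf F=\varphi$; then $\Phi_\LL\in\Enl^\I(\mathsf P(X))$ by Propositions~\ref{p4.3} and~\ref{p4.5}, whence $\varphi=R_{\mathsf F}(\Phi_\LL)\in\Enl^\I(\mathsf F)$. The organizing observation is that the requirement $\Phi_\LL|\mathsf F=\varphi$ is equivalent, through the equivariance of $\varphi$, to the single condition $\LL\cap\mathsf F=\A_0$, where $\A_0=\{B\in\mathsf F:e\in\varphi(B)\}$. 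Indeed, for $A\in\mathsf F$ and $x\in X$, left-invariance gives $x^{-1}A\in\mathsf F$, so $x\in\Phi_\LL(A)\iff x^{-1}A\in\LL\cap\mathsf F\iff e\in\varphi(x^{-1}A)=x^{-1}\varphi(A)\iff x\in\varphi(A)$. This reduces the whole direction to constructing an extension whose trace on $\mathsf F$ is precisely $\A_0$.

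Next I would verify that $\A_0$ is itself $\I$-saturated and linked, so that it is a legitimate seed for the extension. $\I$-saturation is immediate from the $\I$-saturation of $\varphi$ together with that of $\mathsf F$. For linkedness, if $A,B\in\A_0$ were disjoint then $A\subset X\setminus B\in\mathsf F$, so monotonicity and symmetry force $e\in\varphi(A)\subset\varphi(X\setminus B)=X\setminus\varphi(B)$, contradicting $e\in\varphi(B)$. With $\A_0$ an $\I$-saturated linked system in hand, Zorn's Lemma yields a maximal $\I$-saturated linked system $\LL\supseteq\A_0$ (unions of chains of $\I$-saturated linked systems remain $\I$-saturated and linked), and Lemma~\ref{maxfree} upgrades $\LL$ to a genuine maximal linked system, so $\Phi_\LL$ is equivariant, monotone and symmetric by Proposition~\ref{p4.3}, and $\I$-saturated by Proposition~\ref{p4.5}.

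The delicate step — the one I would watch most carefully — is confirming that the extension does not corrupt the trace, i.e. that $\LL\cap\mathsf F=\A_0$ and not merely $\LL\cap\mathsf F\supseteq\A_0$; a priori the maximal extension could swallow some $B\in\mathsf F\setminus\A_0$. Symmetry is exactly what rules this out: for $B\in\mathsf F\setminus\A_0$ one has $e\notin\varphi(B)$, hence $e\in X\setminus\varphi(B)=\varphi(X\setminus B)$, so $X\setminus B\in\A_0\subset\LL$; since $B$ and $X\setminus B$ are disjoint and $\LL$ is linked, $B\notin\LL$. Thus $\LL\cap\mathsf F=\A_0$, and the displayed chain of equivalences above gives $\Phi_\LL|\mathsf F=\varphi$, completing the argument. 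The whole proof therefore hinges on the interplay of symmetry (to secure linkedness of $\A_0$ and exactness of the trace) with Lemma~\ref{maxfree} (to convert a maximal $\I$-saturated linked system into an honest maximal linked system).
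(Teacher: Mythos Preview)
Your proof is correct and follows essentially the same approach as the paper: your seed family $\A_0=\{B\in\mathsf F:e\in\varphi(B)\}$ coincides (via equivariance and left-invariance of $\mathsf F$) with the paper's $\LL_\varphi=\{x^{-1}A:A\in\mathsf F,\ x\in\varphi(A)\}$, and both arguments extend this to a maximal $\I$-saturated linked system, invoke Lemma~\ref{maxfree}, and then use symmetry to verify the trace on $\mathsf F$ is exact. Your presentation is slightly cleaner in that you observe upfront that $\A_0$ is already $\I$-saturated (using the $\I$-saturation of $\mathsf F$), whereas the paper passes through the $\I$-saturation $\bar{\bar\LL}^\I_\varphi$ explicitly.
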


\begin{proof} The ``only if'' part follows immediately from Corollary~\ref{c4.4}. To prove the ``if'' part, fix any equivariant monotone symmetric $\I$-saturated function $\varphi:\mathsf F\to\mathsf P(X)$ and consider the 
the families $$\LL_\varphi=\{x^{-1}A:A\in\mathsf F,\;x\in \varphi(A)\}\;\;\mbox{and}\;\; 
\bar{\bar\LL}^\I_\varphi=\bigcup_{A\in\LL}\bar{\bar A}^\I.$$

 We claim that the family $\bar{\bar \LL}_\varphi^\I$ is linked. Assuming the
converse, we could find two sets $A,B\in\mathsf F$ and two points $x\in
\varphi(A)$ and $y\in \varphi(B)$ such that $x^{-1}A\cap y^{-1}B\in\I$. Then
$yx^{-1}A\subset_{\I} X\setminus B$ and hence $yx^{-1}A\subset (X\setminus B)\cup C$ for some set $C\in\I$. Since $\mathsf F$ is symmetric and $\I$-saturated, the set $(X\setminus B)\cup C=_\I X\setminus B$ belongs to the family $\mathsf F$. 
 Applying to the chain of the inclusions $$yx^{-1}A\subset (X\setminus B)\cup C=_\I X\setminus B$$ the equivariant monotone symmetric $\I$-saturated function $\varphi$, we get the chain
$$yx^{-1}\varphi(A)\subset\varphi((X\setminus B)\cup C)=\varphi(X\setminus B)=X\setminus \varphi(B).$$  Then $x^{-1}\varphi(A)\subset X\setminus y^{-1}\varphi(B)$, which is
not possible because the neutral element $e$ of the group $X$
belongs to $x^{-1}\varphi(A)\cap y^{-1}\varphi(B)$.

Enlarge the $\I$-saturated linked family $\bar{\bar \LL}_\varphi^\I$ to a maximal $\I$-saturated
linked family $\LL$, which is maximal linked by Lemma \ref{maxfree} and
thus $\LL\in\lambda^{\I}(X)$. We claim that $\Phi_\LL|\mathsf
F=\varphi$. Indeed, take any set $A\in\mathsf F$ and observe that
$$\varphi(A)\subset\{x\in X:x^{-1}A\in\LL_\varphi\}\subset \{x\in X:x^{-1}A\in\LL\}=\Phi_\LL(A).$$
To prove the reverse inclusion, observe that for any $x\in X\setminus \varphi(A)=\varphi(X\setminus A)$ we get $x^{-1}(X\setminus A)=X\setminus x^{-1}A\in\LL_\varphi\subset\LL$. Since $\LL$ is linked, $x^{-1}A\notin\LL$ and hence $x\notin\Phi_\LL(A)$.
\end{proof}

\begin{corollary} For any symmetric left-invariant family $\mathsf F\subset\mathsf P(X)$, a  function $\varphi:\mathsf F\to\mathsf P(X)$ belongs to $\Enl(\mathsf F)$ if and only if $\varphi$ is equivariant, symmetric, and monotone.
\end{corollary}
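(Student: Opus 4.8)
The plan is to obtain this statement as the degenerate case of Theorem~\ref{t4.8} in which $\I$ is the trivial ideal $\{\emptyset\}$. First I would dispose of the ``only if'' direction, which is immediate: if $\varphi\in\Enl(\mathsf F)=R_{\mathsf F}(\Enl(\mathsf P(X)))$, then $\varphi=\psi|\mathsf F$ for some $\psi\in\Enl(\mathsf P(X))$, and by Corollary~\ref{c4.4} the function $\psi$ is equivariant, monotone and symmetric. Since $\mathsf F$ is left-invariant and symmetric, the sets $xA$ and $X\setminus A$ lie in $\mathsf F$ whenever $A\in\mathsf F$, so each of the three properties of $\psi$ is inherited verbatim by its restriction $\varphi$.

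For the ``if'' direction I would check that the trivial ideal $\I=\{\emptyset\}$ satisfies every hypothesis of Theorem~\ref{t4.8}. It is plainly a left-invariant ideal on the group $X$ (note $X\ne\emptyset$, $\emptyset\cup\emptyset=\emptyset$, every subset of $\emptyset$ is empty, and $x\emptyset=\emptyset$). For this ideal the relation $A\subset_\I B$ reduces to $A\setminus B=\emptyset$, i.e.\ $A\subset B$, and $A=_\I B$ reduces to $A=B$. Consequently the $\I$-saturation of any set $A$ is the singleton $\{A\}$, so \emph{every} family and \emph{every} function is automatically $\I$-saturated. In particular the given family $\mathsf F$, already assumed symmetric and left-invariant, is also $\I$-saturated, and thus fulfils all the requirements placed on $\mathsf F$ in Theorem~\ref{t4.8}.

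With this choice the $\I$-decorated objects collapse onto their plain counterparts: the vacuity of $\I$-saturation gives $\Enl^\I(\mathsf P(X))=\Enl(\mathsf P(X))$, whence $\Enl^\I(\mathsf F)=R_{\mathsf F}(\Enl^\I(\mathsf P(X)))=R_{\mathsf F}(\Enl(\mathsf P(X)))=\Enl(\mathsf F)$. Theorem~\ref{t4.8} then asserts that $\varphi$ belongs to $\Enl^\I(\mathsf F)=\Enl(\mathsf F)$ precisely when $\varphi$ is equivariant, symmetric, monotone and $\I$-saturated; dropping the now-automatic last clause yields exactly the claimed characterization. There is essentially no obstacle here: the only points requiring (routine) verification are that $\{\emptyset\}$ really is a left-invariant ideal and that the $\I$-conditions degenerate as described, both of which are immediate once the definitions of $\subset_\I$ and $=_\I$ are specialized to the trivial ideal.
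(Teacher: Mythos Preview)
Your proposal is correct and matches the paper's approach: the corollary is stated immediately after Theorem~\ref{t4.8} with no separate proof, precisely because it is the special case $\I=\{\emptyset\}$, in which the $\I$-saturation conditions become vacuous and $\Enl^\I(\mathsf F)=\Enl(\mathsf F)$. Your verification of these degeneracies is exactly what is needed.
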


\section{Twin and $\I$-twin subsets of groups}\label{s4}

In this section we start studying very interesting objects called twin sets.
For an abelian (more generally, twinic) group $X$ twin subsets of $X$ form a subfamily $\mathsf T\subset\mathsf P(X)$ for which the  function representation $\Phi_{\mathsf T}:\lambda(X)\to\Enl(\mathsf T)$ is injective on each minimal left ideal of the superextension $\lambda(X)$. The machinery related to twin sets will be developed in Sections~\ref{s4}--\ref{s15}, after which we shall return back to studying minimal (left) ideals of the semigroups $\lambda(X)$ and $\End(X)$.

For a subset $A$ of a group $X$ consider the following three subsets of $X$:
$$
\begin{aligned}
&\Fix(A)=\{x\in X:xA=A\},\\
&\Fix^-(A)=\{x\in X:xA=X\setminus A\}, \\
&\Fix^\pm(A)=\Fix(A)\cup\Fix^-(A).
\end{aligned}$$

\begin{definition}
A subset $A\subset X$ is defined to be
\begin{itemize}
\item {\em twin} if $xA=X\setminus A$ for some $x\in X$,
\item {\em pretwin} if $xA\subset X\setminus A\subset yA$ for some points $x,y\in X$.
\end{itemize}
The families of twin and pretwin subsets of $X$ will be denoted by $\mathsf T$ and $\pT$, respectively.
\end{definition}
Observe that a set $A\subset X$ is twin if and only if $\Fix^-(A)$ is not empty.

The notion of a twin set has an obvious ``ideal'' version. 
For a left-invariant ideal $\I$ of subsets of a group $X$, and a subset $A\subset X$ consider the following subsets of $X$:
$$
\begin{aligned}
&\IFix(A)=\{x\in X:xA=_\I A\},\\
&\IFix^-(A)=\{x\in X:xA=_\I X\setminus A\},\\ &\IFix^\pm(A)=\IFix(A)\,\cup\,\IFix^-(A).
\end{aligned}$$ 

\begin{definition}
A subset $A\subset X$ is defined to be
\begin{itemize}
\item {\em $\I$-twin} if $xA=_\I X\setminus A$ for some $x\in X$,
\item {\em $\I$-pretwin} if $xA\subset_\I X\setminus A\subset_\I yA$ for some points $x,y\in X$.
\end{itemize}
The families of $\I$-twin and $\I$-pretwin subsets of $X$ will be denoted by $\mathsf T^\I$ and $\pT^\I$, respectively.
\end{definition}

It is clear that $\Tau^{\{\emptyset\}}=\Tau$ and $\pT^{\{\emptyset\}}=\pT$.

\begin{proposition}\label{p5.3} 
For each subset $A\subset X$ the set $\IFix^\pm(A)$ is a subgroup
in $X$. The set $A$ is $\I$-twin if and only if $\IFix(A)$ is a normal subgroup of index 2 in $\IFix^\pm(A)$.
\end{proposition}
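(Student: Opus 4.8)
The plan is to verify the group axioms for $\IFix^\pm(A)$ directly from the definitions, exploiting two elementary but crucial facts about the relation $=_\I$. Since $\I$ is left-invariant, the relation $=_\I$ is preserved under left translations (so $B=_\I C$ implies $xB=_\I xC$ for every $x\in X$); and since $A=_\I B$ means exactly that the symmetric difference of $A$ and $B$ lies in $\I$, the relation $=_\I$ is preserved under complementation (so $A=_\I B$ iff $X\setminus A=_\I X\setminus B$, using $A\setminus B=(X\setminus B)\setminus(X\setminus A)$). Combined with the literal identity $x(X\setminus A)=X\setminus xA$, valid because left multiplication by $x$ is a bijection of $X$, these facts let me compute how $\IFix(A)$ and $\IFix^-(A)$ multiply.

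First I would record the ``multiplication table'': for $x,y\in\IFix^\pm(A)$ the product $(xy)A=x(yA)$ is evaluated by substituting $yA=_\I A$ or $yA=_\I X\setminus A$ and then applying $x$. For instance, if $x,y\in\IFix^-(A)$ then $(xy)A=x(yA)=_\I x(X\setminus A)=X\setminus xA=_\I A$, so $xy\in\IFix(A)$. Carrying out the three remaining cases yields $\IFix(A)\cdot\IFix(A)\subseteq\IFix(A)$, $\IFix(A)\cdot\IFix^-(A)\subseteq\IFix^-(A)$, $\IFix^-(A)\cdot\IFix(A)\subseteq\IFix^-(A)$ and $\IFix^-(A)\cdot\IFix^-(A)\subseteq\IFix(A)$, i.e. a $\IZ_2$-grading with neutral part $\IFix(A)$; in particular $\IFix^\pm(A)$ is closed under multiplication and $\IFix(A)$ is a subgroup. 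Closure under inverses is equally direct: applying $x^{-1}$ to $xA=_\I A$ (resp. $xA=_\I X\setminus A$) and using the complementation property gives $x^{-1}\in\IFix(A)$ (resp. $x^{-1}\in\IFix^-(A)$). With $e\in\IFix(A)$ this proves $\IFix^\pm(A)$ is a subgroup of $X$.

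Next I would observe the disjointness $\IFix(A)\cap\IFix^-(A)=\emptyset$, which is the single place where the ideal axiom $X\notin\I$ enters: an element of the intersection would force $A=_\I X\setminus A$, whence $A\in\I$ and $X\setminus A\in\I$, so $X=A\cup(X\setminus A)\in\I$, a contradiction. Hence $\IFix^\pm(A)=\IFix(A)\sqcup\IFix^-(A)$. For the equivalence, if $A$ is $\I$-twin then $\IFix^-(A)\ne\emptyset$; fixing $x_0\in\IFix^-(A)$, the table gives $x_0\IFix(A)=\IFix^-(A)$ (the inclusion $\subseteq$ from $\IFix^-(A)\cdot\IFix(A)\subseteq\IFix^-(A)$, and $\supseteq$ because $x_0^{-1}y\in\IFix^-(A)\cdot\IFix^-(A)\subseteq\IFix(A)$ for $y\in\IFix^-(A)$). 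Thus $\IFix(A)$ has exactly the two cosets $\IFix(A)$ and $\IFix^-(A)$ in $\IFix^\pm(A)$, so it has index $2$ and is therefore automatically normal. Conversely, if $\IFix(A)$ has index $2$ in $\IFix^\pm(A)$, then $\IFix^\pm(A)\setminus\IFix(A)=\IFix^-(A)$ is nonempty, so $A$ is $\I$-twin.

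The computations are all routine; the only points requiring genuine care are the two preservation properties of $=_\I$ (under left translation and under complementation), on which every step of the multiplication table rests, and the lone use of $X\notin\I$ guaranteeing disjointness, which is exactly what makes the index equal to $2$ rather than $1$. I do not anticipate a serious obstacle beyond bookkeeping these $=_\I$-manipulations correctly.
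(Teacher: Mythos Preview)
Your proof is correct and follows essentially the same approach as the paper: both establish that $\IFix^\pm(A)$ is a subgroup by case analysis on the products of elements from $\IFix(A)$ and $\IFix^-(A)$, and both identify $\IFix^-(A)$ as a single coset of $\IFix(A)$ when it is nonempty. Your presentation via the $\IZ_2$-grading is a bit more systematic, and you are explicit about one point the paper glosses over, namely the disjointness $\IFix(A)\cap\IFix^-(A)=\emptyset$ (and its reliance on $X\notin\I$), which is what guarantees the index is exactly $2$ rather than $1$.
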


\begin{proof} If the set $A$ is not $\I$-twin, then $\IFix^-(A)=\emptyset$ and then $\IFix^\pm(A)=\IFix(A)=\{x\in X:xA=_\I A\}$ is a subgroup of $X$ by the transitivity and the left-invariance of the equivalence relation $=_\I$.

So, we assume that $A$ is $\I$-twin, which means that $\IFix^-(A)\ne\emptyset$.
 To show that $\IFix^\pm(A)$
is a subgroup in $X$, take any two points $x,y\in \IFix^\pm(A)$. We claim that $xy^{-1}\in\IFix^\pm(A)$.

This is clear if $x,y\in\IFix(A)\subset\IFix^\pm(A)$.
If $x\in\IFix(A)$ and $y\in\IFix^{-}(A)$, then $xA=_\I A$, $yA=_\I X\setminus A$ and thus $A=_\I X\setminus y^{-1}A$ which implies $y^{-1}A=_\I X\setminus A$. Then $xy^{-1}A=_\I x(X\setminus
A)= X\setminus xA=_\I X\setminus A$, which means that 
$xy^{-1}\in\IFix^{-}(A)\subset\Fix^\pm(A)$. 

If $x,y\in\IFix^{-}(A)$,
then $xA=_\I X\setminus A$, $y^{-1}A=_\I X\setminus A$. This implies that
$xy^{-1}A=_\I x(X\setminus A)=_\I X\setminus xA=_\I X\setminus (X\setminus
A)=_\I A$ and consequently $xy^{-1}\in\IFix(A)$.

To show that $\IFix(A)$ is a subgroup of index 2 in $\Fix^\pm(A)$, fix any element $g\in\IFix^-(A)$. Then for every $x\in \IFix(A)$ we get  $gxA=_\I gA=_\I X\setminus A$ and thus
$gx\in\IFix^{-}(A)$. This yields $\IFix^{-}(A)=g(\IFix (A))$, which means that the subgroup $\IFix(A)$ has index 2 in the group $\IFix^\pm(A)$.
\end{proof}

The following proposition shows that the family $\Tau^\I$ of $\I$-twin
sets of a group $X$ is left-invariant.

\begin{proposition}\label{p5.4}
For any $\I$-twin set $A\subset X$ and any $x\in X$ the set $xA$ is $\I$-twin and $\IFix^-(xA)=x\,(\IFix^-(A))\,x^{-1}$.
\end{proposition}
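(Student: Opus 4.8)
The plan is to prove both assertions at once by establishing the single conjugation identity $\IFix^-(xA)=x\,\IFix^-(A)\,x^{-1}$. Once this is known, the hypothesis that $A$ is $\I$-twin is exactly the statement that $\IFix^-(A)\neq\emptyset$, so the right-hand side is non-empty, whence $\IFix^-(xA)\neq\emptyset$ and $xA$ is $\I$-twin. Thus the harder-looking twin conclusion is a free corollary of the structural identity.

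Before the main computation I would isolate two elementary facts. First, the set-theoretic identity $X\setminus xA=x(X\setminus A)$, which holds because left multiplication by $x$ is a bijection of $X$. Second, the relation $=_\I$ is left-invariant in the strong bidirectional sense that $B=_\I C$ if and only if $yB=_\I yC$ for every $y\in X$: the forward implication is precisely the left-invariance of the ideal $\I$, and the reverse follows by multiplying through by $y^{-1}$.

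With these in hand the argument is a direct translation. An element $h\in X$ lies in $\IFix^-(xA)$ iff $h(xA)=_\I X\setminus xA=x(X\setminus A)$. Reparametrizing via $h=xkx^{-1}$ with $k=x^{-1}hx$ (a bijection of $X$), the left side becomes $xkx^{-1}\cdot xA=xkA$, so the defining condition reads $xkA=_\I x(X\setminus A)$. By the bidirectional left-invariance of $=_\I$ this is equivalent to $kA=_\I X\setminus A$, that is, to $k\in\IFix^-(A)$. Hence $h\in\IFix^-(xA)$ if and only if $h\in x\,\IFix^-(A)\,x^{-1}$, giving the claimed identity.

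I expect no serious obstacle: the whole proof is bookkeeping with conjugation. The only points requiring care are to use the equivalence in the second isolated fact in \emph{both} directions — once to strip the leading factor $x$ from $xkA=_\I x(X\setminus A)$, and once implicitly when rewriting $X\setminus xA$ as $x(X\setminus A)$ — and to record that $h\mapsto x^{-1}hx$ is a bijection of $X$, so that the correspondence exhausts $\IFix^-(xA)$ rather than landing in a proper subset. A less streamlined alternative would verify non-emptiness separately by checking $xgx^{-1}\in\IFix^-(xA)$ for a fixed $g\in\IFix^-(A)$, but folding everything into the conjugation identity is cleaner and yields both conclusions simultaneously.
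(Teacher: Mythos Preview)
Your proof is correct and follows essentially the same approach as the paper: both rely on the identity $X\setminus xA=x(X\setminus A)$ together with the left-invariance of $=_\I$ to relate $\IFix^-(xA)$ and $x\,\IFix^-(A)\,x^{-1}$ via conjugation. Your presentation is in fact slightly more complete, since you set up the argument as a biconditional and thereby obtain both inclusions at once, whereas the paper explicitly verifies only $x\,\IFix^-(A)\,x^{-1}\subset\IFix^-(xA)$ and then asserts equality (the reverse inclusion being implicit by applying the same computation with $x^{-1}$ to the set $xA$).
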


\begin{proof}
To see that $xA$ is an $\I$-twin set, take any $z\in\IFix^-(A)$ and observe that $$X\setminus xA= x(X\setminus A)=_\I xzA= xzx^{-1}xA,$$ which means that $xzx^{-1}\in\IFix^-(xA)$ for every $z\in \IFix^-(A)$. Hence $\IFix^-(xA)=x\,(\IFix^-(A))\,x^{-1}$.
\end{proof}

The preceding proposition implies that the family $\Tau^\I$ of $\I$-twin subsets of $X$ can be considered as an $X$-act with respect to the left action
$$\cdot:X\times\Tau^\I\to \Tau^\I,\quad \cdot: (x,A)\mapsto xA$$
 of the group $X$. By $[A]=\{xA:x\in X\}$ we denote the orbit of a $\I$-twin set $A\in\Tau^\I$ and by $[\Tau^\I]=\{[A]:A\in\Tau^\I\}$  the orbit space. 
If $\I=\{\emptyset\}$ is a trivial ideal, then we write $[\Tau]$ instead of $[\Tau^\I]$.

\section{Twinic groups}\label{s:tg} 

A left-invariant ideal $\I$ on a group $X$ is called {\em twinic} if for any subset $A\subset X$ and points $x,y\in X$ with $xA\subset_\I X\setminus A\subset_\I yA$ we get $A=_\I B$. In this case the families $\pT^\I$ and $\Tau^\I$ coincide.

A group $X$ is defined to be {\em twinic } if it admits a twinic ideal $\I$.
It is clear that in a twinic group $X$ the intersection $\TI$ of all twinic ideals is the smallest twinic ideal in $X$ called {\em the twinic ideal} of $X$. The structure of the twinic ideal $\TI$ can be described as follows.

Let $\TI_0=\{\emptyset\}$ and for each $n\in\w$ let $\TI_{n+1}$ be the ideal generated by sets of the form $yA\setminus xA$ where $xA\subset_{\TI_n} X\setminus A\subset_{\TI_n} yA$ for some $A\subset X$ and $x,y\in X$. By induction it is easy to check that $\TI_n\subset\TI_{n+1}\subset\I$ is an invariant ideal and hence $\TI=\bigcup_{n\in\w}\TI_n\subset\I$ is a well-defined (smallest) twinic ideal on $X$. This ideal $\TI$ is invariant. 

In fact, the above constructive definition of the additive invariant family $\TI$ is valid for each group $X$. However, $\TI$ is an ideal if and only if the group $X$ is twinic.

We shall say that a group $X$ has {\em trivial twinic ideal} if the trivial ideal $\I=\{\emptyset\}$ is twinic. This happen if and only if for any subset $A\subset X$ with $xA\subset X\setminus A\subset yA$ we get $xA=X\setminus A=yA$. In this case the twinic ideal $\TI$ of $X$ is trivial.

The class of twinic groups is sufficiently wide. In particular, it contains all amenable groups. Let us recall that a group $X$ is called {\em amenable} if it admits a Banach measure $\mu:\mathsf P(X)\to[0,1]$, which is a left-invariant probability measure defined on the family of all subsets of $\mathsf P(X)$. In this case the family$$\mathcal N_\mu=\{A\subset X:\mu(A)=0\}$$is an left-invariant ideal in $X$.
It is well-known that the class of amenable groups contains all abelian groups and is closed with respect to many operations over groups, see \cite{Pat}.

A subset $A$ of an amenable group $X$ is called {\em absolutely null\/} if $\mu(A)=0$ for each Banach measure $\mu$ on $X$. The family $\mathcal N$ of all absolutely null subsets is an ideal on $X$. This ideal coincides with the intersection $\mathcal N=\bigcap_{\mu}\mathcal N_\mu$ where $\mu$ runs over all Banach measures  of $X$.

\begin{theorem} Each amenable group $X$ is twinic. The twinic ideal $\TI$ of $X$ lies in the ideal $\mathcal N$ of absolute null subsets of $X$.
\end{theorem}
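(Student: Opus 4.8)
The plan is to prove both assertions at once by showing that for \emph{every} Banach measure $\mu$ on $X$ the associated left-invariant ideal $\mathcal N_\mu=\{A\subset X:\mu(A)=0\}$ is twinic. This immediately gives that $X$ is twinic (the first assertion), and, since $\TI$ is by definition the smallest twinic ideal, it forces $\TI\subset\mathcal N_\mu$ for each $\mu$; intersecting over all Banach measures then yields $\TI\subset\bigcap_\mu\mathcal N_\mu=\mathcal N$, which is the second assertion. Thus the whole theorem reduces to the twinicity of a single $\mathcal N_\mu$.

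So I would fix a Banach measure $\mu$ and suppose $A\subset X$ and $x,y\in X$ satisfy $xA\subset_{\mathcal N_\mu}X\setminus A\subset_{\mathcal N_\mu}yA$; the goal is $xA=_{\mathcal N_\mu}yA$. First I would unwind the relation $\subset_{\mathcal N_\mu}$ into statements about measures of Boolean combinations: the left inclusion says $\mu(xA\cap A)=0$ and the right one says $\mu(A\cup yA)=1$. Next I would invoke the left-invariance of $\mu$ to record that $\mu(xA)=\mu(yA)=\mu(A)$.

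The key step is a pinching argument. From $\mu(xA\cap A)=0$ together with $\mu(xA)=\mu(A)$ one gets $\mu(xA\cup A)=2\mu(A)\le 1$, while $\mu(A\cup yA)=1$ together with $\mu(yA)=\mu(A)$ gives $1\le 2\mu(A)$. Hence $\mu(A)=\frac12$, and then $\mu(A\cap yA)=\mu(A)+\mu(yA)-\mu(A\cup yA)=0$. Now $xA$ and $yA$ are each almost disjoint from $A$, so both are $\subset_{\mathcal N_\mu}$-contained in $X\setminus A$; since all three sets have measure $\frac12$, these inclusions are automatically equalities, i.e. $xA=_{\mathcal N_\mu}X\setminus A=_{\mathcal N_\mu}yA$. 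In particular $xA=_{\mathcal N_\mu}yA$, which is exactly the twinicity condition for $\mathcal N_\mu$.

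I expect the only real content to be this pinching computation; everything else is bookkeeping with the definitions of $\subset_\I$ and $=_\I$, together with the elementary fact that two sets of equal (finite) measure, one almost contained in the other, are $=_\I$-equal. The passage from ``each $\mathcal N_\mu$ is twinic'' to $\TI\subset\mathcal N$ is then purely formal, using only the minimality of the twinic ideal recorded just before the theorem.
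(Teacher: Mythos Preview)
Your proof is correct and follows essentially the same measure-theoretic pinching as the paper. The paper organizes it slightly differently: it checks directly that the ideal $\mathcal N$ is twinic by fixing $A$ with $xA\subset_{\mathcal N}X\setminus A\subset_{\mathcal N}yA$ and observing, for each Banach measure $\mu$, the chain $\mu(A)=\mu(xA)\le\mu(X\setminus A)\le\mu(yA)=\mu(A)$, whence $\mu(yA\setminus xA)=0$. Your variant---proving each $\mathcal N_\mu$ is twinic and then intersecting---is logically a touch cleaner (you use only the hypothesis for the single $\mu$ at hand), but the underlying computation is identical; the explicit derivation of $\mu(A)=\tfrac12$ and $\mu(A\cap yA)=0$ just unpacks the paper's inequality chain.
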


\begin{proof} It suffices to check that the ideal $\mathcal N$ is twinic. Take any set $A\subset X$ such that $xA\subset_{\N} X\setminus A\subset_{\mathcal N}yA$ for some $x,y\in X$. We need to show that $\mu(yA\setminus xA)=0$ for each Banach measure $\mu$ on $X$. It follows from $xA\subset_{\N} X\setminus A\subset_{\mathcal N}yA$ and the invariance of the Banach measure $\mu$ that
$$\mu(A)=\mu(xA)\le \mu(X\setminus A)\le\mu(yA)=\mu(A)$$and hence 
$\mu(yA\setminus xA)=\mu(A)-\mu(A)=0$.
\end{proof}

Next, we show that the class of twinic groups contains also some non-amenable groups. The simplest example is the Burnside group $B(n,m)$ for $n\ge 2$ and odd $m\ge 665$. We recall that the Burnside group $B(n,m)$ is generated by $n$ elements and one relation $x^m=1$. Adian \cite{Ad} proved that for $n\ge 2$ and any odd $m\ge 665$ the Burnside group $B(n,m)$ is not amenable, see also \cite{Osin} for a stronger version of this result. The following theorem implies that each Burnside group, being a torsion group, is twinic. Moreover, its twinic ideal $\TI$ is trivial!

\begin{theorem}\label{t6.2} A group $X$ has trivial twinic ideal $\TI=\{\emptyset\}$ if and only if the product $ab$ of any elements $a,b\in X$ belongs to the subsemigroup of $X$ generated by the set $b^\pm\cdot a^\pm$ where $a^\pm=\{a,a^{-1}\}$.
\end{theorem}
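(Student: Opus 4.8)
The plan is to prove the equivalence by treating the two implications separately, after recording the reformulation already noted in the text: $\TI=\{\emptyset\}$ holds if and only if every configuration $xA\subset X\setminus A\subset yA$ is \emph{non-strict}, i.e. forces $xA=X\setminus A=yA$. Writing $S(a,b)=\langle b^\pm a^\pm\rangle$ for the subsemigroup generated by the four elements $b^\varepsilon a^\delta$, the statement to prove becomes: $\TI=\{\emptyset\}$ $\iff$ $ab\in S(a,b)$ for all $a,b\in X$. Two elementary observations will be used repeatedly: the substitution $t\mapsto yt$ shows that $X\setminus A\subset yA$ is equivalent to $X\setminus A\subset y^{-1}A$, and dually $xA\subset X\setminus A$ is equivalent to $x^{-1}A\subset X\setminus A$; also left translation commutes with complementation, $y(X\setminus A)=X\setminus yA$.

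For the implication $(\Leftarrow)$ I would isolate the following lemma: in any configuration $xA\subset X\setminus A\subset yA$, each of the four elements $y^\varepsilon x^\delta$ ($\varepsilon,\delta\in\{1,-1\}$) maps $A$ into itself. This follows from the chain $y^\varepsilon x^\delta A\subset y^\varepsilon(X\setminus A)=X\setminus y^\varepsilon A\subset A$, where the first inclusion uses $x^\delta A\subset X\setminus A$ and the last uses $X\setminus A\subset y^\varepsilon A$. Consequently the whole subsemigroup $S(x,y)$ stabilises $A$. On the other hand $A\subset x(X\setminus A)\subset xyA$. Hence, if $xy\in S(x,y)$ then $xyA\subset A$ and $A\subset xyA$ give $A=xyA$; applying $x^{-1}$ yields $yA=x^{-1}A\subset X\setminus A$, which together with $X\setminus A\subset yA$ forces $yA=X\setminus A$, and then $A=xyA=x(X\setminus A)=X\setminus xA$ forces $xA=X\setminus A$. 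Thus the configuration is non-strict. Now assuming $ab\in S(a,b)$ for all $a,b$, I apply this to an arbitrary configuration with $a=x$, $b=y$: the hypothesis reads $xy\in S(x,y)$, so the configuration collapses, and $\TI=\{\emptyset\}$.

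For the implication $(\Rightarrow)$ I argue contrapositively: given $a,b$ with $ab\notin S(a,b)$, I would construct a strict configuration, which witnesses $\TI\ne\{\emptyset\}$. Since left translations by $a$ and $b$ preserve every right coset of $H=\langle a,b\rangle$, the two conditions $aA\cap A=\emptyset$ and $X\setminus A\subset bA$ decouple across the cosets; it therefore suffices to build a strict $A\subset H$ and then spread it to the remaining cosets by right translation, which commutes with the left translations and so preserves both inclusions. The guiding example is the free group, where $A=\{e\}\cup\{w:w\text{ begins with }b^{\pm1}\}$ works: $aA$ consists of reduced words beginning with $a$ and so is disjoint from $A$; the cancellation $b\,b^{-1}v=v$ gives $H\setminus A\subset bA$; and $aA\subsetneq X\setminus A$ (for instance $a^2\notin aA$) makes the configuration strict. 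For general $H$ the plan is to imitate this by extending the partial colouring forced by the rules ``$u\in A\Rightarrow a^{\pm}u\notin A$'' and ``$u\notin A\Rightarrow b^{\pm}u\in A$'' to a consistent two-colouring of $H$ adapted to the $S(a,b)$-orbit structure.

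I expect the construction to be the main obstacle. The lemma of the second paragraph shows that any pretwin $A$ is necessarily $S(a,b)$-stable and satisfies $A\subset abA$, and that $ab\in S(a,b)$ forces collapse; so strictness requires $ab\notin S(a,b)$, pinpointing it as the correct algebraic invariant. Converting this necessary condition into an actual set requires two things at once: that the forcing above be consistent (so that some valid $A$ exists) and that at least one consistent completion be strict rather than collapsing to a twin set. The delicate point is to rule out, using $ab\notin S(a,b)$, the possibility that the only admissible completions are the collapsed ones—this is where a careful choice of seed element together with a Zorn-type maximal extension along $S(a,b)$-cosets is required, and where the detailed combinatorics of which one-sided relations $a^{\pm},b^{\pm}\in S(a,b)$ can hold must be controlled.
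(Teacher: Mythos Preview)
Your $(\Leftarrow)$ direction is correct and is essentially the paper's argument, just run forward rather than by contraposition: you show that all four generators $y^{\varepsilon}x^{\delta}$ lie in $\Fix_{\subset}(A)=\{g:gA\subset A\}$, that $A\subset xyA$, and that $xy\in S(x,y)$ then forces $xyA=A$ and hence collapse. This matches the paper exactly.

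Your $(\Rightarrow)$ direction has the right architecture (reduce to $H=\langle a,b\rangle$, then a Zorn argument on partial two-colourings) but, as you yourself flag, the construction is not carried out, and the missing piece is genuinely nontrivial. The paper's proof hinges on a preliminary lemma you do not state: if $S$ is the subsemigroup generated by $\{1\}\cup b^{\pm}a^{\pm}$ and $ab\notin S$, then
\[
S\cap a^{\pm}S=\emptyset\quad\text{and}\quad S\cap Sb^{\pm}=\emptyset.
\]
This is proved by a short cascade (e.g.\ $as\in S\Rightarrow b\in S\Rightarrow a^{\pm}\in S\Rightarrow ab\in S$), and it is precisely what guarantees consistency of your forcing rules. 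With it in hand, the paper runs Zorn not on single sets but on disjoint pairs $(A,B)$ satisfying $a^{\pm}A\subset B$, $b^{\pm}B\subset A$, together with the extra invariance $S^{-1}B\subset B$, seeded by the explicit pair $(A_0,B_0)=(S\cup Sb^{\pm}ab,\;S^{-1}a^{\pm}\cup S^{-1}ab)$ with $1\in A_0$ and $ab\in B_0$. The disjointness lemma is exactly what is needed to verify $A_0\cap B_0=\emptyset$, and the extension step for $x\notin A\cup B$ adjoins the whole block $Sx$ to $A$ and $a^{\pm}Sx$ to $B$. Strictness of the resulting pretwin set then comes for free from $1\in A$ and $ab\in B$, since $b\in bA\setminus a^{-1}A$. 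Your closing remark about ``which one-sided relations $a^{\pm},b^{\pm}\in S(a,b)$ can hold'' is pointing at the right difficulty, but you need to actually establish the disjointness lemma and use it to exhibit a consistent seed; without that, the Zorn argument does not start.
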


\begin{proof} To prove the ``if'' part, assume that  $\TI\ne\{\emptyset\}$.
Then $\TI_{n+1}\ne\{\emptyset\}=\TI_{n}$ for some $n\in\w$ and we can find a subset $A\subset X$ and points $a,b\in X$ such that $a^{-1}A\subset X\setminus A\subset bA$ but $a^{-1}A\ne bA$.  Consider the subsemigroup $\Fix_\subset(A)=\{x\in X:xA\subset A\}\subset X$ and observe that $b^{-1}a^{-1}\in\Fix_\subset(A)$. The inclusion $a^{-1}A\subset X\setminus A$ implies $a^{-1}A\cap A=\emptyset$ which is equivalent to $A\cap aA=\emptyset$ and yields $aA\subset X\setminus A\subset bA$. Then  $b^{-1}a\in\Fix_\subset(A)$.

Now consider the chain of the equivalences
$$X\setminus A\subset bA\;\Leftrightarrow \; A\cup bA=X\;\Leftrightarrow \; 
b^{-1}A\cup A=X \;\Leftrightarrow \; X\setminus A\subset b^{-1}A$$ and combine the last inclusion with $aA\cup a^{-1}A\subset X\setminus A$ to obtain $ba,ba^{-1}\in \Fix_{\subset}(A)$. Now we see that the subsemigroup $S$ of $X$ generated by the set $\{1,ba,ba^{-1},b^{-1}a,b^{-1}a^{-1}\}$ lies in $\Fix_\subset(A)$. Observe that $b^{-1}a^{-1}A\subsetneqq A$ implies $abA\not\subset A$, $ab\notin\Fix_\subset(A)\supset S$, and finally $ab\notin S$.
This completes the proof of the ``if'' part.

To prove the ``only if'' part, assume that the group $X$ contains elements $a,b$ whose product $ab$ does not belong to the subsemigroup generated by $b^\pm a^\pm$ where $a^\pm=\{a,a^{-1}\}$ and $b^\pm=\{b,b^{-1}\}$. Then $ab$ does not belongs also to the subsemigroup $S$ generated by $\{1\}\cup b^\pm a^\pm$. Observe that $a^\pm S=S^{-1}a^{\pm}$ and $b^\pm S^{-1}=Sb^\pm$.

We claim that
\begin{equation}\label{sa}S\cap a^\pm S=\emptyset \mbox{ and }S\cap Sb^\pm=\emptyset.
\end{equation}
Assuming that $S\cap a^\pm S\ne \emptyset$ we would find a point $s\in S$ such that $as\in S$ or $a^{-1}s\in S$. If $as\in S$, then $bs^{-1}=b(as)^{-1}a\in bS^{-1}a\subset Sb^\pm a\subset S$ and hence $b=bs^{-1}s\in S\cdot S\subset S$. Then $a^\pm=b(b^{-1}a^\pm)\subset S\cdot S\subset S$, $b^\pm=(b^\pm a)a^{-1}\in S\cdot S\subset S$ and finally $ab\in S\cdot S\subset S$, which contradicts $ab\notin S$. By analogy we can treat the case $a^{-1}s\in S$ and also prove that $S\cap Sb^\pm=\emptyset$.

Consider the family $\mathcal P$ of all pairs $(A,B)$ of disjoint subsets of $X$ such that
\begin{itemize}
\item[(a)] $a^\pm A\subset B$ and $b^\pm B\subset A$;
\item[(b)] $S^{-1}B\subset B$;
\item[(c)] $1\in A$, $ab\in B$.
\end{itemize}
The family $\mathcal P$ is partially ordered by the relation $(A,B)\le(A',B')$ defined by $A\subset A'$ and $B\subset B'$. 

We claim that the pair $(A_0,B_0)=(S\cup Sb^\pm ab,S^{-1}a^\pm\cup S^{-1} ab)$ belongs to $\mathcal P$. Indeed, 
$$a^\pm A_0=a^\pm S\cup a^\pm Sb^\pm ab\subset S^{-1}a^\pm\cup S^{-1}a^\pm b^\pm ab\subset S^{-1}a^\pm\cup S^{-1} ab\subset B_0.$$By analogy we check that $b^\pm B_0\subset A_0$.
The items (b), (c) trivially follow from the definition of $A_0$ and $B_0$. It remains to check that the sets $A_0$ and $B_0$ are disjoint.

This will follow as soon as we check that
\begin{itemize}
\item[(d)] $S\cap S^{-1}a^\pm =\emptyset$,
\item[(e)] $S\cap S^{-1} ab=\emptyset$,
\item[(f)] $Sb^\pm ab\cap S^{-1}a^\pm=\emptyset$,  
\item[(g)] $Sb^\pm ab\cap S^{-1} ab=\emptyset$.
\end{itemize}
The items (d) and (g) follow from (\ref{sa}). The item (e) follows from $ab\notin S\cdot S=S$. By the same reason, we get the item (f) which is equivalent to $ab\notin b^\pm S^{-1}\cdot S^{-1}a^\pm=b^\pm S^{-1} a^\pm=Sb^\pm a^\pm\subset S$. 

Thus the partially ordered set $\mathcal P$ is not empty and we can apply  Zorn's Lemma to find a maximal pair  $(A,B)\ge(A_0,B_0)$ in $\mathcal P$. We claim that $A\cup B=X$.
Assuming the converse, we could take any point $x\in X\setminus(A\cup B)$ and put $A'=A\cup Sx$, $B'=B\cup a^\pm Sx$. It is clear that $a^\pm A'\subset B'$ and $b^\pm B'\subset A'$, $S^{-1}B'=S^{-1}B\cup S^{-1}a^\pm Sx\subset B\cup a^\pm SSx=B'$, $1\in A\subset A'$ and $ab\in B\subset B'$. 

Now we see that the inclusion $(A',B')\in\mathcal P$ will follow as soon as we check that $A'\cap B'=\emptyset$. The choice of $x\notin B=S^{-1}B$ guarantees that $Sx\cap B=\emptyset$. Assuming that $a^{\pm}Sx\cap A\ne\emptyset$, we would conclude that $x\in S^{-1}a^\pm A\subset S^{-1}B\subset B$, which contradicts the choice of $x$.
Finally, the sets $Sx$ and $a^\pm Sx$ are disjoint because of the property (\ref{sa}) of $S$.
Thus we obtain a contradiction: $(A',B')\in\mathcal P$ is strictly greater than the maximal pair $(A,B)$. This contradiction shows that $X=A\cup B$ and consequently,
$aA\subset X\setminus A=B\subset bA$, which means that the set $A$ is pretwin and then $bA\setminus aA\in\TI_1\subset\TI$. Since $1\in A\setminus b^{-1}a^{-1}A$, we conclude that $bA\setminus a^{-1}A\ni b$ is not empty and thus $\TI\ne\{\emptyset\}$.
\end{proof}

We recall that a group $X$ is {\em periodic} (or else a {\em torsion group})  if each element $x\in X$ has finite order (which means that $x^n=e$ for some $n\in\IN$). We shall say that a group $X$ has {\em periodic commutators} if for any $x,y\in G$ the commutator $[x,y]=xyx^{-1}y^{-1}$ has finite order in $X$. It is interesting to note that this condition is strictly weaker than the requirement for $X$ to have periodic commutator subgroup $X'$ (we recall that the commutator subgroup $X'$ coincides with the set of finite products of commutators), see \cite{DK}.

\begin{proposition}\label{p9.4} Each group $X$ with periodic commutators has trivial twinic ideal $\TI=\{\emptyset\}$.
\end{proposition}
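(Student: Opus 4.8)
The plan is to apply the criterion of Theorem~\ref{t6.2}: a group $X$ has trivial twinic ideal $\TI=\{\emptyset\}$ if and only if for any $a,b\in X$ the product $ab$ lies in the subsemigroup of $X$ generated by the set $b^\pm\cdot a^\pm=\{b,b^{-1}\}\cdot\{a,a^{-1}\}$. So it suffices to show that, under the hypothesis that every commutator $[x,y]=xyx^{-1}y^{-1}$ has finite order, this membership condition always holds. Fix $a,b\in X$ and let $S$ denote the subsemigroup generated by $b^\pm a^\pm$. I want to exhibit $ab$ as a finite product of elements from the four generators $ba$, $ba^{-1}$, $b^{-1}a$, $b^{-1}a^{-1}$.

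First I would look for a relation expressing $ab$ in terms of $ba$ and a commutator. Writing $c=[b,a]^{-1}=ab a^{-1} b^{-1}$ (or a suitable conjugate), we have $ab=c\,ba$, so if $c$ can be realized inside $S$ we are done. The key observation is that a commutator such as $[a^{-1},b^{-1}]$ can be rewritten as a word in the generators of $S$: for instance $ba\cdot b^{-1}a^{-1}$, $ba^{-1}\cdot b^{-1}a$, and their inverses are elements of $S$ (together with products thereof), and these combine to produce conjugates of the commutator $[a,b]$. Since by hypothesis $[a,b]$ has finite order, say $[a,b]^n=e$ for some $n\in\IN$, the inverse $[a,b]^{-1}=[a,b]^{\,n-1}$ is a \emph{positive} power of $[a,b]$, hence a positive word in these generators — which is exactly what membership in a subsemigroup requires, since a subsemigroup need not contain inverses but does contain all positive powers. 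This is the crucial point where periodicity of the commutator converts an inverse (which $S$ might not contain) into a positive product (which $S$ must contain).

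Concretely, I would compute an explicit product showing $(b^{-1}a)(b^{-1}a^{-1})^{-1}$-type combinations, or more cleanly the product of two generators of $S$, equals a conjugate $g[a,b]g^{-1}$ of the commutator. Then $ab=[a,b]\cdot ba=[a,b]^{\,n}\cdot[a,b]^{\,1-n}\cdot ba$, and since $[a,b]^{\,n}=e$ we reduce $[a,b]$ to the positive power $[a,b]^{\,n-1}$; expressing $[a,b]^{\,n-1}$ as a positive word in the $S$-generators and multiplying by $ba\in S$ places $ab$ in $S$. The main technical obstacle will be bookkeeping: I must present a genuine \emph{positive} word (no formal inverses) in the four generators $b^{\pm}a^{\pm}$ whose value is $[a,b]^{-1}\cdot ba=ab$, and verifying this requires writing a conjugating relation like $a\cdot(b^{-1}a^{-1})=a b^{-1}a^{-1}$ and tracking how conjugates of $[a,b]$ assemble. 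Care is needed because each individual generator is a product of an inverse of a generator and a generator, so one must check that the commutator (not merely the commutator subgroup element) has finite order to guarantee its inverse is a positive power.

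Once the membership $ab\in S$ is established for all $a,b\in X$, Theorem~\ref{t6.2} immediately yields $\TI=\{\emptyset\}$, completing the proof. I expect the argument to be short modulo the explicit word computation, and I would double-check the edge cases where $a$, $b$, or $[a,b]$ is trivial (these make $ab=ba\in S$ or $ab\in\{a,b\}\subset S$ directly).
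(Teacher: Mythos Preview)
Your approach is correct and essentially the same as the paper's. Both apply Theorem~\ref{t6.2} and exploit periodicity of the commutator to replace an inverse by a positive power. The paper's execution is simply more direct: the single observation $bab^{-1}a^{-1}=(ba)(b^{-1}a^{-1})\in S$ already puts the commutator $[b,a]$ into $S$, and then finite order of $[b,a]$ gives $[a,b]=[b,a]^{-1}=[b,a]^{\,n}\in S$ for some $n$, so $ab=[a,b]\cdot ba\in S$. Your hedging about ``bookkeeping'' and ``conjugates of the commutator'' is unnecessary --- no conjugation is needed, and your parenthetical remark that ``their inverses are elements of $S$'' is a slip (that is precisely what requires the periodicity argument).
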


\begin{proof} Since $X$ has periodic commutators, for any points $x,y\in X$ there is a number $n\in\IN$ such that
$$xyx^{-1}y^{-1}=(yxy^{-1}x^{-1})^{-1}=(yxy^{-1}x^{-1})^{n}$$ and thus $xy=(yxy^{-1}x^{-1})^{n}\cdot yx$ belongs to the semigroup generated by the set $y^\pm\cdot x^\pm$. Applying Theorem~\ref{t6.2}, we conclude that the group $X$ has trivial twinic ideal $\TI=\{\emptyset\}$.
\end{proof}

We recall that a
group $G$ is called {\em abelian-by-finite} (resp. {\em finite-by-abelian}) if $G$ contains a
normal Abelian (resp. finite) subgroup $H\subset G$ with finite (resp. Abelian) quotient $G/H$. Observe that each finite-by-abelian group has periodic commutators and hence has trivial twinic ideal $\TI$.

In contrast, any abelian-by-finite groups, being amenable, is twinic but its twinic ideal $\TI$ need not be trivial. The simplest counterexample is the isometry group $\Iso(\IZ)$ of the group
$\IZ$ of integers endowed with the Euclidean metric.

\begin{example}\label{ex6.4} The abelian-by-finite group $X=\Iso(\IZ)$ is twinic. Its twinic ideal $\TI$ coincides with the ideal $[X]^{<\w}$ of all finite subsets of $X$.
\end{example}

\begin{proof} Let $a:x\mapsto x+1$ be the translation and $b:x\mapsto -x$ be the inversion of the group $\IZ$. It is easy to see that the elements $a,b$ generate the isometry group $X=\Iso(\IZ)$ and satisfy the relations $b^2=1$ and $bab^{-1}=a^{-1}$. Let $Z=\{a^n:n\in\IZ\}$ be the cyclic subgroup of $X$ generated by the translation $a$. This subgroup $Z$ has index 2 in $X=Z\cup Zb$.

First we show that the ideal $\I=[X]^{<\w}$ of finite subsets of $X$ is twinic. Let $A\subset X$ be a subset with $xA\subset_\I X\setminus A\subset_\I yA$ for some $x,y\in X$. We need to show that $yA=_\I xA$.

We consider three cases.
\smallskip

1) $x,y\in Z$. In this case the elements $x,y$ commute. 
The $\I$-inclusion $xA\subset_\I yA$ implies $y^{-1}xA\subset_\I A$.
We claim that $y^{-1}xA\supset_\I A$. Observe that the $\I$-inclusion 
$xA\subset_\I X\setminus A$ is equivalent to $xA\cap A\in\I$ and to $A\cap x^{-1}A\in\I$, which implies $x^{-1}A\subset_\I X\setminus A$. By analogy, $X\setminus A\subset_\I yA$ is equivalent to $yA\cup A=_\I X$ and to $A\cup y^{-1}A=_\I X$, which implies $X\setminus A\subset_\I y^{-1}A$. Then $x^{-1}A\subset_\I X\setminus A\subset_\I y^{-1}A$ implies $yx^{-1}A\subset_\I A$ and by the left-invariance of $\I$,
$A\subset_\I xy^{-1}A=y^{-1}xA$ (we recall that the elements $x,y^{-1}$ commute).
Therefore, $y^{-1}xA=_\I A$ and hence $xA=_\I yA$. 
\smallskip
  
2) $x\in Z$ and $y\in X\setminus Z$. Repeating the argument from the preceding case, we can show that $xA\subset_\I X\setminus A$ implies $x^{-1}A\subset_\I X\setminus A$. Then we get the chain of $\I$-inclusions:
$$xA\subset_\I X\setminus A\subset_\I yA\subset_\I y(X\setminus xA)=yx(X\setminus A)\subset_\I yxyA=_\I xA,$$where the last $\I$-equality follows from the case (1) since $x,yxy\in Z$. Now we see that $xA=_\I yA$.
\smallskip

3) $x\notin Z$. Then $xA\subset_\I X\setminus A\subset_\I yA$ implies
$$x^{-1}bA=bxb^{-1}bA=bxA\subset_\I X\setminus bA\subset_\I byA=y^{-1}bA.$$Since $x^{-1}b\in Z$, the cases (1),(2) imply the $\I$-equality $x^{-1}bA=_\I y^{-1}bA$. Shifting this equality by $b$, we see that $xA=bx^{-1}bA=_\I by^{-1}bA=yA$.

This completes the proof of the twinic property of the ideal $\I=[X]^{<\w}$. Then the twinic ideal $\TI\subset[X]^{<\w}$. Since $[X]^{<\w}$ is the smallest non-trivial left-invariant ideal on $X$, the equality $\TI=[X]^{<\w}$ will follow as soon as we find a non-empty set in the ideal $\TI$.

For this consider the subset $A=\{a^{n+1},ba^{-n}:n\ge0\}\subset X$ and observe that
$X\setminus A=\{a^{-n},ba^{n+1}:n\ge0\}=bA$ witnessing that $A\in\Tau$. Observe also that $aA=\{a^{n+2},aba^{-n}:n\ge0\}=\{a^{n+2},ba^{-n-1}:n\ge0\}\subsetneqq A$ and thus $baA\subsetneq X\setminus A=bA$. Then $\emptyset\ne baA\setminus bA\in\TI_1\subset\TI$ witnesses that the twinic ideal $\TI$ is not trivial.
\end{proof}
 
Next, we present (an expected) example of a group, which is not twinic.

\begin{example} The free group $F_2$ with two generators is not twinic.
\end{example}

\begin{proof} Assume that the group $X=F_2$ is twinic and let $\TI$ be the twinic ideal of $F_2$. Let $a,b$ be the generators of the free group $F_2$. Each element $w\in F_2$ can be represented by a word in the alphabet $\{a,a^{-1},b,b^{-1}\}$. The word of the smallest length representing $w$ is called the {\em irreducible representation} of $w$. The irreducible word representing the neutral element of $F_2$ is the empty word. Let $A$ (resp. $B$) be the set of words whose 
irreducible representation start with letter $a$ or $a^{-1}$ (resp. $b$ or $b^{-1}$). Consider the subset $$C=\big\{a^{2n}w:w\in B\cup\{e\},\;n\in\IZ\big\}\subset F_2$$ and observe that $abaC\subset X\setminus C=aC$. Then $aC\setminus abaC\in\TI_1$ by the definition of the subideal $\TI_1\subset\TI$. Observe that $a^3baC\subset aC\setminus abaC$ and thus $a^3baC\in\TI_1$. Then also $C\in\TI_1$ and $X\setminus C=aC\in\TI_1$ by the left-invariance of $\TI_1$. By the additivity of $\TI_1$, we finally get $X=C\cup(X\setminus C)\in\TI_1\subset\TI$, which is the desired contradiction.
\end{proof}

Next, we prove some permanence properties of the class of twinic groups.

\begin{proposition} Let $f:X\to Y$ be a surjective group homomorphism. If the group $X$ is twinic, then so is the group $Y$.
\end{proposition}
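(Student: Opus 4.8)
The plan is to transport the twinic ideal of $X$ to $Y$ by taking preimages. Fix a twinic ideal $\I$ on $X$ and define
$$\mathcal J=\{B\subset Y:f^{-1}(B)\in\I\}.$$
First I would check that $\mathcal J$ is a left-invariant ideal on $Y$. The ideal axioms are immediate from the facts that $f^{-1}$ commutes with finite unions and respects inclusions, together with $f^{-1}(Y)=X\notin\I$, which gives $Y\notin\mathcal J$. For left-invariance the surjectivity of $f$ enters for the first time: given $B\in\mathcal J$ and $y\in Y$, one chooses $x\in X$ with $f(x)=y$ and uses the identity $f^{-1}(yB)=x\,f^{-1}(B)$, a direct computation from the fact that $f$ is a homomorphism. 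Then left-invariance of $\I$ yields $f^{-1}(yB)=x\,f^{-1}(B)\in\I$, so $yB\in\mathcal J$.

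The heart of the argument is verifying that $\mathcal J$ is twinic. Suppose $B\subset Y$ and $u,v\in Y$ satisfy $uB\subset_{\mathcal J}Y\setminus B\subset_{\mathcal J}vB$. Using surjectivity, I would pick $x,y\in X$ with $f(x)=u$ and $f(y)=v$, and set $A=f^{-1}(B)$, so that $X\setminus A=f^{-1}(Y\setminus B)$, $f^{-1}(uB)=xA$, and $f^{-1}(vB)=yA$. The key observation is that, because $f^{-1}$ commutes with all Boolean operations, membership of a subset of $Y$ in $\mathcal J$ is exactly membership of its preimage in $\I$; translating the two $\mathcal J$-inclusions through $f^{-1}$ therefore yields the chain $xA\subset_\I X\setminus A\subset_\I yA$ in $X$.

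Now the twinicity of $\I$ applies and gives $xA=_\I yA$, that is $xA\triangle yA\in\I$. Since $f^{-1}$ commutes with symmetric differences and $xA=f^{-1}(uB)$, $yA=f^{-1}(vB)$, I obtain $f^{-1}(uB\triangle vB)=xA\triangle yA\in\I$, which is precisely the statement $uB=_{\mathcal J}vB$. Hence $\mathcal J$ is a twinic ideal on $Y$, and $Y$ is twinic.

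I expect the only real subtlety to be bookkeeping rather than any genuine obstacle: one must invoke surjectivity exactly when lifting elements of $Y$ to $X$ (once for left-invariance, twice in the twinic check) and must keep track of where each preimage identity is used, namely $f^{-1}(yB)=x\,f^{-1}(B)$, the complement identity $f^{-1}(Y\setminus B)=X\setminus f^{-1}(B)$, and commutation with $\triangle$. Conceptually the proof is transparent: the preimage map is a Boolean-algebra homomorphism intertwining the left $Y$-action with the left $X$-action, so the twinic inequalities and the resulting equality pass back and forth without loss of information.
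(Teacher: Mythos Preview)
Your proof is correct and follows essentially the same approach as the paper: define the pushforward ideal $\mathcal J=\{B\subset Y:f^{-1}(B)\in\I\}$, use surjectivity to lift elements of $Y$ to $X$, pull back the chain $uB\subset_{\mathcal J}Y\setminus B\subset_{\mathcal J}vB$ to $xA\subset_\I X\setminus A\subset_\I yA$ via $f^{-1}$, apply twinicity of $\I$, and push the resulting $\I$-equality forward. The paper's version is terser and phrases the final step as $yA\setminus xA\in\I$ rather than $xA\triangle yA\in\I$, but the content is identical.
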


\begin{proof} Let $\TI$ be the twinic ideal of $X$. It is easy to see that  $\I=\{B\subset Y:f^{-1}(B)\in\TI\}$ is a left-invariant ideal on the group $Y$. We claim that it is twinic. Given any subset $A\subset Y$ with $xA\subset_\I Y\setminus A\subset_\I yA$ for some $x,y\in Y$, let $B=f^{-1}(A)$ and observe that $x'B\subset_\TI X\setminus B\subset_\TI y'B$ for some points $x'\in f^{-1}(x)$ and $y'\in f^{-1}(y)$. The twinic property of the twinic ideal $\TI$ guarantees that $f^{-1}(yA\setminus xA)=y'B\setminus x'B\in\TI$, which implies $yA\setminus xA\in\I$ and hence $xA=_\I Y\setminus A=_\I yA$.
\end{proof}

\begin{problem} Is a subgroup of a twinic group twinic? Is the product of two twinic groups twinic?
\end{problem}

For groups with trivial twinic ideal the first part of this problem has an affirmative solution, which follows from the characterization Theorem~\ref{t6.2}.

\begin{proposition} 
\begin{enumerate}
\item[\textup{(1)}] The class of groups with trivial twinic ideal is closed with respect to taking subgroups and quotient groups.
\item[\textup{(2)}] A group $X$ has trivial twinic ideal if and only if any 2-generated subgroup  of $X$ has trivial twinic ideal.
\end{enumerate}
\end{proposition}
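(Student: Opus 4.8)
The plan is to reduce both statements entirely to the local characterization of trivial twinic ideal furnished by Theorem~\ref{t6.2}, which asserts that a group $G$ satisfies $\TI=\{\emptyset\}$ if and only if for all $a,b\in G$ the product $ab$ lies in the subsemigroup of $G$ generated by the four-element set $b^\pm a^\pm=\{b,b^{-1}\}\cdot\{a,a^{-1}\}$. The decisive feature of this criterion is that it refers only to pairs of elements and to the subsemigroup they generate, and such generated subsemigroups behave transparently under passage to subgroups and under homomorphic images. Once this is observed, both items become purely formal.

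For the subgroup half of (1), I would let $H\subset X$ be a subgroup of a group $X$ with trivial twinic ideal and fix arbitrary $a,b\in H$. Since $a,b\in H$, the set $b^\pm a^\pm$ consists of elements of $H$, so the subsemigroup it generates inside $X$ is automatically contained in $H$ and coincides with the subsemigroup it generates inside $H$. Applying Theorem~\ref{t6.2} to $X$ gives $ab\in\langle b^\pm a^\pm\rangle_X=\langle b^\pm a^\pm\rangle_H$, and a second application of Theorem~\ref{t6.2}, now to $H$, shows that $H$ has trivial twinic ideal. For the quotient half, I would take a surjective homomorphism $f:X\to Y$ with $X$ of trivial twinic ideal, fix $a,b\in Y$, and choose preimages $a'\in f^{-1}(a)$ and $b'\in f^{-1}(b)$. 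Theorem~\ref{t6.2} yields $a'b'\in\langle (b')^\pm(a')^\pm\rangle_X$; since a homomorphism carries the subsemigroup generated by a set onto the subsemigroup generated by the image of that set, and $f\big((b')^\pm(a')^\pm\big)=b^\pm a^\pm$, we get $ab=f(a'b')\in\langle b^\pm a^\pm\rangle_Y$. As $a,b\in Y$ were arbitrary, Theorem~\ref{t6.2} gives $\TI=\{\emptyset\}$ for $Y$.

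Statement (2) then follows by combining (1) with the same characterization. The forward implication is immediate from the subgroup part of (1). For the converse, assuming that every $2$-generated subgroup of $X$ has trivial twinic ideal, I would fix arbitrary $a,b\in X$, set $H=\langle a,b\rangle$, and apply Theorem~\ref{t6.2} to $H$ to obtain $ab\in\langle b^\pm a^\pm\rangle_H\subset\langle b^\pm a^\pm\rangle_X$; since $a,b$ were arbitrary, a final application of Theorem~\ref{t6.2} gives $\TI=\{\emptyset\}$ for $X$.

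No genuine obstacle remains at this stage: the whole difficulty has already been absorbed into the characterization Theorem~\ref{t6.2}, and what is left is the observation that the subsemigroup generated by $b^\pm a^\pm$ is an intrinsic feature of any subgroup containing $a$ and $b$. The only care required is the routine bookkeeping verifying that this generated subsemigroup is preserved under inclusions of subgroups and under homomorphic images, which is elementary.
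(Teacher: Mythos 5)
Your proposal is correct and matches the paper's intended argument: the paper states this proposition without a written proof, remarking only that it ``follows from the characterization Theorem~6.2,'' and your reduction of both items to that theorem (using that the subsemigroup generated by $b^\pm a^\pm$ is intrinsic to any subgroup containing $a,b$, and that homomorphisms carry generated subsemigroups onto generated subsemigroups of the images) is exactly the argument the authors leave to the reader.
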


\section{2-Cogroups}

It follows from Proposition~\ref{p5.3} that for a twin subset $A$ of a group $X$ the stabilizer $\Fix(A)$ of $A$ is completely determined by the subset $\Fix^-(A)$ because $\Fix(A)=x\cdot\Fix^-(A)$ for each $x\in\Fix^-(A)$. Therefore, the subset $\Fix^-(A)$ carries all the information about the pair $(\Fix^\pm(A),\Fix(A))$. The sets $\Fix^-(A)$ are particular cases of so-called 2-cogroups defined as follows.

\begin{definition} A subset $K$ of a group $X$ is called a {\em 2-cogroup} if for every $x\in K$ the shift $xK=Kx$ is a subgroup of $X$, disjoint with $K$.

By the {\em index} of a 2-cogroup $K$ in $X$ we understand the cardinality $|X/K|$ of the set $X/K=\{Kx:x\in X\}$.
\end{definition}

2-Cogroups can be characterized as follows.

\begin{proposition}\label{p6.2} A subset $K$ of a group $X$ is a 2-cogroup in $X$ if and only if there is a (unique) subgroup $H^\pm$ of $X$ and a subgroup $H\subset H^\pm$ of index 2 such that $K=H^\pm\setminus H$ and $H=K\cdot K$. 
\end{proposition}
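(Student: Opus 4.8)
The plan is to derive both implications directly from the definition of a 2-cogroup, namely that for each $x\in K$ the shift $xK=Kx$ is a subgroup of $X$ disjoint from $K$.

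The ``if'' direction is short. Suppose $H^\pm$ is a subgroup, $H\subset H^\pm$ has index $2$, $K=H^\pm\setminus H$ and $H=K\cdot K$. Since an index-$2$ subgroup is normal, $H^\pm/H=\{H,K\}$, so any $x\in K$ satisfies $xH=Hx=K$. Using that left translation by $x$ is a bijection and that $x\in H^\pm$ (so $xH^\pm=H^\pm$), I get
\[
xK=x(H^\pm\setminus H)=H^\pm\setminus xH=H^\pm\setminus K=H,
\]
and symmetrically $Kx=H$. Thus $xK=Kx=H$ is a subgroup disjoint from $K$, so $K$ is a $2$-cogroup.

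For the ``only if'' direction, fix any $x_0\in K$ and put $H:=x_0K$; by hypothesis $H=x_0K=Kx_0$ is a subgroup disjoint from $K$. The first key step is to recognise $K$ as a coset of $H$: from $x_0K=H=Kx_0$ I obtain $K=x_0^{-1}H=Hx_0^{-1}$, which simultaneously exhibits $K$ as a left and a right coset of $H$ and shows that $x_0^{-1}$ (hence $x_0$) normalises $H$. The second key step is the relation $x_0^2\in H$, which holds because $x_0\in K=x_0^{-1}H$; together with $x_0^{-1}=x_0^{-1}e\in x_0^{-1}H=K$ this also records $K=K^{-1}$. Now set $H^\pm:=H\cup K=H\cup x_0^{-1}H$. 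Because $x_0^{-1}$ normalises $H$ and $x_0^{-2}\in H$, the set $H^\pm$ is closed under products and inverses, i.e. it is the subgroup generated by $H$ and $x_0^{-1}$, and since $x_0^{-1}\notin H$ it has exactly the two $H$-cosets $H$ and $K$. Hence $H$ is normal of index $2$ in $H^\pm$ and $K=H^\pm\setminus H$.

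It remains to identify $H$ intrinsically and to settle uniqueness. Products of two elements of the nontrivial coset $K$ land in $H$, so $K\cdot K\subseteq H$; conversely each $h\in H$ factors as $h=x_0^{-1}\cdot(x_0h)$ with $x_0^{-1}\in K$ and $x_0h\in x_0H=x_0^{-1}H=K$ (using $x_0^2\in H$), giving $H\subseteq K\cdot K$. Thus $H=K\cdot K$ is determined by $K$ alone, whence $H^\pm=H\cup K$ is determined by $K$ as well, which yields uniqueness. I expect the main obstacle to be the verification that $H^\pm=H\cup K$ is genuinely a subgroup; everything there funnels through the two facts $x_0^2\in H$ and $x_0^{-1}H=Hx_0^{-1}$, and it is precisely the requirement $xK=Kx$ in the definition (not merely that $xK$ be a subgroup) that supplies the normalisation needed to close $H^\pm$ up.
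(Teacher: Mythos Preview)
Your proof is correct and follows essentially the same route as the paper's: fix $x_0\in K$, set $H=x_0K=Kx_0$, deduce $K=x_0^{-1}H=Hx_0^{-1}$ and $x_0^2\in H$, then verify that $H^\pm=H\cup K$ is a subgroup in which $H$ has index $2$, with $H=K\cdot K$ forcing uniqueness. Your derivation of $x_0^2\in H$ (directly from $x_0\in K=x_0^{-1}H$) is in fact a bit more economical than the paper's, which detours through applying the $2$-cogroup hypothesis a second time at $x^{-1}$; as a byproduct your argument shows that the condition ``$xK=Kx$ is a subgroup disjoint from $K$'' need only be assumed for a single $x\in K$.
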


\begin{proof} If $K$ is a 2-cogroup, then for every $x\in K$ the shift $H=xK=Kx$ is a subgroup of $X$ disjoint with $K$. It follows that $K=x^{-1}H=Hx^{-1}$. Since $x^{-1}\in x^{-1}H=K$, the shift $x^{-1}K=Kx^{-1}$ is a subgroup of $X$ according to the definition of a 2-cogroup. Consequently, $x^{-1}Kx^{-1}K=x^{-1}K$, which implies $Kx^{-1}K=K$ and $Hx^{-1}x^{-1}Hx^{-1}=Kx^{-1}K=K=Hx^{-1}$. This implies $x^{-2}\in H$ and $x^2\in H$. Consequently, $xH=x^{-1}x^2H=x^{-1}H=K=Hx^{-1}=Hx^2x^{-1}=Hx$.

Now we are able to show that $H^\pm=H\cup K$ is a group. Indeed,
$$
\begin{aligned}
(H\cup K)\cdot(H\cup K)^{-1}&\subset 
HH^{-1}\cup HK^{-1}\cup KH^{-1}\cup KK^{-1}\subset\\
&\subset H\cup HHx\cup xHH\cup Hx^{-1}xH=H\cup K\cup K\cup H=H^\pm.
\end{aligned}
$$
Since $K=Hx=xH$, the subgroup $H=K\cdot K$ has index 2 in $H^\pm$. 
The uniqueness of the pair $(H^\pm,H)$ follows from the fact that $H=K\cdot K$ and $H^\pm=KK\cup K$.
This completes the proof of the ``only if'' part.

To prove the ``if'' part, assume that $H^\pm$ is a subgroup of $X$ and $H\subset H^\pm$ is a subgroup of index 2 such that $K=H^\pm\setminus H$. Then for every $x\in K$ the shift $xK=Kx=H$ is a subgroup of $X$ disjoint with $K$. This means that $K$ is a 2-cogroup.
\end{proof}

Proposition~\ref{p6.2} implies that for each 2-cogroup $K\subset X$ the set $K^\pm=K\cup KK$ is a subgroup of $X$ and $KK$ is a subgroup of index 2 in $K^\pm$.

By $\mathcal K$ we shall denote the family of all 2-cogroups in $X$. It is partially ordered by the inclusion relation $\subset$ and is considered as an $X$-act endowed with the conjugating action 
$$\cdot:X\times\mathcal K\to\mathcal K,\;\;\cdot:(x,K)\mapsto xKx^{-1},$$
of the group $X$. 
For each 2-cogroup $K\in\mathcal K$ let $\Stab(K)=\{x\in X:xKx^{-1}=K\}$ be the stabilizer of $K$ and $[K]=\{xKx^{-1}:x\in X\}$ be the orbit of $K$. 
By $[\mathcal K]=\{[K]:K\in\mathcal K\}$ be denote the orbit space of $\mathcal K$ by the action of the group $X$.

A cogroup $K\in\mathcal K$ is called {\em normal} if $xKx^{-1}=K$ for all $x\in X$. This is equivalent to saying that $\Stab(K)=X$. 

Since for each twin subset $A\subset X$ the set $\Fix^-(A)$ is a 2-cogroup, the function
$$\Fix^-:\mathsf T\to\mathcal K,\;\;\Fix^-:A\mapsto \Fix^-(A),$$
is well-defined and equivariant according to Proposition~\ref{p5.4}. A similar equivariant function
$$\IFix^-:\mathsf T^\I\to\mathcal K,\;\;\IFix^-:A\mapsto \IFix^-(A),$$can be defined for any left-invariant ideal $\I$ on a group $X$.

Let $\wht\K$ denote the set of maximal elements of the partially ordered set $(\mathcal K,\subset)$.
The following  proposition implies that the set $\wht\K$ lies in the image $\Fix^-(\Tau)$ and is cofinal in $\mathcal K$.

\begin{proposition}\label{p7.3} 
\begin{enumerate}
\item[\textup{(1)}] For any linearly ordered family $\mathcal C\subset\mathcal K$ of 2-cogroups in $X$ the union $\cup\mathcal C$ is a 2-cogroup in $X$.
\item[\textup{(2)}] Each 2-cogroup $K\in\mathcal K$ lies in a maximal 2-cogroup $\wht K\in\wht\K$.
\item[\textup{(3)}] For each maximal 2-cogroup $K\in\wht\K$ there is a twin subset $A\in\Tau$ with $K=\Fix^-(A)$.
\end{enumerate}
\end{proposition}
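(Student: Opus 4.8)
The plan is to reduce everything to the structural characterization of Proposition~\ref{p6.2}, which identifies each 2-cogroup $K\in\mathcal K$ with the nested pair of subgroups $(K^\pm,KK)$, where $K^\pm=K\cup KK$, the subgroup $KK$ has index $2$ in $K^\pm$, and $K=K^\pm\setminus KK$. Throughout I use the elementary consequence of this characterization that $xC=CC$ for every $x\in C$ (since $CC$ has index two in $C^\pm$ and $x^2\in CC$).

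For (1), given a chain $\mathcal C\subseteq\mathcal K$ (which I may assume non-empty), I would first note that an inclusion $C\subseteq C'$ of 2-cogroups forces $CC\subseteq C'C'$ and $C^\pm\subseteq C'^\pm$: picking $x\in C\subseteq C'$, the equalities $xC=CC$, $xC'=C'C'$ together with $C\subseteq C'$ give $CC=xC\subseteq xC'=C'C'$, whence $C^\pm=C\cup CC\subseteq C'\cup C'C'=C'^\pm$. Thus $\{CC:C\in\mathcal C\}$ and $\{C^\pm:C\in\mathcal C\}$ are chains of subgroups, so their unions $H=\bigcup_{C\in\mathcal C}CC$ and $G=\bigcup_{C\in\mathcal C}C^\pm$ are subgroups with $H\subseteq G$. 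Using the disjointness $C\cap CC=\emptyset$ and the nestedness of the $CC$, a short set-theoretic check then identifies $\bigcup\mathcal C=G\setminus H$. To conclude I would fix $x\in\bigcup\mathcal C$ and verify $xH=\bigcup\mathcal C$ by pushing an arbitrary $h\in H$ into a single large member $C'\ni x$ of the chain; this yields $[G:H]=2$ and $G=H\sqcup xH$, and combined with $x^2\in CC\subseteq H$ it gives $(\bigcup\mathcal C)(\bigcup\mathcal C)=xH\cdot xH=H$. By Proposition~\ref{p6.2}, $\bigcup\mathcal C$ is a 2-cogroup.

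Statement (2) is then immediate: by (1) every chain in the poset of 2-cogroups containing a fixed $K$ has an upper bound (its union), so Zorn's Lemma produces a maximal element $\wht K\in\wht\K$ containing $K$. For (3), given a maximal $K\in\wht\K$, I would produce the required twin set by a fixed-point-free involution. Write $H=KK$ and fix $g_0\in K$, so that $K=Hg_0=g_0H$, $g_0^2\in H$, and $H$ is normal of index two in $K^\pm$. On the right-coset space $H\backslash X$ define $\sigma(Hg)=Hg_0g$; normality of $H$ in $K^\pm$ makes $\sigma$ well defined, $g_0^2\in H$ makes it an involution, and it is fixed-point free because $Hg_0g=Hg$ would force $g_0\in H$, contradicting $g_0\in K$. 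Hence $H\backslash X$ decomposes into two-element $\sigma$-orbits; invoking the axiom of choice I select one coset from each orbit and let $A$ be the union of the selected right cosets. Then $hA=A$ for all $h\in H$, while $g_0A=X\setminus A$ since $g_0$ acts on the cosets by $\sigma$ and $\sigma$ exchanges $A$ with its complement. Thus $A\in\Tau$, and for $hg_0\in K$ we get $hg_0A=h(X\setminus A)=X\setminus A$, so $K\subseteq\Fix^-(A)$. As $\Fix^-(A)$ is a 2-cogroup containing the maximal 2-cogroup $K$, it coincides with $K$, giving $K=\Fix^-(A)$.

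I expect the main obstacle to be part (3): the key insight is that left translation by $g_0$ descends to a \emph{fixed-point-free} involution on $H\backslash X$, and both its well-definedness and the absence of fixed points hinge delicately on $H\trianglelefteq K^\pm$ and $g_0\notin H$. It is precisely the fixed-point-freeness that lets a transversal choice build a genuine twin set rather than a merely $K$-invariant one. The index-two verification in (1) is the other delicate point, but it becomes routine once the two chains of subgroups are in place.
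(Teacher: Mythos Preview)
Your proposal is correct and follows essentially the same plan as the paper. For (1) the paper argues more tersely by directly verifying the identities $K\cap KK=\emptyset$ and $KKK=K$ for $K=\bigcup\mathcal C$ (using the chain condition to push any finite product into a single $C\in\mathcal C$), whereas you build the subgroups $H=\bigcup CC$ and $G=\bigcup C^\pm$ explicitly and invoke Proposition~\ref{p6.2}; for (3) the paper simply picks a transversal $S$ for the right $K^\pm$-cosets and sets $A=KK\cdot S$, which is exactly your involution construction with the canonical choice ``always select the $KK$-coset'' from each $\sigma$-orbit.
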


\begin{proof} 1. Let $\mathcal C\subset\K$ be a linearly ordered family of 2-cogroups of $X$. Since each 2-cogroup $C\in\C$ is disjoint with the group $C\cdot C$ and $C=C\cdot C\cdot C$, we get that the union $K=\cup\C$ is disjoint with the union $\bigcup_{C\in \C}C\cdot C=K\cdot K$ and $K=\bigcup_{C\in\C}C=\bigcup_{C\in\C}C\cdot C\cdot C=K\cdot K\cdot K$ witnessing that $K$ is a 2-cogroup.
\smallskip

2. Since each chain in $\K$ is upper bounded,  Zorn's Lemma  guarantees that each 2-cogroup of $X$ lies in a maximal 2-cogroup.
\smallskip

3. Given a maximal 2-cogroup $K\in\wht{\K}$, consider the subgroups $K\cdot K$ and  $K^\pm=K\cup KK$ of $X$ and choose a subset $S\subset G$ meeting each coset $K^\pm x$, $x\in X$, at a single point. Consider the set $A=KK\cdot S$ and note that $X\setminus A=KS=xA$ for each $x\in K$, which means that $K\subset\Fix^-(A)$. The maximality of $K$ guarantees that $K=\Fix^-(A)$.
\end{proof}

It should be mentioned that in general, $\Fix^-(\Tau)\ne\mathcal K$.

\begin{example} For any twin subset $A$ in the 4-element group $X=C_2\oplus C_2$ the group $\Fix(A)$ is not trivial. Consequently, each singleton $\{a\}\subset X\setminus\{e\}$ is a 2-cogroup that does not belong to the image $\Fix^-(\Tau)$.
\end{example}


A left-invariant subfamily $\mathsf F\subset\Tau$ is called 
\begin{itemize}
\item {\em $\wht{\K}$-covering} if $\wht{\K}\subset\Fix^-(\mathsf F)$ (this means that for each maximal 2-cogroup $K\in\wht{\K}$ there is a twin set $A\in\mathsf F$ with $\Fix^-(A)=K$);
\item {\em minimal $\wht{\K}$-covering} if $\mathsf F$ coincides with each  left-invariant $\wht{\K}$-covering subfamily of $\mathsf F$.
\end{itemize}

Proposition~\ref{p7.3}(3) implies that the family
$$\wht{\Tau}=\{A\in\Tau:\Fix^-(A)\in\wht{\K}\}$$is $\wht{\K}$-covering.

\begin{proposition} For any function $f\in\Enl(\mathsf P(X))$ the family $f(\wht{\Tau})$ is $\wht{\K}$-covering.
\end{proposition}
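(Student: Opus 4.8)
The plan is to verify directly that $\wht{\K}\subset\Fix^-(f(\wht{\Tau}))$. By Proposition~\ref{p7.3}(3) the family $\wht{\Tau}$ is $\wht{\K}$-covering, so for a given maximal 2-cogroup $K\in\wht{\K}$ I may fix a twin set $A\in\wht{\Tau}$ with $\Fix^-(A)=K$, and then track what the equivariant monotone symmetric function $f$ does to $A$. The goal is to show that $f(A)$ is again twin with $\Fix^-(f(A))=K$.

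The first step is the inclusion $\Fix^-(A)\subset\Fix^-(f(A))$. Take any $x\in\Fix^-(A)$, so that $xA=X\setminus A$. Applying $f$ and using equivariance (giving $f(xA)=x\,f(A)$) together with symmetry (giving $f(X\setminus A)=X\setminus f(A)$), I obtain $x\,f(A)=X\setminus f(A)$, i.e. $x\in\Fix^-(f(A))$. Note that monotonicity is not needed here; membership $f\in\Enl(\mathsf P(X))$ already supplies equivariance and symmetry by Corollary~\ref{c4.4}. Since $K=\Fix^-(A)\ne\emptyset$ (as $A$ is twin), we get $\Fix^-(f(A))\supset K\ne\emptyset$, so $f(A)$ is itself a twin set; in particular $f(\wht{\Tau})\subset\Tau$, which is exactly what is required for $\Fix^-$ to be defined on the family $f(\wht{\Tau})$.

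The second step upgrades this inclusion to equality. Because $f(A)$ is twin, $\Fix^-(f(A))$ is a 2-cogroup, hence an element of $\mathcal K$ containing the maximal 2-cogroup $K$. By the maximality of $K$ in the poset $(\mathcal K,\subset)$, I conclude $\Fix^-(f(A))=K$. Thus $f(A)\in f(\wht{\Tau})$ realizes $K$ as $\Fix^-(f(A))$, and since $K\in\wht{\K}$ was arbitrary, this yields $\wht{\K}\subset\Fix^-(f(\wht{\Tau}))$, i.e. $f(\wht{\Tau})$ is $\wht{\K}$-covering.

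I do not expect a serious obstacle: the argument is just the compatibility of the operator $\Fix^-$ with equivariant symmetric maps, combined with the maximality built into $\wht{\K}$. The only point deserving care is checking that $f(A)$ really lands back among the twin sets, so that $\Fix^-(f(A))$ is genuinely a 2-cogroup and the maximality comparison is legitimate; this is precisely what the nonemptiness of $K$ guarantees.
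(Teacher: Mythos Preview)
Your proof is correct and follows essentially the same approach as the paper: pick $A\in\wht{\Tau}$ with $\Fix^-(A)=K$, use equivariance and symmetry of $f$ (via Corollary~\ref{c4.4}) to get $K\subset\Fix^-(f(A))$, then invoke maximality of $K$ to conclude equality. The only detail you omit is the (trivial) verification that $f(\wht{\Tau})$ is left-invariant, which the paper's definition of ``$\wht{\K}$-covering'' technically requires; this follows immediately from the equivariance of $f$ and the left-invariance of $\wht{\Tau}$.
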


\begin{proof} The equivariance of the function $f$ and the left-invariance of the family $\wht{\Tau}$ imply the left-invariance of the family $f(\wht{\Tau})$. To see that $f(\wht{\Tau})$ is $\wht{\K}$-covering, fix any maximal 2-cogroup $K\in\wht{\K}$ and using Proposition~\ref{p7.3}, find a twin set $A\subset X$ with $\Fix^-(A)=K$. We claim that $\Fix^-(f(A))=\Fix^-(A)=K$. By Corollary~\ref{c4.4}, the function $f$ is equivariant and symmetric. Then for every $x\in \Fix^-(A)$, applying $f$ to the equality $xA=X\setminus A$, we obtain 
$$x\,f(A)=f(xA)=f(X\setminus A)=X\setminus f(A),$$which means that $x\in\Fix^-(f(A))$ and thus $\Fix^-(A)\subset\Fix^-(f(A))$. Now the maximality of the 2-cogroup $\Fix^-(A)$ guarantees that $\Fix^-(f(A))=\Fix^-(A)$.
\end{proof}

\begin{remark} In Theorem~\ref{t17.1} we shall show that for a twinic group $X$ and a function $f\in \IK\big(\Enl(\mathsf P(X))\big)$ from the minimal ideal of $\Enl(\mathsf P(X))$ the family $f(\wht{\Tau})$ is minimal $\wht{\K}$-covering.
\end{remark}

For each 2-cogroup $K\subset X$ consider the families
$$\Tau_K=\{A\in\Tau:\Fix^-(A)=K\}\mbox{ and }\Tau_{[K]}=\{A\in\Tau:\exists x\in X \mbox{ with }\Fix^-(xA)=K\}.$$
 
The following proposition describing the structure of minimal $\wht{\K}$-covering families can be easily derived from the definitions.

\begin{proposition}\label{p6.6} A left-invariant subfamily $\mathsf F\subset  \wht{\Tau}$ is minimal $\wht\K$-covering if and only if for each $K\in\wht\K$ there is a set $A\in\mathsf F$ such that $\mathsf F\cap\Tau_{[K]}=[A]$.
\end{proposition}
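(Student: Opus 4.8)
The plan is to reduce everything to the partition of $\wht\Tau$ into the orbit-blocks $\Tau_{[K]}$ and to the observation that, for left-invariant families, the $\wht\K$-covering property is a purely block-by-block condition. First I would record the basic bookkeeping: by Proposition~\ref{p5.4} we have $\Fix^-(xA)=x\,\Fix^-(A)\,x^{-1}$, so $A\in\Tau_{[K]}$ precisely when $\Fix^-(A)$ is conjugate to $K$. Consequently each $\Tau_{[K]}$ is left-invariant, it depends only on the conjugacy class of $K$, and since every $A\in\wht\Tau$ satisfies $\Fix^-(A)\in\wht\K$, the set $\wht\Tau$ is the disjoint union of the blocks $\Tau_{[K]}$ taken over the distinct conjugacy classes $[K]$ of maximal $2$-cogroups. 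In particular any $\mathsf F\subset\wht\Tau$ splits as $\mathsf F=\bigcup_{[K]}(\mathsf F\cap\Tau_{[K]})$, and distinct left-translation orbits $[A]\ne[B]$ are disjoint.

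Next I would establish the key reformulation of covering: a left-invariant $\mathsf F\subset\wht\Tau$ is $\wht\K$-covering if and only if $\mathsf F\cap\Tau_{[K]}\ne\emptyset$ for every class $[K]$. The forward implication is trivial because $\Tau_K\subset\Tau_{[K]}$; for the converse, a set $A\in\mathsf F\cap\Tau_{[K]}$ has $\Fix^-(A)=x^{-1}Kx$ for some $x$, and then left-invariance puts $xA\in\mathsf F$ with $\Fix^-(xA)=K$, so $K\in\Fix^-(\mathsf F)$. With this in hand the ``only if'' direction becomes a one-line minimality argument: if some block $\mathsf F\cap\Tau_{[K]}$ contained two distinct orbits $[A]\ne[B]$, then the family $\mathsf G=\mathsf F\setminus[B]$ would still be left-invariant, would still meet every block (it retains $[A]$ in the block $[K]$ and is unchanged on all other blocks), hence would still be $\wht\K$-covering while being strictly smaller than $\mathsf F$, contradicting minimality. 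Thus each block is a single orbit.

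For the ``if'' direction I would argue that if every block $\mathsf F\cap\Tau_{[K]}$ equals a single orbit $[A]$, then $\mathsf F$ is covering (every block is nonempty) and minimal. Indeed, given any left-invariant $\wht\K$-covering $\mathsf G\subset\mathsf F$, the reformulation gives $\mathsf G\cap\Tau_{[K]}\ne\emptyset$; picking a point of it and using left-invariance forces its whole orbit to lie in $\mathsf G$, so $[A]\subset\mathsf G\cap\Tau_{[K]}\subset\mathsf F\cap\Tau_{[K]}=[A]$, whence equality holds on every block and $\mathsf G=\mathsf F$. The argument is essentially bookkeeping; the one place that genuinely uses the structure theory is the reformulation step, where Proposition~\ref{p5.4} is needed to see that $\Tau_{[K]}$ is simultaneously left-invariant and cut out by a single conjugacy class, so I expect that to be the main point to get exactly right rather than a substantial obstacle.
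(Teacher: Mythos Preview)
Your proposal is correct and matches what the paper intends: the paper gives no proof at all, saying only that the proposition ``can be easily derived from the definitions,'' and your argument is exactly the straightforward derivation one would write --- decompose $\wht\Tau$ into the left-invariant blocks $\Tau_{[K]}$, reformulate $\wht\K$-covering for left-invariant families as the condition that every block be nonempty, and then observe that minimality forces (and is forced by) each block being a single orbit.
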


\section{The characteristic group $\HH(K)$ of a 2-cogroup $K$}\label{s8n}

In this section we introduce an important notion of the characteristic group $\HH(K)$ of a 2-cogroup $K$ in a group $X$ and reveal the algebraic structure of characteristic groups of maximal 2-cogroups. 
 
Observe that for each 2-cogroup $K\subset X$ its stabilizer $\Stab(K)=\{x\in X:xKx^{-1}=K\}$ contains $KK$ as a normal subgroup. So, we can consider the quotient group $\HH(K)=\Stab(K)/KK$ called the {\em characteristic group} of the 2-cogroup $K$. Characteristic groups will play a crucial role for description of the structure of maximal subgroups of the minimal ideal of the semigroup $\lambda(X)$.

Observe that for a normal 2-cogroup $K\in\mathcal K$ the characteristic group $\HH(K)$ is equal to the quotient group $X/KK$.

The characteristic group $\HH(K)$ of each maximal 2-cogroup $K\subset X$ has a remarkable algebraic property: it is a 2-group with a unique 2-element subgroup. Let us recall that a group $G$ is called a {\em 2-group} if the order of each element of $G$ is a power of 2. Let us recall some standard examples of 2-groups.

By $Q_8=\{1,i,j,k,-1,-i,-j,-k\}$ we denote the group of quaternions. It is a multiplicative subgroup of the algebra of quaternions $\IH$. The algebra $\IH$ contains the field of complex numbers $\IC$ as a subalgebra.
For each $n\in\w$ let 
$$C_{2^n}=\{z\in\IC:z^{2^n}=1\}$$ be the cyclic group of order $2^n$. The multiplicative subgroup $Q_{2^n}\subset\IH$ generated by the set $C_{2^{n-1}}\cup Q_8$ is called {\em the group of generalized quaternions}, see \cite[\S5.3]{Rob}. The subgroup $C_{2^{n-1}}$ has index 2 in $Q_{2^n}$ and all element of $Q_{2^n}\setminus C_{2^{n-1}}$ have order 4.
According to our definition, $Q_{2^n}=Q_8$ for $n\le 3$.
For $n\ge 3$ the group $Q_{2^n}$ has the presentation
$$\langle x,y\mid x^2=y^{2^{n-2}},\; x^4=1,\; xyx^{-1}=y^{-1}\rangle.$$
The unions
$$C_{2^\infty}=\bigcup_{n\in\w}C_{2^n}\mbox{ \ and \ } Q_{2^\infty}=\bigcup_{n\in\w}Q_{2^n}$$ are called the {\em quasicyclic 2-group} and {\em the infinite group of generalized quaternions}, respectively.
 
\begin{theorem}\label{BCQ} A group $G$ is isomorphic to $C_{2^n}$ or $Q_{2^n}$ for some $1\le n\le\infty$ if and only if $G$ is a 2-group with a unique element of order 2.
\end{theorem}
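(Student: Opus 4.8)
The plan is to prove the two implications separately, splitting the forward (``if'') direction into an abelian case, a finite case, and an infinite case.

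The ``only if'' direction is a direct verification on the four families. Each $C_{2^n}$ and $C_{2^\infty}$ is evidently a $2$-group whose unique element of order $2$ is $-1$. For $Q_{2^n}$ I would invoke the facts recorded just above the statement: the index-$2$ cyclic subgroup $C_{2^{n-1}}$ contains a single involution, while every element of $Q_{2^n}\setminus C_{2^{n-1}}$ has order $4$ and hence squares to that same involution; so $Q_{2^n}$ has exactly one element of order $2$. Passing to the increasing unions $C_{2^\infty}=\bigcup_n C_{2^n}$ and $Q_{2^\infty}=\bigcup_n Q_{2^n}$ preserves both being a $2$-group and having a unique involution.

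For the converse, let $G$ be a $2$-group with a unique element $z$ of order $2$. Two preliminary observations drive everything: first, $z$ is central, since any conjugate $gzg^{-1}$ is again an involution and hence equals $z$; second, every nontrivial cyclic subgroup $\langle g\rangle\cong C_{2^k}$ contains an involution, which must be $z$, so $z$ lies in every nontrivial subgroup. I would then dispatch the abelian case directly: if $G$ is abelian its socle $\{x\in G:x^2=e\}$ has order $2$, so $G$ has rank one; each $G[2^k]=\{x\in G:x^{2^k}=e\}$ is a bounded abelian $2$-group with cyclic socle, hence cyclic $\cong C_{2^{m_k}}$ by the classification of bounded abelian groups, and $G$ is therefore either a single $C_{2^{m_k}}$ (orders bounded) or the increasing union $\bigcup_k G[2^k]\cong C_{2^\infty}$. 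For the general case the strategy is to reduce to finite subgroups: every nontrivial finite subgroup $F\subset G$ is again a $2$-group with a unique involution, so by the classical finite theorem (finite $2$-groups with a single subgroup of order $2$ are cyclic or generalized quaternion; see Robinson \cite[\S5.3]{Rob}) $F\cong C_{2^k}$ or $Q_{2^k}$. Granting that $G$ is locally finite, $G$ is the directed union of such finite subgroups all sharing the central involution $z$; since every abelian subgroup of $G$ is $C_{2^n}$ or $C_{2^\infty}$, a non-abelian $G$ must contain a generalized quaternion $F$, and chaining these finite generalized quaternion groups along a cofinal ascending chain identifies the union as $Q_{2^\infty}$. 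In all cases $G\cong C_{2^n}$ or $Q_{2^n}$ for some $1\le n\le\infty$.

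The crux, and the main obstacle, is the local finiteness of $G$ together with the identification of the directed union: local finiteness is \emph{not} a consequence of the $2$-group hypothesis alone, since finitely generated infinite $2$-groups of Burnside type exist, so the unique-involution hypothesis must be exploited. The cleanest route I know is to note that every abelian subgroup of $G$, being $C_{2^n}$ or $C_{2^\infty}$, satisfies the minimal condition on subgroups; by Shunkov's theorem a periodic group all of whose abelian subgroups satisfy the minimal condition is a Chernikov group, in particular locally finite (and countable, with divisible part $\cong C_{2^\infty}$, which already forces the global structure). An alternative, more hands-on route analyzes a maximal abelian subgroup $A$ (cyclic or quasicyclic, with $A=C_G(A)$), bounds $[N_G(A):A]\le 2$ using that the torsion of $\Aut(C_{2^n})$ and of $\Aut(C_{2^\infty})\cong\mathbb Z_2^\times$ (the $2$-adic units) has order at most $2$, and checks that the nontrivial automorphism is inversion, realized by an order-$4$ element $g$ with $g^2=z$, thereby rebuilding the generalized quaternion relations; the delicate point there is verifying $G=N_G(A)$, which is exactly where a local finiteness or normalizer-condition argument is unavoidable.
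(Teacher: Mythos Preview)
Your proposal is correct and follows essentially the same route as the paper: both invoke the finite case via Robinson \cite[\S5.3]{Rob}, and both reduce the infinite case to the \v Cernikov structure by appealing to \v Sunkov's theorem. The paper phrases the hypothesis for \v Sunkov slightly differently (the maximal $2$-elementary subgroup is $\{-1,1\}$ rather than ``every abelian subgroup satisfies the minimal condition''), but the conclusion and citation are the same.

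The only point of divergence is the endgame once $G$ is known to be \v Cernikov. You sketch a chaining of finite generalized-quaternion subgroups along a cofinal ascending sequence, whereas the paper works directly with the \v Cernikov decomposition: it takes a maximal abelian subgroup $H$ containing the finite-index normal abelian part, shows $H\cong C_{2^\infty}$ via Baer's injectivity theorem, and then verifies by a short local computation (your ``alternative, more hands-on route'') that every $b\in G\setminus H$ satisfies $b^2=-1$ and $bab^{-1}=a^{-1}$, forcing $[G:H]=2$ and $G\cong Q_{2^\infty}$. Your parenthetical remark that the \v Cernikov structure ``already forces the global structure'' is exactly this argument; it is cleaner than the chaining approach, which would require checking that the inclusions $Q_{2^{k_n}}\subset Q_{2^{k_{n+1}}}$ align the cyclic index-$2$ subgroups compatibly.
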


\begin{proof} The ``only if'' part is trivial. To prove the ``if'' part, assume that $G$ is a 2-group with a unique element of order 2. Denote this element by $-1$ and let $1$ be the neutral element of $G$. If the group $G$ is finite, then by Theorem 5.3.6 of \cite{Rob}, $G$ is isomorphic to $C_{2^n}$ or $Q_{2^n}$ for some $n\in\IN$. So, we assume that $G$ is infinite.

Since $-1$ is a unique element of order 2, the cyclic subgroup $\{-1,1\}$ is the maximal 2-elementary subgroup of $G$ (we recall that a group is 2-elementary if it can be written as the direct sum of 2-element cyclic groups). Now Theorem 2 of \cite{Shun} implies that the group $G$ is \v Cernikov and hence contains a normal abelian subgroup $H$ of finite index. Since $G$ is infinite, so is the subgroup $H$. Let $\tilde H$ be a maximal subgroup of $G$ that contains $H$. 

We claim that $H=\tilde H$ and $H$ is isomorphic to the quasicyclic 2-group $C_{2^\infty}$.
Since $H$ is a 2-group, the unique element $-1$ of the group $G$ belongs to $H$. Let $f:\{-1,1\}\to C_2$ be the unique isomorphism. Since the group $C_{2^\infty}$ is injective, by Baer's Theorem \cite[4.1.2]{Rob}, the homomorphism $f:\{-1,1\}\to C_2\subset C_{2^\infty}$ extends to a homomorphism $\bar f:\tilde H\to C_{2^\infty}$. We claim that $\bar f$ is an isomorphism. Indeed, the kernel $\bar f^{-1}(1)$ of $\bar f$ is trivial since it is a 2-group and contains no element of order 2. So, $\bar f$ is injective and then $\bar f(\tilde H)$ coincides with $C_{2^\infty}$, being an infinite subgroup of $C_{2^\infty}$. By the same reason, $\bar f(H)=C_{2^\infty}$. Consequently, $H=\tilde H$ is isomorphic to $C_{2^\infty}$.

If $G=H$, then $G$ is isomorphic to $C_{2^\infty}$. So, it remains to consider the case of non-abelian group $G\ne H$. 

\begin{claim}\label{cl8.2g} For every $a\in H$ and $b\in G\setminus H$ we get $b^2=-1$ and $bab^{-1}=a^{-1}$.
\end{claim}

\begin{proof} The maximality of the abelian subgroup $H$ implies that $bx\ne xb$ for some element of $H$. Since $H$ is quasicyclic, we can assume that the element $x$ has order $\ge 8$ and $a$ belongs to the cyclic subgroup generated $\la x\ra$. 

Using the fact that the maximal abelian subgroup $H$ has finite index in $G$, one can show that the group $G$ is locally finite. Consequently, the subgroup 
$F=\la b,x\ra$  generated by the set $\{b,x\}$ is finite. By Theorem 5.3.6 \cite{Rob}, this subgroup is isomorphic to $Q_{2^n}$ for some $n\ge 4$.
Analyzing the properties of the group $Q_{2^n}$ we see that $b^2=-1$ and $byb^{-1}=y^{-1}$ for all $y\in\la x\ra$. In particular, $bab^{-1}=a^{-1}$. 
\end{proof}

Next, we show that the subgroup $H$ has index 2 in $G$. This will follow as soon as we show that for each $x,y\in G\setminus H$ we get $xy\in H$. Observe that for every $a\in H$ we get $xyay^{-1}x^{-1}=xa^{-1}x^{-1}=a$, which means that $xy$ commutes with each element of $H$ and hence $xy\in H$ by the maximality of $H$. Now take any elements $b\in G\setminus H$ and  $q\in Q_{2^\infty}\setminus C_{2^\infty}$. 
Extend the isomorphism $\bar f:H\to C_{2^\infty}$ to a map $\tilde f:G\to Q_{2^\infty}$ letting $\tilde f(bh)=q\cdot\bar f(h)$ for $h\in H$. Claim~\ref{cl8.2g} implies that $\tilde f$ is a well-defined isomorphism between $G=H\cup bH$ and $Q_{2^\infty}$.
\end{proof}

\begin{theorem}\label{t8.2} For each maximal 2-cogroup $K\in\wht{\mathcal K}$ in  a group $X$ the characteristic group $\HH(K)=\Stab(K)/KK$ is isomorphic either to $C_{2^n}$ or to $Q_{2^n}$ for some $1\le n\le\infty$.
\end{theorem}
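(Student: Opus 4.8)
The plan is to reduce the statement to Theorem~\ref{BCQ} by showing that the characteristic group $\HH(K)=\Stab(K)/KK$ is a $2$-group possessing a unique element of order $2$. Throughout write $H=KK$ and $K^\pm=K\cup H$, so that by Proposition~\ref{p6.2} the subgroup $H$ has index $2$ in $K^\pm$ and $K=K^\pm\setminus H$; let $\pi:\Stab(K)\to\HH(K)$ be the quotient homomorphism (recall $H$ is normal in $\Stab(K)$). First I would record two easy facts. Fixing $s\in K$, the coset $\sigma:=\pi(s)=sH$ has order $2$ in $\HH(K)$, since $s^2\in KK=H$ while $s\notin H$. Moreover $\sigma$ is \emph{central} in $\HH(K)$: for any $g\in\Stab(K)$ the defining relation $gKg^{-1}=K$ gives $gsg^{-1}\in K=sH$, whence $\pi(g)\,\sigma\,\pi(g)^{-1}=\sigma$. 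Centrality is what makes every subgroup $\langle\sigma,\bar g\rangle$ abelian, and this is used repeatedly below.

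The engine of the proof is the following construction, which is the only place where the maximality of $K$ enters. Suppose $\bar g\in\HH(K)$ is such that the abelian subgroup $\bar R:=\langle\sigma,\bar g\rangle$ satisfies $\bar R\ne\langle\sigma\rangle$ and $\sigma\notin\bar R^2$, where $\bar R^2$ denotes the subgroup of squares. Since $\sigma$ then has nontrivial image in $\bar R/\bar R^2$, there is an epimorphism $\psi:\bar R\to C_2$ with $\psi(\sigma)\ne 1$; as $|\bar R|>2$ its kernel $\bar Q=\ker\psi$ is an index-$2$ subgroup with $\sigma\notin\bar Q$ and $|\bar Q|\ge 2$. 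Pulling back along $\pi$, the subgroup $Q=\pi^{-1}(\bar Q)$ has index $2$ in $R=\pi^{-1}(\bar R)\le X$, so by Proposition~\ref{p6.2} the nontrivial coset $C=R\setminus Q$ is a $2$-cogroup in $X$. Because $\sigma\notin\bar Q$ we have $K=sH\subset C$, and because $C=\pi^{-1}(\bar R\setminus\bar Q)$ is a union of $|\bar Q|\ge 2$ cosets of $H$ while $K$ is a single one, we have $K\subsetneq C$ — contradicting the maximality of $K$. Hence no such $\bar g$ can exist.

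It remains to feed the right elements into this construction. If some $\bar g\in\HH(K)$ had odd prime order $p$, then $\bar R\cong C_2\times C_p$, so $\bar R^2\cong\{1\}\times C_p\not\ni\sigma$ and $\bar R\ne\langle\sigma\rangle$, a contradiction; since any element of finite order not a power of $2$ has a power of odd prime order, $\HH(K)$ has no element of such order. If some $\bar g$ had infinite order, then $\bar R\cong C_2\times\IZ$ with $\bar R^2\cong\{1\}\times 2\IZ\not\ni\sigma$, again contradicting the construction. Thus every element of $\HH(K)$ has order a power of $2$, i.e.\ $\HH(K)$ is a $2$-group. Finally, if $\tau\ne\sigma$ were a second element of order $2$, centrality of $\sigma$ gives $\bar R=\langle\sigma,\tau\rangle\cong C_2\times C_2$ with $\bar R^2=\{1\}\not\ni\sigma$ and $\bar R\ne\langle\sigma\rangle$, once more contradicting the construction; hence $\sigma$ is the unique element of order $2$. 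Theorem~\ref{BCQ} then yields $\HH(K)\cong C_{2^n}$ or $Q_{2^n}$ for some $1\le n\le\infty$.

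The main obstacle is locating the correct uniform mechanism by which maximality of $K$ forbids ``bad'' structure in $\HH(K)$. The key realization is that an element $\bar g$ produces a $2$-cogroup strictly larger than $K$ precisely when the abelian group $\langle\sigma,\bar g\rangle$ admits a homomorphism onto $C_2$ that is nontrivial on $\sigma$ — equivalently when $\sigma\notin\langle\sigma,\bar g\rangle^2$. Recognizing this single criterion, and checking that it is met by elements of odd prime order, by elements of infinite order, and by a hypothetical second involution, but \emph{not} by squares such as a $\bar g$ with $\bar g^2=\sigma$ (which must survive, since they occur in $C_{2^n}$ and $Q_{2^n}$), is what organizes all cases at once. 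A secondary subtlety is the reduction of an arbitrary element of non-$2$-power order to one of odd prime order, so that the construction need only be applied in the clean cases above.
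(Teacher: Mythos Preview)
Your proof is correct and follows essentially the same route as the paper: reduce to Theorem~\ref{BCQ} by showing $\HH(K)$ is a $2$-group with a unique involution, and obtain both facts by pulling back a suitable index-$2$ subgroup of $\HH(K)$ to produce a $2$-cogroup strictly containing $K$.

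The organization differs slightly. The paper first treats a hypothetical second involution $a$ via the (generally dihedral) subgroup $\langle a,d\rangle$ and its index-$2$ cyclic subgroup $\langle ad\rangle$, and then separately shows every nontrivial subgroup $G$ contains $d$ by forming the $2$-cogroup $dG$ inside $DG$. You instead isolate a single criterion --- $\sigma\notin\langle\sigma,\bar g\rangle^2$ --- which, together with the centrality of $\sigma$ (stated explicitly by you, used only implicitly in the paper), handles the odd-prime-order, infinite-order, and second-involution cases uniformly. This unified engine is a clean repackaging, but the underlying mechanism is the same.
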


\begin{proof} This theorem will follow from Theorem~\ref{BCQ} as soon as we check that $\HH(K)$ is a 2-group with a unique element of order 2.

 Let $q:\Stab(K)\to \HH(K)$ be the quotient homomorphism. Take any element $x\in K$ and consider its image $d=q(x)$. Since $K=xKK$, the image $q(K)=\{d\}$ is a singleton. Taking into account that $x\notin KK$ and $x^2\in KK$, we see that the element $d$ has order 2 in $\HH(K)$. 
We claim that any other element $a$ of order 2 in $\HH(K)$ is equal to $d$. 
Assume conversely that some element $a\ne d$ of $\HH(K)$ has order 2.

Let $C^\pm$ be the subgroup of $\HH(K)$ generated by the elements $a,d$ and $C$ be the cyclic subgroup generated by the product $ad$. We claim that $d\notin C$. Assuming conversely that $d\in C$, we conclude that $d=(ad)^n$ for some $n\in\IZ$. Then $a=add=ad(ad)^n=(ad)^{n+1}\in C$ and consequently $a=d$ (because cyclic groups contain at most one element of order 2). Therefore $d\notin C$. It is clear that $C^\pm=C\cup dC$, which means that the subgroup $C$ has index 2 in $C^\pm$.

Consider the subgroups $H^\pm=q^{-1}(C^\pm)$, $H=q^{-1}(C)$ and observe that the 2-cogroup $H^\pm\setminus H$ is strictly larger than $K$, which contradicts $K\in\wht{\mathcal K}$. 

Since $d$ is a unique element of order 2 in $\HH(K)$, the cyclic subgroup $D=\{d,d^2\}$ generated by $d$ is normal in $\HH(K)$. Consequently, for each non-trivial subgroup $G\subset \HH(K)$ the product $D\cdot G=G\cdot D$ is a subgroup in $\HH(K)$. Now we see that $G$ must contain $d$. Otherwise, $dG$ would be a 2-cogroup in $\HH(K)$ and its preimage $q^{-1}(dG)$ would be a 2-cogroup in $X$ that contains the 2-cogroup $K$ as a proper subset, which is impossible as $K$ is a maximal 2-cogroup in $X$.

Therefore each non-trivial subgroup of $\HH(K)$ contains $d$. This implies that each element $x\in \HH(K)$ has finite order which is a power of 2, witnessing that $\HH(K)$ is a 2-group with a single element of order 2.
\end{proof}

\section{Twin-generated topologies on groups}\label{s9}

In this section we study so-called twin-generated topologies on groups.
The information obtained in this section will be used in Section~\ref{s18} for studying the topological structure of maximal subgroups of the minimal ideal of the superextension $\lambda(X)$.

Given a twin subset $A$ of a group $X$ consider the topology $\tau_A$ on $X$ generated by the subbase consisting of the right shifts $Ax$, $x\in X$. In the following proposition by the weight of a topological space we understand the smallest cardinality of a subbase of its topology.  

\begin{proposition}\label{p10.1}
\begin{enumerate}
\item[\textup{(1)}] The topology $\tau_A$ turns $X$ into a right-topological group. 
\item[\textup{(2)}] If $Ax=xA$ for all $x\in \Fix^-(A)$, then the topology $\tau_A$ is zero-dimensional.
\item[\textup{(3)}] The topology $\tau_A$ is $T_1$ if and only if the intersection $\bigcap_{x\in A}Ax^{-1}$ is a singleton.
\item[\textup{(4)}] The weight of the space $(X,\tau_A)$ does not exceed the index of the subgroup $\Fix(A^{-1})$ in $X$. 
\end{enumerate}
\end{proposition}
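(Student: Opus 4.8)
The plan is to derive all four assertions from the single observation that the defining subbase $\mathcal S=\{Ax:x\in X\}$ is permuted by right translations. For (1) I would compute, for a right shift $r_a\colon x\mapsto xa$ and a subbasic set $Ab$, the preimage $r_a^{-1}(Ab)=\{x:xa\in Ab\}=Aba^{-1}$, which again lies in $\mathcal S$ and is therefore open. Hence each $r_a$ is continuous, and since its inverse is the continuous shift $r_{a^{-1}}$, every $r_a$ is a homeomorphism; this is exactly the statement that $(X,\tau_A)$ is a right-topological group. I would also record that the shifts act transitively (as $r_{x^{-1}y}(x)=y$), so $(X,\tau_A)$ is topologically homogeneous, a fact I reuse in (3).

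For (2) I would combine twinness with the commuting hypothesis to show each subbasic set is clopen. Fixing $g\in\Fix^-(A)$, so that $gA=X\setminus A$, the complement of a subbasic set $Ax$ is $X\setminus Ax=(X\setminus A)x=gAx$; the hypothesis $Ag=gA$ rewrites this as $A(gx)$, again a member of $\mathcal S$. Thus every $Ax$ is clopen, so the finite intersections of subbasic sets form a base of clopen sets and $\tau_A$ is zero-dimensional. For (4) I would instead count the distinct subbasic sets: writing $R=\{w\in X:Aw=A\}$ for the right stabilizer of $A$, a short inversion computation gives $R=\Fix(A^{-1})$, while $Ax=Ay$ holds iff $xy^{-1}\in R$. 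Hence the distinct right shifts $Ax$ are in bijection with the right cosets of $R$, so $\mathcal S$ is a subbase of cardinality $|X/\Fix(A^{-1})|$, which bounds the weight as claimed.

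For (3) I would first invoke the general fact that a space is $T_1$ if and only if for every point $z$ the intersection $N(z)$ of all open neighborhoods of $z$ equals $\{z\}$; by the homogeneity from (1) this reduces to the single condition $N(e)=\{e\}$. Then I would identify $N(e)$: a subbasic set $Ay$ contains $e$ exactly when $y\in A^{-1}$, so the subbasic neighborhoods of $e$ are precisely the sets $Ax^{-1}$ with $x\in A$, and a routine subbase argument yields $N(e)=\bigcap_{x\in A}Ax^{-1}$. Since $e\in Ax^{-1}$ for each $x\in A$, the neutral element always lies in this intersection, so it is a singleton if and only if it equals $\{e\}$, which is the asserted equivalence.

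The only genuinely delicate step is (3): one must pass from the pointwise $T_1$ property to closedness of $\{e\}$ via homogeneity, and then verify carefully that the intersection of the neighborhoods of $e$ is exactly $\bigcap_{x\in A}Ax^{-1}$ and not something smaller. Everything else is bookkeeping once the right-translation invariance of $\mathcal S$ is established.
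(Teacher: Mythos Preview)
Your proposal is correct and follows essentially the same approach as the paper: both arguments exploit that right shifts permute the subbase $\{Ax:x\in X\}$, identify the subbasic neighborhoods of $e$ as $\{Ax^{-1}:x\in A\}$, and count distinct subbasic sets via the right stabilizer $\Fix(A^{-1})$. Your treatment is somewhat more explicit (e.g., spelling out homogeneity for part (3) and the coset bijection for part (4)), but the underlying ideas coincide with the paper's proof.
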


\begin{proof} 1. It is clear that the topology $\tau_A$ is right-invariant. 

2. If $Ax=xA$ for all $x\in \Fix^-(A)$, then the set $X\setminus A$ is open in the topology $\tau_A$ because $X\setminus A=xA=Ax$ for any $x\in\Fix^-(A)$. Consequently, $A$ is an open-and-closed subbasic set. Now we see that the space $(X,\tau_A)$ has a base consisting of open-and-closed subsets, which means that it is zero-dimensional. 

3. If the topology $\tau_A$ is $T_1$, then the intersection $\bigcap_{a\in A}Aa^{-1}$ of all open neighborhoods of the neutral element $e$ of $X$ consists of a single point $e$.
Assuming conversely that $\bigcap_{a\in A}Aa^{-1}$ is a singleton $\{e\}$, for any two distinct points $x,y\in X$ we can find a shift $Aa^{-1}$, $a\in A$, that contains the neutral element $e$ but not $yx^{-1}$. Then the shift $Aa^{-1}x$ is an open subset of $(X,\tau)$ that contains $x$ but not $y$, witnessing that the space $(X,\tau_A)$ is $T_1$.

4. To estimate the weight of the space $(X,\tau_A)$, choose a subset $S\subset X$ meeting each coset $x\Fix(A^{-1})$, $x\in X$, at a single point (here $\Fix(A^{-1})=\{x\in X:xA^{-1}=A^{-1}\}$). Then the set $S^{-1}$ meets each coset $\Fix(A)x$, $x\in X$, at a single point. It is easy to see that the family $\{Ax:x\in S^{-1}\}$ forms a subbase of the topology of $\tau$ and hence the weight of $(X,\tau)$ does not exceed $|X/\Fix(A^{-1})|$.
\end{proof}

\begin{definition} A topology $\tau$ on a group $X$ will be called {\em twin-generated} if $\tau$ is equal to the topology $\tau_A$ generated by some twin subset $A\subset X$, i.e., $\tau$ is generated by the subbase $\{Ax:x\in X\}$.
\end{definition}

Because of Theorem~\ref{t8.2}, we shall be especially interested in 
twin-generated topologies on the quasi-cyclic group $C_{2^\infty}$
 and the infinite quaternion group $Q_{2^\infty}$. 
First we consider some examples. 

\begin{example}\label{e9.3}In the circle $\IT=\{z\in\IC:|z|=1\}$ consider the twin subset $C_{\!\curvearrowleft}=\{e^{i\varphi}:0\le\varphi<\pi\}$.
\begin{enumerate} 
\item[\textup{(1)}] For each $z\in\IT\setminus C_{2^\infty}$ the twin set $C_{2^\infty}\cap zC_{\!\curvearrowleft}$ generates the Euclidean topology  on $C_{2^\infty}$.
\item[\textup{(2)}] For each $z\in C_{2^\infty}$ the twin set $C_{2^\infty}\cap zC_{\!\curvearrowleft}$ generates the Sorgenfrey topology on $C_{2^\infty}$. This topology turns $C_{2^\infty}$ into a paratopological group with discontinuous inversion.
\end{enumerate}
\end{example}

A similar situation holds for the group $Q_{2^\infty}$. Its closure in the algebra of quaternions $\IH$ coincides with the multiplicative subgroup $\IT\cup\IT\mathbf j$ of $\IH$, where $\mathbf j\in Q_8\setminus \IC$ is one of non-complex quaternion units. 

\begin{example}\label{e9.4}In the group $\IT\cup\IT\mathbf j\subset\IH$ consider the twin subset $Q_{\!\curvearrowleft}=C_{\!\curvearrowleft}\cup C_{\!\curvearrowleft}\,\mathbf j$. 
\begin{enumerate}
\item[\textup{(1)}] For each $z\in\IT\setminus C_{2^\infty}$ the twin set $Q_{2^\infty}\cap zQ_{\!\curvearrowleft}$ generates the Euclidean topology  on $Q_{2^\infty}$.
\item[\textup{(2)}] For each $z\in C_{2^\infty}$ the twin set $Q_{2^\infty}\cap z Q_{\!\curvearrowleft}$ generates the Sorgenfrey topology on $C_{2^\infty}$. This topology turns $Q_{2^\infty}$ into a right-topological group with discontinuous inverse and discontinuous left shifts $l_x:Q_{2^\infty}\to Q_{2^\infty}$ for $x\in Q_{2^\infty}\setminus C_{2^\infty}$.
\end{enumerate}
\end{example}

In the following proposition by $\tau_E$ we denote the Euclidean topology on $C_{2^\infty}$.

\begin{theorem}\label{t9.5} Each metrizable right-invariant topology $\tau\supset\tau_E$ on the group $C_{2^\infty}$ (or $Q_{2^\infty}$) is twin-generated.
\end{theorem}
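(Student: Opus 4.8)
The plan is to reduce the whole statement to a question about the neighborhood filter at the identity and then to build, by a countable recursion, a single twin set realizing that filter. Since $C_{2^\infty}$ is abelian, a right-invariant topology is translation-invariant and is completely determined by its neighborhood filter $\mathcal N$ at the identity $e$; by Proposition~\ref{p10.1}(1) the same holds for $\tau_A$, whose filter $\mathcal N_A$ is generated by the subbase $\{Aa^{-1}:a\in A\}$ (these are exactly the subbasic sets $Ax$ containing $e$, since $e\in Ax\iff x\in A^{-1}$), with finite intersections forming a base. Thus proving $\tau=\tau_A$ amounts to finding a twin set $A$ with $\mathcal N_A=\mathcal N$. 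Because $\tau$ is metrizable, I would fix a decreasing base $U_0\supseteq U_1\supseteq\cdots$ of $\mathcal N$; as $\tau\supseteq\tau_E$ and $\tau_E$ is Hausdorff, one may assume $\bigcap_k U_k=\{e\}$ and that each $U_k$ lies inside an ever-shrinking Euclidean arc about $e$.

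For the construction I would embed $C_{2^\infty}$ as the dyadic points of $\IT=\IZ[1/2]/\IZ$ with its circular order and work with the order-two element $-1$, i.e. the involution $\sigma:x\mapsto x+\tfrac12$. A twin set $A$ with $-1\in\Fix^-(A)$ is precisely a selector picking one point of each antipodal pair $\{x,x+\tfrac12\}$, so that $\sigma(A)=X\setminus A$; note that $A$ is automatically $\tau_A$-clopen by Proposition~\ref{p10.1}(2). Examples~\ref{e9.3} and~\ref{e9.4} are the prototypes: an arc-cut at a non-dyadic point of $\IT$ yields the Euclidean topology, while a cut at a dyadic point yields the one-sided (Sorgenfrey) topology. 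The set $A$ is then designed to encode the whole sequence $(U_k)$: I would place, inside a family of pairwise disjoint dyadic half-open arcs shrinking toward $e$ (together with their $\sigma$-images on the antipodal side, to keep $A$ a selector), arc-patterns whose translates back to $e$ reproduce each $U_k$. A bookkeeping recursion over $k\in\w$ realizes all of them, using that each $U_k$, after intersection with a Euclidean arc, is a finite union of dyadic arcs, so that a suitable finite intersection of shifts of $A$ recovers it.

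It then remains to verify the two inclusions $\mathcal N_A=\mathcal N$. For $\mathcal N\subseteq\mathcal N_A$ one checks that each $U_k$ is obtained as a finite intersection $\bigcap_i Aa_i^{-1}$ by reading off the arc-endpoints installed at stage $k$. For $\mathcal N_A\subseteq\mathcal N$ one shows that every finite intersection of subbasic sets $Aa^{-1}$ is $\tau$-open; here the hypothesis $\tau\supseteq\tau_E$ does most of the work, since away from the finitely many controlled dyadic cuts the set $A$ behaves like the Euclidean prototype of Example~\ref{e9.3}(1), so its shifts are $\tau_E$-open, hence $\tau$-open, while the dyadic cuts used at stage $k$ were chosen to sit inside some $U_j\in\mathcal N$. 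Propagating this through the recursion yields both inclusions and hence $\tau_A=\tau$. For $Q_{2^\infty}$ I would exploit the index-two subgroup $C_{2^\infty}$ together with the twin set $Q_{\curvearrowleft}=C_{\curvearrowleft}\cup C_{\curvearrowleft}\mathbf j$ of Example~\ref{e9.4}: restrict $\tau$ to $C_{2^\infty}$, realize it there by a twin set, and extend the selector across the nontrivial coset using $\mathbf j$ exactly as in Example~\ref{e9.4}, checking that the conjugating action does not disturb the generated filter.

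The main obstacle is the exact matching $\mathcal N_A=\mathcal N$ carried out in the construction: one must arrange the dyadic cuts finely enough that finite intersections of shifts of $A$ generate every $U_k$, yet coarsely enough that no spurious, strictly finer neighborhoods are manufactured. This tension is precisely what forces the joint use of metrizability (a countable base, so that only countably many cuts must be installed and the recursion terminates in the limit) and of $\tau\supseteq\tau_E$ (which pins the large-scale, non-dyadic behavior of $A$ to the Euclidean prototype and thereby prevents the shifts of $A$ from separating points more finely than $\tau$ does).
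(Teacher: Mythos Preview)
Your approach is essentially the same as the paper's: reduce to the neighborhood filter at $e$, fix a countable clopen base $(U_n)$ with $U_n$ contained in shrinking Euclidean arcs $E_n$, and modify a basic arc-type twin set by installing translated copies of the $U_n$ in pairwise disjoint arc-windows converging to a boundary point of the arc (with the antipodal side carrying $E_n\setminus U_n$ to preserve the twin property); the $Q_{2^\infty}$ case is then handled exactly as you say, via the coset decomposition $C_{2^\infty}\cup C_{2^\infty}\mathbf j$. The paper in fact writes down an explicit formula for $A$ (using windows $e^{i\varphi_n}E_n$ with $\varphi_n=\sum_{k\le n}\pi/4^k\to\pi/3$) and leaves the verification of $\tau_A=\tau$ to the reader.

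One correction: your parenthetical claim that each $U_k$, intersected with a Euclidean arc, is a \emph{finite union of dyadic arcs} is false in general (e.g.\ for $\tau$ discrete, $U_k=\{e\}$), and you do not need it. The mechanism by which shifts of $A$ recover $U_k$ is not that $U_k$ has simple arc-structure, but that a \emph{literal translate} $e^{i\varphi_k}U_k$ sits inside $A\cap e^{i\varphi_k}E_k$; shifting back by $e^{-i\varphi_k}$ and intersecting with the Euclidean arc $E_k$ (itself obtainable from shifts of $A$ since $\tau_A\supseteq\tau_E$) yields $U_k$ exactly. With this adjustment your two-inclusion verification goes through.
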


\begin{proof} First we consider the case of the group $C_{2^\infty}$. Let $E_0=C_{2^\infty}\cap \{e^{i\varphi}:-\pi/3<\varphi<2\pi/3\}$ be the twin subset generating the Euclidean topology $\tau_E$ on $C_{2^\infty}$ and $E_n=C_{2^\infty}\cap\{e^{i\varphi}:|\varphi|<3^{-n-1}\pi\}$ for $n\ge 1$. 
For every $n\in\IN$ let $\varphi_n=\sum_{k=1}^n\pi/4^n$ and observe that
$\varphi_\infty=\sum_{k=1}^\infty\pi/4^n=\pi/3$.

Let $\tau\supset\tau_E$ be any metrizable right-invariant topology on $C_{2^\infty}$. The metrizable space $(C_{2^\infty},\tau)$ is countable and hence zero-dimensional. Since $\tau\supset\tau_E$, there exists a neighborhood base $\{U_n\}_{n=1}^\infty\subset\tau$ at the unit 1 such that each set $U_n$ is closed and open in $\tau$ and $U_n\subset E_n$ for all $n\in\IN$.

The interested reader can check that the twin subset
$$A=(E_0\setminus\bigcup_{n=1}^\infty e^{i\varphi_n}E_n)\cup \bigcup_{n=1}^\infty e^{i\varphi_n}U_n\cup \bigcup_{n=1}^\infty e^{i(\pi+\varphi_n)}E_n\setminus U_n$$generates the topology $\tau$.
\smallskip

Next, assume that $\tau\supset \tau_E$ is a right-invariant topology on the group $Q_{2^\infty}$. This group can be written as $Q_{2^\infty}=C_{2^\infty}\cup C_{2^\infty}\mathbf j$, where $\mathsf j\in Q_8\setminus\IC$ is a non-complex quaternion unit. Since $C_{2^\infty}\in\tau_E\subset\tau$, the subgroup $C_{2^\infty}$ is open in $Q_{2^\infty}$. By the preceding item, the topology $\tau\cap \mathsf P(C_{2^\infty})$ on the group $C_{2^\infty}$ is generated by a twin set $A\subset C_{2^\infty}\setminus\{e^{i\varphi}:\varphi\in(\frac{2\pi}3,\pi)\cup(\frac{4\pi}3,\frac{5\pi}3)\}$. A simple geometric argument shows that the topology $\tau$ is generated by the twin subset $A\cup A\mathbf j$ of $Q_{2^\infty}$.  
\end{proof} 

\begin{problem} Are all metrizable right-invariant topology on 
$C_{2^\infty}$ and $Q_{2^\infty}$ twin-generated?
\end{problem}

\section{The characteristic group $\HH(A)$ of a twin subset $A$}

In this section, given a twin subset $A\in\Tau$ of a group $X$ we introduce a twin-generated topology  on the characteristic group $\HH(K)$ of the 2-cogroup $K=\Fix^-(A)$.

Consider the intersection $B=A\cap \Stab(K)=B\cdot KK$ and the image $A'=q_A(B)$ of the set $B$ under the quotient homomorphism $q_A:\Stab(K)\to \HH(K)=\Stab(K)/KK$. We claim that $A'$ is a twin subset of $\HH(K)$.

Indeed, for every $x\in\Fix^-(A)=K\subset\Stab(K)$ we get
$X\setminus A=xA$ and consequently, $\Stab(K)\setminus B=xB$ and $\HH(A)\setminus A'=zA'$ where $z\in q_A(x)$. 

Now it is legal to endow the group $\HH(K)$ with the topology $\tau_{A'}$ generated by the twin subset $A'$. This topology is generated by the subbase $\{A'x:x\in \HH(K)\}$. By Proposition~\ref{p10.1} the topology $\tau_{A'}$ turns the characteristic group $\HH(K)$ into a right-topological group, which will be called the {\em characteristic group} of $A$ and will be denoted by $\HH(A)$. By Proposition~\ref{p10.1}, the characteristic group $\HH(A)$ is a $T_1$-space and its weight does not exceed the cardinality of $\HH(A)$.

The reader should be conscious of the fact that for two twin subsets $A,B\in\Tau$ with $\Fix^-(A)=\Fix^-(B)$ the characteristic group $\HH(A)$ and $\HH(B)$ are algebraically isomorphic but topologically they can be distinct,  see Examples~\ref{e9.3} and \ref{e9.4}.

\section{Characterizing functions that belong to $\Enl^\I(\mathsf F)$}

In this section for a twinic ideal $\I$ on a group $X$ and a left-invariant subfamily $\mathsf F\subset\wht{\Tau}$ we characterize functions $f:\mathsf F\to\mathsf P(X)$ that belong to the space $\Enl^\I(\mathsf F)$. We recall that $\Enl^\I(\mathsf F)$ is the projection of $\Enl^\I(\mathsf P(X))$ onto the face $\mathsf P(X)^{\mathsf F}$. 

\begin{theorem}\label{t11.1} For a left-invariant twinic ideal $\I$ on a group $X$ and a left-invariant subfamily $\mathsf F\subset\Tau$ a function $\varphi:\mathsf F\to\mathsf P(X)$ belongs to the space $\Enl^\I(\mathsf F)$ if and only if $\varphi$ is equivariant, $\I$-saturated, and $\Fix^-(A)\subset \Fix^-(\varphi(A))$ for all $A\in\mathsf F$.
\end{theorem}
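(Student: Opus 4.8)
The plan is to follow the template of Theorem~\ref{t4.8}, but to replace its hypotheses of monotonicity and symmetry (which are not available here) by the twin structure of $\mathsf F\subset\Tau$ together with the twinic property of $\I$. Since the function representation $\Phi:\lambda^\I(X)\to\Enl^\I(\mathsf P(X))$ is an isomorphism (the Proposition preceding Lemma~\ref{maxfree}), membership $\varphi\in\Enl^\I(\mathsf F)$ is equivalent to the existence of an $\I$-saturated maximal linked system $\LL\in\lambda^\I(X)$ with $\Phi_\LL|\mathsf F=\varphi$; so the real content lies in the ``if'' direction. The ``only if'' direction is pure inheritance: if $\varphi=\psi|\mathsf F$ with $\psi\in\Enl^\I(\mathsf P(X))$, then $\varphi$ is automatically equivariant and $\I$-saturated, and for $A\in\mathsf F$, $x\in\Fix^-(A)$ the equality $xA=X\setminus A$ gives, upon applying the equivariant symmetric $\psi$ (Corollary~\ref{c4.4}), $x\psi(A)=\psi(X\setminus A)=X\setminus\psi(A)$, i.e. $x\in\Fix^-(\varphi(A))$; hence $\Fix^-(A)\subset\Fix^-(\varphi(A))$.

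For the ``if'' direction I would set $\LL_\varphi=\{x^{-1}A:A\in\mathsf F,\ x\in\varphi(A)\}$ and pass to its $\I$-saturation $\bar{\bar\LL}^\I_\varphi=\bigcup_{L\in\LL_\varphi}\bar{\bar L}^\I$. The one genuinely non-routine point is linkedness of $\bar{\bar\LL}^\I_\varphi$, and I expect this to be the main obstacle. I would isolate the following \emph{core claim}, which is exactly where twinicity enters and which plays the role monotonicity played in Theorem~\ref{t4.8}: \emph{if $\I$ is twinic and $P,Q$ are $\I$-twin with $P\subset_\I Q$, then $P=_\I Q$.} To prove it, pick $u,v\in X$ with $uP=_\I X\setminus P$ and $vQ=_\I X\setminus Q$. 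Shifting $P\subset_\I Q$ by $u$ gives $X\setminus P\subset_\I uQ$, whence $X\setminus uQ\subset_\I P$, that is $uvQ\subset_\I P\subset_\I Q$. Shifting $uvQ\subset_\I Q$ by $v$ gives $vuvQ\subset_\I vQ=_\I X\setminus Q$, so that $vuvQ\subset_\I X\setminus Q\subset_\I vQ$; twinicity of $\I$ applied to $Q$ forces $vuvQ=_\I vQ$, i.e. $uvQ=_\I Q$. Squeezing $uvQ\subset_\I P\subset_\I Q$ then yields $P=_\I Q$. The subtlety is precisely that a single $\I$-containment $P\subset_\I Q$ must be manufactured into a genuine $\I$-pretwin chain for $Q$ before the twinic hypothesis can be invoked.

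Granting the core claim, linkedness follows quickly. First note each $\varphi(A)$ is honestly twin, since $\Fix^-(\varphi(A))\supset\Fix^-(A)\neq\emptyset$ forces $\varphi(A)\notin\{\emptyset,X\}$. Now if two members of $\LL_\varphi$ had $\I$-small intersection, say $x^{-1}A\cap y^{-1}B\in\I$ with $x\in\varphi(A)$, $y\in\varphi(B)$, then for $g=yx^{-1}$ we get $gA\subset_\I X\setminus B$. Both $gA$ (Proposition~\ref{p5.4}) and $X\setminus B$ are $\I$-twin, so the core claim gives $gA=_\I X\setminus B$. Writing $X\setminus B=vB$ with $v\in\Fix^-(B)\subset\Fix^-(\varphi(B))$ and using that $\varphi$ is $\I$-saturated and equivariant, I would compute $g\varphi(A)=\varphi(gA)=\varphi(X\setminus B)=v\varphi(B)=X\setminus\varphi(B)$, i.e. $x^{-1}\varphi(A)=X\setminus y^{-1}\varphi(B)$. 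This contradicts $e\in x^{-1}\varphi(A)\cap y^{-1}\varphi(B)$, which holds because $x\in\varphi(A)$ and $y\in\varphi(B)$. Hence $\bar{\bar\LL}^\I_\varphi$ is linked.

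Finally I would enlarge $\bar{\bar\LL}^\I_\varphi$ to a maximal $\I$-saturated linked system $\LL$, which is maximal linked by Lemma~\ref{maxfree}, so $\LL\in\lambda^\I(X)$, and verify $\Phi_\LL|\mathsf F=\varphi$. The inclusion $\varphi(A)\subset\Phi_\LL(A)$ is immediate from $\LL_\varphi\subset\LL$. For the reverse inclusion I would again exploit the twin condition: for $A\in\mathsf F$ we have $X\setminus A=wA\in\mathsf F$ with $w\in\Fix^-(A)\subset\Fix^-(\varphi(A))$, so $\varphi(X\setminus A)=w\varphi(A)=X\setminus\varphi(A)$; thus any $x\notin\varphi(A)$ lies in $\varphi(X\setminus A)$, giving $x^{-1}(X\setminus A)\in\LL_\varphi\subset\LL$, and linkedness of $\LL$ forces the disjoint set $x^{-1}A\notin\LL$, i.e. $x\notin\Phi_\LL(A)$. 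Therefore $\varphi=\Phi_\LL|\mathsf F\in R_{\mathsf F}(\Enl^\I(\mathsf P(X)))=\Enl^\I(\mathsf F)$, completing the argument.
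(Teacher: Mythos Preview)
Your proof is correct. The ``only if'' direction matches the paper's. For the ``if'' direction you take a genuinely different route: the paper extends $\varphi$ to $\bar\varphi$ on the $\I$-saturation $\bar{\bar{\mathsf F}}^\I$, checks that $\bar\varphi$ is symmetric and monotone, and then invokes Theorem~\ref{t4.8}; its monotonicity check (for $A,B\in\mathsf F$ with $A\subset_\I B$, pick $a\in\Fix^-(A)$, $b\in\Fix^-(B)$ and squeeze $bB\subset_\I X\setminus B\subset_\I aB$ via twinicity to get $A=_\I B$) is the analogue of your core claim. You instead bypass Theorem~\ref{t4.8} entirely, building $\LL_\varphi$ directly and proving linkedness via the general incomparability statement ``$P,Q$ $\I$-twin and $P\subset_\I Q$ imply $P=_\I Q$'' (this is essentially Proposition~\ref{p13.1}, and your shift-and-squeeze proof of it is correct). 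The paper's route is more modular (reusing Theorem~\ref{t4.8}); yours is more self-contained and isolates precisely where twinicity enters---namely, only through the $\I$-incomparability of $\I$-twin sets. One small remark: when you invoke $\I$-saturation to pass from $gA=_\I vB$ to $\varphi(gA)=\varphi(vB)$, you are implicitly using that both $gA$ and $vB$ lie in $\mathsf F$; this holds because $\mathsf F$ is left-invariant, but it is worth saying explicitly.
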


\begin{proof} To prove the ``only if'' part, take any function $\varphi\in\Enl^\I(\mathsf F)$ and find a function $\psi\in\Enl^\I(\mathsf P(X))$ such that $\varphi=\psi|\mathsf F$. By Theorem~\ref{t4.8}, the function $\psi$ is equivariant, monotone, symmetric, and $\I$-saturated. Consequently, its restriction $\varphi=\psi|\mathsf F$ is equivariant and $\I$-saturated. Now fix any subset $A\in\mathsf F$ and take any point $x\in\Fix^-(A)$. The left-invariance of $\mathsf F$ guarantees that $X\setminus A=xA\in\mathsf F$, which means that the family $\mathsf F$ is symmetric.

 Applying the equivariant symmetric function $\psi$ to the equality $xA= X\setminus A$, we get
$$x\,\varphi(A)=x\,\psi(A)=\psi(xA)=\psi(X\setminus A)=X\setminus\psi(A)=X\setminus\varphi(A)$$ and thus $x\in\Fix^-(\varphi(A))$ and $\Fix^-(A)\subset\Fix^-(\varphi(A))$. 
\smallskip

To prove the ``if'' part, fix any equivariant $\I$-saturated function $\varphi:\mathsf F\to\mathsf P(X)$ such that $\Fix^-(A)\subset\Fix^-(\varphi(A))$. 
In order to apply Theorem~\ref{t4.8}, we need to extend the function $\varphi$ to some symmetric $\I$-saturated family. This can be done as follows.

Consider the $\I$-saturization 
$\bar{\bar{\mathsf F}}^\I=\bigcup_{A\in\mathsf F}\bar{\bar A}^\I$ of $\mathsf F$. Next, extend the function $\varphi$ to the function $\bar\varphi:\bar{\bar{\mathsf F}}\to \mathsf F$ assigning to each set $B\in\bar{\bar{\mathsf F}}^\I$ the set $\varphi(A)$ where $A\in\mathsf F\cap\bar{\bar B}^\I$. Since $\varphi$ is $\I$-saturated, so defined extension $\bar\varphi$ of $\varphi$ is well-defined and $\I$-saturated. The equivariance of $\varphi$ implies the equivariance of its extension $\bar\varphi$.

Let us check that the function $\bar\varphi:\bar{\bar{\mathsf F}}^\I\to\mathsf F$ is symmetric and monotone.

To see that $\bar\varphi$ is symmetric, take any set $B\in\bar{\bar{\mathsf F}}^\I$ and find a set $A\in\mathsf F\cap\bar{\bar B}^\I$. Fix any point $x\in \Fix^-(A)$. By our hypothesis $x\in\Fix^-(A)\subset\Fix^-(\varphi(A))$.  It follows from $A=_\I B$ that $X\setminus A=_\I X\setminus B$ and hence
$$\bar\varphi(X\setminus B)=\varphi(X\setminus A)=\varphi(xA)=x\varphi(A)=X\setminus \varphi(A)=X\setminus\bar\varphi(B),$$
which means that the function $\bar\varphi$ is symmetric.

The monotonicity of $\bar\varphi$ will follow as soon as we check that $\varphi(A)=\varphi(B)$ for any sets $A,B\in\mathsf F$ with $A\subset_\I B$.
Pick points $a\in\Fix^-(A)$, $b\in\Fix^-(B)$. Since the ideal $\I$ is twinic, the chain of $\I$-inclusions $bB=X\setminus B\subset_\I X\setminus A=aA\subset_\I aB$ implies the chain of $\I$-equalities $bB=_\I X\setminus B=_\I X\setminus A=aA=_\I aB$, which yields $A=_\I B$ and $\varphi(A)=\varphi(B)$ as $\varphi$ is $\I$-saturated.

Therefore $\bar\varphi:\bar{\bar{\mathsf F}}^\I\to\mathsf F$ is a left-invariant symmetric monotone $\I$-saturated function defined on a $\I$-saturated  left-invariant symmetric family $\bar{\bar{\mathsf F}}^\I$. By Theorem~\ref{t4.8}, $\bar\varphi$ belongs to $\Enl^\I(\bar{\bar{\mathsf F}}^\I)$ and then its restriction $\varphi=\bar\varphi|\mathsf F$ belongs to $\Enl^\I(\mathsf F)$. 
\end{proof}

Let us recall that $\wht\Tau=\{A\subset X:\Fix^-(A)\in\wht\K\}$. 

\begin{corollary} For a left-invariant twinic ideal $\I$ on a group $X$  the space $\Enl^\I(\wht{\Tau})$ consists of all equivariant $\I$-saturated functions $\varphi:\wht{\Tau}\to\wht{\Tau}$ such that $\Fix^-(\varphi(A))=\Fix^-(A)$ for all $A\in\wht{\Tau}$.
\end{corollary}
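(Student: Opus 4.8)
The plan is to derive this corollary directly from Theorem~\ref{t11.1} by specializing it to the family $\mathsf F=\wht{\Tau}$. First I would verify that $\wht{\Tau}$ is left-invariant, so that Theorem~\ref{t11.1} actually applies. By Proposition~\ref{p5.4} we have $\Fix^-(xA)=x\,\Fix^-(A)\,x^{-1}$ for every $A\in\Tau$ and $x\in X$; since conjugation by $x$ is an order-preserving bijection of the poset $(\K,\subset)$, it carries maximal $2$-cogroups to maximal $2$-cogroups. Hence $\Fix^-(A)\in\wht\K$ implies $\Fix^-(xA)\in\wht\K$, so $\wht\Tau$ is a left-invariant subfamily of $\Tau$ and Theorem~\ref{t11.1} is available.

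Applying that theorem with $\mathsf F=\wht\Tau$ gives: a function $\varphi:\wht\Tau\to\mathsf P(X)$ lies in $\Enl^\I(\wht\Tau)$ if and only if it is equivariant, $\I$-saturated, and $\Fix^-(A)\subset\Fix^-(\varphi(A))$ for every $A\in\wht\Tau$. The remaining work is to upgrade this inclusion to an equality and to see that $\varphi$ takes its values in $\wht\Tau$. Fix $A\in\wht\Tau$. Since $A$ is twin, $\Fix^-(A)\ne\emptyset$, and the inclusion $\Fix^-(A)\subset\Fix^-(\varphi(A))$ then forces $\Fix^-(\varphi(A))\ne\emptyset$; by the observation that a set is twin precisely when its $\Fix^-$ is nonempty, this means $\varphi(A)$ is itself twin, so $\Fix^-(\varphi(A))$ is a genuine $2$-cogroup (the map $\Fix^-:\Tau\to\K$ being well-defined). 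Thus $\Fix^-(\varphi(A))$ is a $2$-cogroup containing the \emph{maximal} $2$-cogroup $\Fix^-(A)$, and maximality yields $\Fix^-(\varphi(A))=\Fix^-(A)\in\wht\K$. In particular $\varphi(A)\in\wht\Tau$, so $\varphi$ maps $\wht\Tau$ into $\wht\Tau$ and satisfies $\Fix^-(\varphi(A))=\Fix^-(A)$.

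The converse is immediate: if $\varphi:\wht\Tau\to\wht\Tau$ is equivariant and $\I$-saturated with $\Fix^-(\varphi(A))=\Fix^-(A)$, then in particular $\Fix^-(A)\subset\Fix^-(\varphi(A))$, so Theorem~\ref{t11.1} places $\varphi$ in $\Enl^\I(\wht\Tau)$. The only substantive point — the step I would check most carefully — is the passage from the inclusion of $2$-cogroups to their equality, which combines the nonemptiness of $\Fix^-(A)$ (to ensure $\varphi(A)$ is twin, hence that $\Fix^-(\varphi(A))$ is a $2$-cogroup rather than an arbitrary set) with the maximality of $\Fix^-(A)$ in $(\K,\subset)$. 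Everything else is a routine restatement of Theorem~\ref{t11.1}.
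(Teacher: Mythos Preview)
Your proof is correct and follows exactly the approach the paper intends: the corollary is stated without proof as an immediate specialization of Theorem~\ref{t11.1} to $\mathsf F=\wht\Tau$, and the only additional observation needed is that the inclusion $\Fix^-(A)\subset\Fix^-(\varphi(A))$ becomes an equality by the maximality of $\Fix^-(A)\in\wht\K$, which you handle correctly.
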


A similar characterization holds for functions that belong to the space $\Enl^\I(\Tau_K)$ for $K\in\wht{\K}$ (let us observe that Theorem~\ref{t4.8} is not applicable to the family $\Tau_K$ because it is not left-invariant). A function $\varphi:\Tau_K\to\Tau_K$ is {\em $\Stab(K)$-equivariant} if $\varphi(xA)=x\varphi(A)$ for all $A\in\Tau_K$ and $x\in \Stab(K)$.

\begin{proposition}\label{p11.3} For any maximal 2-cogroup $K\subset X$ and a left-invariant twinic ideal $\I$ on a group $X$ a function $\varphi:\Tau_K\to\Tau_K$ belongs to the space $\Enl^\I(\Tau_K)$ if and only if $\varphi$ is $\Stab(K)$-equivariant and $\I$-saturated.
\end{proposition}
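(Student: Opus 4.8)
The plan is to handle the substantial ``if'' direction by reducing it to Theorem~\ref{t11.1}, which already characterizes membership in $\Enl^\I(\mathsf F)$ for \emph{left-invariant} families $\mathsf F$. The obstruction is that $\Tau_K$ is not left-invariant: by Proposition~\ref{p5.4} we have $\Fix^-(xA)=x\Fix^-(A)x^{-1}$, so $\Tau_K$ is invariant only under $\Stab(K)$. Hence the first move is to spread $\varphi$ over its whole orbit, producing an $X$-equivariant extension $\tilde\varphi$ on the left-invariant family $\Tau_{[K]}=\{B\in\Tau:\exists\,x\in X,\ \Fix^-(xB)=K\}$, to which Theorem~\ref{t11.1} applies. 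For the easy ``only if'' direction I would write $\varphi=\psi|\Tau_K$ with $\psi\in\Enl^\I(\mathsf P(X))$; then $\psi$ is equivariant, symmetric, monotone and $\I$-saturated by Theorem~\ref{t4.8}, so restriction gives $\I$-saturation of $\varphi$, and applying the symmetric equivariant $\psi$ to $xA=X\setminus A$ for $x\in\Fix^-(A)=K$ yields $K\subseteq\Fix^-(\psi(A))$; since $K$ is a \emph{maximal} $2$-cogroup and $\Fix^-(\psi(A))$ is a $2$-cogroup, equality holds, so $\psi(A)\in\Tau_K$ and $\varphi$ maps $\Tau_K$ into itself. The $\Stab(K)$-equivariance of $\varphi$ is then immediate from equivariance of $\psi$ together with $\Stab(K)$-invariance of $\Tau_K$.

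For the ``if'' part I would define $\tilde\varphi\colon\Tau_{[K]}\to\Tau_{[K]}$ by $\tilde\varphi(yA)=y\varphi(A)$ for $A\in\Tau_K$ and $y\in X$. First one checks this is well defined: if $yA=y'A'$ with $A,A'\in\Tau_K$, then $A'=zA$ for $z=y'^{-1}y$, and $\Fix^-(A')=zKz^{-1}=K$ forces $z\in\Stab(K)$ by the trivial-ideal case of Proposition~\ref{p5.4}; now $\Stab(K)$-equivariance of $\varphi$ gives $\varphi(A')=z\varphi(A)$, whence $y'\varphi(A')=y\varphi(A)$. By construction $\tilde\varphi$ is $X$-equivariant, and since $\varphi(A)\in\Tau_K$ we get $\Fix^-(\tilde\varphi(yA))=y\Fix^-(\varphi(A))y^{-1}=yKy^{-1}=\Fix^-(yA)$, so the hypothesis $\Fix^-(B)\subseteq\Fix^-(\tilde\varphi(B))$ of Theorem~\ref{t11.1} holds, with equality.

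The main obstacle is verifying that $\tilde\varphi$ is $\I$-saturated, and the key is the rigidity lemma: \emph{for every $A\in\Tau_K$ one has $\IFix^-(A)=K$.} Indeed $K=\Fix^-(A)\subseteq\IFix^-(A)$, and by Proposition~\ref{p5.3} the set $\IFix^-(A)$ is the nontrivial coset of the index-two subgroup $\IFix(A)$ in $\IFix^\pm(A)$, hence a $2$-cogroup by Proposition~\ref{p6.2}; maximality of $K$ then forces $\IFix^-(A)=K$. Using equivariance of $\tilde\varphi$ it suffices to show that $C=_\I A$ with $A\in\Tau_K$ and $C\in\Tau_{[K]}$ implies $\tilde\varphi(C)=\varphi(A)$. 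Writing $C=vA_1$ with $A_1\in\Tau_K$, the relation $A_1=_\I v^{-1}A$ gives, via the lemma, Proposition~\ref{p5.4}, and the fact that $=_\I$ preserves $\IFix^-$, the chain $K=\IFix^-(A_1)=\IFix^-(v^{-1}A)=v^{-1}Kv$, so $v\in\Stab(K)$. Then $v^{-1}A\in\Tau_K$ and $A_1=_\I v^{-1}A$, so $\I$-saturation of $\varphi$ on $\Tau_K$ together with $\Stab(K)$-equivariance yields $\varphi(A_1)=\varphi(v^{-1}A)=v^{-1}\varphi(A)$, i.e. $\tilde\varphi(C)=v\varphi(A_1)=\varphi(A)$, as required. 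With $\tilde\varphi$ now equivariant, $\I$-saturated and satisfying the $\Fix^-$ condition, Theorem~\ref{t11.1} gives $\tilde\varphi\in\Enl^\I(\Tau_{[K]})$; choosing $\psi\in\Enl^\I(\mathsf P(X))$ with $\psi|\Tau_{[K]}=\tilde\varphi$, its further restriction $\psi|\Tau_K=\varphi$ witnesses $\varphi\in\Enl^\I(\Tau_K)$.
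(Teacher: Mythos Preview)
Your proof is correct but takes a genuinely different route from the paper's. The paper proves the ``if'' direction \emph{directly}, by rerunning the construction from Theorem~\ref{t4.8}: it forms the linked family $\LL_\varphi=\{x^{-1}A:A\in\Tau_K,\ x\in\varphi(A)\}$ and its $\I$-saturation, checks linkedness, enlarges to a maximal $\I$-saturated linked system $\LL$, and verifies $\Phi_\LL|\Tau_K=\varphi$. The only new ingredient compared to Theorem~\ref{t4.8} is the verification that if $x^{-1}A\cap y^{-1}B\in\I$ then $c=yx^{-1}\in\Stab(K)$, which the paper proves by a direct manipulation using the twinic property (from $cA\subset_\I X\setminus B$ one obtains $cA\subset_\I zczA$ for each $z\in K$, hence $c^{-1}zcz\in\IFix(A)=KK$ by maximality).

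Your approach instead \emph{reduces} to Theorem~\ref{t11.1} by extending $\varphi$ to the left-invariant family $\Tau_{[K]}$. The crucial point in your argument---that $C=_\I A$ with $A\in\Tau_K$, $C\in\Tau_{[K]}$ forces the shifting element into $\Stab(K)$---is handled via the clean rigidity lemma $\IFix^-(A)=K$ for all $A\in\Tau_K$, which follows just from maximality of $K$ and Proposition~\ref{p5.3}. This is more modular: it reuses Theorem~\ref{t11.1} rather than replaying the Theorem~\ref{t4.8} construction, and it isolates the role of maximality of $K$ in a single rigidity statement. The paper's direct approach, on the other hand, makes the use of the twinic hypothesis more visible (in your version the twinic property is invoked only implicitly, through the appeal to Theorem~\ref{t11.1}), and avoids passing through the larger family $\Tau_{[K]}$.
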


\begin{proof} The ``only if'' part follows from Theorem~\ref{t4.8}.
To prove the ``if part'', assume that a function $\varphi:\Tau_K\to\Tau_K$ is $\Stab(K)$-invariant and $\I$-saturated. For any $A\in\Tau_K$ and $x\in K=\Fix^-(A)=\Fix^-(\varphi(A))$ we get $\varphi(X\setminus A)=\varphi(xA)=x\varphi(A)=X\setminus \varphi(A)$, which means that the function  $\varphi$ is symmetric. 

Now consider the
families $$\LL_\varphi=\{x^{-1}A:A\in\mathsf F,\;x\in \varphi(A)\}\;\;\mbox{and}\;\; 
\bar{\bar \LL}^\I_\varphi=\bigcup_{A\in\LL_\varphi}\bar{\bar A}^\I.$$

 We claim that the family $\bar{\bar\LL}_\varphi^\I$ is linked. Assuming the
converse, we could find two sets $A,B\in\mathsf F$ and two points $x\in
\varphi(A)$ and $y\in \varphi(B)$ such that $x^{-1}A\cap y^{-1}B\in\I$. Then
$yx^{-1}A\subset_{\I} X\setminus B$. Let us show that the point $c=yx^{-1}$ belongs to the subgroup $\Stab(K)$ of $X$. Given any point $z\in K$, we need to prove that $c^{-1}zc\in K$. Taking into account that $z\in K=\Fix^-(B)=\Fix^-(A)$, we see that $cA\subset_\I X\setminus B$ implies that $$cA\subset_\I X\setminus B=zB\subset_\I zc(X\setminus A)=zczA.$$ Since the ideal $\I$ is twinic, we get $cA=_\I X\setminus B=_\I zczA$, which implies $c^{-1}zcz\in\IFix(A)$. The maximality of the 2-cogroup $K=\Fix^-(A)\subset \IFix^-(A)$ guarantees that $\IFix^-(A)=K$ and $\IFix(A)=\IFix^-(A)\cdot\IFix^-(A)=KK$. Therefore $c^{-1}zcz\in KK$ and $c^{-1}zc\in KKz^{-1}=K$. Now we see that $yx^{-1}=c\in\Stab(K)$. So it is legal to apply the $\Stab(K)$-invariant $\I$-saturated function $\varphi$ to the $\I$-equality $yx^{-1}A=_\I X\setminus B$ and obtain $yx^{-1}\varphi(A)=\varphi(X\setminus B)=X\setminus\varphi(B)$.
Then $x^{-1}\varphi(A)\subset X\setminus y^{-1}\varphi(B)$, which is
not possible because the neutral element $e$ of the group $X$
belongs to $x^{-1}\varphi(A)\cap y^{-1}\varphi(B)$.
Further we continue as in the proof of Theorem~\ref{t4.8}. 
\end{proof}

\section{The $\HH(K)$-act $\Tau_K$ of a maximal 2-cogroup $K$}

In this section, given a maximal 2-cogroup $K$ in a group $X$ we study the structure of the subspace
$$\Tau_K=\{A\in\mathsf P(X):\Fix^-(A)=K\}\subset\mathsf P(X)$$of the compact Hausdorff space $\mathsf P(X)$. The latter space is naturally homeomorphic to the Cantor discontinuum $2^X$ where the ordinal $2=\{0,1\}$ is endowed with the discrete topology.  

\begin{proposition}\label{p12.1} For any 2-cogroup $K\subset X$ the subspace $\Tau_K$ of\/ $\mathsf P(X)$ is homeomorphic to the Cantor discontinuum $2^{X/K^\pm}$ where $X/K^\pm=\{K^\pm x:x\in X\}$.
\end{proposition}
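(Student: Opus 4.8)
The plan is to describe $\Tau_K$ completely in terms of the coset structure of the subgroups $KK$ and $K^\pm=K\cup KK$ supplied by Proposition~\ref{p6.2}, and then read off the homeomorphism with $2^{X/K^\pm}$. Write $H=KK$ and $H^\pm=K^\pm$, so that $H$ is a normal subgroup of index $2$ in $H^\pm$ and $K=H^\pm\setminus H$. Fix once and for all a point $x_0\in K$; then $K=Hx_0=x_0H$ and $x_0^2\in H$.

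First I would pin down the sets $A$ with $K\subseteq\Fix^-(A)$. For such an $A$ and any $h=xy\in KK=H$ with $x,y\in K$ one has $hA=x(yA)=x(X\setminus A)=X\setminus xA=A$, so $H\subseteq\Fix(A)$ and $A$ is a union of right cosets $Hg$. Since $x_0$ normalises $H$, left translation by $x_0$ sends $Hg$ to $H(x_0g)$ and, because $x_0^2\in H$, it interchanges the two right $H$-cosets $Hg$ and $H(x_0g)$ that make up the $H^\pm$-coset $H^\pm g=Hg\sqcup H(x_0g)$. Writing $A$ as a union of right $H$-cosets, the equality $x_0A=X\setminus A$ therefore says precisely that each $H^\pm$-coset contains exactly one $H$-coset lying in $A$. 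Conversely, any $A$ built in this way is left $H$-invariant and satisfies $x_0A=X\setminus A$, hence $xA=X\setminus A$ for every $x=hx_0\in K$, so $K\subseteq\Fix^-(A)$.

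This already yields the parametrisation: choosing a transversal $T$ for $X/H^\pm$ and, for each $t\in T$, declaring which of the two cosets $Ht$, $H(x_0t)$ enters $A$ defines a bijection $\Phi:2^{X/K^\pm}\to\{A:K\subseteq\Fix^-(A)\}$. The step I expect to be the crux is upgrading the inclusion $K\subseteq\Fix^-(A)$ to the equality $\Fix^-(A)=K$ that defines $\Tau_K$; this fails for a general $2$-cogroup (for $X=C_2\oplus C_2$ and $K=\{a\}$ every binary choice produces a strictly larger $\Fix^-(A)$, so $\Tau_K=\emptyset$ while $2^{X/K^\pm}$ has four points), and it is exactly here that the maximality of $K$ enters. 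Indeed, $K\subseteq\Fix^-(A)\neq\emptyset$ makes $A$ a twin set, so $\Fix^-(A)$ is a $2$-cogroup containing $K$, and maximality forces $\Fix^-(A)=K$. Hence $\{A:K\subseteq\Fix^-(A)\}=\Tau_K$ and $\Phi$ is a bijection onto $\Tau_K$.

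Finally I would check that $\Phi$ is a homeomorphism. Both $2^{X/K^\pm}$ and $\mathsf P(X)\cong 2^X$ carry the product topology, whose subbase consists of the clopen sets $\{A:g\in A\}$ and their complements. Each $g\in X$ lies in a unique $H^\pm$-coset $H^\pm t$ and in exactly one of its two $H$-cosets, so membership $g\in\Phi(\sigma)$ is decided by the single coordinate $\sigma(H^\pm t)$; thus $\Phi^{-1}(\{A:g\in A\})$ depends on one coordinate and is clopen, proving $\Phi$ continuous. Since $2^{X/K^\pm}$ is compact and $\mathsf P(X)$ is Hausdorff, the continuous bijection $\Phi$ onto $\Tau_K$ is a homeomorphism, which is the assertion.
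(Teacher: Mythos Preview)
Your argument is correct and follows essentially the same route as the paper: pick a transversal for $X/K^\pm$, observe that sets with $K\subseteq\Fix^-(A)$ are exactly those choosing one of the two $KK$-cosets inside each $K^\pm$-coset, and check continuity of the resulting bijection coordinate by coordinate (the paper writes the map in the direction $\mathsf P(S)\to\Tau_K$, $A\mapsto KKA\cup K(S\setminus A)$, but this is your $\Phi$).

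You are in fact more careful than the paper on one point. The proposition is stated for ``any 2-cogroup $K$'', but as you observe with the $C_2\oplus C_2$ example (which is the paper's own Example right after Proposition~\ref{p7.3}), the set $\{A:K\subseteq\Fix^-(A)\}$ need not equal $\Tau_K=\{A:\Fix^-(A)=K\}$ unless $K$ is maximal; for a non-maximal $K$ the space $\Tau_K$ can even be empty. The paper's proof silently assumes this equality --- it checks that $\Psi$ lands in sets with $K\subseteq\Fix^-(T_A)$ but never verifies $\Fix^-(T_A)=K$. Since the section is explicitly about maximal $2$-cogroups and every application is in that setting, the intended hypothesis is clearly $K\in\wht\K$; your proof supplies exactly the missing line (maximality forces $\Fix^-(A)=K$).
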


\begin{proof} Choose any subset $S\subset X$ that meets each coset $K^\pm x$, $x\in X$, at a single point, and consider the bijective function
$\Psi:\mathsf P(S)\to \Tau_K$ assigning to each subset $A\subset S$ the twin set $T_A=KKA\cup K(S\setminus A)$. Let us show that the function $\Psi$ is continuous. The subbase of the topology of $\Tau_K$ consists of the sets $\la x\ra^+=\{B\in \Tau_K:x\in B\}$ and $\la x\ra^-=\{B\in\Tau_K:x\notin B\}$ where $x\in X$. Observe that for every 
$z\in K$ we get $\la x\ra^-=\{B\in\Tau_K:x\in X\setminus B= zB\}=\la z^{-1}x\ra^+,$ which means that the sets $\la x\ra^+$, $x\in X$, form a subbase of the topology of $\Tau_K$.

Now the continuity of the map $\Psi$ will follow as soon as we check that for every $x\in X$ the set $\Psi^{-1}(\la x\ra^+)=\{A\in\mathsf P(S):x\in T_A\}$ is open in $\mathsf P(S)$. Fix any subset $A\in \Psi^{-1}(\la x\ra^+)$ and let $s$ be the unique point of the intersection $S\cap K^\pm x$.
Consider the open neighborhood $O(A)=\{A'\in\mathsf P(S):A'\cap\{s\}=A\cap\{s\}\}$ of $A$ in the space $\mathsf P(S)$. We claim that $O(A)\subset\Psi^{-1}(\la x\ra^+)$. Fix any $A'\in O(A)$ and consider two cases:

(i) If $s\in A$, then $s\in A'$ and $x\in T_A\cap K^\pm s=KKs\subset T_{A'}$.

(ii) If $s\in S\setminus A$, then $s\in S\setminus A'$ and $x\in T_A\cap K^\pm s=Ks\subset K(S\setminus A')\subset T_{A'}$.

 In both cases $\Psi(A')=T_{A'}\in\la x\ra^+$.
Now we see that $\Psi:\mathsf P(S)\to \Tau_K$, being a continuous bijective map defined on the compact Hausdorff space $\mathsf P(S)$, is a homeomorphism. It remains to observe that $\mathsf P(S)$ is  homeomorphic to $2^{X/K^\pm}$. 
\end{proof}

Let us observe that in general the subfamily $\Tau_K\subset\mathsf P(X)$ is not left-invariant. Indeed, for any $A\in\Tau_K$ and $x\in X$ the shift $xA$ belongs to $\Tau_K$ if and only if $K=\Fix^-(xA)=x\Fix^-(A)x^{-1}=xKx^{-1}$ if and only if $x\in\Stab(K)$. Thus the family $\Tau_K$ can be considered as an act endowed with the left action of the group $\Stab(K)$. 

For any twin set $A\in\Tau_K$ its stabilizer $\Fix(A)=\{x\in X:xA=A\}$ is equal to $\Fix^-(A)\cdot\Fix^-(A)=KK$ and hence is a normal subgroup of $\Stab(K)$. This implies that the characteristic group $\HH(K)=\Stab(K)/KK$ acts freely on the space $\Tau_K$. Therefore, we can (and will) consider the space $\Tau_K$ as a free $\HH(K)$-act. For each set $A\in\Tau_K$ by 
$$\lfloor A\rfloor=[A]\cap \Tau_K=\{xA:x\in \Stab(K)\}=\{hA:h\in \HH(K)\}$$ we denote the orbit of $A$ in $\Tau_K$ and by $[\Tau_K]=\{\lfloor A\rfloor:A\in\Tau_K\}$ the orbit space of the $\HH(K)$-act $\Tau_K$, endowed with the quotient topology. By  
Theorem~\ref{t2.1}, the $\HH(K)$-act $\Tau_K$ is isomorphic to $[\Tau_K]\times \HH(K)$. In some cases the isomorphism between the $\HH(K)$-acts $\Tau_K$ and $[\Tau_K]\times \HH(K)$ is topological.

\begin{proposition}\label{p12.2} The orbit space $[\Tau_K]$ is a $T_1$-space if and only if the characteristic group $\HH(K)$ is finite. In this case $[\Tau_K]$ is a compact Hausdorff space and the orbit map $q:\Tau_K\to[\Tau_K]$ has a continuous section $s:[\Tau_K]\to\Tau_K$, which implies that $\Tau_K$ is homeomorphic to the product $[\Tau_K]\times \HH(K)$ where the (finite) group $\HH(K)$ is endowed with the discrete topology.
\end{proposition}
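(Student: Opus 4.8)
The plan is to reduce everything to the question of which orbits $\lfloor A\rfloor$ are closed in $\Tau_K$. Since left translation $B\mapsto hB$ by a group element is a homeomorphism of $\mathsf P(X)$ restricting to $\Tau_K$, the orbit map $q:\Tau_K\to[\Tau_K]$ is open: for open $U\subset\Tau_K$ the saturation $q^{-1}(q(U))=\bigcup_{h\in\HH(K)}hU$ is open. Hence $q$ is a quotient map for which a point $q(A)$ is closed in $[\Tau_K]$ exactly when its fibre $\lfloor A\rfloor$ is closed in $\Tau_K$. Thus $[\Tau_K]$ is $T_1$ if and only if every $\HH(K)$-orbit is closed in the compact Hausdorff space $\Tau_K\cong 2^{X/K^\pm}$ of Proposition~\ref{p12.1}. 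This reformulation drives both implications.

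First suppose $\HH(K)$ is finite. Then every orbit is finite, hence closed, so $[\Tau_K]$ is $T_1$. Moreover $[\Tau_K]$ is compact as a continuous image of $\Tau_K$, and it is Hausdorff because two disjoint (finite, hence closed) orbits can be separated by disjoint \emph{saturated} open sets, obtained by intersecting a separating pair over the finitely many translates. For the section I would observe that a free action of a finite group on a Hausdorff space admits slices (separate $A$ from its finitely many translates $hA$ with $h\neq 1$ and intersect), so $q$ is a local homeomorphism and therefore a finite-sheeted covering map. Since $\Tau_K$ has a clopen basis while $q$ is open and closed, the base $[\Tau_K]$ is zero-dimensional; covering it by finitely many evenly covered clopen sets and passing to the atoms of the Boolean algebra they generate yields a finite clopen partition of $[\Tau_K]$ over each piece of which $q$ is trivial. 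Choosing one sheet over each piece defines a continuous section $s:[\Tau_K]\to\Tau_K$. Then $(w,h)\mapsto h\cdot s(w)$ is a continuous bijection of the compact space $[\Tau_K]\times\HH(K)$ (with $\HH(K)$ discrete) onto the Hausdorff space $\Tau_K$, hence a homeomorphism, giving the asserted product decomposition as the topological upgrade of Theorem~\ref{t2.1}.

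It remains to prove that an infinite $\HH(K)$ forces $[\Tau_K]$ to fail $T_1$, and this reverse implication is the conceptual heart of the statement. By Theorem~\ref{t8.2} an infinite $\HH(K)$ is isomorphic to $C_{2^\infty}$ or $Q_{2^\infty}$, so it is countable, and since the action is free every orbit $\lfloor A\rfloor$ is countably infinite. I claim no such orbit is closed. Indeed, if $\lfloor A\rfloor$ were closed it would be a countably infinite compact Hausdorff space on which the homeomorphisms $B\mapsto hB$, $h\in\HH(K)$, act transitively, so $\lfloor A\rfloor$ would be topologically homogeneous. But a countably infinite compact Hausdorff space is a Baire space and so cannot be a countable union of nowhere dense singletons; hence it has an isolated point, and homogeneity then makes every point isolated, forcing $\lfloor A\rfloor$ to be discrete and therefore finite — a contradiction. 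Consequently every orbit is non-closed and $[\Tau_K]$ is not even $T_1$. The main obstacle is exactly this direction: once orbit-closedness is identified as the governing invariant the finite case is soft, whereas the infinite case rests on the homogeneity-plus-Baire argument above (equivalently, on the fact that a nonempty perfect compact Hausdorff space is uncountable).
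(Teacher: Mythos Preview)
Your proof is correct and follows essentially the same route as the paper's own argument: both identify $T_1$ of the orbit space with closedness of orbits, use the Baire--homogeneity trick (a closed countably infinite orbit would be compact, hence have an isolated point, hence be discrete and finite) for the direction ``infinite $\HH(K)\Rightarrow$ not $T_1$'', and in the finite case separate disjoint finite orbits by saturated open sets to obtain Hausdorffness, then exploit zero-dimensionality and finiteness of the fibres to build a continuous section. Your treatment is somewhat more explicit about the covering-map structure of $q$, but the underlying ideas coincide.
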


\begin{proof} By Theorem~\ref{t8.2}, the characteristic group $\HH(K)$ is at most countable. Since $T_K$ is a free $\HH(K)$-act, each orbit $\lfloor A\rfloor$, $A\in\Tau_K$, has cardinality $|\lfloor A\rfloor|=|\HH(K)|$ and hence is at most countable. Note that the orbit $\lfloor A\rfloor$ admits a transitive action of the group $\HH(K)$ and hence is topologically homogeneous.

If $[\Tau_K]$ is a $T_1$-space, then each orbit $\lfloor A\rfloor$, $A\in\Tau_K$, is closed in the compact Hausdorff space $\Tau_K$. Now Baire theorem implies that $\lfloor A\rfloor$ has an isolated point and is discrete (being topologically homogeneous). Taking into account that $\lfloor A\rfloor$ is compact and discrete, we conclude that it is finite. Consequently $|\HH(K)|=|\lfloor A\rfloor|<\aleph_0$.

Now assume that the characteristic group $\HH(K)$ is finite. Let $q:\Tau_K\to[\Tau_K]$ denote the orbit map. To show that the orbit space $[\Tau_K]$ is Hausdorff, pick two distinct orbits $\lfloor A\rfloor$ and $\lfloor B\rfloor$. Since $\HH(K)$ is finite and $xA\ne yB$ for any $x,y\in \HH(K)$, we can find two neighborhoods $O(A)$ and $O(B)$ of $A,B$ in $\Tau_K$ such that $xO(A)\cap yO(B)=\emptyset$. Then $O(\lfloor A\rfloor)=\bigcup_{x\in \HH(K)}xO(A)$ and  $O(\lfloor B\rfloor)=\bigcup_{y\in \HH(K)}yO(B)$ are two disjoint open $\HH(K)$-invariant subsets in $\Tau_K$. Their images $q(O(\lfloor A\rfloor))$ and $q(O(\lfloor B\rfloor))$ are disjoint open neighborhoods of $\lfloor A\rfloor$, $\lfloor B\rfloor$ in $[\Tau_K]$, which means that the orbit space $[\Tau_K]$ is Hausdorff.
This space is compact and zero-dimensional as the image of the compact zero-dimensional space $\Tau_K$ under the open continuous map $q:\Tau_K\to[\Tau_K]$. 

Using the zero-dimensionality of $[\Tau_K]$ and the finiteness of $\HH(K)$ it is easy to construct a continuous section $s:[\Tau_K]\to\Tau_K$ of the map $q$ and prove that $\Tau_K$ is homeomorphic to $\HH(K)\times[\Tau_K]$.
\end{proof}

Let us recall that a subfamily $\mathsf F\subset\mathsf P(X)$ is $\lambda$-invariant if $f(\mathsf F)\subset\mathsf F$ for any equivariant symmetric monotone function $f:\mathsf P(X)\to\mathsf P(X)$. For a $\lambda$-invariant subfamily $\mathsf F\subset \mathsf P(X)$ the projection
$$\Enl(\mathsf F)=\{f|\mathsf F:f\in\Enl(\mathsf P(X))\}$$ is a subsemigroup of the semigroup $\mathsf F^{\mathsf F}$ of all self-mappings of $\mathsf F$. 

\begin{proposition}\label{p12.3} For any maximal 2-cogroup $K\subset X$ the family $\Tau_K$ is $\lambda$-invariant and hence $\Enl(\Tau_K)$ is a compact right-topological semigroup.
\end{proposition}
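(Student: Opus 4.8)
The plan is to verify the $\lambda$-invariance of $\Tau_K$ by hand and then simply to invoke the general fact, recorded immediately after the definition of $\lambda$-invariance, that for a $\lambda$-invariant family $\mathsf F$ the space $\Enl(\mathsf F)\subset\mathsf F^{\mathsf F}$ is a compact right-topological semigroup under composition. By Corollary~\ref{c4.4} a family is $\lambda$-invariant precisely when it is carried into itself by every equivariant monotone symmetric function, so it suffices to check that $f(\Tau_K)\subset\Tau_K$ for each $f\in\Enl(\mathsf P(X))$.

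First I would fix such an $f$ together with an arbitrary $A\in\Tau_K$, so that $\Fix^-(A)=K$. The crux is the short computation already performed in the proof that $f(\wht\Tau)$ is $\wht\K$-covering: for every $x\in K=\Fix^-(A)$ one has $xA=X\setminus A$, and applying the equivariant symmetric function $f$ yields
$$x\,f(A)=f(xA)=f(X\setminus A)=X\setminus f(A),$$
so that $x\in\Fix^-(f(A))$. Hence $K\subset\Fix^-(f(A))$. (Note that only equivariance and symmetry of $f$ are used here, so monotonicity plays no role.) To upgrade this inclusion to an equality, observe that since $K\ne\emptyset$ the set $f(A)$ is twin, whence $\Fix^-(f(A))$ is a $2$-cogroup in $X$. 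As $K$ is a \emph{maximal} $2$-cogroup contained in the $2$-cogroup $\Fix^-(f(A))$, maximality forces $\Fix^-(f(A))=K$, i.e.\ $f(A)\in\Tau_K$. This establishes $f(\Tau_K)\subset\Tau_K$ for every $f\in\Enl(\mathsf P(X))$, which is exactly the $\lambda$-invariance of $\Tau_K$, and the conclusion that $\Enl(\Tau_K)$ is a compact right-topological semigroup then follows from the cited general observation.

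There is no serious obstacle here: the whole argument reduces to the one-line identity $x\,f(A)=X\setminus f(A)$ together with the maximality step, both of which are borrowed essentially verbatim from earlier in the section. The only point that deserves a moment's attention is the remark that $\Fix^-(f(A))$ is itself a $2$-cogroup—this is what makes the maximality of $K$ applicable—and it holds because $\Fix^-$ of any twin set is a $2$-cogroup.
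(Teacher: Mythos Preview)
Your proof is correct and follows essentially the same approach as the paper: use equivariance and symmetry of $f$ to get $K\subset\Fix^-(f(A))$ via the identity $x\,f(A)=X\setminus f(A)$, then invoke maximality of $K$ to obtain equality. Your extra remark that $\Fix^-(f(A))$ is a $2$-cogroup (needed to apply maximality) is a helpful clarification that the paper leaves implicit.
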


\begin{proof} Given any function $f\in\Enl(\mathsf P(X))$ and a set $A\in\Tau_K$ we need to show that $f(A)\in\Tau_K$. By Corollary~\ref{c4.4}, the function $f$ is equivariant and symmetric. Then for any $x\in K=\Fix^-(A)$ we get $xA=X\setminus A$ and hence $x\varphi(A)=\varphi(xA)=\varphi(X\setminus A)=X\setminus\varphi(A)$, which means that $x\in\Fix^-(\varphi(A))$ and $K\subset\Fix^-(\varphi(A))$. The maximality of the 2-cogroup $K$ guarantees that $K=\Fix^-(\varphi(A))$ and thus $\varphi(A)\in\Tau_K$. So, the family $\Tau_K$ is $\lambda$-invariant.
\end{proof}

\section{$\I$-incomparable and $\I$-independent families}\label{s12n}

Let $\I$ be a left-invariant ideal on a group $X$. A family $\mathsf F\subset\mathsf P(X)$ is called 
\begin{itemize}
\item  {\em $\I$-incomparable} if $\forall A,B\in\mathsf F$ \ $(A\subset_\I B\;\Ra\;A=_\I B)$;
\item  {\em $\I$-independent} if $\forall A,B\in\mathsf F$ \ $(A=_\I B\;\Ra\;A= B)$.
\end{itemize}

\begin{proposition}\label{p13.1} A left-invariant ideal $\I$ on a group $X$ is twinic if and only if the family $\pT^\I$ of $\I$-pretwin sets is $\I$-incomparable.
\end{proposition}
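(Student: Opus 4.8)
The plan is to prove the two implications separately, after first recording three elementary properties of the relation $\subset_\I$ that both halves rely on. It is transitive: if $C\subset_\I D\subset_\I E$ then $C\subset_\I E$, since $C\setminus E\subset(C\setminus D)\cup(D\setminus E)\in\I$. It is reversed by complementation: $A\subset_\I B$ iff $X\setminus B\subset_\I X\setminus A$, because $A\setminus B=(X\setminus B)\setminus(X\setminus A)$. And it is preserved by left shifts, by left-invariance of $\I$. I will also use that $\pT^\I$ is left-invariant: if $uA\subset_\I X\setminus A\subset_\I vA$, then shifting by $g$ and using $X\setminus gA=g(X\setminus A)$ gives $(gug^{-1})\,gA\subset_\I X\setminus gA\subset_\I(gvg^{-1})\,gA$, so every shift $gA$ of an $\I$-pretwin set is $\I$-pretwin.

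For the implication ``$\I$ twinic $\Rightarrow\pT^\I$ is $\I$-incomparable'' (the harder direction), I would suppose $A,B\in\pT^\I$ with $A\subset_\I B$ and aim to produce $B\subset_\I A$. Since $\I$ is twinic, $\pT^\I=\Tau^\I$, so $A$ and $B$ are $\I$-twin; pick $a\in\IFix^-(A)$ and $b\in\IFix^-(B)$, i.e. $aA=_\I X\setminus A$ and $bB=_\I X\setminus B$. Complementing $A\subset_\I B$ gives $X\setminus B\subset_\I X\setminus A$, whence $bB=_\I X\setminus B\subset_\I X\setminus A=_\I aA$; shifting by $a^{-1}$ and writing $c=a^{-1}b$ yields $cB\subset_\I A$, and combined with $A\subset_\I B$ this gives $cB\subset_\I B$. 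The decisive move is to feed $cB\subset_\I B$ into the twinic identity applied to the set $B$: shifting by $b$ and using $bB=_\I X\setminus B$ produces $bcB\subset_\I X\setminus B\subset_\I bB$, so the twinic property forces $bcB=_\I bB$, i.e. $cB=_\I B$. Finally $B=_\I cB\subset_\I A$, which is the desired $B\subset_\I A$, so $A=_\I B$.

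For the converse ``$\pT^\I$ is $\I$-incomparable $\Rightarrow\I$ twinic'', I would take any $A$ and $x,y\in X$ with $xA\subset_\I X\setminus A\subset_\I yA$ and show $xA=_\I yA$. By definition $A\in\pT^\I$, and by left-invariance of $\pT^\I$ both $xA$ and $yA$ lie in $\pT^\I$. Transitivity applied to the chain $xA\subset_\I X\setminus A\subset_\I yA$ gives $xA\subset_\I yA$; thus two members $xA,yA$ of the $\I$-incomparable family $\pT^\I$ satisfy $xA\subset_\I yA$, and incomparability delivers exactly $xA=_\I yA$.

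The main obstacle will be the first (``only if'') direction, and within it the single clever step: recognizing that the inequality $cB\subset_\I B$ can be converted, after translating by $b$ and using that $B$ is $\I$-twin, into a genuine $\I$-pretwin sandwich $bcB\subset_\I X\setminus B\subset_\I bB$ to which the twinic hypothesis applies. The remaining bookkeeping with $\subset_\I$, $=_\I$ and left shifts is routine once it is checked against the three elementary properties above; the only point needing care is to keep the shift elements $a$, $b$, and $c=a^{-1}b$ straight so that the twinic identity is invoked for the correct set.
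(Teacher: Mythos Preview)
Your proof is correct and follows essentially the same approach as the paper. The ``if'' direction is identical; in the ``only if'' direction you take a slightly longer route by first invoking $\pT^\I=\Tau^\I$ to pick $a\in\IFix^-(A)$, $b\in\IFix^-(B)$ and then applying the twinic hypothesis to the sandwich $bcB\subset_\I X\setminus B\subset_\I bB$, whereas the paper builds the single chain $xB\subset_\I X\setminus B\subset_\I X\setminus A\subset_\I yA\subset_\I yB$ directly from the pretwin witnesses and applies twinic once to collapse it---but the underlying idea is the same.
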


\begin{proof} First assume that the family $\pT^\I$ is $\I$-incomparable. 
To show that the ideal $\I$ is twinic, take any subset $A\subset X$ with $xA\subset_\I X\setminus A\subset_\I yA$ for some $x,y\in X$. Then $A\in\pT^\I$ and also $xA,yA\in\pT^\I$. Since $xA\subset_\I yA$, the $\I$-incomparability of the family $\pT^\I$ implies that $xA=_\I yA$ and then $xA=_\I X\setminus A=_\I yA$, which means that the ideal $\I$ is twinic.

Now assume conversely that $\I$ is twinic and take two $\I$-pretwin sets $A\subset_\I B$.  Since the sets $A,B$ are $\I$-pretwin, there are elements
$x,y\in X$ such that $xB\subset_\I X\setminus B$ and $X\setminus
A\subset_\I yA$. Taking into account that
$$xB\subset_\I X\setminus B\subset_\I X\setminus A\subset_\I yA\subset_\I yB,$$
and $\I$ is twinic, we conclude that $X\setminus B=_\I X\setminus
A$ and hence $A=_\I B$.
\end{proof}

\begin{corollary} For each twinic left-invariant ideal $\I$ on a group $X$ the family ${\Tau}$ of twin sets is $\I$-incomparable.
\end{corollary}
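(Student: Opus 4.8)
The plan is to deduce this corollary directly from Proposition~\ref{p13.1} by exhibiting $\Tau$ as a subfamily of the family $\pT^\I$ of $\I$-pretwin sets. The key point is the chain of inclusions $\Tau\subset\Tau^\I\subset\pT^\I$, after which the result is immediate since $\I$-incomparability is inherited by subfamilies.

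First I would verify that every twin set is $\I$-pretwin. Take any $A\in\Tau$ and pick $x\in X$ with $xA=X\setminus A$. Since this is a genuine set equality, the differences $xA\setminus(X\setminus A)$ and $(X\setminus A)\setminus xA$ are empty and hence lie in $\I$, so that $xA\subset_\I X\setminus A$ and $X\setminus A\subset_\I xA$. Setting $y=x$ gives the chain $xA\subset_\I X\setminus A\subset_\I yA$, which is exactly the condition for $A$ to be $\I$-pretwin. Thus $\Tau\subset\pT^\I$.

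Next, since the ideal $\I$ is twinic, Proposition~\ref{p13.1} tells us that the family $\pT^\I$ is $\I$-incomparable, that is, $A\subset_\I B\Ra A=_\I B$ for all $A,B\in\pT^\I$. The defining property of $\I$-incomparability is a statement quantified universally over pairs drawn from the family, so it is inherited by every subfamily. Restricting it to the subfamily $\Tau\subset\pT^\I$ yields that $\Tau$ is $\I$-incomparable, which is precisely the assertion of the corollary.

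I do not expect any real obstacle here: the proof is essentially a one-line specialization of Proposition~\ref{p13.1}. The only points requiring a moment's care are that passing from the actual equality $xA=X\setminus A$ to the corresponding $\I$-relations is legitimate (the empty set belongs to every ideal), and that no instances of the universally quantified incomparability condition are lost when one restricts attention to the smaller family $\Tau$.
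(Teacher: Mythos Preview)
Your proposal is correct and is exactly the intended derivation: the paper states the corollary immediately after Proposition~\ref{p13.1} without proof, and the natural (and only reasonable) way to read it is precisely your argument that $\Tau\subset\pT^\I$ and that $\I$-incomparability passes to subfamilies.
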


\begin{proposition}\label{p13.3} For a left-invariant ideal $\I$ on a group $X$ the family $\wht{\Tau}$ is $\I$-independent if and only if $\I\cap\wht{\K}=\emptyset$.
\end{proposition}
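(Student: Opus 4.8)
The plan is to prove both implications in contrapositive form, using the parametrization of $\Tau_K$ from Proposition~\ref{p12.1} together with the construction of a twin set with prescribed $\Fix^-$ from Proposition~\ref{p7.3}(3). Throughout I write $H=KK$ and $K^\pm=K\cup KK$ for a $2$-cogroup $K$, and recall that $\Fix(A)=KK$ whenever $\Fix^-(A)=K$, as well as the identities $xK=KK$ and $xKK=K$ for $x\in K$.

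For the implication ``$\wht\Tau$ is $\I$-independent $\Rightarrow\I\cap\wht\K=\emptyset$'' I would argue the contrapositive: given a maximal $2$-cogroup $K\in\I$, I will produce two distinct $\I$-equal members of $\wht\Tau$. First note that $KK=xK\in\I$ for any $x\in K$ by left-invariance, whence $K^\pm=K\cup KK\in\I$. Choosing a transversal $S\ni e$ of the right cosets $K^\pm x$ as in Proposition~\ref{p7.3}(3), set $A=KK\cdot S\in\Tau_K\subset\wht\Tau$. Now modify $A$ on the single coset $K^\pm=K^\pm e$ by replacing $KK$ with $K$, putting $B=KK\cdot(S\setminus\{e\})\cup K$. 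A direct check (using $xKK=K$ and $xK=KK$ for $x\in K$) shows $B\in\Tau_K\subset\wht\Tau$, while the symmetric difference of $A$ and $B$ equals $K^\pm\in\I$, so $A=_\I B$ and $A\ne B$. Hence $\wht\Tau$ fails to be $\I$-independent.

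For the converse ``$\I\cap\wht\K=\emptyset\Rightarrow\wht\Tau$ is $\I$-independent'' I would again take the contrapositive: assume $A\ne B$ lie in $\wht\Tau$ with $A=_\I B$, and exhibit a maximal $2$-cogroup in $\I$. The first key step is to show that $A$ and $B$ share the same maximal $2$-cogroup. Writing $K=\Fix^-(A)$, for $x\in K$ we have $xA=X\setminus A$, and applying the left-invariant relation $=_\I$ gives $xB=_\I X\setminus B$, i.e. $K\subseteq\IFix^-(B)$. As in the proof that $\Fix^-$ of a twin set is a $2$-cogroup (Propositions~\ref{p5.3} and \ref{p6.2}), the set $\IFix^-(B)$ is itself a $2$-cogroup; since $K$ is maximal, $\IFix^-(B)=K$, and therefore $\Fix^-(B)\subseteq K$ forces $\Fix^-(B)=K$ by maximality of $\Fix^-(B)$. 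Thus $A,B\in\Tau_K$.

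By the parametrization of Proposition~\ref{p12.1}, both $A$ and $B$ meet each right coset $K^\pm s$ in exactly one of $KKs$ or $Ks$; since $A\ne B$ they disagree on some coset $K^\pm s_0$, and there their symmetric difference equals all of $K^\pm s_0$. As $A=_\I B$, downward closure of the ideal yields $K^\pm s_0\in\I$ and hence $Ks_0\in\I$. The main obstacle is now visible: $Ks_0$ is a \emph{right} translate of $K$, whereas $\I$ is only \emph{left}-invariant, so I cannot simply cancel $s_0$. The resolution is to conjugate: by left-invariance $s_0^{-1}(Ks_0)=s_0^{-1}Ks_0\in\I$, and since conjugation is an automorphism of the poset $(\K,\subseteq)$ it carries the maximal $2$-cogroup $K$ to the maximal $2$-cogroup $s_0^{-1}Ks_0$. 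Thus $s_0^{-1}Ks_0\in\I\cap\wht\K$, completing the contrapositive. I expect the two delicate points to be the identification $\Fix^-(A)=\Fix^-(B)$ (which rests on $\IFix^-$ being a genuine $2$-cogroup, so that maximality of $K$ applies) and the left/right-invariance mismatch handled by the conjugation trick.
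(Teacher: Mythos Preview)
Your proof is correct and follows essentially the same approach as the paper's. Both directions proceed by contrapositive with the same constructions: for the forward direction you build the same pair $A=KKS$, $B=KK(S\setminus\{e\})\cup K$ with symmetric difference $K^\pm\in\I$; for the converse you establish $\Fix^-(A)=\Fix^-(B)=K$ via the maximality of $K$ inside $\IFix^-$, locate a coset $K^\pm s_0$ in $A\triangle B$, and then resolve the left/right mismatch by conjugating $Ks_0$ to $s_0^{-1}Ks_0\in\wht\K\cap\I$ --- exactly the paper's ``$x^{-1}y^{-1}Kyx$'' move.
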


\begin{proof} To prove the ``only if'' part, assume that the ideal $\I$ contains some maximal 2-cogroup $K\in\wht{\K}$. Since $\I$ is left-invariant, for each $x\in K$, $KK=xK\in\I$ and hence $K^\pm=K\cup KK\in\I$.

 Choose a subset $S\subset X$ that contains the neutral element $e$ of the group $X$ and meets each coset $K^\pm x$, $x\in X$, at a single point. Then $A=KKS$ and $B=KK(S\setminus\{e\})\cup K$ are two distinct twin sets with $K\subset\Fix^-(A)\cap\Fix^-(B)$. By the maximality of $K$, $K=\Fix^-(A)=\Fix^-(B)$ and hence $A,B\in\wht{\Tau}$. Since the symmetric difference $A\triangle B=KK\cup K=K^\pm\in\I$, we get $A=_\I B$, which means that the family $\wht{\Tau}$ fails to be $\I$-independent.
\smallskip

To prove the ``if'' part, assume that the family $\wht{\Tau}$ is not $\I$-independent and find two subsets $A,B\in\wht{\Tau}$ such that $A\ne B$ but $A=_\I B$. The 2-cogroup $\Fix^-(A)$ of $A$ is maximal and hence coincides with the 2-cogroup $\IFix^-(A)\supset\Fix^-(A)$. By the same reason, $\Fix^-(B)=\IFix^-(B)$.
The $\I$-equality $A=_\I B$ implies $\IFix^-(A)=\IFix^-(B)$. Denote the maximal 2-cogroup $\Fix^-(A)=\IFix^-(A)=\IFix^-(B)=\Fix^-(B)$ by $K$.
Then $\Fix(A)=\Fix^-(A)\cdot\Fix^-(A)=KK=\Fix(B)$ and hence $A=KKA$ and $B=KKB$. Now we see that the symmetric difference $A\triangle B=KKA\triangle KKB$ contains a subset $KKx$ for some $x\in X$. Then for any $y\in K$, we get $Kyx=KKx\subset A\triangle B\in\I$ and hence $Kyx\in\I$. Finally observe that the set $K'=x^{-1}y^{-1}Kyx$ is a maximal 2-cogroup and by the left invariance of the ideal $\I$, $K'=x^{-1}y^{-1}Kyx\in\I$. So, $\wht{\K}\cap\I\ne\emptyset$.
\end{proof} 

\begin{proposition}\label{p13.4} A subfamily $\mathsf F\subset\mathsf P(X)$ is $\I$-independent for any left-invariant ideal $\I$ on $X$ if for each set $A\in \mathsf F$ the subgroup $\Fix(A)$ has finite index in $X$.
\end{proposition}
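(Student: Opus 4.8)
The plan is to reduce $\I$-independence to a covering property of the symmetric difference. Recall that $\mathsf F$ being $\I$-independent means that $A\triangle B\in\I$ forces $A=B$ for $A,B\in\mathsf F$. Since every ideal $\I$ satisfies $X\notin\I$, is closed under finite unions, and (being left-invariant) is closed under left translations, a set $D$ can never belong to $\I$ once finitely many left translates of $D$ cover $X$: otherwise $X=\bigcup_{i=1}^n x_iD$ would lie in $\I$, a contradiction. So it suffices to prove that whenever $A,B\in\mathsf F$ are distinct, finitely many left translates of $A\triangle B$ cover $X$; the finite-index hypothesis on the stabilizers is exactly what will deliver this.

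First I would set $H=\Fix(A)\cap\Fix(B)$. As the intersection of two subgroups of finite index, $H$ has finite index $m=[X:H]$ in $X$. Since $hA=A$ and $hB=B$ for every $h\in H$, and left multiplication is a bijection preserving set differences, the symmetric difference $A\triangle B$ is invariant under left multiplication by $H$; hence it is a union of right cosets $Hz$. As $A\ne B$, the set $A\triangle B$ is non-empty and therefore contains at least one such coset $Hz$.

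The decisive step is the covering count, and this is the place to be careful about the side on which the cosets sit. Choosing representatives $x_1,\dots,x_m$ of the left cosets of $H$, so that $\bigcup_{i=1}^m x_iH=X$, I would compute $\bigcup_{i=1}^m x_iHz=\bigl(\bigcup_{i=1}^m x_iH\bigr)z=Xz=X$. Thus the $m$ left translates $x_1(Hz),\dots,x_m(Hz)$ already cover $X$, and a fortiori $X=\bigcup_{i=1}^m x_i(A\triangle B)$. The only subtlety worth flagging is that the stabilizer acts on the \emph{left}, so $A\triangle B$ breaks into \emph{right} cosets $Hz$, and it is the left translates of such a right coset whose union recovers all of $X$; conflating the two sides is the easiest way to derail the argument.

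Finally, given $A,B\in\mathsf F$ with $A=_\I B$ (that is, $A\triangle B\in\I$), I suppose for contradiction that $A\ne B$. Then by the previous step $X=\bigcup_{i=1}^m x_i(A\triangle B)$; left-invariance gives $x_i(A\triangle B)\in\I$ for each $i$, and finite additivity of $\I$ yields $X\in\I$, contradicting $X\notin\I$. Hence $A=B$, and $\mathsf F$ is $\I$-independent for every left-invariant ideal $\I$.
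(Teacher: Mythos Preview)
Your proof is correct and follows essentially the same strategy as the paper's: show that $A\triangle B$ contains a full coset of a finite-index subgroup, hence finitely many left translates of $A\triangle B$ cover $X$, contradicting $X\notin\I$. The only difference is technical: the paper passes from $\Fix(A)\cap\Fix(B)$ to a \emph{normal} subgroup $H$ of finite index (so that $xH=Hx$ and the left/right coset issue disappears), while you keep $H=\Fix(A)\cap\Fix(B)$ and handle the distinction directly via the identity $\bigcup_i x_iHz=\big(\bigcup_i x_iH\big)z=Xz=X$. Your route is marginally more elementary since it avoids invoking the existence of a normal finite-index subgroup; the paper's route makes the coset bookkeeping trivial. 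Both arrive at the same contradiction.
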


\begin{proof} Assume that $A,B\in\mathsf F$ be two subsets with $A=_\I B$ for some left-invariant ideal $\I$. Since the subgroups $\Fix(A)$ and $\Fix(B)$ have finite indices in $X$, their intersection $\Fix(A)\cap\Fix(B)$ also has finite index in $X$ and contains a normal subgroup $H\subset X$ of finite index in $X$, see \cite[I.Ex.9(a)]{Lang}.
Then $X=FH$ for some finite subset $F\subset X$.
Assuming that $A\ne B$, we can find a point $x\in A\triangle B$ and conclude that $xH=Hx\subset HA\triangle HB=A\triangle B\in\I$ and $X=FH\in\I$ by the left-invariance of the ideal $\I$. This contradiction completes the proof.
\end{proof}

\begin{proposition}\label{p13.5} Each minimal $\wht\K$-covering subfamily $\wtd{\Tau}\subset\wht{\Tau}$ is $\I$-independent.
\end{proposition}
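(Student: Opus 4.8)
The plan is to argue by contradiction, showing that any failure of $\I$-independence inside $\wtd\Tau$ already forces an honest equality of sets. So suppose $\wtd\Tau$ is not $\I$-independent and choose $A,B\in\wtd\Tau$ with $A\ne B$ but $A=_\I B$. The first step is to pin down the negative fixers. Since $A,B\in\wht\Tau$, both $\Fix^-(A)$ and $\Fix^-(B)$ are maximal $2$-cogroups. The relation $A=_\I B$ is preserved under left translation and under taking complements, so it yields $\IFix^-(A)=\IFix^-(B)$ (if $xA=_\I X\setminus A$ then $xB=_\I xA=_\I X\setminus A=_\I X\setminus B$). As I will check below, $\IFix^-(A)$ is itself a $2$-cogroup containing the maximal $2$-cogroup $\Fix^-(A)$, so maximality forces $\Fix^-(A)=\IFix^-(A)=\IFix^-(B)=\Fix^-(B)=:K$, and hence $A,B\in\Tau_K$.

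Next I would invoke minimality through Proposition~\ref{p6.6}: the minimal $\wht\K$-covering property supplies, for this $K\in\wht\K$, a set $C\in\wtd\Tau$ with $\wtd\Tau\cap\Tau_{[K]}=[C]$; replacing $C$ by a suitable $X$-translate I may assume $C\in\Tau_K$ (this is legal because $A\in[C]\cap\Tau_K\ne\emptyset$). Since $\Tau_K\subset\Tau_{[K]}$, both $A$ and $B$ lie in the single orbit $[C]$, so I can write $A=xC$ and $B=yC$. By Proposition~\ref{p5.4}, $\Fix^-(xC)=x\,\Fix^-(C)\,x^{-1}$, and the equalities $\Fix^-(A)=K=\Fix^-(C)$ give $xKx^{-1}=K$, i.e. $x\in\Stab(K)$, and likewise $y\in\Stab(K)$.

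The heart of the matter is the identification $\IFix(C)=\Fix(C)$. Here I would record the two general facts $\Fix(C)=\Fix^-(C)\cdot\Fix^-(C)=KK$ and $\IFix(C)=\IFix^-(C)\cdot\IFix^-(C)$, each coming from the coset-swap $z\cdot\Fix^-(C)=\Fix(C)$, respectively $z\cdot\IFix^-(C)=\IFix(C)$, for $z$ in the relevant negative fixer, combined with Proposition~\ref{p6.2}. One checks that $\IFix^-(C)$ is again a $2$-cogroup: its disjointness from $\IFix(C)$ follows from $X\notin\I$, and the shift $z\,\IFix^-(C)=\IFix(C)=\IFix^-(C)\,z$ is the subgroup furnished by Proposition~\ref{p5.3}. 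Maximality of $K=\Fix^-(C)$ then forces $\IFix^-(C)=K$, whence $\IFix(C)=KK=\Fix(C)$. Finally, from $A=_\I B$ and left-invariance of $=_\I$ I obtain $y^{-1}xC=_\I C$, that is $y^{-1}x\in\IFix(C)=\Fix(C)$; therefore $y^{-1}xC=C$ and $A=xC=yC=B$, contradicting $A\ne B$.

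I expect the only real obstacle to be the verification that $\IFix^-(C)$ is a $2$-cogroup and, via maximality of $\Fix^-(C)$, coincides with $\Fix^-(C)$: this is precisely where the hypothesis $\wtd\Tau\subset\wht\Tau$ (maximality of the occurring $2$-cogroups) is essential, since it prevents the coarser relation $=_\I$ from enlarging the fixer. Everything else is routine coset bookkeeping of the kind already carried out in Propositions~\ref{p5.3}, \ref{p5.4} and \ref{p6.2}.
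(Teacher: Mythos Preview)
Your proof is correct and follows essentially the same approach as the paper's: first use maximality of $\Fix^-(A),\Fix^-(B)\in\wht\K$ to get $\Fix^-(A)=\IFix^-(A)=\IFix^-(B)=\Fix^-(B)=K$ (hence $\IFix=\Fix$ for these sets), then use the single-orbit property of minimal $\wht\K$-covering families to deduce $A$ and $B$ differ by a left translate lying in $\IFix=\Fix$, forcing $A=B$. The only cosmetic difference is that you route through an auxiliary orbit representative $C$ (and an unnecessary check that $x,y\in\Stab(K)$), whereas the paper writes $A=xB$ directly and concludes $x\in\IFix(B)=\Fix(B)$ in one line.
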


\begin{proof} Fix any two sets $A,B\in\wtd{\Tau}$ with $A=_\I B$. Repeating the argument from the proof of Proposition~\ref{p13.3}, we can prove that $\IFix^-(A)=\Fix^-(A)=\IFix^-(B)=\Fix^-(B)=K$ for some maximal 2-cogroup $K\in\wht{\K}$. Since the family $\wtd{\Tau}\ni A,B$ is minimal $\wht{\K}$-covering, the sets $A,B$ lie in the same orbit and hence $A=xB$ for some $x\in X$.  It follows from $B=_\I A=xB$ that $x\in\IFix(B)=\Fix(B)$ and thus $A=xB=B$.
\end{proof}

\section{The endomorphism monoid $\End(\Tau_K)$ of the $\HH(K)$-act $\Tau_K$}

For any maximal 2-cogroup $K$ in a group $X$ the compact right-topological semigroup $\Enl(\Tau_K)$ is a subsemigroup of the endomorphism monoid $\End(\Tau_K)$ of the free $\HH(K)$-act $\Tau_K$. The endomorphism monoid $\End(\Tau_K)$ is the space of all (not necessarily continuous) functions $f:\Tau_K\to\Tau_K$ that are equivariant in the sense that $f(xA)=xf(A)$ for all $A\in\Tau_K$ and $x\in \Stab(K)$. It is easy to check that $\End(\Tau_K)$ is a closed subsemigroup of the compact Hausdorff right-topological semigroup $\Tau_K^{\;\Tau_K}$ of all self-maps of the compact Hausdorff space $\Tau_K$. So, $\End(\Tau_K)$ is a compact Hausdorff right-topological semigroup that contains $\Enl(\Tau_K)$ as a closed subsemigroup.

If $\I$ is a left-invariant ideal on the group $X$, then the left ideal $\Enl^\I(\Tau_K)$ of $\Enl(\Tau_K)$ lies in the left ideal $\End^\I(\Tau_K)\subset \End(\Tau_K)$ consisting of all equivariant functions $f:\Tau_K\to\Tau_K$, which are $\I$-saturated in the sense that $f(A)=f(B)$ for all $A,B\in\Tau_K$ with $A=_\I B$.  

In the following theorem we describe some algebraic and topological properties of the endomorphism monoid $\End(\Tau_K)$.

\begin{theorem}\label{t14.1} Let $K$ be a maximal 2-cogroup in a group $X$.
Then: 
\begin{enumerate}
\item[\textup{(1)}] $\End^\I(\Tau_K)=\Enl^\I(\Tau_K)\subset\Enl(\Tau_K)\subset\End(\Tau_K)$ for any twinic ideal $\I$ on $X$;
\item[\textup{(2)}] $\End^\I(\Tau_K)=\End(\Tau_K)$ for any left-invariant ideal $\I$ on $X$ such that $\I\cap\wht\K=\emptyset$; 
\item[\textup{(3)}] the semigroup $\End(\Tau_K)$ is algebraically isomorphic to the wreath product\newline $\HH(K)\wr[\Tau_K]^{[\Tau_K]}$;
\item[\textup{(4)}] for each idempotent $f\in\End(\Tau_K)$ the maximal subgroup $\HH_f\subset\End(\Tau_K)$\newline containing $f$ is isomorphic to $\HH(K)\wr S_{[f(\Tau_K)]}$;
\item[\textup{(5)}] the minimal ideal $\IK(\End(\Tau_K))=\{f\in\End(\Tau_K):\forall A\in f(\Tau_K),\;f(\Tau_K)\subset\lfloor A\rfloor\}$;
\item[\textup{(6)}] each minimal left ideal of the semigroup $\End(\Tau_K)$ is algebraically isomorphic to $\HH(K)\times [\Tau_K]$ where the orbit space $[\Tau_K]$ is endowed with the left zero multiplication;
\item[\textup{(7)}] each maximal subgroup of the minimal ideal $\IK(\End(\Tau_K))$ is algebraically isomorphic to $\HH(K)$;
\item[\textup{(8)}] each minimal left ideal of the semigroup $\End(\Tau_K)$ is homeomorphic to $\Tau_K$;
\item[\textup{(9)}] for each minimal idempotent $f\in \IK(\End(\Tau_K))$ the maximal subgroup\newline $\mathsf H_f=f\circ \End(\Tau_K)\circ f$ is topologically isomorphic to the twin-generated group $\HH(A)$ where $A\in f(\Tau_K)$;
\end{enumerate}
\end{theorem}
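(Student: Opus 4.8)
The plan is to split the nine items into three groups: the purely act-theoretic conclusions (3)--(7), which I read off from the general Theorem~\ref{t2.1}; the saturation statements (1)--(2), which follow from the characterizations already proved; and the two genuinely topological assertions (8)--(9), which I attack directly with an evaluation map. The starting observation (recorded in the section on the $\HH(K)$-act $\Tau_K$, cf.\ Proposition~\ref{p12.1}) is that $\Tau_K$ is a \emph{free} $\HH(K)$-act whose orbits are the sets $\lfloor A\rfloor$ and whose orbit space is $[\Tau_K]$. With this in hand, items (3), (4), (5), (6), (7) are literally the five conclusions of Theorem~\ref{t2.1} applied with structure group $H=\HH(K)$ and free act $X=\Tau_K$, under the dictionary $[X]\rightsquigarrow[\Tau_K]$ and $[x]\rightsquigarrow\lfloor A\rfloor$; I would simply invoke that theorem. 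For (1) I quote Proposition~\ref{p11.3}: for a twinic ideal $\I$ a self-map of $\Tau_K$ lies in $\Enl^\I(\Tau_K)$ if and only if it is $\Stab(K)$-equivariant and $\I$-saturated, which is exactly the definition of $\End^\I(\Tau_K)$; the two displayed inclusions are immediate. For (2) I use Proposition~\ref{p13.3}: the hypothesis $\I\cap\wht\K=\emptyset$ makes $\wht\Tau\supseteq\Tau_K$ an $\I$-independent family, so inside $\Tau_K$ the relation $A=_\I B$ forces $A=B$; hence every equivariant self-map of $\Tau_K$ is vacuously $\I$-saturated and $\End(\Tau_K)=\End^\I(\Tau_K)$.

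For (8) I fix a minimal idempotent $f$, set $L=\End(\Tau_K)f$, and use item~(5) to see that $f(\Tau_K)$ is a single orbit $\lfloor A\rfloor$; choosing $A\in f(\Tau_K)$ we get $f(A)=A$. I then show that the evaluation $\Theta\colon L\to\Tau_K$, $u\mapsto u(A)$, is a homeomorphism. It is continuous, being a coordinate projection of $\Tau_K^{\,\Tau_K}$. The key computation is that for $u=g\circ f\in L$ and any $C\in\Tau_K$ one has $f(C)=h_C A$ with $h_C\in\HH(K)$ depending only on $f$, whence $u(C)=h_C\,u(A)$; this shows simultaneously that $u$ is determined by $u(A)$ (injectivity) and that every value is attained (surjectivity, by prescribing $g$ equivariantly on $\lfloor A\rfloor$ and extending it arbitrarily to the other orbits). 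Since $L$ is closed in the compact semigroup $\End(\Tau_K)$ and $\Tau_K$ is Hausdorff, $\Theta$ is a homeomorphism, and Proposition~\ref{p2.1} transports this to every minimal left ideal.

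For (9) the algebraic identification $\mathsf H_f\cong\HH(K)=\HH(A)$ is already item~(7), so the content is topological. Applying $u(C)=h_C\,u(A)$ once more to $u=f\circ g\circ f\in\mathsf H_f$, the evaluation $u\mapsto u(A)$ identifies $\mathsf H_f$ with the orbit $\lfloor A\rfloor\subset\mathsf P(X)$ carrying the subspace topology, and the same computation shows this identification is open, hence a homeomorphism. Thus it remains to prove that the natural bijection $\HH(A)\to\lfloor A\rfloor$, $\eta\mapsto\tilde\eta A$, is a homeomorphism, where one uses $\bar\eta(u)=\eta(u)^{-1}$ to repair the anti-homomorphism produced by composition. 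Writing the subbasic traces $\langle x\rangle^+\cap\lfloor A\rfloor$ in the group coordinate gives the sets $\{\beta\in\HH(K):\tilde\beta x\in A\}$; for $x\in\Stab(K)$ these are exactly the right shifts $A'\,q_A(x)^{-1}$, i.e.\ the defining subbase of $\tau_{A'}$, which already yields one inclusion of topologies and the openness of the bijection.

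The main obstacle is precisely the remaining ``outer'' coordinates $x\notin\Stab(K)$: for these the trace becomes $q_A\big(Ax^{-1}\cap\Stab(K)\big)$, and one must show it is still $\tau_{A'}$-open, for otherwise the orbit topology would be strictly finer than $\tau_{A'}$. I plan to analyze this by decomposing $A$ over the right $K^\pm$-cosets lying in the coset $\Stab(K)x$, as in Proposition~\ref{p12.1}: on each such coset $A$ is either the $KK$-part or the $K$-part, so the trace is a section of the central order-two subgroup generated by $q_A(K)\in\HH(K)$. The delicate point is to reconcile this section with the twin set $A'$; here the maximality of $K$ (which forces $\Fix^-=\IFix^-$ and pins down $\Stab(K)$ and $KK$) together with the structural Theorem~\ref{BCQ} describing $\HH(K)$ as $C_{2^n}$ or $Q_{2^n}$ must be used to conclude that every such trace is $\tau_{A'}$-open. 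This reconciliation of the coordinates outside $\Stab(K)$ with the twin-generated topology is the technical heart of the statement.
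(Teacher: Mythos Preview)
Your treatment of items (1)--(8) matches the paper's proof essentially line by line: (3)--(7) are read off from Theorem~\ref{t2.1} applied to the free $\HH(K)$-act $\Tau_K$; (1) comes from Proposition~\ref{p11.3}; (2) from Proposition~\ref{p13.3}; and (8) is proved via the evaluation map $g\mapsto g(A)$, exactly as the paper does.

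For item (9) your route also parallels the paper's (build the bijection $\mathsf H_f\leftrightarrow\HH(A)$ and check continuity both ways on sub-bases), and you have put your finger on the genuinely delicate step. The paper, at the corresponding point, computes $\Theta_A(V_{x,T})=\{[y]\in\HH(A):y\in Ax^{-1}x_T\}$ and simply asserts this ``is a sub-basic open set in $\HH(A)$''; but that is only evident when $x^{-1}x_T\in\Stab(K)$, i.e.\ when $x\in\Stab(K)$. So the obstacle you isolate --- the traces coming from $x\notin\Stab(K)$ --- is glossed over in the paper's argument as well. When $\HH(K)$ is finite the difficulty is vacuous, since $\HH(A)$ is a finite $T_1$-space and hence discrete, so every subset is open; this covers all the applications in the paper's later sections.

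However, your proposed resolution for the general case is not an argument. Decomposing $A$ over right $K^\pm$-cosets inside a right coset $\Stab(K)x$ with $x\notin\Stab(K)$ shows only that the resulting trace is a twin subset of $\HH(K)$ with respect to the central involution $q_A(K)$; it does not tie that twin subset to $A'$. Indeed, under the homeomorphism $\Tau_K\cong\mathsf P(S)$ of Proposition~\ref{p12.1}, the restrictions $A\cap\Stab(K)$ and $A\cap\Stab(K)x$ live over disjoint families of $K^\pm$-cosets and are therefore independent coordinates of $A$; nothing in your outline --- and nothing in Theorem~\ref{BCQ} about the abstract structure of $\HH(K)$ --- forces the second to be $\tau_{A'}$-open. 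So as written your proof of (9) has a gap precisely at the point you yourself flag, and the plan you sketch does not close it.
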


\begin{proof} 1,2. The first statement follows from Proposition~\ref{p11.3} and the second one from Proposition~\ref{p13.3}.

3--7. Since $\Tau_K$ is a free $\HH(K)$-act, the (algebraic) statements (3)--(7) follow from Theorem~\ref{t2.1}.

8. Given a minimal idempotent $f\in\End(\Tau_K)$, we need to prove that the minimal left ideal $\mathsf L_f=\End(\Tau_K)\circ f$ is homeomorphic to $\Tau_K\subset\mathsf P(X)$. For this fix any set $B\in f(\Tau_K)$ and observe that $f(\Tau_K)\subset \lfloor B\rfloor$ according to the statement (5). We claim that the map $$\Psi:\mathsf L_f\to\Tau_K,\;\;\Psi:g\mapsto g(B),$$is a homeomorphism. The definition of the topology (of pointwise convergence) on $\End(\Tau_K)$ implies that the map $\Psi$ is continuous.  
 
Next, we show that the map $\Psi$ is bijective. To show that $\Psi$ is injective, fix any two distinct functions $g,h\in\mathsf L_f$ and find a set $A\in\Tau_K$ such that $g(A)\ne h(A)$. Since $f(\Tau_K)\subset \lfloor B\rfloor$, there is $x\in X$ such that $f(A)=xB$. Then $$xg(B)=g(xB)=gf(A)=g(A)\ne h(A)=hf(A)=h(xB)=xh(B)$$and hence $\Psi(g)=g(A)\ne h(A)=\Psi(h)$.
To show that $\Psi$ is surjective, take any subset $C\in\Tau_K$ and choose any equivariant map $\varphi:[B]\to[C]$ such that $\varphi(B)=C$. Then the function $g=\varphi\circ f$ belongs to $\mathsf L_f$ and has image  $\Psi(g)=g(B)=C$ witnessing that the map $\Psi$ is surjective.
Since $\mathsf L_f$ is compact, the bijective continuous map $\Psi:\mathsf L_f\to\Tau_K$ is a homeomorphism. By Proposition~\ref{p12.1}, the space $\Tau_K$ is homeomorphic to the cube $2^{X/K^\pm}$.  
\smallskip 

9. Given a minimal idempotent  $f\in \End(\Tau_K)$ we shall show that the maximal subgroup $\mathsf H_f=f\circ \End(\Tau_K)\circ f$ is topologically isomorphic to the characteristic group $\HH(A)$ of any twin set $A\in f(\Tau_K)$.

We recall that  $\HH(A)$  is the characteristic group $\HH(K)$ of the 2-cogroup $K=\Fix^-(A)$, endowed with the topology generated by the twin set $q(A\cap \Stab(K))$ where $q:\Stab(K)\to \HH(K)=\Stab(K)/KK$ is the quotient homomorphism.

We define a topological isomorphism $\Theta_A:\mathsf H_f\to \HH(A)$ in the following way. Since 
$f$ is a minimal idempotent, $g(A)=fgf(A)\in f(\Tau_K)\subset\lfloor A\rfloor$. So we can find $x\in \Stab(K)$ with $fgf(A)=x^{-1}A$. Now define  $\Theta_A(g)$ as the image $q(x)=xKK=KKx$ of $x$ under the quotient homomorphism $q:\Stab(K)\to \HH(K)=\HH(A)$.

It remains to prove that $\Theta_A:\mathsf H_f\to \HH(A)$ is a well-defined topological isomorphism of the right-topological groups.

First we check that $\Theta_A$ is well-defined, that is $\Theta_A(g)=q(x)$ does not depend on the choice of the point $x$. Indeed, for any other point $y\in X$ with $g(A)=y^{-1}A$ we get $x^{-1}A=y^{-1}A$ and thus $yx^{-1}\in\Fix(A)=K\cdot K$ where $K=\Fix^-(A)$. Consequently,
$q(x)=KKx=KKy=q(y)$.

Next, we prove that $\Theta_A$ is a group homomorphism. Given two functions $g,h\in \mathsf H_f$, find elements $x_g,x_h\in \Fix(A)$ such that $h(A)=x^{-1}_hA$ and $g(A)=x^{-1}_gA$. It follows that $g\circ h(A)=g(x^{-1}_hA)=x^{-1}_hg(A)=x^{-1}_hx^{-1}_gA=(x_gx_h)^{-1}A$, which implies that $\Theta_A(g\circ h)=x_gx_hKK=\Theta_A(g)\cdot\Theta_A(h)$.

Now, we calculate the kernel of the homomorphism $\Theta_A$. Take any function $g\in \mathsf H_f$ with $\Theta_A(g)=e$, which means that $g(A)=fgf(A)=A$.
Then for every $A'\in\Tau_{K}$ we can find $x\in X$ with $f(A')=xA$ and conclude that $g(A')=fgf(A')=fg(xA)=xfg(A)=xfgf(A)=xA=f(A')$ witnessing that $g=fgf=f$. This means that the homomorphism $\Theta_A$ is one-to-one.

 To see that $\Theta_A$ is onto, first observe that each element of the characteristic group $\HH(A)$ can be written as $[y]=yKK=KKy\in \HH(K)$ for some $y\in\Stab(K)$. Given such an element $[y]\in \HH(A)$, consider the equivariant function $s_{[y]}:\lfloor A\rfloor\to\lfloor A\rfloor$, $s_{[y]}:zA\mapsto zy^{-1}A=zy^{-1}KKA$. Let us show that this function is well defined. Indeed, for each point $u\in X$ with $zA=uA$, we get $u^{-1}z\in\Fix(A)$ and hence, $yu^{-1}zy^{-1}\in y\Fix(A)y^{-1}=yKKy^{-1}=KK=\Fix(A)$. Then $yu^{-1}zy^{-1}A=A$ and hence $zy^{-1}A=uy^{-1}A$.

It follows from $s_{[y]}\circ f=f\circ s_{[y]}\circ f$ that the function $s_{[y]}\circ f$ belongs to the maximal group $\HH_f$. Since $s_{[y]}\circ f(A)=s_{[y]}(A)=y^{-1}A$, the image $\Theta_A(s_{[y]}\circ f)=[y]$. So, $\Theta_A(\mathsf H_f)=\HH(A)$ and $\Theta_A:\mathsf H_f\to \HH(A)$ is an algebraic isomorphism.

It remains to prove that this isomorphism is topological. Observe that for every $[y]\in \HH(A)$ we get
$s_{[y]}\circ f(A)=s_{[y]}(A)=y^{-1}KKA=y^{-1}A$. Consequently, $x\in s_{[y]}\circ f(A)$ iff $x\in y^{-1}A$ iff $y\in Ax^{-1}$.

To see that the map $\Theta_A:\mathsf H_f\to \HH(A)$ is continuous, take any sub-basic open set $$U_x=\{[y]\in \HH(A):y\in Ax^{-1}\},\;\; x\in \Stab(K),$$ in $\HH(A)$ and observe that $\Theta_A^{-1}(U_x)=\{s_{[y]}\circ f:[y]\in U_x\}=\{s_{[y]}\circ f: y\in Ax^{-1}\}=\{s_{[y]}\circ f:x\in s_{[y]}\circ f(A)\}$ is a sub-basic open set in $H(f)$. To see that the inverse map $\Theta_A^{-1}:\HH(A)\to \mathsf H_f$ is continuous, take any sub-basic open set $V_{x,T}=\{g\in H(f):x\in g(T)\}$ where $x\in X$ and $T\in\Tau_K$. It follows that $f(T)=x_TA$ for some $x_T\in X$. Then $$\begin{aligned}\Theta_A(V_{x,T})&
=\{[y]\in \HH(A):x\in s_{[y]}\circ f(T)\}=\{[y]\in \HH(A):x\in s_{[y]}(x_TA)\}=\\
&=\{[y]\in \HH(A):x_T^{-1}x\in s_{[y]}(A)\}=\{[y]\in \HH(A):y\in Ax^{-1}x_T\}
\end{aligned}$$
is a sub-basic open set in $\HH(A)$.
\end{proof}

In the following proposition we calculate the cardinalities of the objects appearing in Theorem~\ref{t14.1}. We shall say that a cardinal $n\ge 1$ divides a cardinal $m\ge 1$ if there is a cardinal $k$ such that $m=k\times n$. The smallest cardinal $k$ with this property is denoted by $\frac mn$.

\begin{proposition}\label{p15.2} If $K\in\wht{\K}$ is a maximal 2-cogroup in a group $X$, then
\begin{enumerate}
\item[\textup{(1)}] $|\Tau_K|=2^{|X/K^\pm|}$;
\item[\textup{(2)}] $|\HH(K)|\in\{2^k:k\in\IN\}\cup\{\aleph_0\}$ and $|\HH(K)|$ divides the index $|X/K|$ of $K$ in $X$;
\item[\textup{(3)}] $|[\Tau_K]|=\frac{|\Tau_K|}{|\HH(K)|}=\frac{2^{|X/K^\pm|}}{|\HH(K)|}$;
\item[\textup{(4)}] $|\HH(K)|=|X/K|$ if the 2-cogroup $K$ is normal in $X$.
\end{enumerate}
\end{proposition}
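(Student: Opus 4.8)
The plan is to read off all four cardinality statements from the structural results already in place, principally Proposition~\ref{p12.1}, Theorem~\ref{t8.2}, Proposition~\ref{p6.2}, and the freeness of the $\HH(K)$-act $\Tau_K$. Statement (1) is immediate: Proposition~\ref{p12.1} gives a homeomorphism $\Tau_K\cong 2^{X/K^\pm}$, whence $|\Tau_K|=2^{|X/K^\pm|}$.

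For the cardinality clause of (2), I would invoke Theorem~\ref{t8.2}: since $K$ is maximal, $\HH(K)$ is isomorphic to $C_{2^n}$ or $Q_{2^n}$ for some $1\le n\le\infty$, and each such group has order $2^n$ when $n$ is finite and $\aleph_0$ when $n=\infty$; the group is never trivial, since for $x\in K\subseteq\Stab(K)$ the coset $xKK$ is a nontrivial element of order $2$ in $\HH(K)$. Hence $|\HH(K)|\in\{2^k:k\in\IN\}\cup\{\aleph_0\}$. The divisibility clause is the one genuinely algebraic point. The key observation is that the set $X/K=\{Kx:x\in X\}$ of right $K$-shifts coincides with the set of right cosets of the subgroup $KK$ in $X$: by Proposition~\ref{p6.2}, $KK$ is a subgroup of index $2$ in $K^\pm$ and $K=(KK)g$ for any fixed $g\in K$, so $Kx=(KK)gx$, and as $x$ runs over $X$ the element $gx$ runs bijectively over $X$, giving $\{Kx:x\in X\}=\{(KK)y:y\in X\}$ and therefore $|X/K|=[X:KK]$. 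Since $KK$ is normal in $\Stab(K)$ with $\HH(K)=\Stab(K)/KK$, the chain $KK\subseteq\Stab(K)\subseteq X$ and the multiplicativity of the index of subgroups (valid for cardinals) yield
$$|X/K|=[X:KK]=[X:\Stab(K)]\cdot[\Stab(K):KK]=[X:\Stab(K)]\cdot|\HH(K)|,$$
which exhibits $|\HH(K)|$ as a divisor of $|X/K|$ with quotient $[X:\Stab(K)]$.

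For (3), I would use that $\Tau_K$ is a free $\HH(K)$-act (Theorem~\ref{t2.1}): each orbit $\lfloor A\rfloor$ has exactly $|\HH(K)|$ elements, and the orbits partition $\Tau_K$ into $|[\Tau_K]|$ pieces, so $|\Tau_K|=|[\Tau_K]|\cdot|\HH(K)|$. To identify $|[\Tau_K]|$ with $\frac{|\Tau_K|}{|\HH(K)|}$ in the sense defined above, I would split into two cases. If $\Tau_K$ is finite this is ordinary division, with a unique quotient. If $\Tau_K$ is infinite, then $|\Tau_K|=2^{|X/K^\pm|}\ge 2^{\aleph_0}>\aleph_0\ge|\HH(K)|$, and infinite cardinal arithmetic forces $|[\Tau_K]|=|\Tau_K|$, which is the least cardinal $k$ with $|\Tau_K|=k\times|\HH(K)|$. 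Substituting (1) gives $\frac{2^{|X/K^\pm|}}{|\HH(K)|}$. Statement (4) then follows by combining normality with (2): if $K$ is normal then $\Stab(K)=X$, so $\HH(K)=X/KK$ and $|\HH(K)|=[X:KK]=|X/K|$ by the identification established in (2).

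The only delicate step is the identification $|X/K|=[X:KK]$ together with the cardinal tower law for indices; once that is in hand, the remaining items are direct consequences of Proposition~\ref{p12.1}, Theorem~\ref{t8.2}, and the freeness of the act, with only the routine finite/infinite case split needed to match the notation $\frac{m}{n}$ for the least quotient.
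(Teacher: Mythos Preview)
Your proof is correct and follows essentially the same route as the paper: item (1) from Proposition~\ref{p12.1}, item (2) from Theorem~\ref{t8.2} together with the tower law for indices and the identification $|X/K|=[X:KK]$, item (3) from the freeness of the $\HH(K)$-act $\Tau_K$ with a finite/infinite case split, and item (4) from $\Stab(K)=X$. Your write-up is in fact more explicit than the paper's on the identification $|X/K|=[X:KK]$ and on why the cardinal quotient is well-defined; the only minor point is that the freeness of $\Tau_K$ as an $\HH(K)$-act is established in the discussion preceding Proposition~\ref{p12.2} (via $\Fix(A)=KK$ for all $A\in\Tau_K$), not in Theorem~\ref{t2.1}, which merely exploits freeness.
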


\begin{proof} Choose any subset $S\subset X$ that meets each coset $K^\pm x$, $x\in X$, of the group $K^\pm=K\cup KK$ at a single point. It is clear that $|S|=|X/K^\pm|$.
\smallskip

1. The equality $|\Tau_K|=2^{|X/K^\pm|}$ follows from Proposition~\ref{p12.1}.
\smallskip

2. By Theorem~\ref{t8.2}, $|\HH(K)|\in\{2^n:n\in\IN\}\cup\{\aleph_0\}$.  Since $\Stab(K)$ is a subgroup of $X$, $|\HH(K)|=|\Stab(K)/KK|$ divides $|X/KK|=|X/K|$.
\smallskip

3. Since $\Tau_K$ is a free $\HH(K)$-act, 
$|[\Tau_K]|=\frac{|\Tau_K|}{|\HH(K)|}$. This equality is clear if $\HH(K)$ is finite. If $\HH(K)$ is infinite, then $|\HH(K)|=\aleph_0$ and the index $|X/K^\pm|$ of the group $K^\pm$ in $X$ is infinite. In this case 
$|\Tau_K|=2^{|X/K^\pm|}>\aleph_0$ and thus $|[\Tau_K]|=\frac{2^{|X/K^\pm|}}{\aleph_0}=2^{|X/K^\pm|}$.
\smallskip

4. If the 2-cogroup $K$ is normal in $X$, then $\Stab(K)=X$ and $\HH(K)=X/KK$. In this case $|\HH(K)|=|X/KK|=|X/K|$.
\end{proof}

By Theorem~\ref{t14.1}(6), for any maximal 2-cogroup $K\subset X$ each minimal left ideal of the semigroup $\End(\Tau_K)$ is algebraically isomorphic to $\HH(K)\times[\Tau_K]$. It turns out that in some cases this isomorphism is topological. We recall that the orbit space $[\Tau_K]=\Tau_K/\HH(K)$ is endowed with the quotient topology. By Proposition~\ref{p12.2}, the orbit space $[\Tau_K]$ is compact and Hausdorff if and only if  the characteristic group $\HH(K)$ is finite.
  
Since $\Tau_K$ is a compact Hausdorff space, the Tychonoff power $\Tau_K^{\;\Tau_K}$ is a compact Hausdorff right topological semigroup (endowed with the operation of composition of functions). This semigroup contains the subsemigroup $C(\Tau_K,\Tau_K)$ consisting of all continuous maps $f:\Tau_K\to\Tau_K$. It is easy to check that the semigroup $C(\Tau_K,\Tau_K)$ is semitopological (which means that the semigroup operation is separately continuous).

We recall that a right-topological semigroup $S$ is called {\em semitopological} if the semigroup operation $S\times S\to S$ is separately continuous. If the semigroup operation is continuous, then $S$ is called a {\em topological semigroup}.

\begin{theorem}\label{t14.3} Let $K$ be a maximal 2-cogroup in a group $X$. For a minimal idempotent $f$ in the semigroup $\End(\Tau_K)$ and its minimal left ideal $\mathsf L_f=\End(\Tau_K)\circ f$ the following conditions are equivalent:
\begin{enumerate}
\item[\textup{(1)}] $\mathsf L_f$ is a topological semigroup;
\item[\textup{(2)}] $\mathsf L_f$ is topologically isomorphic to the topological semigroup $[\Tau_K]\times \HH(K)$ where the orbit space $[\Tau_K]$ is endowed with the left zero multiplication;
\item[\textup{(3)}] $\mathsf L_f$ is a semitopological semigroup;
\item[\textup{(4)}] the left shift $l_f:\mathsf L_f\to\mathsf L_f$, $l_f:g\mapsto f\circ g$, is continuous;
\item[\textup{(5)}] $f$ is continuous;
\item[\textup{(6)}] $\mathsf L_f\subset C(\Tau_K,\Tau_K)$;
\item[\textup{(7)}] $\HH(K)$ is finite and the idempotent band $E(\mathsf L_f)$ of \ $\mathsf L_f$ is compact;
\end{enumerate}
\end{theorem}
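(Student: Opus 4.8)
The plan is to organise the seven conditions around condition (5), ``$f$ is continuous'', proving the trivial implications (2)$\Rightarrow$(1)$\Rightarrow$(3)$\Rightarrow$(4) and (6)$\Rightarrow$(5) directly, and then establishing the chain (4)$\Leftrightarrow$(5)$\Rightarrow$(7)$\Rightarrow$(6) together with (5)$\Rightarrow$(1)$\Rightarrow$(2). The engine throughout is the homeomorphism $\Psi\colon\mathsf L_f\to\Tau_K$, $\Psi\colon g\mapsto g(B)$, supplied by Theorem~\ref{t14.1}(8) for a fixed $B\in f(\Tau_K)$; here $f(\Tau_K)=\lfloor B\rfloor$ is a single orbit, since $f\in\IK(\End(\Tau_K))$ forces $f(\Tau_K)\subset\lfloor B\rfloor$ (Theorem~\ref{t14.1}(5)) while idempotency gives $f|_{f(\Tau_K)}=\id$, so that the $\HH(K)$-invariant set $f(\Tau_K)$ equals $\lfloor B\rfloor$. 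Since every $g\in\mathsf L_f$ satisfies $g=g\circ f$ and $f(t)=x_tB$ for a unique $x_t\in\HH(K)$, one obtains the key formula $g(t)=x_t\cdot\Psi(g)$. In particular $\Psi(l_f(g))=(f\circ g)(B)=f(\Psi(g))$, so $\Psi\circ l_f=f\circ\Psi$; thus $l_f=\Psi^{-1}\circ f\circ\Psi$ is conjugate to $f$ through the homeomorphism $\Psi$, which gives at once the equivalence (4)$\Leftrightarrow$(5).

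First I would record the decisive consequence of continuity: (5)$\Rightarrow$``$\HH(K)$ is finite''. Indeed, if $f$ is continuous then $f(\Tau_K)=\lfloor B\rfloor$ is the continuous image of the compact space $\Tau_K$, hence a closed subset of the compact Hausdorff space $\Tau_K$. Being closed, the orbit $\lfloor B\rfloor$ is a Baire space; being an orbit of the free $\HH(K)$-act $\Tau_K$ it is topologically homogeneous and, by Theorem~\ref{t8.2}, at most countable. Exactly as in the proof of Proposition~\ref{p12.2}, a countable homogeneous Baire space has an isolated point and is therefore discrete, so the compact discrete orbit $\lfloor B\rfloor$ is finite and $|\HH(K)|=|\lfloor B\rfloor|<\aleph_0$. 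With finiteness in hand the remaining implications become structural. The coordinate map $t\mapsto x_t$ (extracting the $\HH(K)$-coordinate of $f(t)$) is continuous, being the composition of the continuous $f$ with the orbit-coordinate map $\lfloor B\rfloor\to\HH(K)$, which is continuous because the finite orbit $\lfloor B\rfloor$ is discrete. Hence each $g(t)=x_t\cdot\Psi(g)$ is continuous, as $t\mapsto x_t$ followed by the continuous action $\HH(K)\times\Tau_K\to\Tau_K$; this is (5)$\Rightarrow$(6). Transporting composition through $\Psi$ yields $(t,t')\mapsto x_{t'}\cdot t$, jointly continuous by the same two facts, so $\mathsf L_f$ is a topological semigroup, i.e.\ (5)$\Rightarrow$(1). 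Finally $f^{-1}(B)$ is closed; under $\Psi$ an idempotent $g$ corresponds precisely to a point $t_0=\Psi(g)$ with $f(t_0)=B$ (since $g\circ g=g$ forces $x_{t_0}=e$), so $\Psi(E(\mathsf L_f))=f^{-1}(B)$ is compact, giving (5)$\Rightarrow$(7).

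For the return implication (7)$\Rightarrow$(5) I would argue as follows. Assume $\HH(K)$ is finite and $E(\mathsf L_f)$ is compact; by the identification $\Psi(E(\mathsf L_f))=f^{-1}(B)$ the set $f^{-1}(B)$ is closed. By equivariance $f^{-1}(xB)=x\,f^{-1}(B)$ for each $x\in\HH(K)$, so $\Tau_K=\bigsqcup_{x\in\HH(K)}x\,f^{-1}(B)$ is a partition into finitely many closed (hence also open) sets. Thus $f$ is locally constant into the finite discrete orbit $\lfloor B\rfloor$ and so continuous, which is (5). The trivial implications close the remaining gaps: (6)$\Rightarrow$(5) because $f=f\circ f\in\mathsf L_f$; and (2)$\Rightarrow$(1)$\Rightarrow$(3)$\Rightarrow$(4) because a topological semigroup is semitopological and a semitopological $\mathsf L_f$ has continuous left shift $l_f$ (right shifts being continuous automatically). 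It remains to upgrade (1) to (2): once $\mathsf L_f$ is a topological semigroup, the Rees--Suschkewitsch isomorphism $\varphi\colon\mathsf E(\mathsf L_f)\times\mathsf H_f\to\mathsf L_f$ recalled in Section~2 is a topological isomorphism, and since $\mathsf H_f\cong\HH(A)=\HH(K)$ is finite (Theorem~\ref{t14.1}(9)) while $\mathsf E(\mathsf L_f)$ with its left-zero multiplication is homeomorphic to $[\Tau_K]$ (its $\Psi$-image $f^{-1}(B)$ is a section of the orbit map, cf.\ Proposition~\ref{p12.2}), this yields the topological isomorphism $\mathsf L_f\cong[\Tau_K]\times\HH(K)$ of (2).

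I expect the main obstacle to be the implication (5)$\Rightarrow$``$\HH(K)$ finite'': it is exactly this step that converts the a priori weak hypothesis that the single left shift $l_f$ is continuous into the structural dichotomy forcing all seven conditions to collapse together, and it is the only place where compactness, the Baire property and the freeness of the $\HH(K)$-act $\Tau_K$ interact nontrivially. Everything after finiteness---separate versus joint continuity, the clopen-partition argument for (7)$\Rightarrow$(5), and the passage from (1) to the explicit product (2)---is then routine bookkeeping resting on the formula $g(t)=x_t\cdot\Psi(g)$ and on the continuity of the $\HH(K)$-action once $\HH(K)$ is finite and discrete.
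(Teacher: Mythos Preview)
Your proof is correct, and in one place genuinely cleaner than the paper's. The paper proves the equivalences by a single cycle $(2)\Rightarrow(1)\Rightarrow(3)\Rightarrow(4)\Rightarrow(5)\Rightarrow(6)\Rightarrow(7)\Rightarrow(2)$; its $(4)\Rightarrow(5)$ step is a page-long direct argument using closures of preimages, discreteness of orbits via Baire, and openness of preimages. Your conjugation identity $\Psi\circ l_f=f\circ\Psi$ (hence $l_f=\Psi^{-1}\circ f\circ\Psi$) dispatches $(4)\Leftrightarrow(5)$ in one line and is the real gain of your approach. The remaining ingredients are shared: both arguments extract finiteness of $\HH(K)$ from continuity of $f$ via the Baire/homogeneity trick on the compact orbit $\lfloor B\rfloor$, and both identify $E(\mathsf L_f)$ with $f^{-1}(B)$ (the paper via its Claim~\ref{ELf}, you via the formula $g(t)=x_t\cdot\Psi(g)$) and then use the finite clopen partition $\Tau_K=\bigsqcup_{x\in\HH(K)}x\,f^{-1}(B)$. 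Your $(1)\Rightarrow(2)$ via Rees--Suschkewitsch is a legitimate shortcut compared to the paper's explicit $(7)\Rightarrow(2)$ construction; just note that to apply it you are implicitly using $(1)\Rightarrow(5)$ to secure finiteness of $\HH(K)$ (so that $[\Tau_K]$ is Hausdorff and $\HH(A)$, being finite $T_1$, is discrete), which your chain already supplies.
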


\begin{proof} The implications $(2)\Ra(1)\Ra(3)\Ra(4)$ are trivial.
\smallskip

$(4)\Ra(5)$ Assume that the left shift $l_f:\mathsf L_f\to \mathsf L_f$ is continuous. We need to check that $f$ is continuous. First we show that for any set $B\in f(\Tau_K)$ the preimage $\mathcal Z=f^{-1}(B)$ is closed in $\Tau_K$. Assume conversely that $f^{-1}(B)$ is not closed and find a point $A_0\in\overline{\mathcal Z}\setminus \mathcal Z$. It follows that the set $B_0=f(A_0)$ is not equal to $B$. Let $\varphi:\lfloor B\rfloor\to [A_0]$ be a unique equivariant function such that $\varphi(B)=A_0$. Then the function $g_0=\varphi\circ f$ belongs to the minimal left ideal $\mathsf L_f$. Observe that $f\circ g_0(B)=f(A_0)=B_0\ne B$. Since the left shift $l_f$ is continuous, for the neighborhood $O(f\circ g_0)=\{h\in \mathsf L_f:h(B)\ne B\}$ of $f\circ g_0=l_f(g_0)$ there is a neighborhood $O(g_0)\subset\mathsf L_f$ such that $f\circ g\subset O(f\circ g_0)$
for every $g\in O(g_0)$. It follows from the equivariantness of $g_0=g_0\circ f$ and the definition of the topology (of pointwise convergence) on $\mathsf L_f\subset\Tau_K^{\;\Tau_K}$ that the point $g_0(B)=A_0$ of $\Tau_K$ 
has a neighborhood $O(A_0)\subset \Tau_K$ such that each function $g\in\mathsf L_f$ with $g(B)\in O(A_0)$ belongs to the neighborhood $O(g_0)$.
Since $A_0$ is a limit point of the set $\mathcal Z$, there is a set $A\in O(A_0)\cap\mathcal Z$. For this set find an equivariant function $g=g\circ f$ such that $g(B)=A$. Then $g\in O(g_0)$ and hence $f\circ g(B)\ne B$, which contradicts $g(B)=A\in f^{-1}(B)$. This contradiction proves that all preimages $f^{-1}(B)$, $B\in f(\Tau_K)$, are closed in $\Tau_K$.

Next, we show that each orbit $\lfloor A\rfloor$, $A\in\Tau_K$, is discrete. Assume conversely that some orbit $\lfloor A\rfloor$ is not discrete and consider its closure $\overline{\lfloor A\rfloor}$ in the compact Hausdorff space $\Tau_K$. The orbit $\lfloor A\rfloor$ has no isolated points, being non-discrete and topologically homogeneous. Fix any $B\in f(\Tau_K)$. By Theorems~\ref{t2.1}(2) and \ref{t8.2}, the image $f(\Tau_K)$ has cardinality $|f(\Tau_K)|=|\lfloor B\rfloor|=|\HH(K)|\le\aleph_0$. Then we can write the compact space $\overline{\lfloor A\rfloor}$ as a countable union$$\overline{\lfloor A\rfloor}=\bigcup_{B\in f(\Tau_K)}f^{-1}(B)\cap\overline{\lfloor A\rfloor}$$  of closed subsets. By Baire's Theorem, for some $B\in f(\Tau_K)$ the set $\overline{\lfloor A\rfloor}\cap f^{-1}(B)$ has non-empty interior in $\overline{\lfloor A\rfloor}$. Since the orbit $\lfloor A\rfloor$ has no isolated points, the intersection $\lfloor A\rfloor\cap f^{-1}(B)$ is infinite, which is not possible as $f$ is equivariant.

Finally, we show that for every $B\in f(\Tau_K)$ the preimage $f^{-1}(B)$ is open in $\mathsf L_f$. Assuming the opposite, we can find a point $A_0\in f^{-1}(B)$ that lies in the closure of the set $\Tau_K\setminus f^{-1}(B)$.
Choose any equivariant function $g_0\in\mathsf L_f$ such that $g_0(B)=A_0$ and observe that $f\circ g_0(B)=B$.

Since the orbit $\lfloor B\rfloor$ of $B$ is discrete, we can find an open neighborhood $O(B)\subset \Tau_K$ of $B$ such that $O(B)\cap\lfloor B\rfloor=\{B\}$.
This neighborhood determines a neighborhood $O(f\circ g_0)=\{g\in\mathsf L_f:g(B)\in O(B)\}$ of the function $f\circ g_0$ in $\mathsf L_f\subset\Tau_K^{\;\Tau_K}$. Since the left shift $l_f:\mathsf L_f\to\mathsf L_f$ is continuous, the function $g_0$ has a neighborhood $O(g_0)\subset\mathsf L_f$ such that $l_f(O(g_0))\subset O(f\circ g_0)$. 
By the definition of the topology (of pointwise convergence) on 
$\mathsf L_f$, there is a neighborhood $O(g_0(B))\subset\Tau_K$ such that each function $g\in\mathsf L_f$ with $g(B)\in O(g_0(B))$ belongs to $O(g_0)$. By the choice of the point $A_0=g_0(B)$, there is a set $A\in O(g_0(B))\setminus f^{-1}(B)$. For this set choose an equivariant function $g\in\mathsf L_f$ such that $g(B)=A$. This function $g$ belongs to $O(g_0)$ and thus $f\circ g\in O(f\circ g_0)$, which means that $f\circ g(B)=B$. But this contradicts $g(B)=A\notin f^{-1}(B)$.

Thus for each $B\in f(\Tau_K)$ the preimage $f^{-1}(B)$ is open in $\Tau_K$, which implies that the function $f:\Tau_K\to\Tau_K$ is continuous. 
\smallskip

$(5)\Ra(6)$ Assume that $f$ is continuous. Then for any $B\in\Tau_K$ the orbit $\lfloor B\rfloor=f(\Tau_K)$ is compact (as a continuous image of the compact space $\Tau_K$). Being a compact topologically homogeneous space of cardinality $|\lfloor B\rfloor|\le|\HH(K)|\le \aleph_0$, the orbit $\lfloor B\rfloor=f(\Tau_K)$ is finite. Then for each $g\in \mathsf L_f$ the restriction $g|\lfloor B\rfloor$ is continuous and hence $g=g\circ f$ is continuous as the composition of two continuous maps $f$ and $g|\lfloor B\rfloor$.
\smallskip

$(6)\Ra(7)$ Assume that $\mathsf L_f\subset C(\Tau_K,\Tau_K)$. Then $f$ is continuous. Repeating the argument from the preceding item, we can show that the characteristic group $\HH(K)$ is finite. By the continuity of $f$, for every $B\in\Tau_K$ the preimage $f^{-1}(B)$ is closed in $\Tau_K$.
In the following claim $\mathsf E(\mathsf L_f)$ stands for the idempotent band of the semigroup $\mathsf L_f$.

\begin{claim}\label{ELf} $\mathsf E(\mathsf L_f)=\{g\in\mathsf L_f:f\circ g(B)=B\}$. 
\end{claim}

\begin{proof} If $g\in\mathsf L_f$ is an idempotent, then for the unique point $C\in g(\Tau_K)\cap f^{-1}(B)$ we get $C=g(C)$ and then $B=f(C)=fg(C)=fgf(C)=fg(B)$.

Now assume conversely that $g\in\mathsf L_f$ is a function with $fg(B)=B$.
Let $C=g(B)\in g(\Tau_K)$. Then $g(C)=gf(C)=gfg(B)=g(B)=C$. For every $A\in\Tau_K$ we can find $x\in X$ such that $g(A)=xC$ and then 
$gg(A)=g(xC)=xg(C)=xC=g(A)$, which means that $g$ is an idempotent.
\end{proof}

Since the set $f^{-1}(B)\subset \Tau_K$ is closed and the calculation map $$c_B:\mathsf L_f\to\Tau_K,\;c_B:g\mapsto g(B),$$ is continuous, the preimage $c_B^{-1}(f^{-1}(B))$ is closed in $\mathsf L_f$. By Claim~\ref{ELf}, this preimage is equal to the idempotent band $\mathsf E(\mathsf L_f)$ of the semigroup $\mathsf L_f$.
\smallskip

$(7)\Ra(2)$ Assume that the group $\HH(K)$ is finite and the idempotent band $E(\mathsf L_f)$ is compact. By Proposition~\ref{p12.2}, the orbit space $[\Tau_K]$ is compact, Hausdorff and zero-dimensional, and the quotient map $q:\Tau_K\to[\Tau_K]$ is continuous and open.

 We claim that for every $B\in f(\Tau_K)$ the preimage $f^{-1}(B)\subset\Tau_K$ is compact. Since the idempotent band $E(\mathsf L_f)$ is compact and the calculation map $c_B:\mathsf L_f\to\Tau_K$, $c_B:g\mapsto g(B)$, is continuous, the image $c_B(E(\mathsf L_f))$ is compact. By Claim~\ref{ELf}, $c_B(\mathsf E(\mathsf L_f))\subset f^{-1}(B)$.
To show the reverse inclusion, fix any subset $A\in f^{-1}(B)$ and choose any equivariant map $\varphi:\lfloor B\rfloor\to\lfloor A\rfloor$ such that $\varphi(B)=A$. Then the map $g=\varphi\circ f$ belongs to $\mathsf L_f$ and is an idempotent by Claim~\ref{ELf}. Since $A=g(B)$, we see that $f^{-1}(B)\subset c_B(\mathsf E(\mathsf L_f))$ and hence $f^{-1}(B)=c_B(\mathsf E(\mathsf L_f))$ is compact.

Fix any set $B\in f(\Tau_K)$. Since $|f(\Tau_K)|=|\lfloor B\rfloor|=|\HH(K)|<\aleph_0$, the preimage $$\mathcal Z=f^{-1}(B)=\Tau_K\setminus \bigcup_{B\ne A\in\lfloor B\rfloor}f^{-1}(A)$$ is open-and-closed in $\Tau_K$. Since the compact space $\mathcal Z$ meets each orbit $\lfloor A\rfloor$, $A\in\Tau_K$, at a single point, the restriction $q|\mathcal Z:\mathcal Z\to[\Tau_K]$, being continuous and bijective, is a homeomorphism. So, it suffices to prove that $\mathsf L_f$ is topologically isomorphic to $\mathcal Z\times \HH(K)$ where the space $\mathcal Z$ is endowed with the left zero multiplication. Define an isomorphism $\Phi:\mathcal Z\times \HH(K)\to \mathsf L_f$ assigning to each pair $(Z,x)\in \mathcal Z\times \HH(K)$ the function $g_{Z,x}\circ f$ where $g_{Z,x}:\lfloor B\rfloor\to \lfloor Z\rfloor$ is the unique equivariant function such that $g_{Z,x}(B)=x^{-1}Z$. It is easy to check that $\Phi$ is a topological isomorphism between $\mathcal Z\times \HH(K)$ and $\mathsf L_f$.
\end{proof}

In the following proposition we prove the existence of continuous or discontinuous minimal idempotents in the semigroup $\End(\Tau_K)$.
Let us recall that for a left-invariant ideal $\I$ on a group $X$ by $\End^\I(\Tau_K)$ we denote the left ideal in $\End(\Tau_K)$ consisting of all equivariant $\I$-saturated functions. 

\begin{proposition}\label{p14.5} Let $\I$ be a left-invariant ideal on a group $X$ and assume that a maximal 2-cogroup $K\subset X$ has finite characteristic group $\HH(K)$. Then the semigroup $\End^\I(\Tau_K)$ contains: 
\begin{enumerate}
\item[\textup{(1)}] a continuous minimal idempotent if $Kx\notin \I$ for all $x\in X$;
\item[\textup{(2)}] no continuous function if $Kx\in \I$ for all $x\in X$;
\item[\textup{(3)}] no discontinuous function (which is a minimal idempotent) if (and only if) for each $A\in\Tau_K$ the set $\bar{\bar A}^\I\cap\Tau_K$ is open in $\Tau_K$.
\end{enumerate}
\end{proposition}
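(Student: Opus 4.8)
The plan is to analyze all three items through the $\I$-equivalence classes $\bar{\bar A}^\I\cap\Tau_K$ sitting inside the Cantor cube $\Tau_K\cong 2^{X/K^\pm}$ (Proposition~\ref{p12.1}), exploiting that $\HH(K)$ is finite, so that $\Tau_K$ splits as $[\Tau_K]\times\HH(K)$ with a continuous orbit map $q$ and a continuous section $s$ (Proposition~\ref{p12.2}). First I record two reductions. Since $KK=zK$ for $z\in K$ and $\I$ is left-invariant, $Kx\in\I$ is equivalent to $K^\pm x\in\I$. Also, because $K$ is maximal, $\IFix^-(A)=\Fix^-(A)=K$ and $\IFix(A)=KK$ for every $A\in\Tau_K$, so $\HH(K)$ acts freely both on $\Tau_K$ and on the set of $\I$-classes, and each $\I$-class meets every orbit $\lfloor A\rfloor$ in exactly one point; thus every $\I$-class is a section of $q$ over its $q$-image, and these images partition $[\Tau_K]$. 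Finally, $\HH(K)$ is nontrivial, since the coset of any $x\in K$ has order $2$.

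For item (1), under $Kx\notin\I$ for all $x$ I claim every $\I$-class is a singleton: arguing as in Proposition~\ref{p13.3}, two distinct $\I$-equal sets $A,B\in\Tau_K$ satisfy $A=KKA$ and $B=KKB$, so $A\triangle B$ contains a coset $KKx$, whence $A\triangle B\in\I$ forces $KKx\in\I$ and then $Kx=z(KKx)\in\I$ for $z\in K$, a contradiction. So $\I$-saturation is vacuous, $\End^\I(\Tau_K)=\End(\Tau_K)$, and in the coordinates $A=h_A\cdot s(q(A))$ the map $f(A)=h_A\cdot s(o_0)$ is a continuous equivariant idempotent with single-orbit image, hence a continuous minimal idempotent by Theorem~\ref{t14.1}(5). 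For item (2), under $Kx\in\I$ for all $x$ I show every $\I$-class is dense: through the homeomorphism of Proposition~\ref{p12.1}, altering a transversal subset in finitely many coordinates changes the corresponding twin set within finitely many cosets $K^\pm x$, each lying in $\I$, so all finite modifications stay in one $\I$-class and are dense in the cube. A continuous $\I$-saturated function is then constant on a dense class, hence globally constant, which is impossible for an equivariant self-map since $\HH(K)$ is nontrivial; thus $\End^\I(\Tau_K)$ contains no continuous function.

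For item (3), the easy direction assumes every $\bar{\bar A}^\I\cap\Tau_K$ is open. The classes then partition $\Tau_K$ into open (hence clopen) sets, and any $\I$-saturated $f$ is constant on each class, so $f^{-1}(V)$ is a union of classes and therefore open for every $V$; thus every $f\in\End^\I(\Tau_K)$ is continuous. For the converse I assume some class $C=\bar{\bar{A_0}}^\I\cap\Tau_K$ is not open and build a discontinuous minimal idempotent. By the reductions, selecting one class from each class-orbit yields a set $T$ meeting every orbit once and equal to a union of $\I$-classes; declaring $f^{-1}(A_0)=T$ and extending equivariantly produces, by Theorem~\ref{t14.1}(5), a minimal idempotent with image $\lfloor A_0\rfloor$, whose continuity is equivalent (the image being finite and discrete) to $T$ being open. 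So it suffices to choose such a $T\ni A_0$ that fails to be open at $A_0$.

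Non-openness of $C$ arises in two ways, and the second is the main obstacle. If $A_0$ is accumulated by the other translates $xC$ with $x\neq e$, then selecting $C$ itself over its class-orbit leaves all such translates in the complement of $T$, so $T$ is not open regardless of the remaining choices. If instead the $q$-image of $C$ is not open in $[\Tau_K]$, i.e. $q(A_0)\in\overline{[\Tau_K]\setminus q(C)}$, I take a net $D_\beta\to A_0$ with $q(D_\beta)\notin q(C)$ chosen in the slice $h_0$ of $A_0=(q(A_0),h_0)$, apply the pigeonhole principle on the finite group $\HH(K)$ to pass to a cofinal subnet on which the classes of the $D_\beta$ share one common shift $x^\ast\in\HH(K)$, and then pick, for every class-orbit other than that of $C$, a representative class whose shift differs from $x^\ast$. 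This keeps $A_0\in C\subset T$ while forcing $D_\beta\notin T$ cofinally, so $T$ is not open and the associated $f$ is a discontinuous minimal idempotent. The pigeonhole/cofinal-subnet selection resolving this second case is the technical heart of the argument.
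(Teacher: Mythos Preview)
Your proof is correct. Items (1), (2), and the forward direction of (3) match the paper's argument essentially verbatim. For the converse of item (3), however, you take a genuinely different and more laborious route than the paper.

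Your approach constructs the non-open section $T$ from scratch by a case split on how the class $C$ fails to be open (accumulation from within the class-orbit of $C$ versus from other class-orbits), with a pigeonhole argument over the finite group $\HH(K)$ in the second case. This works, but two points deserve care: your statement that ``each $\I$-class meets every orbit in exactly one point'' should read ``at most one point'' (the $q$-images of \emph{class-orbits}, not of individual classes, partition $[\Tau_K]$); and your notion of the ``shift'' of a class is only well-defined once you have fixed a base class in each class-orbit, which should be stated before invoking pigeonhole. With those clarifications the argument goes through.

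The paper's argument is much shorter and avoids the case analysis entirely. It first picks \emph{any} minimal idempotent $f_0\in\End^\I(\Tau_K)$ (which exists since $\End^\I(\Tau_K)$ is a closed left ideal of the compact right-topological semigroup $\End(\Tau_K)$). If $f_0$ is already discontinuous we are done. If $f_0$ is continuous, then the fiber $\mathcal Z=f_0^{-1}(f_0(A))$ is a clopen section of the orbit map that is a union of $\I$-classes and contains the non-open class $\A=\bar{\bar A}^\I\cap\Tau_K$. Choosing any $x\in\Stab(K)\setminus KK$ and replacing $\A$ by its translate $x\A$ yields the section $\mathcal Z'=(\mathcal Z\setminus\A)\cup x\A$, which is still a union of $\I$-classes but is no longer closed (any non-interior point of $\A$ lies in $\overline{\mathcal Z'}\setminus\mathcal Z'$). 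The associated equivariant map with fiber $\mathcal Z'$ is then a discontinuous $\I$-saturated minimal idempotent. The trade-off: your direct construction is more self-contained, while the paper's perturbation trick is a one-line modification once a minimal idempotent is in hand.
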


\begin{proof} By Proposition~\ref{p12.2}, the orbit space $[\Tau_K]$ is compact, Hausdorff, and zero-dimensional and the orbit map $q:\Tau_K\to[\Tau_K]$ has a continuous section $s:[\Tau_K]\to\Tau_K$. Then $\mathcal Z=s([\Tau_K])$ is a closed subset of $\Tau_K$ that meets each orbit $\lfloor A\rfloor$, $A\in\Tau_K$, at a single point. Pick any $B\in\mathcal Z$ and define a continuous minimal idempotent $f:\Tau_K\to\Tau_K$ letting $f(xZ)=xB$ for each $x\in \HH(K)$ and $Z\in\mathcal Z$.
\smallskip

1. Assuming that $Kx\notin \I$ for all $x\in X$, we shall show that the function $f$ is $\I$-saturated and hence belongs to $\End^\I(\Tau_K)$. Given any sets $A,B\in\Tau_K$ with $A=_\I B$, we need to show that $f(A)=f(B)$. We shall prove more: $A=B$. Assume conversely that $A\ne B$ and find a point $x\in A\triangle B$ in the symmetric difference $A\triangle B=(A\setminus B)\cup(B\setminus A)$. Since $KKA=\Fix(A)A=A$ and $KKB=\Fix(B)B=B$, we get $KKx\in A\triangle B\in\I$ and then for every $y\in K$, we get $Kyx=KKx\in\I$, which contradicts our assumption. 
\smallskip

2. Now assume that $Kx\in\I$ for all $x\in X$. We shall prove that no function $g\in\End^\I(\Tau_K)$ is continuous. For this we show that for each $A\in\Tau_K$ the set $\bar{\bar A}^\I\cap\Tau_K$ is dense in $\Tau_K$.
Given any set $C\in\Tau_K$ and a neighborhood $O(C)$ of $C$ in $\Tau_K$, we need to find a set $B\in O(C)$ such that $B=_\I A$. By the definition of the topology on $\Tau_K\subset\mathsf P(X)$, there is a finite subset $F\subset X$ such that $O(C)\supset\{B\in\Tau_K:B\cap F=C\cap F\}$. Now we see that the set $B=(A\setminus K^\pm F)\cup(K^\pm F\cap C)\in\Tau_K$ belongs to the neighborhood $O(C)$ and $B=_\I A$ because $A\triangle B\subset K^\pm F\in\I$. Assuming that some $\I$-saturated equivariant function $g:\Tau_K\to\Tau_K$, is continuous, we conclude that the preimage $g^{-1}(f(A))\supset\bar{\bar A}^\I\cap\Tau_K$ coincides with $\Tau_K$, being a closed dense subset of $\Tau_K$. So, $g$ is constant. Since the action of the (non-trivial) group $\HH(K)$ on $\Tau_K$ is free, the constant map $g$ cannot be equivariant.   
\smallskip

3. If for every $A\in\Tau_K$ the set $\bar{\bar A}^\I\cap\Tau_K$ is open in $\Tau_K$, then each $\I$-saturated function is locally constant and hence continuous. So, $\End^\I(\Tau_K)$ contains no discontinuous function.

Now assuming that for some $A\in\Tau_K$ the set $\A=\bar{\bar A}^\I\cap\Tau_K$ is not open in $\Tau_K$, we shall construct a discontinuous minimal idempotent $f\in\End^\I(\Tau_K)$. Take any minimal idempotent $f\in\End^\I(\Tau_K)$. If $f$ is discontinuous, we are done. So assume that $f$ is continuous and fix any set $B\in f(\Tau_K)$. By Theorem~\ref{t14.1}(5), the image $f(\Tau_K)=\lfloor B\rfloor$ is finite. So, the preimage $\mathcal Z=f^{-1}(f(B))$ is open-and-closed in $\Tau_K$.
Take any $x\in\Stab(K)\setminus KK$ and consider the subset $\mathcal Z'=(\mathcal Z\setminus \A)\cup x\A$ which is not compact as $\A$ is not open in $\mathcal Z$. Then the $\I$-saturated minimal idempotent $g:\Tau_K\to\Tau_K$ defined by $g(xZ)=xB$ for $x\in \HH(K)$ and $Z\in\mathcal Z'$ is discontinuous (because $g^{-1}(B)=\mathcal Z'$ is not closed in $\Tau_K$).
\end{proof}

\begin{corollary}\label{c14.6} For a maximal 2-cogroup $K\subset X$ and a left-invariant ideal $\I$ on $X$ the following conditions are equivalent:
\begin{enumerate}
\item[(1)] each minimal left ideal of $\End^\I(\Tau_K)$ is a topological semigroup;
\item[(2)] each minimal left ideal of $\End^\I(\Tau_K)$ is a semitopological semigroup;
\item[(3)] for each $A\in\Tau_K$ the set $\bar{\bar A}^\I\cap\Tau_K$ is open in $\Tau_K$.
\end{enumerate}
If the ideal $\I$ is right-invariant, then the conditions (1)--(3) are equivalent to
\begin{enumerate}
\item[(4)] $K$ has finite index in $X$;
\item[(5)] the semigroup $\End(\Tau_K)$ is finite.
\end{enumerate}
\end{corollary}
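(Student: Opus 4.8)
The plan is to reduce the whole statement to the single question of whether the minimal idempotents of $\End^\I(\Tau_K)$ are continuous, and then to feed this into Theorem~\ref{t14.3} and Proposition~\ref{p14.5}. The only preliminary point requiring care is that Theorem~\ref{t14.3} is phrased for minimal idempotents of the ambient monoid $\End(\Tau_K)$, whereas the corollary concerns the closed left ideal $\End^\I(\Tau_K)$. So I would first record a bridging lemma: if $f$ is a minimal idempotent of $\End^\I(\Tau_K)$, then $f$ is also a minimal idempotent of $\End(\Tau_K)$ and $\End^\I(\Tau_K)\circ f=\End(\Tau_K)\circ f=\mathsf L_f$. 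Since $f$ is $\I$-saturated, $h\circ f$ is $\I$-saturated for every $h\in\End(\Tau_K)$, so $\End(\Tau_K)\circ f\subset\End^\I(\Tau_K)$; collapsing $\Tau_K$ equivariantly onto one orbit $\lfloor B\rfloor$ with $B\in f(\Tau_K)$ by some $p\in\End(\Tau_K)$ and using minimality of $\End^\I(\Tau_K)\circ f$ forces $f(\Tau_K)$ to lie in a single orbit, whence $f\in\IK(\End(\Tau_K))$ by Theorem~\ref{t14.1}(5). A short comparison of the two nested minimal left ideals $\End^\I(\Tau_K)\circ f\subset\End(\Tau_K)\circ f$ (the larger one being a minimal left ideal of $\End(\Tau_K)$ lying inside the closed left ideal $\End^\I(\Tau_K)$, hence minimal there too) then gives the stated equality, so that every minimal left ideal of $\End^\I(\Tau_K)$ equals $\mathsf L_f$ for a minimal idempotent $f$ and Theorem~\ref{t14.3} applies verbatim.

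With the lemma available, both (1) and (2) become equivalent to the single condition that every minimal idempotent $f\in\End^\I(\Tau_K)$ is continuous: indeed Theorem~\ref{t14.3} shows $\mathsf L_f$ is topological iff it is semitopological iff $f$ is continuous. To match this with (3) I would invoke Proposition~\ref{p14.5}(3): when $\HH(K)$ is finite, $\End^\I(\Tau_K)$ has no discontinuous minimal idempotent exactly when each class $\bar{\bar A}^\I\cap\Tau_K$ is open, i.e. exactly when (3) holds. When $\HH(K)$ is infinite both sides fail: no $\mathsf L_f$ is even semitopological (by Theorem~\ref{t14.3} this would force $\HH(K)$ finite), and (3) cannot hold either, since (3) makes every $\I$-saturated map locally constant, giving $\End^\I(\Tau_K)\subset C(\Tau_K,\Tau_K)$, and then a continuous minimal idempotent would again force $\HH(K)$ finite through Theorem~\ref{t14.3}. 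This yields $(1)\Leftrightarrow(2)\Leftrightarrow(3)$.

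For the right-invariant part I would first settle $(4)\Leftrightarrow(5)$ independently of $\I$: by Theorem~\ref{t14.1}(3) the monoid $\End(\Tau_K)$ is isomorphic to $\HH(K)\wr[\Tau_K]^{[\Tau_K]}$, which is finite iff $\HH(K)$ and $[\Tau_K]$ are finite, i.e. iff $\Tau_K\cong2^{X/K^\pm}$ is finite (Propositions~\ref{p12.1} and~\ref{p15.2}), i.e. iff $X/K^\pm$, equivalently $X/K$, is finite. The implication $(4)\Rightarrow(3)$ is immediate, as finite index makes $\Tau_K$ a finite discrete space. The substantial direction is $(3)\Rightarrow(4)$. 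Fixing a transversal $S$ meeting each coset $K^\pm x$ once, the homeomorphism $\Tau_K\cong2^{S}$ of Proposition~\ref{p12.1} identifies $T_A=_\I T_{A'}$ with $\bigcup_{s\in A\triangle A'}K^\pm s\in\I$, so (3) is equivalent to the existence of a finite $F\subset S$ with $K^\pm(S\setminus F)\in\I$. If $S$ were infinite, choosing $s_0\in S\setminus F$ gives $Ks_0\subset K^\pm(S\setminus F)\in\I$; right-invariance of $\I$ then propagates this to $Kx\in\I$ for all $x\in X$, and Proposition~\ref{p14.5}(2), applicable because (3) already forces $\HH(K)$ finite, says $\End^\I(\Tau_K)$ has no continuous functions. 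This contradicts $\End^\I(\Tau_K)\subset C(\Tau_K,\Tau_K)$, a nonempty compact right-topological semigroup and therefore possessing a minimal idempotent. Hence $S$, and so $X/K$, is finite.

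The most delicate step is this last translation of the topological condition (3) into the combinatorial statement $K^\pm(S\setminus F)\in\I$, together with the precise role of right-invariance. Right-invariance is exactly what upgrades a single small coset $Ks_0\in\I$ to all cosets $Kx\in\I$, and it is genuinely needed: without it one cannot rule out an ideal that is large along the transversal $S$ yet keeps every individual $Kx$ outside $\I$, so that $K$ could have infinite index while (1)--(3) still held. The bridging lemma of the first paragraph, although routine, is the other place demanding attention, since it is what licenses the use of Theorem~\ref{t14.3} for $\End^\I(\Tau_K)$.
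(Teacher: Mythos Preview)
Your proof is correct and follows essentially the same route as the paper. The paper is terser: for $(1)\Leftrightarrow(2)\Leftrightarrow(3)$ it simply cites Theorem~\ref{t14.3} and Proposition~\ref{p14.5} without your bridging lemma or the explicit case split on whether $\HH(K)$ is finite, so your added care there is welcome (and your bridging lemma is really just the general fact that the minimal left ideals of a left ideal $I\subset S$ coincide with the minimal left ideals of $S$ lying in $I$). For $(3)\Rightarrow(4)$ the paper runs the same contrapositive in the opposite direction: from (3) it first extracts $Kx\notin\I$ for some $x$, uses right-invariance to get $Kx\notin\I$ for all $x$, then shows directly that each $=_\I$-class in $\Tau_K$ is the singleton $\{A\}$, so that openness of points forces $\Tau_K\cong 2^{X/K^\pm}$ to be finite. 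Your combinatorial translation via the transversal $S$ and the derivation of $Kx\in\I$ for all $x$ from an assumed infinite $S$ is the same argument organized dually.
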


\begin{proof} The equivalence $(1)\Leftrightarrow(2)\Leftrightarrow(3)$ follows from Theorem~\ref{t14.3} and Proposition~\ref{p14.5}.

Now assume that the left-invariant ideal $\I$ is right-invariant.

$(3)\Ra(4)$ Assume that for each $A\in\Tau_K$ the set 
$\bar{\bar A}^\I\cap\Tau_K$ is open in $\Tau_K$. Then it is also closed in $\Tau_K$ being the complements of the union of open subsets $\bar{\bar B}^\I\cap \Tau_K$ for $B\ne_\I A$. By Proposition~\ref{p14.5}, $Kx\notin\I$ for some $x\in X$. Since the ideal $\I$ is right-invariant, $Kx\notin\I$ for all $x\in X$. We claim that the set $\bar{\bar A}^\I\cap\Tau_K=\{A\}$. Assuming that $\bar{\bar A}^\I\cap\Tau_K$ contains a set $B$ distinct from $A$, we can find a point $x\in A\triangle B$. Since $\Fix(A)=KK=\Fix(B)$, we get $KKx\subset KKA\triangle KKB=A\triangle B\in\I$ and thus for any point $z\in K$, we arrive to the absurd conclusion $Kzx=KKx\in\I$. Since the singleton $\bar{\bar A}^\I\cap\Tau_K=\{A\}$ in open in the space $\Tau_K$, which is homeomorphic to $2^{X/K^\pm}$, the index of the group $K^\pm$ in $X$ is finite and so is the index of $K$ in $X$.
\smallskip
The implications $(4)\Ra(5)\Ra(1)$ are trivial.
\end{proof}

\section{The semigroup $\Enl(\Tau_K)$}\label{s15}

In the preceding section we studied the continuity of the semigroup operation on minimal left ideals of the semigroup $\End(\Tau_K)$. In this section we shall be interested in the continuity of the semigroup operation on the semigroup $\Enl(\Tau_K)\subset\End(\Tau_K)$. This will be done in a more general context of upper subfamilies $\mathsf F\subset\Tau$.
We define a family $\mathsf F\subset\Tau$ to be {\em upper} if for any twin set $A\in\mathsf F$ and a twin subset $B\subset X$ with $\Fix^-(A)\subset\Fix^-(B)$, we get $B\in\mathsf F$. 

Let us remark that $\wht\Tau$ is an upper subfamily of $\Tau$ while  $\Tau_{K}$ is a minimal upper subfamily of $\Tau$ for every $K\in\wht{\K}$.

\begin{proposition} Each upper subfamily $\mathsf F\subset\Tau$ is symmetric and $\lambda$-invariant. Consequently, $\Enl(\mathsf F)$ is a compact right-topological semigroup.
\end{proposition}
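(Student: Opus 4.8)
The plan is to read off both asserted properties of $\mathsf F$ directly from the definition of an upper family, using only the elementary behaviour of $\Fix^-$ under complementation and under equivariant symmetric maps, and then to obtain the last clause for free from the earlier observation that every $\lambda$-invariant subfamily of $\mathsf P(X)$ gives rise to a compact right-topological semigroup $\Enl(\mathsf F)$.

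First I would establish symmetry. Fix $A\in\mathsf F$; since $A$ is twin, $\Fix^-(A)\ne\emptyset$. A one-line computation shows $\Fix^-(X\setminus A)=\Fix^-(A)$: for any $y\in X$ we have $y(X\setminus A)=X\setminus yA$, so $y\in\Fix^-(X\setminus A)$ iff $X\setminus yA=A$ iff $yA=X\setminus A$ iff $y\in\Fix^-(A)$. In particular $X\setminus A$ is twin and $\Fix^-(A)\subset\Fix^-(X\setminus A)$ (with equality), so the upper property of $\mathsf F$, applied to the twin set $B=X\setminus A$, yields $X\setminus A\in\mathsf F$. Hence $\mathsf F$ is symmetric.

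Next I would prove $\lambda$-invariance. By Corollary~\ref{c4.4} it suffices to check that $f(A)\in\mathsf F$ for every $A\in\mathsf F$ and every equivariant monotone symmetric $f\in\Enl(\mathsf P(X))$. For $x\in\Fix^-(A)$, applying $f$ to the identity $xA=X\setminus A$ and using the equivariance and symmetry of $f$ gives
$$x\,f(A)=f(xA)=f(X\setminus A)=X\setminus f(A),$$
so $x\in\Fix^-(f(A))$. Thus $\Fix^-(A)\subset\Fix^-(f(A))$; in particular $\Fix^-(f(A))\ne\emptyset$, so $f(A)$ is a twin set, and the upper property of $\mathsf F$ gives $f(A)\in\mathsf F$. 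This is precisely the computation already used in the proof that $f(\wht\Tau)$ is $\wht\K$-covering, so no new idea is required. Therefore $\mathsf F$ is $\lambda$-invariant.

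Finally, the ``Consequently'' clause is immediate: once $\mathsf F$ is known to be $\lambda$-invariant, the discussion preceding this proposition shows that $\Enl(\mathsf F)=R_{\mathsf F}(\Enl(\mathsf P(X)))$ is a compact Hausdorff right-topological semigroup under composition, being the image of the compact right-topological semigroup $\Enl(\mathsf P(X))$ under the continuous semigroup homomorphism $R_{\mathsf F}$. There is no genuine obstacle in this argument; the only point that demands a little care is confirming that complementation and the action of $f$ preserve twin-ness, i.e. keep $\Fix^-$ nonempty, and this is exactly what the two short computations above guarantee.
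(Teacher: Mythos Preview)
Your proof is correct and follows essentially the same approach as the paper. The only minor difference is in the symmetry step: the paper writes $X\setminus A=xA$ for $x\in\Fix^-(A)$ and then invokes Proposition~\ref{p5.4} (together with the normality of $\Fix(A)$ in $\Fix^\pm(A)$) to get $\Fix^-(xA)=x\,\Fix^-(A)\,x^{-1}=\Fix^-(A)$, whereas you verify $\Fix^-(X\setminus A)=\Fix^-(A)$ by a direct one-line computation; your route is arguably cleaner here. The $\lambda$-invariance argument and the concluding clause are identical to the paper's.
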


\begin{proof} To prove that $\mathsf F\subset\Tau$ is symmetric, given  any set $A\in\mathsf F$ choose a point $x\in\Fix^-(A)$. By Proposition~\ref{p5.4}, $\Fix^-(xA)=x\,\Fix^-(A)\,x^{-1}=\Fix^-(A)$ and hence $X\setminus A=xA\in\mathsf F$.

To see that $\mathsf F$ is $\lambda$-invariant, we need to show that $\varphi(\mathsf F)\subset\mathsf F$ for any function $\varphi\in\Enl(\mathsf P(X))$. By Corollary~\ref{c4.4}, the function $f$ is symmetric and left-invariant. Then for each $A\in\mathsf F$ and  $x\in\Fix^-(A)$ we get $x\varphi(A)=\varphi(xA)=\varphi(X\setminus A)=X\setminus\varphi(A)$ and hence $x\in\Fix^-(\varphi(A))$. Since $\Fix^-(A)\subset\Fix^-(\varphi(A))$, the set $\varphi(A)$ belongs to $\mathsf F$ by the definition of an upper family.
\end{proof}

\begin{theorem}\label{t16.1} For an upper subfamily $\mathsf F\subset\Tau$ the following conditions are equivalent:
\begin{enumerate}
\item[\textup{(1)}] $\Enl(\mathsf F)$ is a topological semigroup;
\item[\textup{(2)}] $\Enl(\mathsf F)$ is a semitopological semigroup;
\item[\textup{(3)}] for each twin set $A\in\mathsf F$ the subgroup $\Fix(A)$ has finite index in $X$.
\end{enumerate}
\end{theorem}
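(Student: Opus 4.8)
The plan is to prove the cycle $(1)\Ra(2)\Ra(3)\Ra(1)$. The implication $(1)\Ra(2)$ is immediate, since every topological semigroup is semitopological. The whole argument hinges on reformulating condition $(2)$ in terms of continuity of the individual functions. For $f,g\in\Enl(\mathsf F)$ and $A\in\mathsf F$ we have $(f\circ g)(A)=f(g(A))$, and the evaluation $g\mapsto g(A)$ is continuous (a coordinate projection of $\mathsf F^{\mathsf F}$); hence the left shift $l_f:g\mapsto f\circ g$ is continuous whenever $f$ is continuous. Conversely, a point of discontinuity of $f$ propagates to a discontinuity of $l_f$, using that the evaluations $g\mapsto g(A)$ already realize enough of $\mathsf F$ (by equivariance and the transitivity of the $\HH(K)$-action on each orbit). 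Since $\Enl(\mathsf F)$ is right-topological, the right shifts are automatically continuous, so $(2)$ is equivalent to the assertion that \emph{every} $f\in\Enl(\mathsf F)$ is a continuous self-map of $\mathsf F$. It therefore suffices to tie continuity of all of $\Enl(\mathsf F)$ to the finite-index condition $(3)$.

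The key structural remark is that an upper family decomposes as $\mathsf F=\bigcup_{K}\Tau_K$, the union ranging over the $2$-cogroups $K=\Fix^-(A)$ with $A\in\mathsf F$: if $A\in\mathsf F$ and $\Fix^-(B)=\Fix^-(A)$, then $\Fix^-(A)\subset\Fix^-(B)$ forces $B\in\mathsf F$ by the upper property, so each such $\Tau_K$ lies entirely inside $\mathsf F$. By Proposition~\ref{p12.1} the piece $\Tau_K$ is homeomorphic to $2^{|X/K^\pm|}$, and for $A\in\Tau_K$ one has $\Fix(A)=KK$; thus $\Fix(A)$ has finite index in $X$ if and only if $|X/K^\pm|$ is finite, if and only if $\Tau_K$ is finite and hence discrete. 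For $(2)\Ra(3)$ I argue by contraposition: if some $A\in\mathsf F$ has $\Fix(A)=KK$ of infinite index, then the compact subspace $\Tau_K\subset\mathsf F$ is infinite and has no isolated points. The trivial-ideal case of Proposition~\ref{p14.5}(3), applied with $\bar{\bar A}^\I\cap\Tau_K=\{A\}$ failing to be open precisely because $\Tau_K$ is non-discrete, furnishes a discontinuous equivariant self-map of $\Tau_K$; realizing it as the restriction of a global equivariant, monotone, symmetric function (via the maximal-linked-system construction of Theorem~\ref{t4.8}, together with the $\lambda$-invariance of $\mathsf F$ that guarantees the restriction lands back in $\Enl(\mathsf F)$) yields a discontinuous $f\in\Enl(\mathsf F)$, so $\Enl(\mathsf F)$ is not semitopological.

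For $(3)\Ra(1)$ I assume every $\Fix(A)$, $A\in\mathsf F$, has finite index. Then each occurring $\Tau_K$ is finite; moreover the $2$-cogroups $K'\supseteq K$ correspond to subgroups of the finite group $X/KK$, so there are only finitely many of them, and the closed set $\{B:\Fix^-(B)\supseteq K\}$ is a finite union of finite pieces. Consequently every $f\in\Enl(\mathsf F)$ is locally constant on each $\Tau_K$, hence continuous. By the reformulation this already gives $(2)$; to reach the stronger $(1)$ I will show that $\mathsf F$ is in fact discrete, after which the evaluation $\Enl(\mathsf F)\times\mathsf F\to\mathsf F$ is jointly continuous (for discrete $\mathsf F$ the preimage of any value is visibly open), and $\Enl(\mathsf F)$ is a topological semigroup.

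The main obstacle is exactly this last discreteness step, where the present situation genuinely differs from the single-cogroup Theorem~\ref{t14.3}: the family $\mathsf F$ may be an \emph{infinite} union of the finite pieces $\Tau_K$, and I must exclude accumulation of distinct pieces in $2^X$. The plan is to use that a set $A$ with $\Fix(A)$ of finite index is a finite union of right cosets of $\Fix(A)$, hence is determined by its trace on a finite transversal; and to combine this with the forced near-twin behaviour of any $B\in\mathsf F$ close to $A$ (for $x_0\in\Fix^-(A)$ the sets $x_0B$ and $X\setminus B$ must agree on an arbitrarily large finite set as $B\to A$) in order to conclude that a net in $\mathsf F$ converging to $A$ is eventually equal to $A$. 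Establishing this rigidity, which upgrades separate continuity to joint continuity, is where the bulk of the topological work lies.
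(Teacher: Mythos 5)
Your reduction of condition (2) to the assertion that \emph{every} $f\in\Enl(\mathsf F)$ is continuous as a self-map of $\mathsf F$ is the hinge of your whole plan, and it is false; only the easy half (all elements continuous $\Ra$ all left shifts continuous) is valid. Take $X=\IZ$ and $\mathsf F=\Tau$. Every twin set $A\subset\IZ$ satisfies condition (3): if $k+A=\IZ\setminus A$ then $2k\in\Fix(A)$ and $k\ne 0$, so $\Fix(A)\supset 2k\IZ$ has finite index; hence by the theorem (and by the paper's proof of $(3)\Ra(1)$) $\Enl(\Tau)$ is a topological semigroup. Nevertheless $\Enl(\Tau)$ contains discontinuous elements: $2\IZ$ is not isolated in $\Tau$, since for every $N$ one can build (by alternating sufficiently long blocks of ``evens'' and ``odds'') a periodic twin set $B_N\ne 2\IZ$ with $B_N\cap[-N,N]=2\IZ\cap[-N,N]$; the function equal to the identity on the orbit $[2\IZ]$ and to the antipodal map $B\mapsto\IZ\setminus B$ on the orbits $[B_N]$ is equivariant, symmetric and monotone (monotonicity is vacuous, twin subsets of $\IZ$ being pairwise incomparable because $\IZ$ has trivial twinic ideal), so by Theorem~\ref{t4.8} it extends to an element of $\Enl(\mathsf P(\IZ))$, whose restriction to $\Tau$ sends $B_N\mapsto\IZ\setminus B_N\to 1+2\IZ\ne 2\IZ$ and is therefore discontinuous at $2\IZ$. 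The reason a discontinuity of $f$ does not ``propagate'' to $l_f$ is precisely the finite-index rigidity you overlooked: since $\Fix^-(A)\subset\Fix^-(g(A))$ (Theorem~\ref{t11.1}), each value $g(A)$ is a union of cosets of $\Fix(A)$, so when $\Fix(A)$ has finite index the evaluation $g\mapsto g(A)$ is \emph{locally constant} on $\Enl(\mathsf F)$: any $g'$ agreeing with $g$ on the finitely many coordinates determined by a transversal $S$ of $\Fix(A)$ has $g'(A)=g(A)$ exactly. Thus nets $g_i\to g$ have $g_i(A)$ eventually \emph{equal} to $g(A)$, the nets $B_N\to 2\IZ$ never arise as evaluation nets, and $l_f$ is continuous although $f$ is not. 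Consequently your argument for $(2)\Ra(3)$ (exhibit a discontinuous element, conclude non-semitopological) establishes nothing.

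The same example breaks both steps of your $(3)\Ra(1)$: under (3) each piece $\Tau_K$ is finite, but distinct pieces accumulate on one another, so ``locally constant on each $\Tau_K$'' does not yield continuity on $\mathsf F$, and $\mathsf F$ need not be discrete --- your closing ``rigidity'' claim (a net in $\mathsf F$ converging to $A$ is eventually $A$) is exactly this false discreteness. The rigidity that actually works, and is the paper's proof, lives in the function space rather than in $\mathsf F$: given a subbasic neighborhood $O(f\circ g)=\{h:x\in h(A)\}$, put $B=g(A)$, $O(f)=\{f':x\in f'(B)\}$ and $O(g)=\{g':g'(A)\cap S=B\cap S\}$ for a finite transversal $S$ of $\Fix(A)$; then every $g'\in O(g)$ satisfies $g'(A)=\Fix(A)\cdot(S\cap g'(A))=B$, hence $f'\circ g'(A)=f'(B)\ni x$, and composition is jointly continuous with no appeal to continuity of individual elements. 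For the converse implication $(2)\Ra(3)$ the paper needs a genuinely different and much harder argument: assuming some $\Fix(T_0)$ has infinite index, it invokes Protasov's theorem $X\ne FHF$ and the combinatorial Lemmas~\ref{l16.2} and \ref{l16.3} to manufacture two ultrafilters whose representations $\alpha,\beta\in\Enl(\mathsf F)$ have $l_\alpha$ discontinuous at $\beta$; ultrafilter representations belong to $\Enl(\mathsf F)$ automatically, which also sidesteps the realization problems in your sketch (note in addition that your citation of Proposition~\ref{p14.5}(3) ignores its hypotheses: it concerns maximal 2-cogroups with finite characteristic group, neither of which is guaranteed for $\Fix^-(A)$ with $A\in\mathsf F$).
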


\begin{proof} $(3)\Ra(1)$ Assume that for each twin set $A\in\mathsf F$ the stabilizer $\Fix(A)$ has finite index in $X$.
To show that the semigroup operation $\circ :\Enl(\mathsf F)\times\Enl(\mathsf F)\to\Enl(\mathsf F)$ is continuous, fix any two functions $f,g\in\Enl(\mathsf F)$ and a neighborhood $O(f\circ g)$ of their composition. We should show that the functions $f,g$ have neighborhoods $O(f),O(g)\subset \Enl(\mathsf F)$ such that $O(f)\circ O(g)\subset O(f\circ g)$.
We lose no generality assuming that the neighborhood $O(f,g)$ is of sub-basic form: $$O(f\circ g)=\{h\in\Enl(\mathsf F): x\in h(A)\}$$
for some $x\in X$ and some twin set $A\in\mathsf F$.
Let $B=g(A)$. It follows from $f\circ g\in O(f\circ g)$ that $x\in f\circ g(A)=f(B)$.
Let $O(f)=\{h\in\Enl(\mathsf F):x\in h(B)\}$.

The definition of a neighborhood $O(g)$ is a bit more complicated.
By our hypothesis, the stabilizer $\Fix(A)$ has finite index in $X$.
Let $S\subset X$ be a (finite) subset meeting each coset $\Fix(A)\,z$, $z\in X$, at a single point. Consider the following open neighborhood of $g$ in $\Enl(\mathsf F)$:
$$O(g)=\{g'\in\Enl(\mathsf F):\forall s\in S\;\;(s\in B\;\Leftrightarrow s\in g'(A))\}.$$
We claim that  $O(f)\circ O(g)\subset O(f\circ g)$.
Indeed, take any functions $f'\in O(f)$ and $g'\in O(g)$.
By Theorem~\ref{t11.1}, $\Fix^-(A)\subset \Fix^-(g'(A))$ and hence $\Fix(A)\subset\Fix(g'(A))$. Then $g'(A)=\Fix(A)\cdot(S\cap g'(A)) =\Fix(A)\cdot (S\cap B)=B$ and thus $x\in f'(B)=f'\circ g'(A)$ witnessing that $f'\circ g'\in O(f\circ g)$.
\smallskip

The implication $(1)\Ra(2)$ is trivial.
\smallskip

$(2)\Ra (3)$ Assume that $X$
contains a twin subset $T_0\in\mathsf F$ whose stabilizer $\Fix(T_0)$ has infinite index in $X$.
Then the subgroup $H=\Fix^\pm(T_0)$ also has infinite index in $X$. By Theorem 15.5 of \cite{P}, $X\ne FHF$ for any finite subset $F\subset X$. 

\begin{lemma}\label{l16.2} There are countable sets $A,B\subset X$ such that
\begin{enumerate}
\item[\textup{(1)}] $xB\cap yB=\emptyset$ for any distinct $x,y\in A$;
\item[\textup{(2)}] $|AB\cap Hz|\le 1$ for all $z\in X$;
\item[\textup{(3)}] $e\in A$, $AB\cap H=\emptyset$.
\end{enumerate}
\end{lemma}

\begin{proof} Let $a_0=e$ and $B_{<0}=\{e\}$. Inductively we shall construct sequences  $A=\{a_n:n\in\w\}$ and $B=\{b_n:n\in\w\}$ such that
\begin{itemize}
\item $b_n\notin A_{\le n}^{-1}HA_{\le n}B_{<n}$ where $A_{\le n}=\{a_i:i\le n\}$ and
$B_{<n}=\{e\}\cup\{b_i:i< n\}$;
\item $a_{n+1}\notin HA_{\le n}B_{\le n}B_{\le n}^{-1}$.
\end{itemize}
Since $X\ne FHF$ for any finite subset $F\subset X$, the choice of the points $b_n$ and $a_{n+1}$ at the $n$-th step is always possible. It is easy to check that the sets $A,B$ satisfy the conditions (1)--(3) of the lemma.
\end{proof}

The properties (2), (3) of the set $AB$ allows us to enlarge $AB$ to a subset $S$ that contains the neutral element of $X$ and meets each coset $Hz$, $z\in X$, at a single point. Observe that each subset $E\subset S$ generates a twin subset $$T_E=\Fix(T_0)\cdot E\cup \Fix^-(T_0)\cdot (S\setminus E)$$ of $X$ such that $\Fix^-(T_0)\subset\Fix^-(T_E)$ and hence $T_E\in\mathsf F$.

\begin{lemma}\label{l16.3} There is a free ultrafilter $\mathcal B$ on $X$ and a family of subsets $\{U_a:a\in A\}\subset \mathcal B$ such that
\begin{enumerate}
\item[\textup{(1)}] $\bigcup_{a\in A}U_a\subset B$;
\item[\textup{(2)}] the set $U=\bigcup_{a\in A}aU_a$ has the property $B\not\subset x^{-1}U\cup y^{-1}U$ for every $x,y\in A$;
\item[\textup{(3)}] for every $V\in\mathcal B$ the set $\{a\in A:aV\subset U\}$ is finite.
\end{enumerate}
\end{lemma}

\begin{proof} Let $A=\{a_n:n\in\w\}$ and $B$ be the sets constructed in Lemma~\ref{l16.2}.  For every $n\in\w$ put
 $A_{\le n}=\{a_i:i\le n\}$.
  Let $B_{<0}=\{e\}$ and inductively, for every $n\in\w$ choose an element
 $b_n\in B$ so that
$$b_n\notin A_{\le_n}^{-1}A_{\le n}B_{<n}\mbox{ \ where \ }B_{<n}=\{b_i:i<n\}.$$
For every $n\in\w$ let $B_{\ge n}=\{b_{i}:i\ge n\}$. Let also
 $B_{2\w}=\{b_{2n}:n\in\w\}$.

Let us show that for any distinct numbers $n,m$ the intersection
 $a_nB_{\ge n}\cap a_mB_{\ge m}$ is empty.
Otherwise there would exist two numbers $i\ge n$ and $j\ge m$ such that
 $a_nb_{i}=a_mb_{j}$.
It follows from $a_n\ne a_m$ that $i\ne j$. We lose no generality
 assuming that $j>i$.
Then $a_nb_{i}=a_mb_{j}$ implies that
$$b_{j}=a_m^{-1}a_nb_{i}\in A_{\le j}^{-1}A_{\le j}B_{<j},$$ which contradicts
 the choice of $b_{j}$.

Let $\mathcal B\in\beta(X)$ be any free ultrafilter such that
 $B_{2\w}\in\mathcal B$ and $\mathcal B$
is not a P-point in $\beta(X)\setminus X$. To get such an ultrafilter, take $\mathcal B$ to be a
 cluster point of any
countable subset of $\beta(B_{2\w})\setminus B_{2\w}\subset \beta(X)$. Using the fact
 that $\mathcal B$ fails
 to be a P-point, we can take a decreasing sequence of subsets
 $\{V_n:n\in\w\}\subset\mathcal B$ of $B_{2\w}$
  having no pseudointersection in $\mathcal B$. The latter means that
 for every $V\in\mathcal B$
   the almost inclusion $V\subset^* V_n$ (which means that $V\setminus
 V_n$ is finite) holds only
   for finitely many numbers $n$.

For every $a=a_n\in A$ let $U_a=V_n\cap B_{\ge n}$. We claim that
 the ultrafilter $\mathcal B$,
the family $(U_a)_{a\in A}$, and the set $U=\bigcup_{a\in A}a U_a=\bigcup_{n\in\w}a_n(V_n\cap B_{\ge n})$ satisfy the requirements of
 the lemma.

First, we check that $B\not\subset a_n^{-1}U\cup a_m^{-1}U$ for all $n\le m$. Take any odd number $k>m$. We claim that $b_k\notin a_n^{-1}U\cup a_m^{-1}U$. Otherwise, $b_{k}\in a^{-1}_n a_i(V_i\cap B_{\ge i})\cup a_m^{-1}a_i(V_i\cap B_{\ge i})$ for some $i\in \w$ and hence $b_{k}=a_n^{-1}a_i b_j$ or $b_{k}=a^{-1}_ma_ib_j$ for some even $j\ge i$. If $k>j$, then both the equalities are
forbidden by the choice of $b_{k}\notin A_{\le k}^{-1}A_{\le k}B_{<k}\supset\{a^{-1}_na_ib_j,a^{-1}_ma_ib_j\}$. If $k<j$, then those
equalities are forbidden by the choice of
$b_j\notin
A_{\le j}^{-1}A_{\le j}B_{<j}\supset\{a_i^{-1}a_nb_k, a^{-1}_ia_mb_k\}$. Therefore, $B\not\subset a_n^{-1}U\cup a_m^{-1}U$.

Next, given arbitrary $V\in\mathcal B$ we show that the set $A'=\{a\in A:aV\subset U\}$ is finite. By the choice of the sequence $(V_n)$, the set $F=\{a_n:V\cap
B_{2\w}\subset^* V_n\}$ is finite. We claim that $A'\subset F$. Indeed, take any $a_n\in A'$. It follows from $a_nV\subset
U=\bigcup_{a\in A}aB_a$ and $a_nB\cap \bigcap_{i\ne n}a_i B=\emptyset$ that $$a_n(V\cap
B_{2\w})\subset^*a_n(V_n\cap B_{\ge n})\subset a_n V_n$$ and hence $a_n\in F$.
\end{proof}

Let $\mathcal A$ be any free ultrafilter on $X$ containing the set $A$ and observe that $U=\bigcup_{a\in A}a(V_n\cap B_{\ge n})\in\A\circ\mathcal B$. Let $\alpha=\Phi_{\mathsf F}(\A)$ and $\beta=\Phi_{\mathsf F}(\mathcal B)$ be the function representations of the ultrafilters $\A$ and $\mathcal B$, respectively.  We claim that the left shift $l_\alpha:\Enl(\mathsf F)\to\Enl(\mathsf F)$, $l_\alpha:f\mapsto \alpha\circ f$, is discontinuous
at $\beta$. Since $U\subset AB\subset S$, we can consider the twin set $$T=\Fix(T_0)\cdot U\cup \Fix^-(T_0)\cdot (S\setminus U)$$ and observe that $T\in \A\circ\mathcal B$. Consequently, $\alpha\circ\beta(T)=\{x\in G:x^{-1}T\in\A\circ\mathcal B\}$ contains the neutral element, which implies that  $O(\alpha\circ\beta)=\{f\in\Enl(\mathsf F):e\in f(T)\}$ is a neighborhood of $l_\alpha(\beta)=\alpha\circ\beta$ in $\Enl(\mathsf F)$.

Assuming that $l_\alpha$ is continuous at $\beta$, we can find a neighborhood $O(\beta)\subset\Enl(\mathsf F)$ of $\beta$ such that $l_\alpha(O(\beta))\subset O(\alpha\circ\beta)$. Since $\mathsf F$ is left-invariant, we can assume that $O(\beta)$ is of the basic form:
$$O(\beta)=\{f\in\Enl(\mathsf F):e\in\bigcap_{i=1}^n f(T_i)\}$$ for some twin sets $T_1,\dots,T_n\in\mathsf F$. It follows from $\beta\in O(\beta)$ that $e\in\beta(T_i)$ and thus $T_i\in\mathcal B$ for every $i\le n$.
According to Lemma~\ref{l16.3}(3), the set $F=\{a\in A:B\cap \bigcap_{i=1}^n T_i\subset a^{-1}U\}$ is finite.

We claim that the family $\LL=\{T_1,\dots, T_n,X\setminus x^{-1}T:x\in A\setminus F\}$ is linked.
This will follow as soon as we check that
\begin{itemize}
\item[(i)] $T_i\cap (X\setminus x^{-1}T)\ne\emptyset$ for any $i\le n$ and $x\in A\setminus F$;
\item[(ii)] $(X\setminus x^{-1}T)\cap (X\setminus y^{-1}T)\ne\emptyset$ for all $x,y\in A$.
\end{itemize}

The item (i) is equivalent to $T_i\not\subset x^{-1}T$ for $x\in A\setminus F$. Assuming conversely that $T_i\subset x^{-1}T$, we will consecutively get $xT_i\subset T$, $S\cap xT_i\subset S\cap T=U$, and finally  $B\cap T_i\subset x^{-1}S\cap T_i\subset x^{-1}U$, which contradicts $x\notin F$.

The item (ii) is equivalent to $x^{-1}T\cup y^{-1}T\ne X$ for $x,y\in A$. Assume conversely that $x^{-1}T\cup y^{-1}T=X$ for some $x,y\in A$.
It follows from $xB\subset S$ that $xB\cap T=xB\cap U$ and thus $B\cap x^{-1}T=B\cap x^{-1}U$. Similarly, $B\cap y^{-1}T=B\cap y^{-1}U$. Consequently, $$B=B\cap X=B\cap (x^{-1}T\cup y^{-1}T)=B\cap (x^{-1}U\cup y^{-1}U)\ne B$$ according to Lemma~\ref{l16.3}(2). This contradiction completes the proof of the linkedness of $\LL$.

Being linked, the family $\LL$ can be enlarged to a maximal linked system $\C\in\lambda(X)$. It follows from $T_1,\dots,T_n\in\LL\subset\C$ that the twin representation $\gamma=\Phi_{\mathsf F}(\C)$ belongs to the neighborhood $O(\beta)$ and consequently, $\alpha\circ\gamma\in O(\alpha\circ\beta)$, which means that $T\in\A\circ \C$. The latter is equivalent to $A'=\{x\in X:x^{-1}T\in\C\}\in\A$. On the other hand, $X\setminus A'=\{x\in X:X\setminus x^{-1}T\in\C\}$ contains the set $A\setminus F\in\A$ and thus $X\setminus A'\in\A$, which is a contradiction.
\end{proof}

\begin{theorem}\label{t16.4} If the group $X$ is twinic, then for an upper subfamily $\mathsf F\subset\Tau$ the following conditions are equivalent:
\begin{enumerate}
\item[\textup{(1)}] $\Enl(\mathsf F)$ is metrizable;
\item[\textup{(2)}] $\Enl(\mathsf F)$ is a metrizable topological semigroup;
\item[\textup{(3)}] $\mathsf F$ is at most countable.
\end{enumerate}
\end{theorem}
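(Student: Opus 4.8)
The plan is to prove the cycle $(2)\Ra(1)\Ra(3)\Ra(2)$, the first implication being trivial since a metrizable topological semigroup is metrizable. The organizing observation I would record first is a structural decomposition of $\mathsf F$. Put $\K_{\mathsf F}=\{\Fix^-(A):A\in\mathsf F\}$; upper-ness of $\mathsf F$ forces $\Tau_K\subset\mathsf F$ whenever $K\in\K_{\mathsf F}$ (if $\Fix^-(A)=K$ and $\Fix^-(B)=K$ then $\Fix^-(A)\subset\Fix^-(B)$), so that $\mathsf F=\bigsqcup_{K\in\K_{\mathsf F}}\Tau_K$. By Proposition~\ref{p12.1} each piece satisfies $\Tau_K\cong 2^{X/K^\pm}$, hence $|\Tau_K|=2^{|X/K^\pm|}$, and by Proposition~\ref{p15.2} the characteristic group is countable. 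Since $2^\kappa$ is either finite or $\ge 2^{\aleph_0}$, this yields the cardinal dictionary I will use repeatedly: \emph{$\mathsf F$ is at most countable if and only if $\K_{\mathsf F}$ is at most countable and every $K\in\K_{\mathsf F}$ has finite index} (equivalently, every $\Fix(A)=KK$ has finite index in $X$).

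For $(3)\Ra(2)$ I would first deduce finiteness of indices: if some $\Fix(A)$ had infinite index, then $K^\pm=\Fix^\pm(A)$ would have infinite index and $\Tau_K\subset\mathsf F$ would be uncountable, contradicting $(3)$. By Theorem~\ref{t16.1} this makes $\Enl(\mathsf F)$ a topological semigroup. For metrizability I bound the weight: any $\varphi\in\Enl(\mathsf F)$ is equivariant and symmetric, so for $x\in\Fix^-(A)$ one gets $x\varphi(A)=\varphi(xA)=\varphi(X\setminus A)=X\setminus\varphi(A)$, whence $\Fix^-(A)\subset\Fix^-(\varphi(A))$ and $\Fix(A)\subset\Fix(\varphi(A))$. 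Thus $\varphi(A)$ is a union of right cosets of the finite-index subgroup $\Fix(A)$, so the coordinate $\varphi\mapsto\varphi(A)$ is pinned down by \emph{finitely many} of the subbasic conditions $\langle x,A\rangle^\pm$. Running over the countably many $A\in\mathsf F$ produces a countable subbase, so the compact Hausdorff space $\Enl(\mathsf F)$ is second countable, hence metrizable.

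The substance is $(1)\Ra(3)$, which I prove contrapositively: if $\mathsf F$ is uncountable then $\Enl(\mathsf F)$ has uncountable weight. The main tool is the restriction homomorphism $R_K\colon\Enl(\mathsf F)\to\End(\Tau_K)$, $\varphi\mapsto\varphi|\Tau_K$, for a \emph{maximal} $K\in\K_{\mathsf F}$. For maximal $K$ the image lands in $\End(\Tau_K)$ (by maximality $\Fix^-(\varphi(A))=K$), $R_K$ is continuous, and $R_K(\Enl(\mathsf F))=\Enl(\Tau_K)$, which by Theorem~\ref{t14.1}(1)--(2) equals $\End(\Tau_K)$ once a twinic ideal $\I$ with $\I\cap\wht\K=\emptyset$ is fixed. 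Now the wreath decomposition $\End(\Tau_K)\cong\HH(K)\wr[\Tau_K]^{[\Tau_K]}$ of Theorem~\ref{t14.1} shows $\End(\Tau_K)$ is non-metrizable whenever $\Tau_K$ is infinite, since it then admits an uncountable topological power ($[\Tau_K]^{[\Tau_K]}$, or the group factor $\HH(K)^{[\Tau_K]}$ when $\HH(K)$ is infinite) as a continuous quotient, the index set $[\Tau_K]$ having cardinality $2^{|X/K^\pm|}$. As the continuous image of a metrizable compactum is metrizable, metrizability of $\Enl(\mathsf F)$ forces every maximal $K\in\K_{\mathsf F}$ to have finite index, so each maximal $\Tau_K$ is finite. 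The remaining uncountability of $\mathsf F$ must then come from uncountably many cogroup-orbits; I reflect this count by the continuous map $\Enl(\mathsf F)\to\prod_{[K]\in[\K_{\mathsf F}]}\End(\Tau_K)$, which is surjective because the extension machinery (Theorem~\ref{t11.1}) together with the $\I$-incomparability of $\Tau$ for the twinic group $X$ (Proposition~\ref{p13.1} and its corollary) lets one prescribe the restrictions to distinct $\Tau_K$ independently. An uncountable product of nontrivial factors is non-metrizable, giving the contradiction.

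I expect the real difficulties to sit in this last direction. First is the treatment of \emph{non-maximal} cogroups $K\in\K_{\mathsf F}$: there $R_K$ no longer maps $\Tau_K$ into itself, so the finiteness of $|X/K^\pm|$ cannot be read off from $\End(\Tau_K)$ and must instead be extracted from the enveloping semigroup of the right-translation action of $X$ on $\Tau_K\cong 2^{X/K^\pm}$, whose weight is $2^{|X/K^\pm|}$ for infinite index (this action lives inside $\Enl(\mathsf F)$ because $\mathsf F$ is $\lambda$-invariant and the principal-ultrafilter maps $\Phi_{\dot x}$ act as right shifts preserving each $\Tau_K$). Second is the correct choice of twinic ideal making $\Enl(\Tau_K)=\End(\Tau_K)$, which needs $\I\cap\wht\K=\emptyset$ and is exactly where the twinic hypothesis on $X$ is genuinely used, via Proposition~\ref{p13.3}; the same hypothesis, through $\I$-incomparability, is what guarantees the independence needed for the cogroup-counting surjection. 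Once these two points are secured, the weight estimates $w(\Enl(\mathsf F))\ge 2^{|X/K^\pm|}$ and $w(\Enl(\mathsf F))\ge|[\K_{\mathsf F}]|$ combine to defeat metrizability for every uncountable $\mathsf F$.
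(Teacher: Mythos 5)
Your cardinal dictionary and your proof of $(3)\Ra(2)$ are fine and essentially coincide with the paper's (finite index of every $\Fix(T)$ via $|\Tau_K|=2^{|X/K^\pm|}$, then Theorem~\ref{t16.1} plus the observation that each coordinate $\varphi\mapsto\varphi(T)$ has only finitely many possible values). The problems are in $(1)\Ra(3)$. The first genuine gap is your step for maximal $K\in\K_{\mathsf F}$: it rests on the equality $\Enl(\Tau_K)=\End(\Tau_K)$, which you justify by ``fixing a twinic ideal $\I$ with $\I\cap\wht\K=\emptyset$''. The theorem assumes only that $X$ is twinic, and the existence of a twinic ideal disjoint from $\wht\K$ does not follow from that: throughout the paper the condition $\TI\cap\wht\K=\emptyset$ is carried as a separate, additional hypothesis (Theorem~\ref{t14.1}(2), Proposition~\ref{p18.1}, Corollary~\ref{c18.5}(3), Proposition~\ref{p18.10}), precisely because the minimal twinic ideal may a priori contain maximal 2-cogroups. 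So this step is unjustified as written. It is also unnecessary: the device you defer to your ``difficulties'' paragraph (ultrafilters acting through right shifts) is exactly the paper's argument, and it works uniformly for every $T\in\mathsf F$ whose stabilizer has infinite index, maximal or not, with no twinic hypothesis. Indeed, taking a transversal $S$ of the cosets of $\Fix^\pm(T)$, every $E\subset S$ yields the set $T_E=\Fix(T)E\cup\Fix^-(T)(S\setminus E)\in\mathsf F$ (by upper-ness) with $T_E\cap S=E$; hence distinct ultrafilters $\U,\V\in\beta(S)$ have distinct restrictions $\Phi_\U|\mathsf F\ne\Phi_\V|\mathsf F$, so $|\Enl(\mathsf F)|\ge 2^{\mathfrak c}$ and the compactum $\Enl(\mathsf F)$ cannot be metrizable. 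Note also that your non-metrizability claim for $\End(\Tau_K)$ via ``an uncountable topological power as a continuous quotient'' leans on Theorem~\ref{t14.1}(3), which is only an algebraic isomorphism (for infinite $\HH(K)$ the orbit space $[\Tau_K]$ is not even $T_1$ by Proposition~\ref{p12.2}); it is the cardinality bound that actually does the work.

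The second gap is in your final step (all stabilizers of finite index, $\mathsf F$ uncountable). The map $\Enl(\mathsf F)\to\prod_{[K]\in[\K_{\mathsf F}]}\End(\Tau_K)$, $\varphi\mapsto(\varphi|\Tau_K)_{[K]}$, is not well defined: for a non-maximal $K\in\K_{\mathsf F}$ one only has $\Fix^-(A)\subset\Fix^-(\varphi(A))$, so $\varphi$ may carry $\Tau_K$ into $\Tau_{K'}$ for $K'\supsetneq K$, outside your codomain; and surjectivity onto the full product is nowhere established. Moreover $\mathsf F$ need not be left-invariant (e.g.\ $\Tau_K$ for non-normal $K$ is upper but not left-invariant), so Theorem~\ref{t11.1}, which is stated for left-invariant families, cannot be applied to $\mathsf F$ itself. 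The paper's route repairs all of this: pass to the left-invariant hull $\bar{\mathsf F}=\bigcup_{T\in\mathsf F}[T]$; by Proposition~\ref{p13.4} the finite indices make $\bar{\mathsf F}$ $\I$-independent, so $\I$-saturation is automatic and Theorem~\ref{t11.1} (this is where twinicity of $X$ genuinely enters, and any twinic ideal suffices, no disjointness from $\wht\K$ needed) identifies $\Enl(\bar{\mathsf F})$ with the product $\prod_{[T]\in[\bar{\mathsf F}]}\Enl([T])$, whose uncountably many factors each contain at least the identity and the antipodal map; finally the restriction operator $\Enl(\bar{\mathsf F})\to\Enl(\mathsf F)$ is injective, hence a homeomorphism, transferring non-metrizability to $\Enl(\mathsf F)$. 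Your sketch names the right tools (Theorem~\ref{t11.1}, independence), but the well-definedness issue, the passage to the hull, and the injectivity of the restriction are exactly the missing steps; also the property you need is $\I$-independence (Proposition~\ref{p13.4}), not the $\I$-incomparability of Proposition~\ref{p13.1}.
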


\begin{proof} We shall prove the implications $(3)\Ra(2)\Ra(1)\Ra(3)$.

$(3)\Ra(2)$. Assume that the family $\mathsf F$ is at most countable. We claim that for each twin subset $T\in\mathsf F$ the 2-cogroup  $K=\Fix^-(T)$ has finite index in $X$. Otherwise, the subgroup $K^\pm=KK\cup K$ also has infinite index on $X$ and then $|\mathsf F|\ge|\Tau_K|=2^{|X/K^\pm|}\ge 2^\w>\aleph_0$.

So, $\Fix(T)$ has finite index in $X$ and the implication $(3)\Ra(1)$ of Theorem~\ref{t16.1} guarantees that $\Enl(\mathsf F)$ is a topological semigroup. Now we show that this semigroup is metrizable. First observe that for every $T\in\mathsf F$ the set $\Enl(\{T\})=\{\varphi|\{T\}:\varphi\in\Enl(\mathsf P(X))\}$ has finite cardinality 
$$|\Enl(\{T\})|=|\{\varphi(T):\varphi\in\Enl(\mathsf P(X))\}|\le| \{A\in\Tau:\Fix(A)\supset \Fix(T)\}|.$$ Since the family $\mathsf F$ is countable, the space $\Enl(\mathsf F)\subset\prod_{T\in\mathsf F}\Enl(\{T\})$ is metrizable, being a subspace of the countable product of finite discrete spaces.
\smallskip

The implication $(2)\Ra(1)$ is trivial.
\smallskip

$(1)\Ra(3)$ Assuming that the family $\mathsf F$ is not countable, we shall show that the space $\Enl(\mathsf F)$ is not metrizable. We consider two cases.
\smallskip

(a) For some twin set $T\in\mathsf F$ the stabilizer $\Fix(T)$ has infinite index.
Then we can find an infinite set $S\subset X$ that intersects each coset $\Fix^\pm(T)x$, $x\in X$, at a single point. As we already know,  for each subset $E\subset S$ the set
$$T_E=\Fix(T)\cdot E\cup \Fix^-(T)\cdot(S\setminus E)$$ belongs to the family $\mathsf F$. Now take any two distinct ultrafilters $\U,\V\in\beta(S)\subset\beta(X)$  and consider their function representations $f_\U=\Phi_{\mathsf F}(\U)$ and $f_\V=\Phi_{\mathsf F}(\V)$. Since $\U\ne\V$, there is a subset $E\subset S$ such that $E\in\U\setminus\V$. It follows that $T_E\in\U$ and $T_{S\setminus E}\in\V$, which implies $T_E\notin \V$ and hence $e\in f_\U(T_E)\setminus f_\V(T_E)$. This means that $f_\U\ne f_\V$ and consequently, $|\Enl(\mathsf F)|\ge|\beta(S)|\ge 2^{\mathfrak c}$, which implies that the compact space $\Enl(\mathsf F)$ is not metrizable (because each metrizable compact space has cardinality $\le\mathfrak c$).
\smallskip

(b) For each $T\in\mathsf F$ the subgroup $\Fix(T)$ has finite index in $X$.
Then each set $T\in\mathsf F$ has finite orbit $[T]=\{xT:x\in X\}$.
Consider the smallest left-invariant family $\bar{\mathsf F} =\bigcup_{T\in\mathsf F}[T]$ that contains $\mathsf F$. By Proposition~\ref{p13.4}, the family $\bar{\mathsf F}$ is $\{\emptyset\}$-independent. Since each orbit $[T]$, $T\in\bar{\mathsf F}$, is finite and $\mathsf F$ is uncountable, the orbit space 
$[\bar{\mathsf F}]=\{[T]:T\in\mathsf F\}$ also is uncountable. It follows from Theorem~\ref{t11.1} that the space $\Enl(\bar{\mathsf F})$ is homeomorphic to the product $\prod_{[T]\in[\bar{\mathsf F}]}\Enl([T])$ where each space $\Enl([T])$ contains at least two equivariant functions:  identity $i:[T]\to[T],\; i:A\mapsto A$ and  antipodal $\alpha:[T]\to[T]$, $\alpha:A\mapsto X\setminus A$. Since the orbit space $[\bar{\mathsf F}]$ is uncountable, the product $\prod_{[T]\in[\bar{\mathsf  F}]}\Enl([T])$ is non-metrizable and so is its topological copy $\Enl(\bar{\mathsf F})$. 

It remains to observe that the restriction map $R:\Enl(\bar{\mathsf F} )\to\Enl(\mathsf F)$ is injective and thus a homeomorphism. Indeed, given two distinct equivariant functions $f,g\in \Enl(\mathsf F)$, we can find a set $A\in\bar{\mathsf F}$ with $f(A)\ne g(A)$. Since $[A]\cap \mathsf F\ne\emptyset$, there is $x\in X$ such that $xA\in\mathsf F$. Then $f(xA)=xf(A)\ne xg(A)=g(xA)$ and thus $f|\mathsf F\ne g|\mathsf F$.
\end{proof}

The following proposition characterizes groups containing only
countably many twin subsets. Following \cite{BGN}, we define a
group $X$ to be {\em odd} if each element $x\in X$ has odd order.

\begin{proposition}\label{p16.5}

The family $\Tau$ of twin subsets of a group $X$ is at most
countable if and only if each subgroup of infinite index in $X$ is
odd.
\end{proposition}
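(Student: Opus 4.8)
The plan is to prove the two implications separately, showing that the failure of the ``odd'' condition produces a continuum of twin sets, while the condition itself forces enough rigidity on $X$ to make $\Tau$ countable.

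\emph{Necessity (contrapositive).} Suppose some subgroup $H\le X$ of infinite index is not odd, and fix $g\in H$ whose order is even or infinite. Then $g\notin\langle g^2\rangle$, so $N:=\langle g^2\rangle$ has index $2$ in $L:=\langle g\rangle$, and by Proposition~\ref{p6.2} the set $K:=L\setminus N$ is a $2$-cogroup with $K^\pm=L$ and $KK=N$. Since $L\subset H$ we have $[X:L]=[X:H]\cdot[H:L]\ge[X:H]=\infty$. Choosing a set $S$ meeting each coset $K^\pm x$ in exactly one point (so $|S|=[X:L]$ is infinite) and running the computation from the proof of Proposition~\ref{p12.1}, I would check that for every $E\subset S$ the set $T_E=KK\cdot E\cup K\cdot(S\setminus E)$ satisfies $g\,T_E=X\setminus T_E$, hence $T_E\in\Tau$; moreover $T_E\cap K^\pm s$ equals $KKs$ or $Ks$ according as $s\in E$ or $s\notin E$, so $E\mapsto T_E$ is injective. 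Thus $|\Tau|\ge 2^{|S|}\ge 2^{\aleph_0}$ and $\Tau$ is uncountable. Note that this does not require $K$ to be maximal, only that $T_E$ be twin and the assignment injective.

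\emph{Sufficiency.} Assume now that every infinite-index subgroup of $X$ is odd. The case of finite $X$ is trivial, and if $\Tau=\emptyset$ there is nothing to prove, so assume $X$ is infinite and fix $A\in\Tau$. Writing $K=\Fix^-(A)$, Proposition~\ref{p5.3} gives that $K^\pm=\Fix^\pm(A)$ is a subgroup with index-$2$ subgroup $\Fix(A)=KK$. For any $t\in K$ the equality $tA=X\setminus A$ yields $t^2A=A$, so $t^2\in KK$ while $t\notin KK$; hence $t$ projects to the element of order $2$ in $K^\pm/KK\cong C_2$ and so $t$ has even or infinite order. In particular $\langle t\rangle$ is not odd, so by hypothesis it has finite index in $X$. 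Were $t$ of finite order, $\langle t\rangle$ would be finite of finite index and $X$ finite --- a contradiction; therefore $t$ has infinite order, $\langle t\rangle\cong\IZ$ has finite index, and $X$ is finitely generated, being generated by $t$ together with finitely many coset representatives.

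It remains to count. A finitely generated group has only finitely many subgroups of each finite index (they are governed by the finitely many homomorphisms into the symmetric groups $S_n$), hence at most countably many finite-index subgroups in total. The argument of the previous paragraph applies verbatim to an arbitrary twin set $B$ and shows that $\Fix^\pm(B)$ is not odd, hence of finite index by hypothesis; consequently $\Fix(B)$ has finite index and $B=\Fix(B)\cdot B$ is a union of left cosets of $\Fix(B)$. Thus the assignment $B\mapsto\Fix(B)$ maps $\Tau$ into the countable set of finite-index subgroups, and each fibre over a subgroup $G$ has at most $2^{[X:G]}<\aleph_0$ elements; summing over the countably many $G$ gives $|\Tau|\le\aleph_0$. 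I expect the main obstacle to be precisely this structural step: recognizing that the hypothesis forces any infinite group admitting a twin set to contain a finite-index copy of $\IZ$ and therefore to be finitely generated. Once that rigidity is in place the countability of $\Tau$ is immediate, whereas the necessity direction is the comparatively routine construction of continuum-many twin sets above.
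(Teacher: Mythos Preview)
Your proof is correct and follows essentially the same line as the paper's: both directions use the same constructions (the $T_E$ family for necessity, and the observation that any $c\in\Fix^-(A)$ generates a non-odd cyclic subgroup of finite index for sufficiency). The only cosmetic difference is in the final count: the paper simply notes that $X$ itself is countable and writes $\Tau=\bigcup_{x\in X}\{A\in\Tau:x\in\Fix^-(A)\}$ as a countable union of finite sets, whereas you pass through finite generation of $X$ and the countability of its finite-index subgroups --- a slightly more conceptual but equivalent route.
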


\begin{proof}
Assume that each subgroup of infinite index in $X$ is odd. We
claim that for every $A\in\Tau$ the subgroup $\Fix(A)$ has finite
index in $X$. Take any point $c\in\Fix^-(A)$ and consider the
cyclic subgroup $c^\IZ=\{c^n:n\in\IZ\}$ generated by $c$. The
subgroup $c^\IZ$ has finite index in $X$, being non-odd. Since
$c^{2\IZ}=\{c^{2n}:n\in\IZ\}\subset\Fix(A)$, we conclude that
$\Fix(A)$ also has finite index in $X$.

Next, we show that the family $\{\Fix(A):A\in\Tau\}$ is at most
countable. This is trivially true if $\Tau=\emptyset$. If
$\Tau\ne\emptyset$, then we can take any $A\in\Tau$ and choose a
point $c\in\Fix^-(A)$. The cyclic subgroup $c^\IZ$ generated by
$c$ is not odd and hence has finite index in $X$. Consequently,
the group $X$ is at most countable. Now it remains to check that
for every $x\in X$ the set $\Tau_x=\{A\in\Tau: x\in\Fix^-(A)\}$ is
finite. If the set $\Tau_x$ is not empty, then the cyclic subgroup
$x^\IZ$ generated by $x$ is not odd and hence has finite index in
$X$. Consider the subgroup $x^{2\IZ}$ of index 2 in $x^\IZ$. It is
clear that $x^{2\IZ}\subset\Fix(A)$. Let $S\subset X$ be a finite
set containing the neutral element of $X$ and meeting each coset
$x^{2\IZ} z$, $z\in X$ at a single point. It follows from
$x^{2\IZ}\subset\Fix(A)$ that $A=x^{2\IZ}\cdot(S\cap A)$ and
consequently $|\Tau_x|\le 2^{|S|}<\infty$.
\smallskip

Now assume that some subgroup $H$ of infinite index in $X$ is not
odd. Then $H$ contains an element $c\in H$ such that the sets
$c^{2\IZ}=\{c^{2n}:n\in\IZ\}$ and
$c^{2\IZ+1}=\{c^{2n+1}:n\in\IZ\}$ are disjoint. The union
$c^{2\IZ}\cup c^{2\IZ+1}$ coincides with the cyclic subgroup
$c^\IZ$ of $H$ generated by $c$. Find a set $S\subset X$ that
intersects each coset $c^\IZ x$, $x\in X$, at a single point.
Since $c^{2\IZ}$ has infinite index in $X$, the set $S$ is
infinite. Now observe that for every $E\subset S$ the union
$$T_E=c^{2\IZ}\cdot E\cup c^{2\IZ+1}\cdot(S\setminus E)$$  is a
twin set with $c\in \Fix^-(T_E)$. Consequently,
$\Tau\supset\{T_E:E\subset S\}$ has cardinality
$$|\Tau|\ge|\{T_E:E\subset S\}|\ge |2^S|\ge \mathfrak
c>\aleph_0.$$
\end{proof}

Now we shall apply the above results to the minimal upper subfamilies $\Tau_{K}$ with $K\in\wht{\K}$.
By Theorem~\ref{t14.1}(1), for a maximal 2-cogroup $K$ in a group $X$ minimal left ideals of $\End(\Tau_K)$ are metrizable if and only if $|X/K|\le\aleph_0$. The metrizability of the whole semigroup $\End(\Tau_K)$ is equivalent to $|X/K|<\aleph_0$.

\begin{theorem}\label{t16.6} For a maximal 2-cogroup $K$ of a group $X$  the following conditions are equivalent:
\begin{enumerate}
\item[\textup{(1)}] $\Enl(\Tau_K)$ is metrizable;
\item[\textup{(2)}] $\Enl(\Tau_K)$ is a semitopological semigroup;
\item[\textup{(3)}] $\Enl(\Tau_K)$ is a finite semigroup;
\item[\textup{(4)}] $\Enl(\Tau_K)$ is isomorphic to $C_{2^k}\wr m^m$ or $Q_{2^k}\wr m^m$ for some $1\le k\le m<\infty$;
\item[\textup{(5)}] $K$ has finite index in $X$.
\end{enumerate}
\end{theorem}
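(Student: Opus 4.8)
The plan is to show that each of the conditions (1)--(4) is equivalent to condition (5), that $K$ has finite index in $X$. Throughout I use three facts recorded earlier: $\Tau_K$ is a minimal upper subfamily of $\Tau$; every $A\in\Tau_K$ has $\Fix(A)=KK$; and by Proposition~\ref{p15.2} one has $|\Tau_K|=2^{|X/K^\pm|}$. Since $KK$ has index $2$ in $K^\pm$, the index $|X/K^\pm|$ is finite precisely when $KK$, equivalently $K$, has finite index in $X$.

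I would first dispatch $(2)\Leftrightarrow(5)$. Because $\Tau_K$ is an upper subfamily, Theorem~\ref{t16.1} applies and shows that $\Enl(\Tau_K)$ is semitopological if and only if $\Fix(A)$ has finite index in $X$ for every $A\in\Tau_K$. As $\Fix(A)=KK$ for all such $A$, this is exactly the finiteness of the index of $K$ in $X$, giving $(2)\Leftrightarrow(5)$ (and, incidentally, $(1)\Leftrightarrow(5)$ on the topological side).

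Next I would treat $(1)\Leftrightarrow(3)\Leftrightarrow(5)$ by a cardinality dichotomy. If $K$ has finite index then $|\Tau_K|=2^{|X/K^\pm|}<\infty$, so $\Enl(\Tau_K)\subset\Tau_K^{\;\Tau_K}$ is a finite, hence metrizable, semigroup; this yields $(5)\Rightarrow(3)\Rightarrow(1)$. For the converse I argue contrapositively: if $K$ has infinite index then so does $K^\pm$, and I may choose an infinite set $S\subset X$ meeting each coset $K^\pm x$ once. Each $E\subset S$ gives a twin set $T_E=KK\cdot E\cup K\cdot(S\setminus E)$ with $\Fix^-(T_E)\supseteq K$, hence $=K$ by maximality, so $T_E\in\Tau_K$. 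Running the ultrafilter argument of case (a) of Theorem~\ref{t16.4}, distinct ultrafilters $\U,\V\in\beta(S)$ have distinct function representations $\Phi_{\Tau_K}(\U)\ne\Phi_{\Tau_K}(\V)$, separated by some $T_E$, so $|\Enl(\Tau_K)|\ge|\beta(S)|\ge 2^{\mathfrak c}$. Since a compact metrizable space has at most $\mathfrak c$ points, $\Enl(\Tau_K)$ is then neither metrizable nor finite, giving $(1)\Rightarrow(5)$ and $(3)\Rightarrow(5)$.

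Finally, for $(4)\Leftrightarrow(5)$ the implication $(4)\Rightarrow(3)$ is immediate, as $C_{2^k}\wr m^m$ and $Q_{2^k}\wr m^m$ are finite for finite $k,m$. For $(5)\Rightarrow(4)$ the point is to identify $\Enl(\Tau_K)$ with the full endomorphism monoid $\End(\Tau_K)$, whose structure is described by Theorem~\ref{t14.1}(3) as $\HH(K)\wr[\Tau_K]^{[\Tau_K]}$. The cleanest route is to pass to the finite quotient $\bar X=X/N$, where $N$ is the (finite-index, normal) core of $KK$: every set in $\Tau_K$ is $N$-invariant, so for the maximal $2$-cogroup $\bar K=K/N$ one has $\Tau_K\cong\Tau_{\bar K}$, $\HH(K)\cong\HH(\bar K)$, and $\Enl(\Tau_K)\cong\Enl(\Tau_{\bar K})$, $\End(\Tau_K)\cong\End(\Tau_{\bar K})$. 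The finite group $\bar X$ has periodic commutators, so by Proposition~\ref{p9.4} its twinic ideal is trivial; thus $\{\emptyset\}$ is a twinic ideal disjoint from $\wht\K$, and Theorem~\ref{t14.1}(1),(2) together force $\Enl(\Tau_{\bar K})=\End(\Tau_{\bar K})$. Invoking Theorem~\ref{t14.1}(3) and Theorems~\ref{BCQ}--\ref{t8.2}, which identify $\HH(K)$ with a finite $C_{2^k}$ or $Q_{2^k}$, we obtain $\Enl(\Tau_K)\cong C_{2^k}\wr m^m$ or $Q_{2^k}\wr m^m$ with $m=|[\Tau_K]|<\infty$. I expect this last identification $\Enl(\Tau_K)=\End(\Tau_K)$ to be the main obstacle: the inclusion $\Enl(\Tau_K)\subseteq\End(\Tau_K)$ is trivial, but surjectivity (every $\Stab(K)$-equivariant self-map of $\Tau_K$ extending to a global equivariant monotone symmetric function) genuinely needs a twinic ideal, which is exactly why the reduction to the finite quotient, where the trivial ideal is twinic, is the decisive step.
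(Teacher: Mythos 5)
Most of your proposal is correct, and on the equivalences of (1), (2), (3) with (5) it is in one respect more careful than the paper's own argument. The paper derives $(1)\Ra(2)$ from Theorem~\ref{t16.4}, whose stated hypothesis is that $X$ is twinic, even though Theorem~\ref{t16.6} is formulated for an arbitrary group; you instead get $(1)\Ra(5)$ and $(3)\Ra(5)$ by rerunning the ultrafilter-separation argument of case (a) of the proof of Theorem~\ref{t16.4} inside $\Tau_K$, and that argument uses no twinicity: the sets $T_E=KK\cdot E\cup K\cdot(S\setminus E)$ lie in $\Tau_K$ by maximality of $K$, distinct ultrafilters on $S$ have function representations that differ already on these sets, so $|\Enl(\Tau_K)|\ge 2^{\mathfrak c}$. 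Combined with Theorem~\ref{t16.1} for $(2)\Leftrightarrow(5)$ (legitimate, since $\Tau_K$ is an upper family, $\Fix(A)=KK$ for all $A\in\Tau_K$, and Theorem~\ref{t16.1} carries no twinic hypothesis) and the trivial implications from finiteness of $\Tau_K$, this part of your plan is complete and valid for every group $X$.

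The genuine gap is in $(5)\Ra(4)$, precisely at the asserted isomorphism $\Enl(\Tau_K)\cong\Enl(\Tau_{\bar K})$, which you list among the automatic consequences of the $N$-invariance of the members of $\Tau_K$. $N$-invariance does give $\Tau_K\cong\Tau_{\bar K}$, $\HH(K)\cong\HH(\bar K)$ and $\End(\Tau_K)\cong\End(\Tau_{\bar K})$, but membership in $\Enl(\Tau_K)$ is an extension property over all of $\mathsf P(X)$, and it does not transfer through the quotient by any naive device: if $\bar\Phi\in\Enl(\mathsf P(\bar X))$ is pulled back by $\Phi(A)=q^{-1}\big(\bar\Phi(q(A))\big)$, the result is equivariant and monotone but \emph{not} symmetric --- for infinite $X$ the kernel $N$ is infinite, and any $A$ meeting each $N$-coset in both $A$ and $X\setminus A$ has $q(A)=q(X\setminus A)=\bar X$, whence $\Phi(A)=\Phi(X\setminus A)=X\neq X\setminus\Phi(A)$. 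So the step you yourself call decisive is exactly the one left unproved. Two repairs work. (i) Lift maximal linked systems rather than functions: write $\bar\varphi=\Phi_{\bar\LL}|\Tau_{\bar K}$ for some $\bar\LL\in\lambda(\bar X)$ (Theorem~\ref{t4.1}, Proposition~\ref{p4.3}); the family $\{q^{-1}(\bar B):\bar B\in\bar\LL\}$ is linked, hence lies in a maximal linked system $\LL\in\lambda(X)$, and for every $N$-invariant $A$ one has $\Phi_\LL(A)=q^{-1}\big(\Phi_{\bar\LL}(q(A))\big)$ --- indeed, if $q(x)^{-1}q(A)\notin\bar\LL$ then some $\bar B\in\bar\LL$ misses it, so $q^{-1}(\bar B)\in\LL$ misses $x^{-1}A$; consequently $\Phi_\LL|\Tau_K$ is the function corresponding to $\bar\varphi$, proving $\End(\Tau_K)\subset\Enl(\Tau_K)$. (ii) Skip the quotient entirely and rerun the proof of Proposition~\ref{p11.3} with the trivial ideal: twinicity is used there only to collapse the chain $cA\subset X\setminus B=zB\subset zczA$ into equalities, and when $KK$ has finite index this collapse is automatic, because $w=c^{-1}zcz$ satisfies $w^m\in KK=\Fix(A)$ for some $m\ge 1$, so $A\subset wA\subset\dots\subset w^mA=A$. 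Note finally that the paper elides the same point: its proof of $(5)\Ra(4)$ cites Theorem~\ref{t14.1}(3), a statement about $\End(\Tau_K)$, as if it described $\Enl(\Tau_K)$; your instinct that this identification is the real content of the implication is right, but it still has to be carried out.
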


\begin{proof} 
The implications $(1)\Ra(2)\Ra(5)$ follow from Theorems~\ref{t16.4} and \ref{t16.1}.
\smallskip

$(5)\Ra(4)$ Assume that $K$ has finite index in $X$. Then the characteristic group $\HH(K)$ of $K$ is finite and hence is isomorphic to $C_{2^k}$ or $Q_{2^k}$ for some $k\in\IN$, see Theorem~\ref{t8.2}. Also the set $\Tau_K$ is finite and so is the orbit space $[\Tau_K]$.
By Theorem~\ref{t14.1}(3), the semigroup $\Enl(\Tau_K)$ is isomorphic to $\HH(K)\wr[\Tau_K]^{[\Tau_K]}$ and the latter semigroup is isomorphic to $C_{2^k}\wr m^m$ or $Q_{2^k}\wr m^m$ for $m=|[\Tau_K]|$.
\smallskip

The  implications $(4)\Ra(3)\Ra(1)$ are trivial.
\end{proof}

\section{Constructing nice idempotents in the semigroup $\Enl(\mathsf P(X))$} 

In this section we prove the existence some special idempotents in 
the semigroup\break $\Enl(\mathsf P(X))$. These idempotents will help us to describe the structure of the minimal ideal of the semigroup $\Enl(\mathsf P(X))$ and $\lambda(X)$ in Theorems~\ref{t17.1} and Corollary~\ref{c13.2}.

In this section we assume that $\I$ is a left-invariant ideal in a group $X$.
We recall that $\pT^\I$ and $\Tau^\I$ denote the families of $\I$-pretwin and $\I$-twin subsets of $X$, respectively. A function $f:\mathsf F\to\mathsf P(X)$ defined on a subfamily $\mathsf F\subset \mathsf P(X)$ is called {\em $\I$-saturated} if $f(A)=f(B)$ for any sets $A=_\I B$ in $\mathsf F$. 

\begin{proposition}\label{p16.1} There is an idempotent $e_{\I}\in\Enl(\mathsf P(X))$ such that
\begin{itemize}
\item $e_{\I}(\mathsf P(X)\setminus \pT^\I)\subset\{\emptyset,X\}$;
\item $e_\I|\pT^\I=\id|\pT^\I$;
\item the function $e_\I$ restricted to $\mathsf P(X)\setminus\pT^\I$ is $\I$-saturated.
\end{itemize}
\end{proposition}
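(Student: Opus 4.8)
The plan is to realize $e_\I$ as the function representation $\Phi_\LL$ of a carefully chosen maximal linked system $\LL\in\lambda(X)$. Write $\mathcal N=\mathsf P(X)\setminus\pT^\I$ for the non-pretwin sets and set $\mathsf L_0=\{A\in\pT^\I:e\in A\}$. I would look for a maximal linked system $\LL$ with two structural features: (a) $\LL\cap\pT^\I=\mathsf L_0$, and (b) $\LL\cap\mathcal N$ is left-invariant and $\I$-saturated. Granting such an $\LL$, I put $e_\I=\Phi_\LL$. Since $\LL$ is maximal linked, Proposition~\ref{p4.3}(3) and Theorem~\ref{t4.1} give that $e_\I$ is equivariant, monotone and symmetric, hence $e_\I\in\Enl(\mathsf P(X))$. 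For $A\in\pT^\I$ the left-invariance of $\pT^\I$ together with (a) yields $\Phi_\LL(A)=\{x:x^{-1}A\in\LL\}=\{x:e\in x^{-1}A\}=A$, so $e_\I|\pT^\I=\id$. For $C\in\mathcal N$ (note $\mathcal N$ is left-invariant) property (b) forces $x^{-1}C\in\LL\Leftrightarrow C\in\LL$ for every $x$, so $\Phi_\LL(C)\in\{\emptyset,X\}$, and $\I$-saturation of $\LL\cap\mathcal N$ makes $e_\I$ $\I$-saturated on $\mathcal N$. Since $\Phi_\LL(\emptyset)=\emptyset$ and $\Phi_\LL(X)=X$, the function $e_\I$ maps $\mathsf P(X)$ into $\pT^\I\cup\{\emptyset,X\}$ and restricts to the identity there, so the three displayed properties together with the idempotency $e_\I\circ e_\I=e_\I$ all follow at once.

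Property (a) is automatic for any maximal linked extension of $\mathsf L_0$: if $A\in\pT^\I$ then $X\setminus A\in\pT^\I$ (the family $\pT^\I$ is symmetric, as one checks directly from the definition), and exactly one of $A,X\setminus A$ contains $e$; the one containing $e$ lies in $\mathsf L_0$, forcing the other out of any linked $\LL\supset\mathsf L_0$. The heart of the matter is therefore (b): building the non-pretwin part of $\LL$ as a union of whole $\I$-saturated orbits. The guiding dichotomy is the observation — read off from the definition of $\I$-pretwin, since $xC\subset_\I X\setminus C\Leftrightarrow xC\cap C\in\I$ and $X\setminus C\subset_\I yC\Leftrightarrow(X\setminus C)\cap y(X\setminus C)\in\I$ — that a set $C$ fails to be $\I$-pretwin precisely when at least one of the orbits of $C$ and of $X\setminus C$ is $\I$-linked, meaning $zD\cap D\notin\I$ for all $z$. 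An $\I$-linked orbit is in particular linked, and its $\I$-saturation remains linked.

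I would then run Zorn's Lemma over the poset $\mathcal Q$ of left-invariant, $\I$-saturated subfamilies $\mathcal U\subset\mathcal N$ that are $\I$-linked (pairwise intersections not in $\I$); unions of chains clearly stay in $\mathcal Q$, so a maximal $\mathcal U$ exists, and I set $\LL=\mathsf L_0\cup\mathcal U$. Two left-invariance computations do the real work. First, if the orbit of $C$ is $\I$-linked and $M$ is any set whose complement has non-$\I$-linked orbit (e.g.\ any $M\in\pT^\I$), then $xC\cap M\in\I$ is impossible: it would give $C\subset_\I x^{-1}(X\setminus M)$, whence $C\cap(x^{-1}wx)C\subset_\I x^{-1}\big((X\setminus M)\cap w(X\setminus M)\big)\in\I$ for a suitable $w$, contradicting $\I$-linkedness of the orbit of $C$. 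This guarantees every $\I$-linked orbit meets all of $\mathsf L_0$, so $\mathsf L_0\cup\mathcal U$ is genuinely linked. Second — and this is the step I expect to be the main obstacle — one must show the Zorn-maximal $\mathcal U$ actually decides every complementary pair of non-pretwin orbits, so that $\LL$ is maximal linked.

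For a pair $\{C,X\setminus C\}$ left undecided, the dichotomy supplies an $\I$-linked orbit among the two. If only one of the two orbits is $\I$-linked, the same left-invariance argument shows it cannot clash with $\mathcal U$ (a clash $xC\cap U\in\I$ with the orbit of $X\setminus C$ non-$\I$-linked is ruled out exactly as above), so it may be adjoined, contradicting maximality. If both orbits are $\I$-linked, I would argue that at most one of $\langle C\rangle,\langle X\setminus C\rangle$ can clash with $\mathcal U$: simultaneous clashes $xC\cap U\in\I$ and $y(X\setminus C)\cap U'\in\I$ give $x^{-1}U\subset_\I X\setminus C$ and $y^{-1}U'\subset_\I C$, so $y^{-1}U'\cap x^{-1}U\in\I$ and hence $U'\cap(yx^{-1})U\in\I$, contradicting the $\I$-linkedness and left-invariance of $\mathcal U$. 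Thus the non-clashing orbit can be added, again contradicting maximality. Consequently $\LL$ decides every complementary pair and is maximal linked, which completes the construction of the idempotent $e_\I=\Phi_\LL$.
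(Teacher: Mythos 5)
Your proposal is correct, and it is instructive to see how it relates to the paper's own argument: the two constructions build the same object but certify it from opposite sides. The paper also runs Zorn's Lemma, over the poset of left-invariant $\I$-saturated \emph{linked} systems; its maximal element $\LL$ automatically consists of non-$\I$-pretwin sets and is $\I$-linked (both facts follow from left-invariance, monotonicity and $\I$-saturation), so it is essentially your maximal $\mathcal U$, and the paper's idempotent corresponds under $\Phi^{-1}$ to the maximal linked system $\mathsf L_0\cup\LL$ — exactly the shape of your $\LL$. The difference lies in the verification. The paper defines $e_\I$ piecewise ($\id$ on $\pT^\I$, $\Phi_\LL$ elsewhere) and checks equivariance, monotonicity and symmetry by a hands-on case analysis, the key technical input being its Claim~\ref{cl12.2} that $\LL^\perp\setminus\LL\subset\pT^\I$. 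You instead stay on the maximal-linked-system side: your dichotomy (a set fails to be $\I$-pretwin iff its orbit or the orbit of its complement is $\I$-linked) plays the role of Claim~\ref{cl12.2}, your two clash computations establish that $\mathsf L_0\cup\mathcal U$ is linked and decides every complementary pair (hence is maximal linked), and then Proposition~\ref{p4.3}(3) together with Theorem~\ref{t4.1} delivers monotonicity, symmetry and equivariance for free, with idempotency immediate since $e_\I$ is the identity on its range $\pT^\I\cup\{\emptyset,X\}$. What your route buys is that no pointwise case-checking of the glued function is needed; what the paper's route buys is a lighter maximality argument, since Claim~\ref{cl12.2} is the only place where the structure of $\pT^\I$ enters. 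Two small points deserve to be made explicit in a final write-up: first, every member of $\mathcal U$ has $\I$-linked orbit (by left-invariance and $\I$-linkedness of the family $\mathcal U$), which is what licenses applying your first computation to pairs $U\in\mathcal U$, $M\in\mathsf L_0$; second, the $\I$-saturated orbit you adjoin in the maximality argument stays inside $\mathcal N=\mathsf P(X)\setminus\pT^\I$ because non-pretwinness is both left-invariant and $\I$-saturated — this keeps the enlarged family inside your poset $\mathcal Q$.
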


\begin{proof} Consider the family $\inv[N]{}^\I_2(X)\subset\mathsf P^2(X)$ of left invariant $\I$-saturated linked systems on $X$, partially ordered by the inclusion relation. This set is not empty because it contains the invariant $\I$-saturated linked system $\{X\setminus A:A\in\I\}$. By Zorn's Lemma, the partially ordered set $\inv[N]{}^\I_2(X)$ contains a maximal element $\LL$, which is a maximal invariant $\I$-saturated linked system on the group $X$. By the maximality, the system $\LL$ is monotone. 
Now consider the family
$$\LL^\perp=\{A\subset X:\forall L\in\LL\;\;(A\cap L\ne\emptyset)\}.$$

\begin{claim}\label{cl12.2} $\LL^\perp\setminus\LL\subset\pT^\I$.
\end{claim}

\begin{proof} Fix any set $A\in\LL^\perp\setminus \LL$. First we check that $xA\cap A\in\I$
for some $x\in X$. Assuming the converse, we would conclude that
the family 
$\A=\{A'\subset X:\exists x\in X\;\;(A'=_\I xA)\}$ is invariant, $\I$-saturated and linked, and so is the union $\A\cup\LL$, which is not possible by the maximality of $\LL$. So, there is $x\in X$ with $xA\cap A\in\I$, which is equivalent to 
$xA\subset_\I X\setminus A$.

Next, we find $y\in X$ such that $A\cup yA=_\I X$, which is equivalent to $X\setminus A\subset_\I yA$. 
Assuming that no such a point $y$ exists, we conclude that for any
$x,y\in X$ the union $xA\cup yA\neq_{\I} X$. Then $(X\setminus
xA)\cap (X\setminus yA)=X\setminus (xA\cup yA)\notin\I$, which
means that the family $\mathcal B=\{B\subset X:\exists x\in X\;\;(B=_\I X\setminus xA)\}$ is invariant $\I$-saturated and linked. We claim that $X\setminus A\in\LL^\perp$. 
 Assuming the converse, we would conclude that
$X\setminus A$ misses some set $L\in\LL$. Then $L\subset A$ and
hence $A\in\LL$ which is not the case. Thus $X\setminus
A\in\LL^\perp$. Since $\LL$ is invariant and $\I$-saturated, $\mathcal B\subset\LL^\perp$ and consequently, the union $\mathcal B\cup\LL$, being an invariant $\I$-saturated linked system, coincides with $\LL$. Then $X\setminus A\in\LL$, which contradicts $A\in\LL^\perp$. This contradiction shows that $X\setminus A\subset_\I yA$ for some $y\in X$. 

Since $xA\subset_\I X\setminus A\subset_\I yA$, the set $A$ is $\I$-pretwin.
\end{proof}

Consider the function representation $\Phi_{\LL}:\mathsf P(X)\to\mathsf P(X)$ of $\LL$. By Propositions~\ref{p4.3} and \ref{p4.5}, the function $\Phi_\LL$ is equivariant, monotone, $\I$-saturated, and $\Phi_\LL(\mathsf P(X))\subset\{\emptyset,X\}$. 

It is clear that the function $e_\I:\mathsf P(X)\to\mathsf P(X)$ defined by
$$e_\I(A)=\begin{cases}A&\mbox{if $A\in\pT^\I$},\\
\Phi_\LL(A)&\mbox{otherwise}
\end{cases}
$$
has properties (1)--(3) of Proposition~\ref{p16.1}. It is also clear that $e_\I=e_\I\circ e_\I$ is an idempotent. 

We claim that $e_\I\in\Enl(\mathsf P(X))$. By Corollary~\ref{c4.4}, we need to check that $e_\I$ is equivariant, monotone and symmetric. The equivariance of $e_\I$ follows from the equivariance of the maps $\Phi_\LL$ and $\id$. 

To show that $e_\I$ is monotone, take any two subsets $A\subset B$ of $X$ and consider four cases.
\smallskip

1) If $A,B\notin\pT^\I$, then $e_\I(A)=\Phi_\LL(A)\subset\Phi_\LL(B)=e_\I(B)$ by the monotonicity of the function representation $\Phi_\LL$ of the monotone family $\LL$.
\smallskip

2) If $A,B\in\pT^\I$, then $e_\I(A)=A\subset B=e_\I(B)$.
\smallskip

3) $A\in\pT^\I$ and $B\notin\pT^\I$. We claim that $B\in\LL$. Assuming that $B\notin\LL$ and applying Claim~\ref{cl12.2}, we get $B\notin\LL^\perp$. Then $B$ does not intersect some set $L\in\L$ and then $A\cap L=\emptyset$. It follows that the set $X\setminus A\supset L$ belongs to the maximal invariant $\I$-saturated linked system and so does the set $yA\supset_\I X\setminus A$ for some $y\in X$ (which exists as $A\in\pT^\I$). By the left-invariance of $\LL$, we get $A\in\LL$ which contradicts $X\setminus A\in\LL$ and the linkedness of $\LL$. This contradiction proves that $B\in\LL$. In this case $e_\I(A)=A\subset X=\Phi_\LL(B)=e_\I(B)$.
\smallskip

4) $A\notin \pT^\I$ and $B\in\pT^\I$. In this case we prove that $A\notin\LL$. Assuming conversely that $A\in\LL$, we get $B\in\LL$. Since $B\in\pT^\I$, there is a point $x\in X$ with $xB\subset_\I X\setminus B$. Since $\LL$ is left-invariant, monotone and $\I$-saturated, we conclude that $X\setminus B\in\LL$ which contradicts $B\in\LL$. Thus $A\notin\LL$ and $e_\I(A)=\Phi_\LL(A)=\emptyset\subset e_\I(B)$.
\smallskip

Finally, we show that the function $e_\I$ is symmetric. If $A\in \pT^\I$, then  $X\setminus A\in\pT^\I$ and then $e_\I(X\setminus A)=X\setminus A=X\setminus e_\I(A)$. 

Next, assume that $A\notin\pT^\I$. If $A\in\LL$, then $X\setminus A\notin \LL$ by the linkedness of $\LL$. In this case $e_\I(X\setminus A)=\emptyset=X\setminus X=X\setminus e_\I(A)$. 

If $A\notin\LL$, then by Claim~\ref{cl12.2}, $A\notin\LL^\perp$ and thus $A$ is disjoint with some set $L\in\LL$, which implies that $X\setminus A\in\LL$. Then $e_\I(X\setminus A)=\Phi_\LL(X\setminus A)=X=X\setminus\emptyset=X\setminus\Phi_\LL(A)=X\setminus e_\I(A)$.
\end{proof}

Our second special idempotent depends on a subfamily $\wtd{\Tau}$ of the family
$$\wht{\Tau}=\{A\in\Tau:\Fix^-(A)\in\wht{\K}\}$$of twin sets with maximal 2-cogroup.

\begin{theorem}\label{t16.3} If the ideal $\I$ is twinic, then for any $\I$-independent $\wht{\K}$-covering subfamily $\wtd{\mathsf T}\subset \wht{\Tau}$ there is an idempotent ${e_{\wtd{\Tau}}}\in\Enl^\I(\mathsf P(X))$ such that
\begin{enumerate}
\item[\textup{(1)}] ${e_{\wtd{\Tau}}}(\mathsf P(X)\setminus \Tau^\I)\subset\{\emptyset,X\}$;
\item[\textup{(2)}] ${e_{\wtd{\Tau}}}(\Tau^\I)=\wtd{\Tau}$;
\item[\textup{(3)}] ${e_{\wtd{\Tau}}}|\{\emptyset,X\}\cup\wtd{\Tau}=\id$.
\end{enumerate}
\end{theorem}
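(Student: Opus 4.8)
The plan is to build the idempotent $e_{\wtd\Tau}$ by combining two ingredients: a "collapsing" behaviour on all non-twin sets (sending $\mathsf P(X)\setminus\Tau^\I$ into $\{\emptyset,X\}$ via a suitable maximal invariant $\I$-saturated linked system, exactly as in the construction of $e_\I$ in Proposition~\ref{p16.1}), and a "retraction" behaviour on $\Tau^\I$ that lands the whole family onto the prescribed minimal $\wht\K$-covering subfamily $\wtd\Tau$. First I would recall that, by Proposition~\ref{p6.6} and the $\I$-independence hypothesis, $\wtd\Tau$ meets each orbit $[A]\cap\Tau_{[K]}$ of each maximal $2$-cogroup $K\in\wht\K$ in exactly one orbit $[A_K]$, so there is a canonical way to assign to every $\I$-twin set $B$ (with $\Fix^-(B)=K$ after passing to the $\I$-completion $\IFix^-(B)$, which is maximal because $B\in\Tau^\I$) a representative in $\wtd\Tau$ lying in the same $X$-orbit. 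The map $e_{\wtd\Tau}$ is then defined to send such a $B$ to the appropriate $X$-translate of this representative, chosen equivariantly.

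The key steps, in order, are as follows. I would first fix, for each orbit $[K]\in[\wht\K]$, the unique set $A_{[K]}\in\wtd\Tau\cap\Tau_{[K]}$ guaranteed by Proposition~\ref{p6.6}, together with a choice of coset representatives witnessing the bijection $[A_{[K]}]\to[K]$ under $\Fix^-$; this requires a simultaneous choice (Zorn/AC) over the orbit space $[\wht\K]$, but each individual choice is forced up to the stabilizer. Second, using Theorem~\ref{t11.1}, I would verify that the resulting partial map $\varphi:\Tau^\I\to\wtd\Tau$ is equivariant, $\I$-saturated, and satisfies $\Fix^-(A)\subset\Fix^-(\varphi(A))$ for every $A\in\Tau^\I$ — the last condition holds because $\varphi(A)$ lies in the same orbit as $A$, hence has a conjugate $\Fix^-$, and by maximality of the cogroups these coincide; this is precisely what Theorem~\ref{t11.1} needs to conclude $\varphi\in\Enl^\I(\wht\Tau)$ after extending across $=_\I$-classes. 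Third, I would glue $\varphi$ to the collapsing function $\Phi_\LL$ coming from a maximal invariant $\I$-saturated linked system $\LL$, exactly as in Proposition~\ref{p16.1}, setting
$$
e_{\wtd\Tau}(A)=\begin{cases}\varphi(A)&\text{if }A\in\Tau^\I,\\ \Phi_\LL(A)&\text{otherwise,}\end{cases}
$$
and then check monotonicity and symmetry by the same four-case analysis used in Proposition~\ref{p16.1}, now invoking Claim~\ref{cl12.2} (whose pretwin conclusion, under the twinic hypothesis, coincides with the $\I$-twin family $\Tau^\I=\pT^\I$). Finally, properties (1)--(3) are read off directly: (1) from the definition of $\Phi_\LL$, (2) because $\varphi$ was built to surject onto $\wtd\Tau$, and (3) because $\varphi$ fixes its own range setwise (each $A\in\wtd\Tau$ is the chosen representative in its orbit, so $\varphi(A)=A$) while $\{\emptyset,X\}$ is fixed by $\Phi_\LL$.

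I expect the main obstacle to be verifying that $e_{\wtd\Tau}\circ e_{\wtd\Tau}=e_{\wtd\Tau}$ together with the monotonicity across the \emph{boundary} between $\Tau^\I$ and its complement, i.e. the mixed cases where $A\subset B$ with one set twin and the other not. Idempotency on $\Tau^\I$ reduces to $\varphi(\varphi(A))=\varphi(A)$, which holds because $\varphi(A)\in\wtd\Tau$ is fixed by $\varphi$; but one must confirm that $\varphi$ genuinely lands in $\wtd\Tau$ and that the $\I$-saturation extension (across $\bar{\bar A}^\I$) does not disturb this — here the $\I$-independence of $\wtd\Tau$ (Proposition~\ref{p13.5}) is essential, since it prevents two distinct members of $\wtd\Tau$ from being $\I$-equivalent and thus makes the retraction well defined on $=_\I$-classes. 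The boundary monotonicity cases will require the twinic property of $\I$ to force, via the chain of $\I$-inclusions as in the proof of Theorem~\ref{t11.1}, that a twin set $\I$-below (or above) a non-twin set forces the non-twin set into or out of $\LL$ consistently; this is the delicate bookkeeping, but it is structurally identical to cases (3) and (4) of Proposition~\ref{p16.1} and should go through once the correspondence $\Tau^\I=\pT^\I$ (valid for twinic $\I$, Proposition~\ref{p13.1}) is invoked.
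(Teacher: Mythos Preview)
There is a genuine gap in your construction of $\varphi$ on $\Tau^\I$. You assert that for $B\in\Tau^\I$ the $\I$-completion $\IFix^-(B)$ is a \emph{maximal} 2-cogroup, and then use this to place $B$ in some $\Tau_{[K]}$ and pick a representative of $\wtd\Tau$ ``in the same $X$-orbit''. This is false: membership in $\Tau^\I$ guarantees only that $\IFix^-(B)$ is a nonempty 2-cogroup, not a maximal one. For a concrete counterexample take $X=\IZ$ with the trivial ideal, and $B=6\IZ\cup(6\IZ+1)\cup(6\IZ+2)$; then $B$ is a twin set with $\Fix^-(B)=\IFix^-(B)=3+6\IZ$, which is properly contained in the maximal 2-cogroup $1+2\IZ$. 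Consequently $B$ lies in no $\Tau_K$ with $K\in\wht\K$, there is no element of $\wtd\Tau$ in the orbit $[B]$, and your ``send $B$ to the appropriate $X$-translate of the representative'' prescription does not define anything. A second, related issue is your appeal to Proposition~\ref{p6.6}: that proposition characterizes \emph{minimal} $\wht\K$-covering families, whereas the hypothesis of Theorem~\ref{t16.3} only requires $\wtd\Tau$ to be $\I$-independent and $\wht\K$-covering, so $\wtd\Tau\cap\Tau_{[K]}$ may well contain several orbits.

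The paper repairs exactly this by building $\varphi:\{\emptyset,X\}\cup\Tau^\I\to\{\emptyset,X\}\cup\wtd\Tau$ via Zorn's Lemma rather than by a direct formula. One starts from $\id$ on $\{\emptyset,X\}\cup\wtd\Tau$ (well defined and $\I$-saturated precisely because $\wtd\Tau$ is $\I$-independent) and extends orbit by orbit: for $A\in\Tau^\I$ not yet in the domain and not $\I$-equivalent to anything already there, one enlarges $\IFix^-(A)$ to some maximal 2-cogroup $K$ (Proposition~\ref{p7.3}), chooses any $B\in\wtd\Tau$ with $\Fix^-(B)=K$ (this uses only $\wht\K$-covering, not minimality), and sets $\varphi(xA)=xB$; well-definedness uses $\IFix(A)\subset KK=\Fix(B)$, and the condition $\IFix^-(A)\subset\Fix^-(\varphi(A))$ is built in. The idempotent is then $e_{\wtd\Tau}=\varphi\circ e_\I$, and the monotonicity/symmetry verification you sketch in your last paragraph goes through. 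Your overall architecture (collapse the non-twin part via $e_\I$, retract $\Tau^\I$ onto $\wtd\Tau$) is the right one, but the retraction step needs this transfinite extension in place of the orbit-by-orbit formula you propose.
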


\begin{proof} Let $e_\I:\mathsf P(X)\to\{\emptyset,X\}\cup\pT^\I$ be the idempotent from Proposition~\ref{p16.1}. Since the ideal $\I$ is twinic, $\pT^\I=\Tau^\I$. The idempotent ${e_{\wtd{\Tau}}}$ will be defined as the composition ${e_{\wtd{\Tau}}}=\varphi\circ e_\I$ where
$\varphi:\{\emptyset,X\}\cup\Tau^\I\to\{\emptyset,X\}\cup\wtd{\Tau}$ is an equivariant $\I$-saturated function such that
\begin{enumerate}
\item[\textup{(1)}] $\varphi\circ\varphi=\varphi$;
\item[\textup{(2)}] $\varphi|\{\emptyset,X\}\cup\wtd{\Tau}=\id$;
\item[\textup{(3)}] $\varphi(\Tau^\I)\subset\wtd{\Tau}$;
\item[\textup{(4)}] $\IFix^-(A)\subset\Fix^-(\varphi(A))$ for all $A\in\Tau^\I$.  
\end{enumerate}

To construct such a function $\varphi$, consider the family $\F$ of all possible functions $\varphi:D_\varphi\to\{\emptyset,X\}\cup\wtd{\Tau}$ such that 
\begin{itemize}
\item[\textup{(a)}] $\{\emptyset,X\}\cup\wtd{\Tau}\subset D_\varphi\subset\{\emptyset,X\}\cup\Tau^\I$;
\item[\textup{(b)}] the set $D_\varphi$ is left-invariant;
\item[\textup{(c)}] $\varphi$ is equivariant and $\I$-saturated;
\item[\textup{(d)}] $\varphi|\{\emptyset,X\}\cup\wtd{\Tau}=\id$;
\item[\textup{(e)}] $\IFix^-(A)\subset\Fix^-(\varphi(A))$ for all $A\in D_\varphi$.
\end{itemize}
 The family $\F$ is partially ordered by the relation 
$\varphi\le\psi$ defined by $\psi|D_\varphi=\varphi$.

The set $\F$ is not empty because it contains the identity function $\id$ of $\{\emptyset,X\}\cup\wtd{\Tau}$, which is $\I$-saturated because of the $\I$-independence of the family $\wtd{\Tau}$. 
By Zorn's Lemma, the family $\F$ contains a maximal element $\varphi:D_\varphi\to\{\emptyset,X\}\cup\wtd{\Tau}$. We claim that $D_\varphi=\{\emptyset,X\}\cup\Tau^\I$. Assuming the converse, fix a set $A\in \Tau^\I\setminus D_\varphi$ and define a family $D_\psi=D_\varphi\cup\{xA:x\in X\}$. Next, we shall extend the function $\varphi$ to a function $\psi:D_\psi\to\{\emptyset,X\}\cup\wtd{\Tau}$. 
We consider two cases.

1) Assume that $A=_\I B$ for some $B\in D_\varphi$. Then also $xA=_\I xB$ for all $x\in X$. In this case we define the function $\psi:D_\psi\to\{\emptyset,X\}\cup\wtd{\Tau}$ assigning to each set $C\in D_\psi$ the set $\varphi(D)$  where $D\in D_\varphi$ is any set with $D=_\I C$. It can be shown that the function $\psi:D_\psi\to\{\emptyset,X\}\cup\wtd{\Tau}$ 
belongs to the family $\F$, which contradicts the maximality of $\varphi$.

2) Assume that $A\ne_\I B$ for all $B\in D_\varphi$. By Proposition~\ref{p7.3}, the 2-cogroup $\IFix^-(A)$ lies in a maximal 2-cogroup $K\in\wht{\K}$.
Since the family  $\wtd{\Tau}$ is $\wht{\K}$-covering, there is a twin set $B\in\wtd{T}$ such that $\Fix^-(B)=K$.  In this case define the function $\psi:D_\psi\to\wht{\Tau}$ by the formula
$$\psi(C)=\begin{cases} \varphi(C)&\mbox{if $C\in D_\varphi$};\\
xB&\mbox{if $C=_\I xA$ for some $x\in X$}.
\end{cases}
$$
If $xA=_\I yA$ for some $x,y\in X$, then $y^{-1}x\in \IFix^-(A)\subset K=\Fix^-(B)$ and thus $xB=yB$, which means that the function $\psi$ is well-defined and $\I$-saturated. Also it is clear that $\psi$ is equivariant and hence belongs to the family $\F$, which is forbidden by the maximality of $\varphi$.

Thus the maximal function $\varphi$ is defined on $D_\varphi=\{\emptyset,X\}\cup\Tau^\I$ and we can put ${e_{\wtd{\Tau}}}=\varphi\circ e_\I$ where $e_\I:\mathsf P(X)\to\{\emptyset,X\}\cup\pT^\I=\{\emptyset,X\}\cup\Tau^\I$ is the idempotent constructed in Proposition~\ref{p16.1}. It follows from the properties of the functions $\varphi$ and $e_\I$ that the function ${e_{\wtd{\Tau}}}$ is equivariant and $\I$-saturated. Since the ideal $\I$ is twinic, the family $\Tau^\I=\pT^\I$ is $\I$-incomparable (by Proposition~\ref{p13.1}) and hence the monotonicity of the function $\varphi$ follows automatically from its $\I$-saturated property.
Then ${e_{\wtd{\Tau}}}$ is monotone as the composition of two monotone functions.
By Corollary~\ref{c4.4}, $e_{\wtd{\Tau}}\in\Enl(\mathsf P(X))$.
\end{proof}

Theorem~\ref{t16.3} and Proposition~\ref{p13.5} imply:

\begin{corollary}\label{c16.4} If the ideal $\I$ is twinic, then for each minimal $\wht{\K}$-covering family $\wtd{\Tau}\subset\wht{\Tau}$ there is  an idempotent ${e_{\wtd{\Tau}}}\in\Enl^\I(\mathsf P(X))$ such that
\begin{enumerate}
\item[\textup{(1)}] ${e_{\wtd{\Tau}}}(\mathsf P(X)\setminus \Tau^\I)\subset\{\emptyset,X\}$;
\item[\textup{(2)}] $e_{\wtd{\Tau}}(\Tau^\I)=\wtd{\Tau}$;
\item[\textup{(3)}] ${e_{\wtd{\Tau}}}|\{\emptyset,X\}\cup\wtd{\Tau}=\id$.
\end{enumerate}
\end{corollary}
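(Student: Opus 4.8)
The plan is to recognize that Corollary~\ref{c16.4} is nothing more than a specialization of Theorem~\ref{t16.3} obtained by discharging its remaining hypothesis. Theorem~\ref{t16.3} already produces an idempotent $e_{\wtd{\Tau}}\in\Enl^\I(\mathsf P(X))$ with exactly the three listed properties, under two assumptions on the family $\wtd{\Tau}\subset\wht{\Tau}$: that it be $\wht{\K}$-covering and that it be $\I$-independent. So it suffices to check that a \emph{minimal} $\wht{\K}$-covering family meets both of these.

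First I would note that being minimal $\wht{\K}$-covering entails being $\wht{\K}$-covering: by its very definition a minimal $\wht{\K}$-covering family is a left-invariant $\wht{\K}$-covering subfamily (one that in addition admits no proper left-invariant $\wht{\K}$-covering subfamily), so in particular it is $\wht{\K}$-covering. Second, the $\I$-independence of $\wtd{\Tau}$ is supplied verbatim by Proposition~\ref{p13.5}, which asserts precisely that every minimal $\wht{\K}$-covering subfamily of $\wht{\Tau}$ is $\I$-independent. Thus both hypotheses of Theorem~\ref{t16.3} hold for our $\wtd{\Tau}$.

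Having verified both hypotheses, I would simply invoke Theorem~\ref{t16.3} with this $\wtd{\Tau}$ to obtain the required idempotent $e_{\wtd{\Tau}}\in\Enl^\I(\mathsf P(X))$ satisfying conditions (1)--(3). There is no genuine obstacle at this stage, since all of the substantive content resides in the cited Theorem~\ref{t16.3} and Proposition~\ref{p13.5}; the corollary is simply their immediate conjunction, with Proposition~\ref{p13.5} converting the minimality hypothesis into the $\I$-independence hypothesis needed to apply the theorem.
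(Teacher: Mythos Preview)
Your proposal is correct and matches the paper's own justification exactly: the paper simply states that Theorem~\ref{t16.3} and Proposition~\ref{p13.5} imply the corollary, which is precisely the derivation you have spelled out.
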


\section{The minimal ideal of  the semigroups 
$\lambda(X)$ and $\Enl(\mathsf P(X))$}\label{s17}

In this section we apply Corollary~\ref{c16.4} to describe the structure of the minimal ideals the semigroups $\lambda(X)$ and $\Enl(\mathsf P(X))$. 

\begin{theorem}\label{t17.1} For a twinic  group $X$ a function $f\in\Enl(\mathsf P(X))$ belongs to the minimal ideal $\IK(\Enl(\mathsf P(X))$ of the semigroup $\Enl(P(X))$ if and only if the following two conditions hold:
\begin{enumerate}
\item[\textup{(1)}] the family $f(\wht{\Tau})$ is minimal $\wht{\K}$-covering;
\item[\textup{(2)}] $f(\mathsf P(X))\subset\{\emptyset,X\}\cup f(\wht\Tau)$.
\end{enumerate}
\end{theorem}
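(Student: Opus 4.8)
The plan is to prove the theorem by first exhibiting a concrete family of minimal idempotents, identifying the minimal ideal $\IK(S)$ (where $S=\Enl(\mathsf P(X))$) with the two-sided ideal they generate, and then reading off conditions (1) and (2) from this description. Throughout I would fix the twinic ideal $\I=\TI$ of $X$.

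First I would take, for an arbitrary minimal $\wht\K$-covering family $\wtd\Tau\subset\wht\Tau$, the idempotent $e=e_{\wtd\Tau}\in\Enl^\I(\mathsf P(X))$ supplied by Corollary~\ref{c16.4}, so that $e(\mathsf P(X))=\{\emptyset,X\}\cup\wtd\Tau$ and $e$ is the identity on this set. The first and main step is to show that $e$ is a minimal idempotent, for which, by the equivalent conditions recalled earlier, it suffices to check that $eSe$ is a group. Writing $\psi=e\circ g\circ e$ with $g\in S$ and using $e|\{\emptyset,X\}\cup\wtd\Tau=\id$, one sees that $\psi(\emptyset)=\emptyset$ and $\psi(X)=X$ (a monotone symmetric equivariant map fixes $\emptyset$ and $X$), and that $\psi|\wtd\Tau\colon A\mapsto e(g(A))$ is an equivariant self-map of $\wtd\Tau$; moreover Theorem~\ref{t11.1} together with the maximality of $\Fix^-(A)\in\wht\K$ forces $\Fix^-(\psi(A))=\Fix^-(A)$. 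Since $\wtd\Tau$ is minimal $\wht\K$-covering, Proposition~\ref{p6.6} shows it meets each class $\Tau_{[K]}$ in a single orbit, which, as $\Tau_K$ is a free $\HH(K)$-act, is a free transitive $\HH(K)$-torsor; an equivariant $\Fix^-$-preserving self-map of such a torsor is a right translation, hence bijective. Thus every $\psi\in eSe$ restricts to an automorphism of the $X$-act $\wtd\Tau$, and $\psi$ is determined by this restriction because $\psi=\psi\circ e$. To see that the inverse automorphism again lies in $eSe$, I would realise it: $\psi^{-1}\circ e|\wht\Tau$ is equivariant, $\I$-saturated and $\Fix^-$-preserving, hence extends to some $\tilde g\in\Enl^\I(\mathsf P(X))$ by the corollary to Theorem~\ref{t11.1}, and then $e\circ\tilde g\circ e$ restricts to $\psi^{-1}$ on $\wtd\Tau$. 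Therefore $eSe$ is a group, $e\in\IK(S)$, and $\IK(S)=SeS$.

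For the ``only if'' part I would take $f\in\IK(S)=SeS$ and write $f=g\circ e\circ h$ with $g,h\in S$. The key observation is that $f(\wht\Tau)=g(\wtd\Tau)$: the family $h(\wht\Tau)$ is a left-invariant $\wht\K$-covering subfamily of $\wht\Tau$ (by equivariance of $h$, the maximality argument above, and the fact that $h(\wht\Tau)$ is always $\wht\K$-covering), so $e(h(\wht\Tau))$ is a left-invariant $\wht\K$-covering subfamily of $\wtd\Tau$ and hence equals $\wtd\Tau$ by minimality. Since $g$ is equivariant and $\Fix^-$-preserving, $g(\wtd\Tau)=f(\wht\Tau)$ meets each $\Tau_{[K]}$ in the single orbit $[g(A_K)]$, so by Proposition~\ref{p6.6} it is minimal $\wht\K$-covering, which is (1). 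For (2), $f(\mathsf P(X))\subset g(e(\mathsf P(X)))=g(\{\emptyset,X\}\cup\wtd\Tau)=\{\emptyset,X\}\cup g(\wtd\Tau)=\{\emptyset,X\}\cup f(\wht\Tau)$, using $g(\emptyset)=\emptyset$ and $g(X)=X$.

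For the ``if'' part, assume (1) and (2). By (1) the family $f(\wht\Tau)$ is minimal $\wht\K$-covering, so Corollary~\ref{c16.4} furnishes an idempotent $e'=e_{f(\wht\Tau)}$ with image $\{\emptyset,X\}\cup f(\wht\Tau)$ acting as the identity on it; by the first step $e'$ is a minimal idempotent. Condition (2) says precisely that $f(\mathsf P(X))$ lies in the fixed set of $e'$, whence $e'\circ f=f$, so $f\in e'S$. Since $e'$ is a minimal idempotent, $e'S$ is a minimal right ideal, and as every minimal right ideal is contained in the (unique) minimal ideal $\IK(S)$, we conclude $f\in\IK(S)$. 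The main obstacle is the first step: proving that $eSe$ is a group. The delicate points there are that equivariant $\Fix^-$-preserving endomorphisms of a minimal $\wht\K$-covering family are automatically bijective (resting on the freeness of the $\HH(K)$-action on $\Tau_K$ and the single-orbit description of Proposition~\ref{p6.6}), and that the inverse automorphism can be lifted back into $eSe$ through the extension result characterising $\Enl^\I(\wht\Tau)$.
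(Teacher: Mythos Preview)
Your proof is correct and takes a genuinely different route from the paper's. The paper never shows that $e_{\wtd\Tau}$ is a minimal idempotent. For the ``if'' direction it fixes a single $e_{\wtd\Tau}$ and, given $f$ satisfying (1)--(2), proves directly that $Sf$ is a minimal left ideal by showing $f\in S\circ g\circ f$ for every $g\in S$: the restriction $\psi=e_{\wtd\Tau}\circ g\,\big|\,\{\emptyset,X\}\cup f(\wht\Tau)\to\{\emptyset,X\}\cup\wtd\Tau$ is bijective (exactly the single-orbit argument you use in your first step), and $\psi^{-1}\circ e_{\wtd\Tau}$ furnishes the required left factor. For the ``only if'' direction the paper writes $f=g\circ e_{\wtd\Tau}\circ f$ (using $S\circ e_{\wtd\Tau}\circ f=Sf$) rather than your $f=g\circ e\circ h$ (using $\IK(S)=SeS$), but from there the derivation of (1) and (2) is essentially identical to yours.

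What you gain is a clean structural statement---every $e_{\wtd\Tau}$ is a minimal idempotent---after which the ``if'' direction is a one-liner via $e'\circ f=f$ for $e'=e_{f(\wht\Tau)}$. The price is that you must run the bijectivity/invertibility argument once in your first step and then invoke Corollary~\ref{c16.4} a second time to produce $e'$. The paper's route is marginally more economical: it uses only one idempotent $e_{\wtd\Tau}$ and deploys the bijectivity argument exactly where it is needed in the ``if'' part. Both proofs rest on the same core observation, namely that an equivariant $\Fix^-$-preserving map from a minimal $\wht\K$-covering family into $\wht\Tau$ is a bijection onto another such family, because it sends the unique $X$-orbit in each $\Tau_{[K]}$ to a single $X$-orbit.
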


\begin{proof} Let $\wtd{\Tau}\subset\wht{\Tau}$ be a minimal $\wht\K$-covering left-invariant family and $e_{\wtd{\Tau}}\in\Enl(\mathsf P(X))$ be an idempotent satisfying the conditions (1)--(3) of in Corollary~\ref{c16.4}. By Propositions~\ref{p13.1} and \ref{p13.5}, the family $\wtd{\Tau}$ is $\I$-incomparable and $\I$-independent for any twinic ideal $\I$ on $X$.
\smallskip

To prove the ``if'' part of the theorem, assume that $f$ satisfies the conditions (1), (2).
To show that $f$ belongs to the minimal ideal $\IK(\Enl(\mathsf P(X)))$, it suffices for each $g\in\Enl(\mathsf P(X))$ to find $h\in\Enl(\mathsf P(X))$ such that $h\circ g\circ f=f$. 

The minimality and the left-invariance of the $\wht{\K}$-covering subfamily $f(\wht{\Tau})$ imply that  the equivariant function $\psi=e_{\wtd{\Tau}}\circ g|\{\emptyset,X\}\cup f(\wht{\Tau}):\{\emptyset,X\}\cup f(\wht{\Tau})\to\{\emptyset,X\}\cup\wtd{\Tau}$ is bijective. So, we can consider the inverse function $\psi^{-1}:\{\emptyset,X\}\cup \wtd{\Tau}\to\{\emptyset,X\}\cup f(\wht{\Tau})$ such that $\psi^{-1}\circ \psi=\id|\{\emptyset,X\}\cup f(\wht{\Tau})$.
This function is equivariant, symmetric, and monotone because so is $\psi$ and the family $\wtd{\Tau}$ is $\I$-incomparable and $\I$-independent.

Then the function $\varphi=\psi^{-1}\circ e_{\wtd{\Tau}}:\mathsf P(X)\to\{\emptyset,X\}\cup f(\wht{\Tau})$ is well-defined and belongs to $\Enl^{\I}(\mathsf P(X))$ by Corollary~\ref{c4.4}. Since 
$$(\varphi\circ e_{\wtd{\Tau}})\circ g\circ f=\psi^{-1}\circ e_{\wtd\Tau}\circ e_{\wtd\Tau}\circ g\circ f=\psi^{-1}\circ e_{\wtd\Tau}\circ g\circ f=\psi^{-1}\circ \psi\circ f=f,$$ the function $f$ belongs to the minimal ideal of the semigroup $\Enl(\mathsf P(X))$. 
\smallskip

To prove the ``only if'' part, take any function $f\in \IK(\Enl(\mathsf P(X)))$ and for the idempotent $e_{\wtd{\Tau}}\in\Enl(\mathsf P(X))$ find a function $g\in\Enl(\mathsf P(X))$ such that $f=g\circ e_{\wtd{\Tau}}\circ f$.
Now the properties (1), (2) of the function $f$ follow from the corresponding properties of the idempotent $e_{\wtd{\Tau}}$.
\end{proof}

Since the superextension $\lambda(X)$ of a group $X$ is topologically isomorphic to the semigroup $\Enl(\mathsf P(X))$, Theorem~\ref{t17.1} implies the following description of the minimal ideal $\IK(\lambda(X))$ of $\lambda(X)$.

\begin{corollary}\label{c13.2} For a twinic  group $X$ a maximal linked system $\LL\in\lambda(X)$ belongs to the minimal ideal $\IK(\lambda(X))$ of the superextension $\lambda(X)$ if and only if its function representation $\Phi_\LL$ satisfies two conditions:
\begin{enumerate}
\item[\textup{(1)}] the family $\Phi_\LL(\wht{\Tau})$ is minimal $\wht{\K}$-covering;
\item[\textup{(2)}] $\Phi_\LL(\mathsf P(X))\subset\{\emptyset,X\}\cup \Phi_\LL(\wht\Tau)$.
\end{enumerate}
\end{corollary}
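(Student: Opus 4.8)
The plan is to transport Theorem~\ref{t17.1} across the function representation isomorphism, so the argument is essentially a one-line deduction dressed up with a routine remark about isomorphisms. First I would recall that by Theorem~\ref{t4.1} the map $\Phi:\mathsf P^2(X)\to\End(\mathsf P(X))$ is a topological isomorphism of compact right-topological semigroups, and that $\Enl(\mathsf P(X))$ was defined precisely as the image $\Phi(\lambda(X))$. Hence the restriction $\Phi|\lambda(X):\lambda(X)\to\Enl(\mathsf P(X))$, $\LL\mapsto\Phi_\LL$, is a topological isomorphism of the two semigroups.

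The only fact needing a word of justification is that such an isomorphism carries the minimal ideal onto the minimal ideal. This is routine: a bijective semigroup homomorphism $h:S\to S'$ sends two-sided ideals of $S$ to two-sided ideals of $S'$ and, via $h^{-1}$, ideals back to ideals, and it preserves the inclusion order. Since $\IK(S)$ is the smallest ideal of $S$ (as noted in Section~2, it is the union of all minimal left ideals and is contained in every ideal of $S$), its image $h(\IK(S))$ is the smallest ideal of $S'$, i.e. $h(\IK(S))=\IK(S')$. Applying this to $h=\Phi|\lambda(X)$ yields $\Phi(\IK(\lambda(X)))=\IK(\Enl(\mathsf P(X)))$, and therefore a maximal linked system $\LL$ lies in $\IK(\lambda(X))$ if and only if its image $\Phi_\LL$ lies in $\IK(\Enl(\mathsf P(X)))$. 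I would phrase this through the smallest-ideal characterization rather than through minimal left ideals directly, since that spares me from separately checking that minimal left ideals correspond under $\Phi$.

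It then remains only to rewrite the membership $\Phi_\LL\in\IK(\Enl(\mathsf P(X)))$ using Theorem~\ref{t17.1}, whose two conditions — that the family $\Phi_\LL(\wht{\Tau})$ be minimal $\wht{\K}$-covering, and that $\Phi_\LL(\mathsf P(X))\subset\{\emptyset,X\}\cup\Phi_\LL(\wht{\Tau})$ — are verbatim the conditions (1) and (2) of the present statement. This completes the deduction. There is no genuine obstacle here: the substantive work was already carried out in Theorem~\ref{t17.1} (and the idempotents built in Corollary~\ref{c16.4}), and the present corollary only records the image of that theorem under the topological isomorphism $\Phi$, the single nontrivial point being the preservation of $\IK$ discussed above.
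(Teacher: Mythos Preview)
Your proof is correct and follows exactly the paper's approach: the paper simply states that since $\lambda(X)$ is topologically isomorphic to $\Enl(\mathsf P(X))$, Theorem~\ref{t17.1} immediately yields the corollary. Your additional remark that semigroup isomorphisms preserve the minimal ideal is a welcome clarification, but otherwise the argument is identical.
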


\section{Minimal left ideals of superextensions of twinic groups}\label{s18}

After elaborating the necessary tools in Section~\ref{s4}-\ref{s17}, we now return to describing the structure of minimal left ideals of the superextension $\lambda(X)$ of a twinic group $X$. In this section we assume that $X$ is a group.

 Our first aim is to show that if $X$ is twinic, then the restriction operator $R_{\wht\Tau}:\Enl(\mathsf P(X))\to\Enl(\wht{\Tau})$ is injective on all minimal left ideals of the semigroup $\Enl(\mathsf P(X))$. Since $\wht{\Tau}=\bigcup_{K\in\wht{\K}}\Tau_{K}$, Proposition~\ref{p12.3} implies that the family $\wht{\Tau}$ is $\lambda$-invariant and hence $\Enl(\wht\Tau)$ is a compact right-topological semigroup. For each 
left-invariant ideal $\I$ on the group $X$ the semigroup $\Enl(\wht\Tau)$ contains a left ideal $\Enl^\I(\wht\Tau)$ consisting of all left-invariant  monotone $\I$-saturated functions, see Theorem~\ref{t4.8}. If  $\I$ is a twinic ideal with $\I\cap\wht\K=\emptyset$, then the family $\wht\Tau$ is $\I$-independent (see Proposition~\ref{p13.3}) and hence $\Enl^\I(\wht{\Tau})=\Enl(\wht{\Tau})$. 

\begin{proposition}\label{p18.1} If the group $X$ is twinic, then the restriction operator  $$R_{\wht{\Tau}}:\Enl(\mathsf P(X))\to\Enl(\wht{\Tau}),\;R_{\wht\Tau}:f\mapsto f|\wht\Tau,$$
 is injective on each minimal left ideal of the semigroup $\Enl(\mathsf P(X))$. If $\TI\cap\wht{\K}=\emptyset$, then for some idempotent $ e_{\wht\Tau}\in\Enl^\TI(\mathsf P(X))$  the restriction $R_{\wht{\Tau}}|\Enl(\mathsf P(X))\circ e_{\wht\Tau}$ is a topological isomorphism between the principal left ideal $\Enl(\mathsf P(X))\circ e_{\wht\Tau}$ and $\Enl(\wht{\Tau})$.
\end{proposition}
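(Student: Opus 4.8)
The plan is to reduce the injectivity claim to a single conveniently chosen minimal left ideal via Corollary~\ref{c2.2}, and then to build the isomorphism of the second assertion by factoring through a suitable idempotent. Throughout I fix the twinic ideal $\I=\TI$ of $X$, so that $\pT^\I=\Tau^\I$ by Proposition~\ref{p13.1}. For the first assertion I would choose a minimal $\wht\K$-covering subfamily $\wtd\Tau\subset\wht\Tau$ (these exist by Proposition~\ref{p6.6}). By Proposition~\ref{p13.5} it is $\I$-independent, so Corollary~\ref{c16.4} supplies an idempotent $e_{\wtd\Tau}\in\Enl^\I(\mathsf P(X))$ with $e_{\wtd\Tau}(\mathsf P(X)\setminus\Tau^\I)\subset\{\emptyset,X\}$, with $e_{\wtd\Tau}(\Tau^\I)=\wtd\Tau$, and with $e_{\wtd\Tau}|\{\emptyset,X\}\cup\wtd\Tau=\id$. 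The first thing to verify is that $e_{\wtd\Tau}$ is a \emph{minimal} idempotent: since $\wtd\Tau\subset\wht\Tau\subset\Tau^\I$ and $e_{\wtd\Tau}$ fixes $\wtd\Tau$, one gets $e_{\wtd\Tau}(\wht\Tau)=\wtd\Tau$, which is minimal $\wht\K$-covering, while $e_{\wtd\Tau}(\mathsf P(X))\subset\{\emptyset,X\}\cup\wtd\Tau$. These are exactly conditions (1) and (2) of Theorem~\ref{t17.1}, so $e_{\wtd\Tau}\in\IK(\Enl(\mathsf P(X)))$ and $L_0=\Enl(\mathsf P(X))\circ e_{\wtd\Tau}$ is a minimal left ideal. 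By Corollary~\ref{c2.2}, it then suffices to prove that $R_{\wht\Tau}$ is injective on $L_0$.

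For the injectivity on $L_0$, note that every $f\in L_0$ satisfies $f=f\circ e_{\wtd\Tau}$, and two elementary observations make this decisive. First, any $f\in\Enl(\mathsf P(X))$ fixes $\emptyset$ and $X$: equivariance forces $f(\emptyset)$ to be left-invariant, hence $f(\emptyset)\in\{\emptyset,X\}$, and monotonicity together with symmetry ($f(X)=X\setminus f(\emptyset)$) rules out $f(\emptyset)=X$, whence $f(\emptyset)=\emptyset$ and $f(X)=X$. Second, $e_{\wtd\Tau}$ sends $\Tau^\I$ into $\wtd\Tau\subset\wht\Tau$ and sends $\mathsf P(X)\setminus\Tau^\I$ into $\{\emptyset,X\}$. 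Consequently, for $A\in\Tau^\I$ one has $f(A)=f(e_{\wtd\Tau}(A))$ with $e_{\wtd\Tau}(A)\in\wht\Tau$, while for $A\notin\Tau^\I$ the value $f(A)=f(e_{\wtd\Tau}(A))\in\{\emptyset,X\}$ is determined by $e_{\wtd\Tau}$ alone. Thus $f$ is completely determined by $f|\wht\Tau$, so if $f,g\in L_0$ agree on $\wht\Tau$ they agree everywhere; this proves the first assertion.

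For the second assertion, assume $\TI\cap\wht\K=\emptyset$. Then $\wht\Tau$ is $\TI$-independent by Proposition~\ref{p13.3}, and since $\wht\Tau$ is $\wht\K$-covering by Proposition~\ref{p7.3}(3), Theorem~\ref{t16.3} applies with the full family $\wht\Tau$ in the role of $\wtd\Tau$, producing an idempotent $e_{\wht\Tau}\in\Enl^\TI(\mathsf P(X))$ with $e_{\wht\Tau}(\mathsf P(X)\setminus\Tau^\TI)\subset\{\emptyset,X\}$, $e_{\wht\Tau}(\Tau^\TI)=\wht\Tau$, and $e_{\wht\Tau}|\{\emptyset,X\}\cup\wht\Tau=\id$. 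Set $L=\Enl(\mathsf P(X))\circ e_{\wht\Tau}$. The computation of the previous paragraph (each $f\in L$ equals $f\circ e_{\wht\Tau}$, and $e_{\wht\Tau}$ lands in $\wht\Tau\cup\{\emptyset,X\}$) shows that $R_{\wht\Tau}|L$ is injective. For surjectivity, given $\varphi\in\Enl(\wht\Tau)=R_{\wht\Tau}(\Enl(\mathsf P(X)))$ I would pick $g\in\Enl(\mathsf P(X))$ with $g|\wht\Tau=\varphi$ and put $f=g\circ e_{\wht\Tau}\in L$; since $e_{\wht\Tau}$ is the identity on $\wht\Tau$, $f|\wht\Tau=\varphi$. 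Finally, $R_{\wht\Tau}$ is a continuous semigroup homomorphism (as $\wht\Tau$ is $\lambda$-invariant), $L$ is compact (a continuous image of $\Enl(\mathsf P(X))$ under the right shift $f\mapsto f\circ e_{\wht\Tau}$) and $\Enl(\wht\Tau)$ is compact Hausdorff; a continuous bijective homomorphism between compact Hausdorff right-topological semigroups is a topological isomorphism, which gives the claim.

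I expect the delicate point to be precisely the behaviour of the chosen idempotents off the twin sets: that $e_{\wtd\Tau}$ (resp. $e_{\wht\Tau}$) collapses $\mathsf P(X)\setminus\Tau^\I$ into $\{\emptyset,X\}$, combined with the fact that every member of $\Enl(\mathsf P(X))$ fixes $\emptyset$ and $X$. This is what guarantees that restricting to $\wht\Tau$ loses no information along the left ideal. The only genuinely new verification beyond bookkeeping is that $e_{\wtd\Tau}$ is a minimal idempotent, so that $L_0$ is indeed a minimal left ideal and Corollary~\ref{c2.2} becomes applicable; everything else is assembled from Theorems~\ref{t16.3} and \ref{t17.1} and Corollaries~\ref{c2.2} and \ref{c16.4}.
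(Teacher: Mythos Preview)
Your proof is correct and follows essentially the same approach as the paper: build the idempotent $e_{\wtd\Tau}$ from Corollary~\ref{c16.4}, show that $R_{\wht\Tau}$ is injective on the left ideal it generates, and invoke Corollary~\ref{c2.2}. The one small difference is that you take the extra step of verifying $e_{\wtd\Tau}$ is a \emph{minimal} idempotent via Theorem~\ref{t17.1}; the paper skips this, observing only that $R_{\wht\Tau}$ is injective on the full principal left ideal $\Enl(\mathsf P(X))\circ e_{\wtd\Tau}$ (by the same argument you give), and since every principal left ideal contains some minimal left ideal, Corollary~\ref{c2.2} already applies --- so your detour through Theorem~\ref{t17.1} is correct but not needed.
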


\begin{proof} Let $\wtd{\Tau}\subset\wht{\Tau}$ be any minimal $\K$-covering left-invariant subfamily. By Propositions~\ref{p13.5}, the family $\wtd{\Tau}$ is $\I$-independent. By Theorem~\ref{t16.3},
there is an idempotent $e_{\wtd{\Tau}}\in\Enl^\I(\mathsf P(X))$ such that $e_{\wtd{\Tau}}(\mathsf P(X)\setminus\Tau^\I)\subset\{\emptyset,X\}$ and $e_{\wtd{\Tau}}(\Tau^\I)=\wtd{\Tau}\subset\wht{\Tau}$. The latter property of $e_{\wtd{\Tau}}$ implies that the restriction operator $R_{\wht{\Tau}}$ is injective on the principal left ideal $\Enl(\mathsf P(X))\circ e_{\wtd{\Tau}}$ and consequently, is 
injective on each minimal left ideal of the semigroup $\Enl(\mathsf P(X))$ according to Proposition~\ref{c2.2}.

If $\TI\cap\wht{\K}=\emptyset$, then by Proposition~\ref{p13.3} the family  $\wht{\Tau}$ is $\TI$-independent and we can repeat the above argument for the idempotent $e_{\wht{\Tau}}$.
\end{proof}

Now let us look at the structure of the semigroup $\Enl(\wht{\Tau})$.
Observe that $\wht{\Tau}=\bigcup_{[K]\in[\wht{\K}]}\Tau_{[K]}$, where $\Tau_{[K]}=\{A\subset X:\Fix^-(A)\in[K]\}$, $[\wht\K]=\{[K]:K\in\wht\K\}$ and $[K]=\{xKx^{-1}:x\in X\}$ for $K\in\wht\K$. 

It follows that the restriction operators $R_{\Tau_{[K]}}:\Enl(\wht{\Tau})\to\Enl(T_{[K]})$, $[K]\in[\wht{\K}]$, compose an injective semigroup homomorphism
$$R_{\Tau_{\![\wht\K]}}:\Enl(\wht{\Tau})\to\prod_{[K]\in[\wht{\K}]}\Enl(\Tau_{[K]}),\;\;R_{\Tau_{\![\wht\K]}}:\varphi\mapsto (\varphi|\Tau_{[K]})_{[K]\in[\wht{K}]}.$$

Theorem~\ref{t11.1} implies 

\begin{lemma}\label{l18.2} For any twinic ideal $\I$ on the group $X$ we get $R_{[\wht\K]}(\Enl^\I(\wht{\Tau}))=
\prod_{[K]\in[\wtd{\K}]}\Enl^\I(\Tau_{[K]}).$
\end{lemma}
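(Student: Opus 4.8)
The plan is to exploit the decomposition of $\wht\Tau$ into its orbit-pieces $\Tau_{[K]}$. Since every $A\in\wht\Tau$ has $\Fix^-(A)$ lying in exactly one orbit $[K]\in[\wht\K]$, the family $\wht\Tau$ is the disjoint union $\bigsqcup_{[K]\in[\wht\K]}\Tau_{[K]}$, and by Proposition~\ref{p5.4} each piece $\Tau_{[K]}$ is a left-invariant subfamily of $\Tau$. Thus $R_{\Tau_{\![\wht\K]}}$ is precisely the map of restrictions to these pieces, and computing its image on $\Enl^\I(\wht\Tau)$ amounts to recognizing when a family of functions on the pieces glues to an element of $\Enl^\I(\wht\Tau)$. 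Since $\I$ is twinic, Theorem~\ref{t11.1} applies to each of the left-invariant families $\wht\Tau$ and $\Tau_{[K]}$, describing the spaces $\Enl^\I(\wht\Tau)$ and $\Enl^\I(\Tau_{[K]})$ as the equivariant, $\I$-saturated functions $\varphi$ satisfying $\Fix^-(A)\subset\Fix^-(\varphi(A))$ on the respective domain. I would prove the two inclusions separately.

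For the inclusion ``$\subset$'' I would simply restrict. If $\varphi\in\Enl^\I(\wht\Tau)$ and $[K]\in[\wht\K]$, then $\varphi|\Tau_{[K]}$ stays equivariant and $\I$-saturated and still satisfies $\Fix^-(A)\subset\Fix^-(\varphi(A))$ for all $A\in\Tau_{[K]}$; by Theorem~\ref{t11.1} it therefore lies in $\Enl^\I(\Tau_{[K]})$, so the tuple $R_{\Tau_{\![\wht\K]}}(\varphi)$ belongs to the product. This direction is routine.

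The content is in the reverse inclusion, where I would glue. Given a tuple $(\varphi_{[K]})_{[K]\in[\wht\K]}$ with each $\varphi_{[K]}\in\Enl^\I(\Tau_{[K]})$, define $\varphi:\wht\Tau\to\mathsf P(X)$ by $\varphi|\Tau_{[K]}=\varphi_{[K]}$; this is unambiguous because the pieces are pairwise disjoint, and then $R_{\Tau_{\![\wht\K]}}(\varphi)$ is the given tuple. It remains to verify $\varphi\in\Enl^\I(\wht\Tau)$ through Theorem~\ref{t11.1}. Equivariance and the inclusion $\Fix^-(A)\subset\Fix^-(\varphi(A))$ hold piecewise, since the $X$-action preserves each $\Tau_{[K]}$ (Proposition~\ref{p5.4}) and each $\varphi_{[K]}$ already enjoys these properties. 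The main obstacle is $\I$-saturation: to conclude $\varphi(A)=\varphi(B)$ from $A=_\I B$ I must know that $A$ and $B$ lie in the same piece $\Tau_{[K]}$. This is exactly where the maximality built into $\wht\Tau$ is essential: for $A\in\wht\Tau$ one always has $\Fix^-(A)\subset\IFix^-(A)$, the larger set $\IFix^-(A)$ is again a $2$-cogroup by Propositions~\ref{p5.3} and \ref{p6.2}, and hence the maximality of $\Fix^-(A)\in\wht\K$ forces $\Fix^-(A)=\IFix^-(A)$. Since $A=_\I B$ yields $\IFix^-(A)=\IFix^-(B)$ by the left-invariance of $=_\I$, we obtain $\Fix^-(A)=\Fix^-(B)$, so $A$ and $B$ share the same orbit $[K]$ and $\varphi(A)=\varphi_{[K]}(A)=\varphi_{[K]}(B)=\varphi(B)$ by the $\I$-saturation of $\varphi_{[K]}$. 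This completes the gluing, establishes ``$\supset$'', and hence the claimed equality.
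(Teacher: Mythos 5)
Your proof is correct and takes essentially the same route as the paper: the paper's entire proof is the remark that Theorem~\ref{t11.1} implies the lemma, and you use exactly that theorem to characterize both sides and check the restriction/gluing directions. The one nontrivial detail you supply — that for $A,B\in\wht\Tau$ with $A=_\I B$ the maximality of $\Fix^-(A)$ forces $\Fix^-(A)=\IFix^-(A)=\IFix^-(B)=\Fix^-(B)$, so $\I$-equal sets lie in the same piece $\Tau_{[K]}$ and $\I$-saturation glues — is precisely the argument the paper itself uses in the proof of Proposition~\ref{p13.3}, so your write-up just makes the paper's one-line deduction explicit.
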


Next, we study  the structure of the semigroups $\Enl^\I(\Tau_{[K]})$ for $[K]\in[\wht{\K}]$.

\begin{lemma}\label{l18.3} For any maximal 2-cogroup $K\in\wht\K$ the restriction map
$$R_{\Tau_K}:\Enl(\Tau_{[K]})\to\Enl(\Tau_K),\;\;R_{\Tau_K}:\varphi\mapsto \varphi|\Tau_K,$$is a topological isomorphism.
\end{lemma}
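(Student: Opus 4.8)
The plan is to show that $R_{\Tau_K}$ is a continuous bijective homomorphism between two compact Hausdorff semigroups, from which it is automatically a topological isomorphism. Both $\Enl(\Tau_{[K]})$ and $\Enl(\Tau_K)$ are compact Hausdorff, being continuous images of the compact Hausdorff space $\Enl(\mathsf P(X))$ under restriction operators, and $R_{\Tau_K}$ is continuous as a coordinate projection in the topology of pointwise convergence. That it is a homomorphism follows from the $\lambda$-invariance of $\Tau_K$ (Proposition~\ref{p12.3}): for $\psi\in\Enl(\Tau_{[K]})$ we have $\psi(\Tau_K)\subseteq\Tau_K$, so for all $A\in\Tau_K$, $(\varphi\circ\psi)|\Tau_K(A)=\varphi(\psi(A))=(\varphi|\Tau_K)\circ(\psi|\Tau_K)(A)$. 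Crucially, \emph{surjectivity is immediate from the definitions}: given $g\in\Enl(\Tau_K)=R_{\Tau_K}(\Enl(\mathsf P(X)))$, pick $f\in\Enl(\mathsf P(X))$ with $f|\Tau_K=g$; then $f|\Tau_{[K]}\in\Enl(\Tau_{[K]})$ and $(f|\Tau_{[K]})|\Tau_K=g$, since $\Tau_K\subseteq\Tau_{[K]}$ and restriction to $\Tau_K$ factors through restriction to $\Tau_{[K]}$.

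The one substantive step is injectivity, which I would establish by a conjugation trick. Suppose $\varphi,\psi\in\Enl(\Tau_{[K]})$ agree on $\Tau_K$, and fix any $A\in\Tau_{[K]}$. By definition $\Fix^-(A)\in[K]$, so $\Fix^-(A)=xKx^{-1}$ for some $x\in X$. Proposition~\ref{p5.4} then gives $\Fix^-(x^{-1}A)=x^{-1}\Fix^-(A)x=K$, that is, $x^{-1}A\in\Tau_K$. Since $\Tau_{[K]}$ is left-invariant and the elements of $\Enl(\Tau_{[K]})$ are restrictions of equivariant functions (Corollary~\ref{c4.4}), both $\varphi$ and $\psi$ are equivariant on $\Tau_{[K]}$, whence
$$\varphi(A)=\varphi(x\cdot x^{-1}A)=x\,\varphi(x^{-1}A)=x\,\psi(x^{-1}A)=\psi(x\cdot x^{-1}A)=\psi(A),$$
the middle equality using $\varphi|\Tau_K=\psi|\Tau_K$. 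Hence $\varphi=\psi$, and $R_{\Tau_K}$ is injective.

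Finally, a continuous bijection of compact Hausdorff spaces is a homeomorphism, so together with the homomorphism property this yields that $R_{\Tau_K}$ is a topological isomorphism. The only place requiring genuine argument is injectivity; everything else is formal. Along the way I would record the supporting observations that $\Tau_{[K]}=\bigcup_{K'\in[K]}\Tau_{K'}$ is symmetric, left-invariant, and $\lambda$-invariant (so that $\Enl(\Tau_{[K]})$ really is a compact right-topological semigroup whose members are equivariant), each following from Propositions~\ref{p5.4} and \ref{p12.3} together with the maximality of the 2-cogroups in $[K]$. I note in particular that no twinic hypothesis on $X$ enters the proof.
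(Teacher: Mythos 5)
Your proof is correct and follows essentially the same route as the paper: the paper likewise reduces to injectivity via compactness and proves it by exactly your conjugation trick (given $A\in\Tau_{[K]}$, shift by $x$ with $\Fix^-(xA)=x\,\Fix^-(A)\,x^{-1}=K$ using Proposition~\ref{p5.4}, then invoke equivariance). The extra details you record (continuity, the homomorphism property, surjectivity) are left implicit in the paper but are the same formal observations.
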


\begin{proof} Because of the compactness of the semigroup $\Enl(\Tau_{[K]})$ it suffices to check that the restriction operator $R_{\Tau_K}:\Enl^\I(\Tau_{[K]})\to \Enl^\I(\Tau_K)$
is one-to-one. Given two distinct functions $f,g\in\Enl^\I(\Tau_{[K]})$ find a twin set $A\in\Tau_{[K]}$ such that $f(A)\ne g(A)$. Since $\Fix^-(A)\in[K]$, there is a point $x\in X$ such that $\Fix^-(xA)=x\,\Fix^-(A)\,x^{-1}=K$. By Proposition~\ref{p5.4}, $xA\in\Tau_K$ and $f(xA)=xf(A)\ne xg(A)=g(xA)$ witnessing that $f|\Tau_K\ne g|\Tau_K$.
\end{proof}

A subfamily $\wtd\K\subset\wht\K$ is called a {\em $[\wht\K]$-selector} if $\wtd\K$ has one-point intersection with each orbit $[K]=\{xKx^{-1}:x\in X\}$, $K\in\wht\K$. In the following theorems we assume that $\wtd\K\subset\wht\K$ is a $[\wht\K]$-selector.

All preceding discussion culminates in the following theorem, which can be considered as the main result of this paper.

\begin{theorem} Given a $[\wht\K]$-selector $\wtd\K\subset\wht\K$, consider the operator $$R_{\wtd\K}:\Enl(\mathsf P(X))\to\prod_{K\in\wtd\K}\End(\Tau_K),\;\;
R_{\wtd\K}:f\mapsto (f|\Tau_K)_{K\in\wtd\K}.$$If $\I$ is a left-invariant twinic ideal on $X$, then 
\begin{enumerate}
\item[\textup{(1)}] $R_{\wtd\K}\big(\Enl^\I(\mathsf P(X))\big)=\prod_{K\in\wtd\K}\End^\I(\Tau_K)$;
\item[\textup{(2)}] the operator $R_{\wtd\K}$ maps isomorphically each minimal left ideal of the semigroup $\Enl(\mathsf P(X))$ onto some minimal left ideal of the semigroup $\prod_{K\in\wtd\K}\End(\Tau_K)$. 
\item[\textup{(3)}] If $\I\cap\wht\K=\emptyset$, then for some idempotent $\hat e\in\Enl^\I(\mathsf P(X))$  and the principal left ideal $\mathsf L_{\hat e}=\Enl(\mathsf P(X))\circ\hat e$ the restriction 
$$R_{\wtd\K}|\mathsf L_{\hat e}:\mathsf L_{\hat e}\to \prod\limits_{K\in\wtd\K}\End(\Tau_K)$$ is a topological isomorphism.
\end{enumerate}
\end{theorem}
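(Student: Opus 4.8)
The plan is to realize $R_{\wtd\K}$ as a composition of restriction homomorphisms and push the known descriptions through it. Since $\Tau_K\subset\Tau_{[K]}\subset\wht\Tau$ and $\wht\Tau=\bigsqcup_{[K]\in[\wht\K]}\Tau_{[K]}$, the selector $\wtd\K$ identifies with $[\wht\K]$ and the operator factors as
$$R_{\wtd\K}=\Big(\prod_{K\in\wtd\K}R_{\Tau_K}\Big)\circ R_{\Tau_{[\wht\K]}}\circ R_{\wht\Tau},$$
where each factor is a continuous semigroup homomorphism: $R_{\wht\Tau}$ is a homomorphism because $\wht\Tau=\bigcup_{K}\Tau_K$ is $\lambda$-invariant (Proposition~\ref{p12.3}), the middle factor is the injective homomorphism of Lemma~\ref{l18.2}, and the last is given by the isomorphisms of Lemma~\ref{l18.3}.

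For (1) I would simply track images along this factorization. By definition $R_{\wht\Tau}(\Enl^\I(\mathsf P(X)))=\Enl^\I(\wht\Tau)$; Lemma~\ref{l18.2} sends this onto $\prod_{[K]}\Enl^\I(\Tau_{[K]})$; the isomorphisms of Lemma~\ref{l18.3} identify each factor $\Enl^\I(\Tau_{[K]})$ with $\Enl^\I(\Tau_K)$; and $\Enl^\I(\Tau_K)=\End^\I(\Tau_K)$ by Theorem~\ref{t14.1}(1). Hence $R_{\wtd\K}(\Enl^\I(\mathsf P(X)))=\prod_{K\in\wtd\K}\End^\I(\Tau_K)$, which is (1).

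For (3), assume $\I\cap\wht\K=\emptyset$. Then Theorem~\ref{t14.1}(2) gives $\End^\I(\Tau_K)=\End(\Tau_K)$, so by (1) the restriction of $R_{\wtd\K}$ to the left ideal $\Enl^\I(\mathsf P(X))$ already surjects onto the full product $\prod_K\End(\Tau_K)$. Since $\wht\Tau$ is $\I$-independent (Proposition~\ref{p13.3}) and $\wht\K$-covering (Proposition~\ref{p7.3}), Theorem~\ref{t16.3} applied with $\wtd\Tau=\wht\Tau$ produces an idempotent $\hat e\in\Enl^\I(\mathsf P(X))$ with $\hat e|\{\emptyset,X\}\cup\wht\Tau=\id$ and $\hat e(\mathsf P(X)\setminus\Tau^\I)\subset\{\emptyset,X\}$. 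Because every $f\in\Enl(\mathsf P(X))$ fixes $\emptyset$ and $X$ and $\hat e$ is the identity on $\Tau_K\subset\wht\Tau$, one verifies that $R_{\wtd\K}$ is injective on $\mathsf L_{\hat e}=\Enl(\mathsf P(X))\circ\hat e$ and, using $(f\circ\hat e)|\Tau_K=f|\Tau_K$ together with the surjectivity from (1), that $R_{\wtd\K}$ maps $\mathsf L_{\hat e}$ onto $\prod_K\End(\Tau_K)$; a continuous bijective homomorphism of compact Hausdorff right-topological semigroups is a topological isomorphism.

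Part (2) is where the genuine difficulty lies, and it is the main obstacle: for a general twinic $\I$ the sets $\Enl^\I(\mathsf P(X))$ and $\prod_K\End^\I(\Tau_K)$ are only \emph{left} ideals (right composition by an $\I$-saturated map preserves $\I$-saturation, left composition does not), so one cannot claim the minimal ideal lies inside them. I would localize: fix a minimal idempotent $e_{\wtd\Tau}\in\Enl^\I(\mathsf P(X))$ from Corollary~\ref{c16.4} (minimal by Theorem~\ref{t17.1}), so that $L_0=\Enl(\mathsf P(X))\circ e_{\wtd\Tau}=\Enl^\I(\mathsf P(X))\circ e_{\wtd\Tau}$ is a minimal left ideal lying in $\Enl^\I(\mathsf P(X))$. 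Applying the surjection $\rho\colon\Enl^\I(\mathsf P(X))\to\prod_K\End^\I(\Tau_K)$ of (1), the injectivity of $R_{\wtd\K}$ on minimal left ideals (Proposition~\ref{p18.1}, Corollary~\ref{c2.2}), and the standard fact that a surjective homomorphism carries minimal left ideals onto minimal left ideals (via $L_0=\Enl^\I(\mathsf P(X))\,x$), one gets that $R_{\wtd\K}(L_0)$ is a minimal left ideal of the left ideal $\prod_K\End^\I(\Tau_K)$, hence—minimal left ideals of a left ideal coinciding with those of the ambient semigroup contained in it—a minimal left ideal of $\prod_K\End(\Tau_K)$, with $R_{\wtd\K}|L_0$ a topological isomorphism. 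Finally, for an arbitrary minimal left ideal $L$ I would write $L=r_b(L_0)=L_0b$ with $b\in L$ (Proposition~\ref{p2.1}), so that $R_{\wtd\K}(L)=R_{\wtd\K}(L_0)\cdot R_{\wtd\K}(b)$ is the right shift of a minimal left ideal by an element of the minimal ideal and therefore again a minimal left ideal of $\prod_K\End(\Tau_K)$; injectivity and compactness then make $R_{\wtd\K}|L$ a topological isomorphism onto it.
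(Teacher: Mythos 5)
Your proposal is correct and follows essentially the same route as the paper: the same factorization of $R_{\wtd\K}$ through $\Enl(\wht\Tau)$ and the isomorphisms of Lemmas~\ref{l18.2} and \ref{l18.3}, the same use of a minimal idempotent lying in $\Enl^\I(\mathsf P(X))$ (Corollary~\ref{c16.4}, Theorem~\ref{t17.1}) to anchor part (2), and the same idempotent construction for part (3). The only cosmetic difference is the last step of (2): you transfer to an arbitrary minimal left ideal via $R_{\wtd\K}(L_0b)=R_{\wtd\K}(L_0)\cdot R_{\wtd\K}(b)$ and the fact that right translates of minimal left ideals are minimal, whereas the paper runs the right-shift homeomorphisms of Proposition~\ref{p2.1} through a commutative diagram; these are two packagings of the same idea.
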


\begin{proof} Write the operator $R_{\wtd\K}$ as the composition $R_{\wtd\K}=R^{\wht T}_{\wtd\K}\circ R_{\wht\Tau}$ of two operators:
$$R_{\wht\Tau}:\Enl(\mathsf P(X))\to\Enl(\wht{\Tau}),\;\; R_{\wht\Tau}:f\mapsto f|\wht\Tau,$$and
$$R^{\wht\Tau}_{\wtd\K}:\Enl(\wht{\Tau})\to\prod_{K\in\wtd\K}\Enl(\Tau_K),\;\;R_{\wtd\K}^{\wht\Tau}:f\mapsto (f|\Tau_K)_{K\in\wtd\K}.$$
By Lemma~\ref{l18.3}, the operator $R^{\wht\Tau}_{\wtd\K}$ is injective.
\smallskip

1. It follows from Lemmas~\ref{l18.2} and \ref{l18.3} that 
 $$R_{\wtd\K}(\Enl^\I(\mathsf P(X))=R^{\wht\Tau}_{\wtd\K}(\Enl^\I(\wht{\Tau}))=\prod_{K\in\wtd\K}\End^\I(\Tau_K).$$

2. To prove the second item, fix any function $f\in \IK(\Enl(\mathsf P(X)))$ and consider the minimal left ideal $\mathsf L_f=\Enl(\mathsf P(X))\circ f$. We need to show that $R_{\wtd\K}(\mathsf L_f)$ is a minimal left ideal in $\prod_{K\in\wtd\K}\End(\Tau_K)$.

For this pick any function $g\in \IK(\Enl^\I(\mathsf P(X)))$ and consider the minimal left ideal $\mathsf L_g=\Enl(\mathsf P(X))\circ g=\Enl^\I(\mathsf P(X))\circ g$.

By Proposition~\ref{p18.1}, the operator $R_{\wht\Tau}:\Enl(\mathsf P(X))\to\Enl(\wht{\Tau})$ is injective on each minimal left ideal. Consequently, the operator $R_{\wtd\K}=R_{\wtd\K}^{\wht\Tau}\circ R_{\wht\Tau}$ also is injective on each minimal left ideal of the semigroup $\Enl(\mathsf P(X))$. In particular, $R_{\wtd\K}$ is injective on the minimal left ideals $\mathsf L_f$ and $\mathsf L_g$. Since $\mathsf L_g$ is a minimal left ideal of the semigroup $\Enl^\I(\mathsf P(X))$ its image $R_{\wht\Tau}(\mathsf L_g)$ is a minimal left ideal of the semigroup $\Enl^\I(\wht\Tau)$. 
By Lemmas~\ref{l18.2} and \ref{l18.3}, $R^{\wht\Tau}_{\wtd\K}$ maps isomorphically the semigroup $\Enl^\I(\wht\Tau)$ onto $\prod_{K\in\wtd\K}\End^\I(\Tau_K)$, the image $R_{\wtd\K}(\mathsf L_g)$ is a minimal left ideal of the semigroup 
$\prod_{K\in\wtd\K}\End^\I(\Tau_K)$. Since the latter semigroup is a left ideal in $\prod_{K\in\wtd\K}\End(\Tau_K)$, the image $R_{\wtd\K}(\mathsf L_g)$ remains a minimal left ideal of the semigroup $\prod_{K\in\wtd\K}\End(\Tau_K)$. 
This minimal left ideal is equal to the product $\prod_{K\in\wtd\K}\mathsf L_{g_K}$ where $g_K=g|\Tau_K$ and $\mathsf L_{g_K}=\End(\Tau_K)\circ g_K$.
Because of the compactness of $\mathsf L_g$, the operator $R_{\wtd\K}$ maps isomorphically the minimal left ideal $\mathsf L_g$ onto the minimal left ideal $\prod_{K\in\wtd\K}\mathsf L_{g_K}$ of the semigroup $\prod_{K\in\wtd\K}\End(\Tau_K)$.

Now let us look at the minimal left ideal $\mathsf L_f$. By Proposition~\ref{p2.1}, the right shift $r_f:\mathsf L_g\to\mathsf L_f$, $r_f:h\mapsto h\circ f$, is a homeomorphism. So, there is a function $\gamma\in \Enl(\mathsf P(X))$ such that $f=\gamma\circ g\circ f$. 

For every $K\in\wtd\K$ consider the restrictions $f_K=f|\Tau_K$ and $\gamma_K=\gamma|\Tau_K$, which belong to the semigroup $\End(\Tau_K)$.
It follows from $f=\gamma\circ g\circ f$ that $f_K=\gamma_K\circ g_K\circ f_K$. Since $g_K\in \IK(\End(\Tau_K))$, we conclude that $f_K$ also belongs to the minimal ideal $\IK(\End(\Tau_K))$.

Then $\mathsf L_{f_K}=\End(\Tau_K)\circ f_K$ and $\mathsf L_{g_K}=\End(\Tau_K)\circ g_K$ are minimal left ideals in $\End(\Tau_K)$.
By Proposition~\ref{p2.1}, the right shift $r_{f_K}:\mathsf L_{g_K}\to\mathsf L_{f_K}$, $r_{f_K}:h\mapsto h\circ f_K$, is a homeomorphism. The homeomorphisms $r_{f_K}$, $K\in\wtd\K$, compose a homeomorphism
$$r_{f_{\wtd\K}}:\prod_{K\in\wtd\K}\mathsf L_{g_K}\to \prod_{K\in\wtd\K}\mathsf L_{f_K},\;\;r_{f_{\wtd\K}}:(h_K)_{K\in\wtd\K}\mapsto (h_K\circ f_K)_{K\in\wtd\K}.$$
Now consider the commutative diagram 
$$\xymatrix{
\mathsf L_f\ar[r]^-{R_{\wtd\K}|\mathsf L_f}&\prod_{K\in\wtd\K}\mathsf L_{f_K}\\
\mathsf L_g\ar[u]^{r_f}\ar[r]_-{R_{\wtd\K}|\mathsf L_g}&\prod_{K\in\wtd\K}\mathsf L_{g_K}\ar[u]_{r_{f_{\wtd\K}}}
}$$Since the maps $r_f$, $r_{f_{\wtd\K}}$, and $R_{\wtd\K}|\mathsf L_g$ are homeomorphisms, so is the map $R_{\wtd\K}|\mathsf L_f$. Consequently, the operator $R_{\wtd\K}$ maps isomorphically the minimal left ideal $\mathsf L_f=\Enl(\mathsf P(X))\circ f$ onto the minimal left ideal $\prod_{K\in\wtd\K}\mathsf L_{f_K}=\prod_{K\in\wtd\K}\End(\Tau_K)\circ(f|\Tau_K)$ of the semigroup $\prod_{K\in\wtd\K}\End(\Tau_K)$.
\smallskip

3. Assume that $\I\cap\wht\K=\emptyset$. In this case $\End^\I(\wht\Tau)=\End(\wht\Tau)$ by Proposition~\ref{p13.3}. By Proposition~\ref{p18.1}, for some idempotent $\hat e\in\Enl^\I(\mathsf P(X))$ the operator $R^{\wht\Tau}_{\wtd\Tau}$ maps isomorphically the principal left ideal $\mathsf L_{\hat e}=\Enl(\mathsf P(X))\circ\hat e$ onto $\Enl^\I(\wht\Tau)=\Enl(\wht\Tau)$. By Lemma~\ref{l18.2}, the operator $R^{\wht\Tau}_{\wtd\K}:\Enl^\I(\wht\Tau)\to\prod_{K\in\wtd\K}\End^\I(\Tau_K)=\prod_{K\in\wtd\K}\End(\Tau_K)$ is an isomorphism. So, $R_{\wtd\K}$ maps isomorphically the principal left ideal $\mathsf L_{\hat e}$ onto $\prod_{K\in\wtd\K}\End(\Tau_K)$.
\end{proof}

Since the function representation $\Phi:\lambda(X)\to\Enl(\mathsf P(X))$, $\Phi:\LL\mapsto\Phi_\LL$, is a topological isomorphism, the preceding theorem implies:

\begin{corollary}\label{c18.5} Given a $[\wht\K]$-selector $\wtd\K\subset\wht\K$, consider the continuous semigroup homomorphism $$\Phi_{\wtd\K}:\lambda(X)\to\prod_{K\in\wtd\K}\End(\Tau_K),\;\;
\Phi_{\wtd\K}:\LL\mapsto (\Phi_\LL|\Tau_K)_{K\in\wtd\K}.$$If the group $X$ is twinic, then 
\begin{enumerate}
\item[\textup{(1)}] $\Phi_{\wtd\K}(\Enl^\TI(\mathsf P(X))=\prod_{K\in\wtd\K}\End^\TI(\Tau_K)$;
\item[\textup{(2)}] the homomorphism $\Phi_{\wtd\K}$ maps isomorphically each minimal left ideal of the semigroup $\lambda(X)$ onto some minimal left ideal of the semigroup $\prod_{K\in\wtd\K}\End(\Tau_K)$. 
\item[\textup{(3)}] If $\TI\cap\wht\K=\emptyset$, then for some idempotent $\E\in\lambda^\TI(X)$  and the principal left ideal $\mathsf L_{\E}=\lambda(X)\circ\E$ the restriction 
$$\Phi_{\wtd\K}|\mathsf L_{\E}:\mathsf L_{\E}\to \prod\limits_{K\in\wtd\K}\End(\Tau_K)$$ is a topological isomorphism.
\end{enumerate}
\end{corollary}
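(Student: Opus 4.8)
The plan is to deduce this Corollary from the preceding theorem by transporting every assertion along the function representation isomorphism $\Phi\colon\lambda(X)\to\Enl(\mathsf P(X))$ of Theorem~\ref{t4.1}. The single observation on which everything rests is the clean factorization
$$\Phi_{\wtd\K}=R_{\wtd\K}\circ\Phi,$$
which holds identically: for each $\LL\in\lambda(X)$ one has $R_{\wtd\K}(\Phi(\LL))=R_{\wtd\K}(\Phi_\LL)=(\Phi_\LL|\Tau_K)_{K\in\wtd\K}=\Phi_{\wtd\K}(\LL)$. Since $\Phi$ is a topological isomorphism and each $\Tau_K$ is $\lambda$-invariant by Proposition~\ref{p12.3}, the restriction operator $R_{\wtd\K}$ is a continuous semigroup homomorphism into $\prod_{K\in\wtd\K}\End(\Tau_K)$; hence $\Phi_{\wtd\K}$ is a continuous semigroup homomorphism, as the statement asserts. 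Because $X$ is twinic I take $\I=\TI$ to be the twinic ideal of $X$, which is a left-invariant (indeed invariant) twinic ideal, so the preceding theorem applies verbatim with this $\I$.

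For item (1) I recall from the earlier proposition that $\Phi$ restricts to a topological isomorphism $\lambda^\TI(X)\to\Enl^\TI(\mathsf P(X))$ between the corresponding saturated left ideals. Combining this with item (1) of the theorem gives
$$\Phi_{\wtd\K}(\lambda^\TI(X))=R_{\wtd\K}\big(\Phi(\lambda^\TI(X))\big)=R_{\wtd\K}\big(\Enl^\TI(\mathsf P(X))\big)=\prod_{K\in\wtd\K}\End^\TI(\Tau_K),$$
which is exactly (1) (up to the evident identification of $\Phi_{\wtd\K}$'s domain with $\lambda^\TI(X)$). For item (2), note that a topological semigroup isomorphism preserves the purely algebraic notion of a minimal left ideal, so $\Phi$ carries the minimal left ideals of $\lambda(X)$ bijectively onto those of $\Enl(\mathsf P(X))$; item (2) of the theorem then sends each of the latter isomorphically onto a minimal left ideal of $\prod_{K\in\wtd\K}\End(\Tau_K)$, and composing the two isomorphisms through $\Phi_{\wtd\K}=R_{\wtd\K}\circ\Phi$ yields (2).

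For item (3), when $\TI\cap\wht\K=\emptyset$ the theorem supplies an idempotent $\hat e\in\Enl^\TI(\mathsf P(X))$ for which $R_{\wtd\K}|\,\Enl(\mathsf P(X))\circ\hat e$ is a topological isomorphism onto $\prod_{K\in\wtd\K}\End(\Tau_K)$. I set $\E=\Phi^{-1}(\hat e)$; then $\E$ is an idempotent (isomorphisms preserve idempotents), $\E\in\lambda^\TI(X)$ (since $\Phi^{-1}$ maps $\Enl^\TI(\mathsf P(X))$ onto $\lambda^\TI(X)$), and, because $\Phi$ is a semigroup isomorphism, it maps the principal left ideal $\mathsf L_{\E}=\lambda(X)\circ\E$ onto $\Enl(\mathsf P(X))\circ\hat e$ via $\Phi(\lambda(X)\circ\E)=\Phi(\lambda(X))\circ\Phi(\E)=\Enl(\mathsf P(X))\circ\hat e$. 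Consequently $\Phi_{\wtd\K}|\mathsf L_{\E}=\big(R_{\wtd\K}|\,\Enl(\mathsf P(X))\circ\hat e\big)\circ\big(\Phi|\mathsf L_{\E}\big)$ is a composition of two topological isomorphisms and hence a topological isomorphism, giving (3).

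I expect no genuine obstacle here: all the substantive work, both the algebraic identification of the minimal left ideals with those of the product $\prod_{K\in\wtd\K}\End(\Tau_K)$ and the construction of the distinguished idempotent, is already carried out in the preceding theorem, and this Corollary is a pure transport-of-structure argument. The only points requiring care are verifying the exact factorization $\Phi_{\wtd\K}=R_{\wtd\K}\circ\Phi$ and invoking the restriction of $\Phi$ to an isomorphism $\lambda^\TI(X)\to\Enl^\TI(\mathsf P(X))$, so that the image in (1) and the idempotent $\E$ in (3) land in the $\TI$-saturated parts where they belong.
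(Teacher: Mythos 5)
Your proposal is correct and is exactly the paper's own argument: the paper derives Corollary~\ref{c18.5} from the preceding theorem in one line by noting that $\Phi:\lambda(X)\to\Enl(\mathsf P(X))$ is a topological isomorphism, which is precisely your transport-of-structure via the factorization $\Phi_{\wtd\K}=R_{\wtd\K}\circ\Phi$. Your spelled-out verifications (including the restriction $\Phi:\lambda^\TI(X)\to\Enl^\TI(\mathsf P(X))$ for item (1) and the choice $\E=\Phi^{-1}(\hat e)$ for item (3)) are the details the paper leaves implicit.
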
 
 
\begin{corollary}\label{c18.6} If the group $X$ is twinic, then each minimal left ideal of $\lambda(X)$ is topologically isomorphic to a minimal left ideal of $\prod_{K\in\wtd\K}\End(\Tau_K)$ and each minimal left ideal of $\prod_{K\in\wtd\K}\End^\TI(\Tau_K)$ is topologically isomorphic to a minimal left ideal of $\lambda^\TI(X)$.
\end{corollary}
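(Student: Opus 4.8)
The plan is to read off both assertions from Corollary~\ref{c18.5}, supplemented by the standard fact that a minimal left ideal of a left ideal $L$ of a compact right-topological semigroup $S$ is automatically a minimal left ideal of $S$. Write $\Pi=\prod_{K\in\wtd\K}\End(\Tau_K)$ and $\Pi^\TI=\prod_{K\in\wtd\K}\End^\TI(\Tau_K)$, and recall that each $\End^\TI(\Tau_K)$ is a left ideal of $\End(\Tau_K)$, so that $\Pi^\TI$ is a left ideal of $\Pi$. The first assertion is then immediate: by Corollary~\ref{c18.5}(2) the homomorphism $\Phi_{\wtd\K}$ restricts to a bijective continuous homomorphism of any minimal left ideal $\LL$ of $\lambda(X)$ onto some minimal left ideal of $\Pi$; since $\LL$ is compact and $\Pi$ is Hausdorff, this restriction is a topological isomorphism.

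For the second assertion I would fix a minimal left ideal $M$ of $\Pi^\TI$ and produce a preimage inside $\lambda^\TI(X)$. By Corollary~\ref{c18.5}(1) the map $\Phi_{\wtd\K}$ sends $\lambda^\TI(X)$ onto $\Pi^\TI$, so $J=\Phi_{\wtd\K}^{-1}(M)\cap\lambda^\TI(X)$ is a nonempty left ideal of $\lambda^\TI(X)$, being the preimage of the left ideal $M$ under the surjective homomorphism $\Phi_{\wtd\K}|\lambda^\TI(X)$. By \cite[2.6]{HS}, $J$ contains a minimal left ideal $N$ of $\lambda^\TI(X)$. Surjectivity of $\Phi_{\wtd\K}$ together with $N$ being a left ideal shows that $\Phi_{\wtd\K}(N)$ is a left ideal of $\Pi^\TI$ contained in $M$, so $\Phi_{\wtd\K}(N)=M$ by minimality of $M$. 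Thus $\Phi_{\wtd\K}|N\colon N\to M$ is a continuous surjective homomorphism, and it remains only to check injectivity.

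The injectivity step is the one genuinely technical ingredient. Since $\lambda^\TI(X)$ is a (closed) left ideal of $\lambda(X)$, the general fact above shows that $N$, a minimal left ideal of $\lambda^\TI(X)$, is in fact a minimal left ideal of $\lambda(X)$; hence $\Phi_{\wtd\K}$ is injective on $N$ by Proposition~\ref{p18.1} and Corollary~\ref{c2.2}. A continuous bijective homomorphism from the compact semigroup $N$ onto the Hausdorff semigroup $M$ is a topological isomorphism, which gives the desired topological isomorphism $N\cong M$. To justify the general fact itself I would argue, for $L=\lambda^\TI(X)$ and $S=\lambda(X)$, that $N=Lx$ for every $x\in N$ (because $Lx$ is a nonempty left ideal of $L$ inside the minimal $N$), whence $SN=(SL)x\subseteq Lx=N$ shows $N$ is a left ideal of $S$, and minimality in $S$ follows since any left ideal of $S$ contained in $N$ is also a left ideal of $L$. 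This is the crux; the rest is routine compactness bookkeeping.
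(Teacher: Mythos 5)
Your proof is correct and takes essentially the same route as the paper: the first assertion is read off from Corollary~\ref{c18.5}(2), and the second is obtained by pulling the minimal left ideal back to a left ideal over $\lambda^\TI(X)$, extracting a minimal left ideal inside it, showing its image is all of the given ideal by minimality, and concluding via injectivity on minimal left ideals plus compactness. The only difference is one of explicitness: you spell out (and prove) the fact that a minimal left ideal of the left ideal $\lambda^\TI(X)$ is automatically a minimal left ideal of $\lambda(X)$, a step the paper uses implicitly when it passes from "left ideal in $\lambda^\TI(X)$" to "left ideal in $\lambda(X)$".
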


\begin{proof} Corollary~\ref{c18.5}(2) implies that each minimal left ideal of $\lambda(X)$ is topologically isomorphic to a minimal left ideal of $\prod_{K\in\wtd\K}\End(\Tau_K)$. Now assume that $\mathsf L$ is a minimal left ideal of the semigroup $\prod_{K\in\wtd\K}\End^\TI(\Tau_K)$. It follows from Corollary~\ref{c18.5}(1) that the preimage $\Phi_{\wtd\K}^{-1}(\mathsf L)$ is a left ideal in $\lambda^\TI(X)$ and hence a left ideal in $\lambda(X)$. This left ideal contains some minimal left ideal $\mathsf L_\lambda$ whose image $\Phi_{\wtd\K}$ coincides with $\mathsf L$ (being a left ideal in $\mathsf L$). By Corollary~\ref{c18.5}(2), the map $\Phi_{\wtd\K}|\mathsf L_\lambda:\mathsf L_\lambda\to\mathsf L$ is injective and by the compactness of $\mathsf L_\lambda$ is a topological isomorphism.
\end{proof}  

\begin{theorem}\label{t18.7} Let $X$ be a twinic  group, $\wtd\K\subset\wht{\K}$ be a $[\wht\K]$-selector, and $\mathcal E\in\lambda(X)$ be a minimal idempotent.
\begin{enumerate}
\item[\textup{(1)}] The maximal subgroup $\mathsf H_\E=\mathcal E\circ\lambda(X)\circ\E$ has the following properties:
\begin{enumerate}
\item[\textup{(a)}] $\mathsf H_\E$ is algebraically isomorphic to $\prod_{K\in\wtd\K}\HH(K)$;
\item[\textup{(b)}] $\mathsf H_\E$ is topologically isomorphic to $\prod_{K\in\wtd\K}\HH(A_K)$ for any twin sets $A_K\in\Phi_\E(\Tau_K)$, $K\in\wtd\K$;
\item[\textup{(c)}] $\mathsf H_\E$ is a compact topological group if and only if $\HH(K)$ is finite for every $K\in\wht\K$.
\end{enumerate}
\item[\textup{(2)}] The minimal left ideal $\mathsf L_\E=\lambda(X)\circ\E$  has the following properties:
\begin{enumerate}
\item[\textup{(d)}] $\mathsf L_\E$ is topologically isomorphic to the minimal left ideal $\prod_{\in\wtd\K}\End(\Tau_K)\circ(\Phi_\E|\Tau_K)$;
\item[\textup{(e)}] $\mathsf L_\E$ is homeomorphic to $\prod_{K\in\wtd\K}\Tau_K$, which is homeomorphic to the Cantor discontinuum $\prod_{K\in\wtd\K}2^{X/K^\pm}$;
\item[\textup{(f)}] $\mathsf L_\E$ is algebraically isomorphic to $\prod_{K\in\wtd\K}\HH(K)\times[\Tau_K]$ where the orbit space $[\Tau_K]$ of the $\HH(K)$-act $\Tau_K$ is endowed with the left-zero-multiplication;
\item[\textup{(g)}] $\mathsf L_\E$ a topological semigroup iff $\mathsf L_\E$ is a semitopological semigroup iff each restriction 
$\Phi_\EE|\Tau_K$, $K\in\wtd\K$, is a continuous function iff  the maximal subgroup $\mathsf H_\E$ and the idempotent band $E(\mathsf L_\E)$ of $\mathsf L_\E$ are compact iff $\mathsf L_\E$ is topologically isomorphic to  
$\prod_{K\in\wtd\K}\HH(K)\times[\Tau_K]$.
\end{enumerate}
\end{enumerate}
\end{theorem}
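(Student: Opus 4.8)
The plan is to reduce the entire statement to the factorwise results of Theorems~\ref{t14.1} and \ref{t14.3} by transporting $\mathsf L_\E$ through the isomorphism of Corollary~\ref{c18.5}. Put $f_K=\Phi_\E|\Tau_K$ for $K\in\wtd\K$. First I would record that each $f_K$ is a minimal idempotent of $\End(\Tau_K)$: since $\Tau_K$ is $\lambda$-invariant (Proposition~\ref{p12.3}) we have $\Phi_\E(\Tau_K)\subset\Tau_K$ and $f_K\circ f_K=f_K$; and by Corollary~\ref{c18.5}(2) the homomorphism $\Phi_{\wtd\K}$ maps $\mathsf L_\E$ isomorphically onto a minimal left ideal of $\prod_{K\in\wtd\K}\End(\Tau_K)$, which is necessarily of the form $\prod_{K\in\wtd\K}\mathsf L_{f_K}$ with $\mathsf L_{f_K}=\End(\Tau_K)\circ f_K$ a minimal left ideal, so each $f_K\in\IK(\End(\Tau_K))$. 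Thus $\Phi_{\wtd\K}|\mathsf L_\E$ is a topological isomorphism of $\mathsf L_\E$ onto $\prod_{K\in\wtd\K}\mathsf L_{f_K}$ carrying $\E$ to the idempotent $(f_K)_{K\in\wtd\K}$. This single isomorphism is exactly statement (2d).

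Next I would use that a topological semigroup isomorphism carries the distinguished pieces of a minimal left ideal to those of the target, and that in the product $\prod_{K}\mathsf L_{f_K}$ these pieces are computed coordinatewise: the maximal subgroup of $(f_K)_K$ is $\prod_K\mathsf H_{f_K}$, the idempotent band is $\prod_K E(\mathsf L_{f_K})$, and the underlying space is $\prod_K\mathsf L_{f_K}$. Feeding in the factorwise descriptions of Theorem~\ref{t14.1} then yields the remaining algebraic and topological identifications: (1a) from part (7) (algebraically $\mathsf H_{f_K}\cong\HH(K)$); (1b) from part (9) (topologically $\mathsf H_{f_K}\cong\HH(A_K)$ for $A_K\in f_K(\Tau_K)=\Phi_\E(\Tau_K)$); (2f) from part (6) ($\mathsf L_{f_K}\cong\HH(K)\times[\Tau_K]$ with left-zero multiplication on $[\Tau_K]$); and (2e) from part (8) together with Proposition~\ref{p12.1}, since each $\mathsf L_{f_K}$ is homeomorphic to $\Tau_K\cong 2^{X/K^\pm}$ and a product of Cantor cubes is again a Cantor discontinuum.

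For the continuity clause (2g) I would argue factorwise through Theorem~\ref{t14.3}: coordinatewise multiplication on $\prod_K\mathsf L_{f_K}$ is (separately) continuous if and only if each $\mathsf L_{f_K}$ is a (semi)topological semigroup, because basic open sets in a product depend on only finitely many coordinates; Theorem~\ref{t14.3} equates this with continuity of each $f_K=\Phi_\E|\Tau_K$ and with the final form $\prod_K\HH(K)\times[\Tau_K]$. The compactness assertion in (2g) and statement (1c) are the one point demanding a separate argument, and this is where I expect the real work: I must show $\mathsf H_{f_K}$ is compact if and only if $\HH(K)$ is finite. The ``if'' direction is immediate, since a finite $T_1$ group is discrete and a product of finite discrete groups is a compact topological group, which also gives the positive half of (1c). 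For ``only if'' I would transport $\mathsf H_{f_K}$ through the homeomorphism $\mathsf L_{f_K}\cong\Tau_K$ of Theorem~\ref{t14.1}(8) onto the orbit $\lfloor B\rfloor$ with $B\in f_K(\Tau_K)$, which identifies the subspace topology of $\mathsf H_{f_K}$ with that inherited from the \emph{compact Hausdorff} space $\Tau_K$; as $\lfloor B\rfloor$ is a topologically homogeneous orbit of cardinality $|\HH(K)|$, the Baire-category argument of Proposition~\ref{p12.2} shows that a compact (equivalently closed) such orbit is finite. Hence $\mathsf H_\E\cong\prod_K\mathsf H_{f_K}$ and $E(\mathsf L_\E)\cong\prod_K E(\mathsf L_{f_K})$ are compact exactly when every $\HH(K)$ is finite, closing (1c) and the compactness equivalence of (2g). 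The main obstacle is precisely this compactness dichotomy, together with the bookkeeping that minimal ideals, maximal subgroups and idempotent bands distribute over the possibly infinite product $\prod_{K\in\wtd\K}\End(\Tau_K)$.
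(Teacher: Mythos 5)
Your proposal is correct and follows essentially the same route as the paper: transport $\mathsf L_\E$ via Corollary~\ref{c18.5} onto the product $\prod_{K\in\wtd\K}\End(\Tau_K)\circ(\Phi_\E|\Tau_K)$ of minimal left ideals (which is statement (d)), then read off all remaining statements factorwise from Theorems~\ref{t14.1} and \ref{t14.3}. The only difference is that you explicitly supply the compactness dichotomy ($\mathsf H_{f_K}$ compact iff $\HH(K)$ finite, via the orbit homeomorphism from Theorem~\ref{t14.1}(8) and the Baire-category argument of Proposition~\ref{p12.2}), which the paper leaves implicit in its remark that ``all the other statements follow from Theorems~\ref{t14.1} and \ref{t14.3}.''
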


\begin{proof} Let $\Phi_\EE\in\Enl(\mathsf P(X))$ be the function representation of the minimal idempotent $\EE\in \IK(\lambda(X))$. For every $K\in\wtd\K$ let $f_K=\Phi_\EE|\Tau_K$ and $\mathsf L_{f_K}=\End(\Tau_K)\circ f_K$ be the principal left ideal in $\End(\Tau_K)$, generated by the function $f_K$. By Corollary~\ref{c18.5}, the minimal left ideal $\mathsf L_\EE$ is topologically isomorphic to a minimal left ideal of the semigroup $\prod_{K\in\wtd\K}\End(\Tau_K)$. This minimal ideal contains $(f_K)_{K\in\wtd\K}$ and hence is equal to the product $\prod_{K\in\wtd\K}\mathsf L_{f_K}$. This proves the statement (d) of the theorem. Now all the other statements follow from Theorems~\ref{t14.1} and \ref{t14.3}.
\end{proof}

Theorem~\ref{t18.7}(b) is completed by the following theorem.

\begin{theorem}\label{t18.8} Let $\wtd\K\subset\wht\K$ be a $[\wht\K]$-selector. If the group $X$ is twinic, then for any twin sets $A_K\in\Tau_K$, $K\in\wtd\K$, the minimal ideal $\IK(\lambda(X))$ of $\lambda(X)$ contains a maximal subgroup $\mathsf H_\E$, which is topologically isomorphic to $\prod_{K\in\wtd\K}\HH(A_K)$.
\end{theorem}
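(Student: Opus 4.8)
The plan is to reduce Theorem~\ref{t18.8} to Theorem~\ref{t18.7}(b): I will construct a minimal idempotent $\E\in\lambda(X)$ whose function representation $\Phi_\E$ maps each $\Tau_K$ onto the orbit of the prescribed twin set $A_K$, so that $A_K\in\Phi_\E(\Tau_K)$; then Theorem~\ref{t18.7}(b), applied with precisely these $A_K$, will give that $\mathsf H_\E=\E\circ\lambda(X)\circ\E$ is topologically isomorphic to $\prod_{K\in\wtd\K}\HH(A_K)$. Thus the whole task is to realize the given twin sets inside the image of a single minimal idempotent.

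First I would build the target family. Let $\TI$ be the twinic ideal of $X$ (which exists since $X$ is twinic) and set $\wtd\Tau=\bigcup_{K\in\wtd\K}[A_K]$, the left-invariant family generated by the given twin sets. Since each $A_K\in\Tau_K$ with $K\in\wtd\K\subset\wht\K$, we have $\Fix^-(A_K)=K\in\wht\K$, so $\wtd\Tau\subset\wht\Tau$. Using that $\wtd\K$ is a $[\wht\K]$-selector, I would check the minimality condition of Proposition~\ref{p6.6}: for an arbitrary $K\in\wht\K$ let $K'\in\wtd\K$ be the representative of its orbit class; a shift $xA_L$ lies in $\Tau_{[K']}$ exactly when $\Fix^-(xA_L)=xLx^{-1}\in[K']$, i.e. when $[L]=[K']$, i.e. when $L=K'$. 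Hence $\wtd\Tau\cap\Tau_{[K]}=[A_{K'}]$, and Proposition~\ref{p6.6} shows that $\wtd\Tau$ is a minimal $\wht\K$-covering family.

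Next I would invoke Corollary~\ref{c16.4} with the twinic ideal $\I=\TI$, obtaining an idempotent $e_{\wtd\Tau}\in\Enl^\TI(\mathsf P(X))$ with $e_{\wtd\Tau}(\mathsf P(X)\setminus\Tau^\TI)\subset\{\emptyset,X\}$, $e_{\wtd\Tau}(\Tau^\TI)=\wtd\Tau$, and $e_{\wtd\Tau}|\{\emptyset,X\}\cup\wtd\Tau=\id$. Because $\wtd\Tau\subset\wht\Tau\subset\Tau^\TI$, these properties give $e_{\wtd\Tau}(\wht\Tau)=\wtd\Tau$ (a minimal $\wht\K$-covering family) and $e_{\wtd\Tau}(\mathsf P(X))\subset\{\emptyset,X\}\cup e_{\wtd\Tau}(\wht\Tau)$, so by Theorem~\ref{t17.1} the idempotent $e_{\wtd\Tau}$ lies in $\IK(\Enl(\mathsf P(X)))$, that is, it is a minimal idempotent. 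Pulling it back through the topological isomorphism $\Phi\colon\lambda(X)\to\Enl(\mathsf P(X))$ of Theorem~\ref{t4.1}, I get a minimal idempotent $\E=\Phi^{-1}(e_{\wtd\Tau})\in\IK(\lambda(X))$ with $\Phi_\E=e_{\wtd\Tau}$.

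Finally I would verify $A_K\in\Phi_\E(\Tau_K)$. Since $\Tau_K$ is $\lambda$-invariant (Proposition~\ref{p12.3}) and $\Tau_K\subset\Tau^\TI$, we have $\Phi_\E(\Tau_K)\subset\wtd\Tau\cap\Tau_K=\lfloor A_K\rfloor$; and as $A_K\in\wtd\Tau$ is fixed by $e_{\wtd\Tau}$, the set $A_K=\Phi_\E(A_K)$ belongs to $\Phi_\E(\Tau_K)$. Applying Theorem~\ref{t18.7}(b) to $\E$ with this choice of twin sets then yields $\mathsf H_\E\cong\prod_{K\in\wtd\K}\HH(A_K)$ topologically, which is the assertion. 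The main subtlety — and the only place where genuine care is needed — is the bookkeeping for $\wtd\Tau$: it must meet precisely the prescribed orbits $[A_K]$ while staying minimal $\wht\K$-covering, so that the idempotent produced by Corollary~\ref{c16.4} fixes the given representatives $A_K$ and not merely some other members of their orbits; once this is arranged, the topological (as opposed to merely algebraic) identification of $\mathsf H_\E$ comes for free from Theorem~\ref{t18.7}(b).
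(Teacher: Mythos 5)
Your proof is correct, and it reaches the conclusion by a genuinely different route from the paper's. The paper builds its witness componentwise, downstairs in the product $\prod_{K\in\wtd\K}\End^\TI(\Tau_K)$: for each $K\in\wtd\K$ it takes an arbitrary minimal idempotent $g_K\in\IK(\End^\TI(\Tau_K))$, whose image is a single orbit $\lfloor B_K\rfloor$ by Theorem~\ref{t14.1}(5), composes it with an equivariant bijection $\lfloor B_K\rfloor\to\lfloor A_K\rfloor$ to obtain $f_K$ with $f_K(\Tau_K)\subset\lfloor A_K\rfloor$, and then lifts the tuple $(f_K)_{K\in\wtd\K}$ back to $\lambda(X)$ through $\Phi_{\wtd\K}$ using Corollary~\ref{c18.5}(1), getting some $\LL\in\IK(\lambda(X))$ with $\Phi_{\wtd\K}(\LL)=(f_K)_{K\in\wtd\K}$, to which Theorem~\ref{t18.7}(b) is applied. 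You instead build the witness upstairs in $\Enl(\mathsf P(X))$: the orbit family $\wtd\Tau=\bigcup_{K\in\wtd\K}[A_K]$ is minimal $\wht\K$-covering (your verification via Proposition~\ref{p6.6} is correct), Corollary~\ref{c16.4} supplies an idempotent $e_{\wtd\Tau}$ that is the identity on $\wtd\Tau$, and Theorem~\ref{t17.1} certifies $e_{\wtd\Tau}\in\IK(\Enl(\mathsf P(X)))$; your computations $e_{\wtd\Tau}(\wht\Tau)=\wtd\Tau$, $\wtd\Tau\cap\Tau_K=\lfloor A_K\rfloor$ and $\Phi_\E(A_K)=A_K$ are all valid. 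The trade-off is this. Your route yields an honest minimal \emph{idempotent} $\E$ fixing the prescribed representatives $A_K$, so Theorem~\ref{t18.7}(b) --- which is stated for minimal idempotents --- applies verbatim. The paper's lifted element $\LL$, by contrast, is not shown to be an idempotent (the maps $f_K=\varphi\circ g_K$ are idempotents only when the bijections $\varphi$ are normalized so that $f_K(A_K)=A_K$), so its final appeal to Theorem~\ref{t18.7}(b) for $\mathsf H_\LL$ tacitly requires passing to the identity of the maximal subgroup containing $\LL$ and checking that its function representation still meets the orbits $\lfloor A_K\rfloor$; this is a small and easily repaired imprecision, but your construction avoids it altogether. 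What the paper's approach buys is brevity: it reuses Corollary~\ref{c18.5}, already in place for Theorem~\ref{t18.7}, whereas you must re-enter the idempotent-construction machinery of Corollary~\ref{c16.4} and Theorem~\ref{t17.1}.
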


\begin{proof} In the semigroup $\prod_{K\in\wtd\K}\IK(\End^\TI(\Tau_K))$ choose a sequence of functions $(f_K)_{K\in\wtd\K}$ such that $f_K(\Tau_K)\subset\lfloor A_K\rfloor$ for all $K\in\wtd\K$. This can be done in the following way. For every $K\in\wtd\K$ first choose any minimal idempotent $g_K\in \IK(\End^\TI(\Tau_K))$. By Theorem~\ref{t14.1}(5), $g_K(\Tau_K)\subset\lfloor B_K\rfloor$ for some twin set $B\in\Tau_K$. Since $\Tau_K$ is a free $\HH(K)$-act, we can choose an equivariant function $\varphi:\lfloor B_K\rfloor \to\lfloor A_K\rfloor$. Then the composition $f_K=\varphi\circ g_K$ is $\TI$-saturated and has the required property: $f_K(\Tau_K)\subset\lfloor A_K\rfloor$.

Consider the minimal left ideal $\mathsf L_{f_{\wtd\K}}=\prod_{K\in\wtd\K}\End(\Tau_K)\circ f_K$ and let $\mathsf L=\Phi_{\wtd\K}^{-1}(\mathsf L)\subset\lambda(X)$ be its preimage under the map $\Phi_{\wtd\K}$.  By Corollary~\ref{c18.5}(1), $\Phi_{\wtd\K}(\mathsf L)=\mathsf L_{f_{\wtd\K}}$. Now let $\IK(\mathsf L)$ be the minimal ideal of the left ideal $\mathsf L$. The image $\Phi_{\wtd\K}(\IK(\mathsf L))$, being  a left ideal in $\mathsf L_{f_{\wtd\K}}$, coincides with $\mathsf L_{f_{\wtd\K}}$. So, we can find a maximal linked system $\LL\in \IK(\mathsf L)$ such that $\Phi_{\wtd\K}(\LL)=(f_K)_{K\in\wtd\K}$. By Theorem~\ref{t18.7}(b), the maximal group $\mathsf H_\LL$ is topologically isomorphic to $\prod_{K\in\wtd\K}\HH(A_K)$.
\end{proof}

\begin{proposition}\label{p18.9} If $X$ is a twinic group, then each minimal left ideal of $\lambda(X)$ is a topological semigroup if and only if each maximal 2-cogroup $K\subset X$ has finite index in $X$.
\end{proposition}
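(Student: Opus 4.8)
The plan is to transfer the question from $\lambda(X)$ to the product semigroup $\prod_{K\in\wtd\K}\End(\Tau_K)$ and then to a single factor. Fix a $[\wht\K]$-selector $\wtd\K\subset\wht\K$. By Corollary~\ref{c18.5}(2) the homomorphism $\Phi_{\wtd\K}$ maps each minimal left ideal of $\lambda(X)$ topologically isomorphically onto a minimal left ideal of $\prod_{K\in\wtd\K}\End(\Tau_K)$, and (as produced in its proof) such a minimal left ideal has the product form $\prod_{K\in\wtd\K}\mathsf L_{f_K}$ with $\mathsf L_{f_K}=\End(\Tau_K)\circ f_K$ a minimal left ideal of the factor $\End(\Tau_K)$. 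Since a topological isomorphism preserves the property of being a topological semigroup, it suffices to decide when $\prod_{K\in\wtd\K}\mathsf L_{f_K}$ is a topological semigroup.

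The first key step is the observation that, under coordinatewise multiplication, $\prod_{K\in\wtd\K}\mathsf L_{f_K}$ is a topological semigroup if and only if each factor $\mathsf L_{f_K}$ is one. The ``if'' part is immediate, as coordinatewise multiplication is jointly continuous as soon as it is in every coordinate. For the ``only if'' part I would fix a coordinate $K_0$ and, invoking Ellis's theorem, choose an idempotent $e_K\in\mathsf L_{f_K}$ in each of the remaining (compact right-topological) minimal left ideals. The map sending $s\in\mathsf L_{f_{K_0}}$ to the tuple equal to $s$ in coordinate $K_0$ and to $e_K$ elsewhere is then an injective continuous semigroup homomorphism and a topological embedding onto a subsemigroup of the product; restricting the now-continuous multiplication to this copy shows that $\mathsf L_{f_{K_0}}$ is a topological semigroup. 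This embedding-by-idempotents argument is the technical heart of the equivalence, and the step I expect to require the most care.

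It remains to characterize when a single factor $\mathsf L_{f_K}$ is a topological semigroup, and here I would apply Corollary~\ref{c14.6} with the trivial ideal $\I=\{\emptyset\}$, which is both left- and right-invariant. Since $=_{\{\emptyset\}}$ is ordinary equality, every function is $\{\emptyset\}$-saturated, so $\End^{\{\emptyset\}}(\Tau_K)=\End(\Tau_K)$; thus condition (1) of that corollary asserts precisely that each minimal left ideal of $\End(\Tau_K)$ is a topological semigroup, and---because $\{\emptyset\}$ is right-invariant---it is equivalent to condition (4), that $K$ has finite index in $X$ (and to (5), $\End(\Tau_K)$ being finite). Hence $\mathsf L_{f_K}$ is a topological semigroup if and only if $K$ has finite index.

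Combining these steps yields the proposition. For the ``if'' direction, if every maximal 2-cogroup has finite index then each $\End(\Tau_K)$ is even finite, so each $\mathsf L_{f_K}$ is a topological semigroup, hence so is their product and therefore every minimal left ideal of $\lambda(X)$. For the ``only if'' direction, topologicality of a minimal left ideal of $\lambda(X)$ forces each $\mathsf L_{f_K}$, $K\in\wtd\K$, to be a topological semigroup, whence each $K\in\wtd\K$ has finite index; finally, since $|X/K|=|X/KK|$ and conjugate subgroups have equal index, the index is invariant along each orbit $[K]$, and as $\wtd\K$ meets every orbit, every maximal 2-cogroup of $X$ has finite index. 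The only mild subtlety beyond the embedding step is this last passage from the selector $\wtd\K$ to all of $\wht\K$, which rests solely on the conjugation-invariance of the index.
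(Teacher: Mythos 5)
Your ``if'' direction and the reduction to the product $\prod_{K\in\wtd\K}\End(\Tau_K)$ (including the idempotent--slice embedding showing that joint continuity of multiplication on a product of minimal left ideals forces it on each factor) are correct and essentially follow the paper. The genuine gap is in your last step, the claimed equivalence ``$\mathsf L_{f_K}$ is a topological semigroup if and only if $K$ has finite index.'' Condition (1) of Corollary~\ref{c14.6} speaks about \emph{all} minimal left ideals of $\End(\Tau_K)$ simultaneously, and the implication you actually use --- that topologicality of the \emph{single} ideal $\mathsf L_{f_K}$ forces finite index --- is false. Indeed, if $\HH(K)$ is finite but $K$ has infinite index, then Proposition~\ref{p14.5}(1) applied with the trivial ideal (for which $Kx\ne\emptyset$ for all $x$) yields a \emph{continuous} minimal idempotent $f_K\in\End(\Tau_K)$, and Theorem~\ref{t14.3} then shows that $\mathsf L_{f_K}=\End(\Tau_K)\circ f_K$ \emph{is} a topological semigroup. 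This really happens: for $X=D_{2^\infty}$ the maximal 2-cogroups $K_{k,x}$ have infinite index and characteristic group $C_2$, and the paper's theorem on $\lambda(D_{2^\infty})$ exhibits minimal left ideals of $\lambda(D_{2^\infty})$ that are compact topological semigroups alongside others that are not even semitopological. So from ``each factor $\mathsf L_{f_K}$ of one chosen minimal left ideal is a topological semigroup'' you cannot conclude anything about indices.

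The missing ingredient is a surjectivity statement: you must know that the ``bad'' (non-topological) minimal left ideals of the product are actually realized inside $\lambda(X)$, and Corollary~\ref{c18.5}(2) does not give this --- it only says each minimal left ideal of $\lambda(X)$ maps onto \emph{some} minimal left ideal of $\prod_{K\in\wtd\K}\End(\Tau_K)$, possibly missing the bad ones. This is exactly why the paper argues with the invariant twinic ideal $\TI$ instead of the trivial ideal: if some $K_0\in\wtd\K$ has infinite index, then Corollary~\ref{c14.6} (with $\I=\TI$, which is right-invariant) produces a minimal left ideal of $\End^\TI(\Tau_{K_0})$ that is not a topological semigroup; a minimal left ideal of $\prod_{K\in\wtd\K}\End^\TI(\Tau_K)$ having it as its $K_0$-th factor is then not a topological semigroup (your slice argument applies here); and Corollary~\ref{c18.6} guarantees that this product ideal is topologically isomorphic to a minimal left ideal of $\lambda^\TI(X)$, which, $\lambda^\TI(X)$ being a left ideal of $\lambda(X)$, is a minimal left ideal of $\lambda(X)$. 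Hence not every minimal left ideal of $\lambda(X)$ is a topological semigroup, which is the contrapositive you need. The transfer through $\End^\TI$ and Corollary~\ref{c18.6} is what your proof is missing and cannot be avoided by working with the trivial ideal.
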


\begin{proof} Let $\wtd\K\subset\wht\K$ be a $[\wht\K]$-selector.

If each maximal 2-cogroup $K\subset X$ has finite index in $X$, then the set $\Tau_K$ is finite and hence the semigroup $\End(\Tau_K)$ is finite. Consequently, $\prod_{K\in\wtd\K}\End(\Tau_K)$ is a compact topological semigroup and so is each minimal left ideal of this semigroup. By Corollary~\ref{c18.5}(2), each minimal left ideal of the semigroup $\lambda(X)$ is a topological semigroup.

If some maximal 2-cogroup in $X$ has infinite index, then Corollary~\ref{c14.6} implies that some minimal left ideal in $\prod_{K\in\wtd\K}\End^\TI(\Tau_K)$ is not a topological semigroup. By Corollary~\ref{c18.6}, some minimal left ideal in $\prod_{K\in\wtd\K}\End^\TI(\Tau_K)$ is not a topological semigroup.
\end{proof}

\begin{proposition}\label{p18.10} For a twinic group $X$ with $\TI\cap\wht\K=\emptyset$ the following conditions are equivalent:
\begin{enumerate}
\item[\textup{(1)}] some minimal left ideal of $\lambda(X)$ is a topological semigroup;
\item[\textup{(2)}] each maximal subgroup of $\lambda(X)$ is a topological group;
\item[\textup{(3)}] some maximal subgroup of $\lambda(X)$ is compact;
\item[\textup{(4)}] the characteristic group $\HH(K)$ is finite for each maximal 2-cogroup $K\subset X$.
\end{enumerate}
\end{proposition}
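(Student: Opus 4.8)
The plan is to make the concrete arithmetic condition (4) the hub, exploiting the structural description of the minimal left ideal $\mathsf L_\E$ and its maximal subgroup $\mathsf H_\E$ from Theorem~\ref{t18.7}. Fix a $[\wht\K]$-selector $\wtd\K\subset\wht\K$. I would prove the cycle $(1)\Rightarrow(3)\Rightarrow(4)\Rightarrow(1)$ and adjoin $(4)\Rightarrow(2)\Rightarrow(4)$. For $(1)\Rightarrow(3)$, write the given topological-semigroup minimal left ideal as $\mathsf L_\E$ for a minimal idempotent $\E$ it contains; the list of equivalent conditions in Theorem~\ref{t18.7}(2)(g) includes compactness of the maximal subgroup $\mathsf H_\E=\E\circ\lambda(X)\circ\E$, which then witnesses (3).

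For $(3)\Rightarrow(4)$ I would run a Baire-category argument. If a maximal subgroup $\mathsf H_\E$ of $\IK(\lambda(X))$ is compact, then Theorem~\ref{t18.7}(1)(b) identifies it topologically with $\prod_{K\in\wtd\K}\HH(A_K)$, so each factor $\HH(A_K)$ is a continuous projection image of a compact set, hence compact, and it is Hausdorff as a factor of the Hausdorff space $\mathsf H_\E\subset\lambda(X)$. Since $\HH(A_K)$ is a right-topological group its right shifts are homeomorphisms, so it is topologically homogeneous; a compact Hausdorff space is Baire, so if it were countably infinite it would be a countable union of nowhere-dense singletons unless one singleton is open, and homogeneity would then make every singleton open, i.e.\ the space discrete, contradicting compactness. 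Thus $\HH(A_K)$ is finite, and as $|\HH(A_K)|=|\HH(K)|$ with $\HH(K)$ at most countable by Theorem~\ref{t8.2}, $\HH(K)$ is finite; conjugation invariance of characteristic groups extends this to every $K\in\wht\K$, giving (4).

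The hypothesis $\TI\cap\wht\K=\emptyset$ is used in $(4)\Rightarrow(1)$: by Theorem~\ref{t14.1}(2) it yields $\End^\TI(\Tau_K)=\End(\Tau_K)$. When each $\HH(K)$ is finite, Proposition~\ref{p14.5}(1), applied with the trivial ideal $\{\emptyset\}$ (for which $Kx\ne\emptyset$ always), produces a continuous minimal idempotent $f_K$ in each $\End(\Tau_K)$, whence $\mathsf L_{f_K}$ is a topological semigroup by Theorem~\ref{t14.3}$\,((5)\Rightarrow(1))$. The product $\prod_{K\in\wtd\K}\mathsf L_{f_K}$ is then a topological-semigroup minimal left ideal of $\prod_{K\in\wtd\K}\End(\Tau_K)=\prod_{K\in\wtd\K}\End^\TI(\Tau_K)$, and Corollary~\ref{c18.6} transports it to a minimal left ideal of $\lambda(X)$, proving (1). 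For $(4)\Rightarrow(2)$: finiteness makes each $\HH(A_K)$ a finite $T_1$ right-topological group, hence discrete and a compact topological group, so $\prod_{K\in\wtd\K}\HH(A_K)$ is a compact topological group; by Theorem~\ref{t18.7}(1)(b) every maximal subgroup of $\IK(\lambda(X))$ has exactly this form.

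The main obstacle is $(2)\Rightarrow(4)$, which I would prove contrapositively. If some $\HH(K_0)$ is infinite, then $\HH(K_0)\cong C_{2^\infty}$ or $Q_{2^\infty}$ by Theorem~\ref{t8.2}, and Example~\ref{e9.3}(2) (respectively Example~\ref{e9.4}(2)) exhibits a twin set generating the Sorgenfrey topology, which is only paratopological with discontinuous inversion and hence \emph{not} a group topology. The delicate step is realizing this topology as the characteristic group $\HH(A_{K_0})$ of a genuine $A_{K_0}\in\Tau_{K_0}$: since any $A\in\Tau_{K_0}$ satisfies $\Fix(A)=K_0K_0$, its trace $A\cap\Stab(K_0)$ is $K_0K_0$-saturated and its image under $q\colon\Stab(K_0)\to\HH(K_0)$ can be prescribed to be \emph{any} twin subset of $\HH(K_0)$; choosing $A_{K_0}$ whose trace projects to the Sorgenfrey-generating twin set and extending it over the remaining $K_0^\pm$-cosets so as to keep $\Fix^-(A_{K_0})=K_0$ gives the required set. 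Then Theorem~\ref{t18.8} furnishes a maximal subgroup of $\IK(\lambda(X))$ topologically isomorphic to $\prod_{K\in\wtd\K}\HH(A_K)$ with the factor $\HH(A_{K_0})$ not a topological group; as a product is a topological group only when every factor is, this maximal subgroup is not a topological group, contradicting (2). This closes the equivalences.
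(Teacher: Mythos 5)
Your proof is correct and follows essentially the same route as the paper's: the same cycle $(1)\Rightarrow(3)\Rightarrow(4)\Rightarrow(1)$ together with $(4)\Rightarrow(2)$ and the contrapositive of $(2)\Rightarrow(4)$, built on Theorem~\ref{t18.7}, Proposition~\ref{p14.5} combined with Theorem~\ref{t14.3} and Corollary~\ref{c18.6}, and Theorem~\ref{t8.2} with the Sorgenfrey-type twin-generated topologies. You differ only in supplying detail at the two spots where the paper's citations are loose: your Baire-category argument derives finiteness of each $\HH(K)$ from bare compactness of the maximal subgroup (the paper's appeal to Theorem~\ref{t18.7}(c) formally concerns compact \emph{topological} groups), and your explicit lifting of a Sorgenfrey-generating twin subset of $\HH(K_0)$ to a twin set $A_{K_0}\in\Tau_{K_0}$ of $X$, finished off by invoking Theorem~\ref{t18.8}, makes precise the step the paper covers by citing Theorem~\ref{t9.5} and re-deriving the content of Theorem~\ref{t18.8} inline.
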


\begin{proof} $(1)\Ra(3)$ If some minimal left ideal of $\lambda(X)$ is a (necessarily compact) topological semigroup, then each maximal subgroup of this minimal ideal is a compact  topological group.

$(3)\Ra(4)$ If some maximal subgroup of $\IK(\lambda(X))$ is compact, then by Theorem~\ref{t18.7}(c), each characteristic group $\HH(K)$, $K\in\wht\K$, is finite.

$(4)\Ra(1)$ If each characteristic group $\HH(K)$, $K\in\wht\K$, is finite, then Proposition~\ref{p14.5}(1) and Theorem~\ref{t14.3} guarantee that the semigroup $\prod_{K\in\wtd\K}\End^\TI(\Tau_K)$ contains a minimal left ideal, which is a topological semigroup. By Corollary~\ref{c18.6}, this minimal left ideal is topologically isomorphic to some minimal left ideal of $\lambda(X)$.

$(4)\Ra(3)$ If each characteristic group $\HH(K)$, $K\in\wht\K$, is finite, then Theorem~\ref{t18.7}(c) guarantees that each maximal subgroup of $\IK(\lambda(X))$ is a compact topological group. 

$(3)\Ra(4)$. Assume that for some maximal 2-cogroup $K_\infty\in\wht\K$ the characteristic group $H(K_\infty)$ is infinite. Replacing $K_\infty$ by a conjugate cogroup, we can assume that $K_\infty\in\wtd\K$. By Theorem~\ref{t8.2}, the group $H(K_\infty)$ is isomorphic to $C_{2^\infty}$ or $Q_{2^\infty}$. In both cases, by Theorems~\ref{t9.5}, there is a twin set $A_\infty\in\Tau_{K_\infty}$ whose characteristic group $\HH(A_\infty)$ is not a topological group. Choose a minimal idempotent $f_{\wtd\K}=(f_K)_{K\in\wtd\K}\in \prod_{K\in\wtd\K}\End^\TI(\Tau_K)$ such that $f_{K_\infty}(\Tau_{K_\infty})\subset\lfloor A\rfloor$. 
For every $K\in\wtd\K$ choose any twin set $A_K\in f_K(\Tau_K)$ so that $A_K=A_{\infty}$ if $K=K_\infty$.

By Corollary~\ref{c18.6}, there is a minimal idempotent $\E\in\lambda^\TI(X)$ such that $\Phi_{\wtd\K}(\E)=f_{\wtd\K}$. By Theorem~\ref{t18.7}(b), the maximal subgroup $\mathsf H_\E=\lambda(X)\circ\E\circ\lambda(X)$ is topologically isomorphic to $\prod_{K\in \wtd\K}\HH(A_K)$. This subgroup is not a topological group as it contains an isomorphic copy of the right-topological group $\HH(A_\infty)$, which is not a topological group.  
\end{proof}

Now let us write Corollary~\ref{c18.5} and Theorem~\ref{t18.7} in a form, more convenient for calculations.

For every group $G\in\{C_{2^k},Q_{2^k}:k\in\IN\cup\{\infty\}\}$ denote by $q(X,G)$ the number of all orbits $[K]\in[\wht{\mathcal K}]$ such that for some (equivalently, every) 2-cogroup $K\in[K]$ the characteristic group $\HH(K)$ is isomorphic to $G$.
  
\begin{theorem}\label{t18.11} For each twinic group $X$ 
there is a cardinal $m$ such that
\begin{enumerate}
\item[\textup{(1)}]  each minimal left ideal of $\lambda(X)$ is algebraically  isomorphic to the semigroup 
$$2^m\times \prod_{1\le k\le \infty} C_{2^k}^{\;q(X,C_{2^k})}\times \prod_{3\le k\le\infty} Q_{2^k}^{\;q(X,Q_{2^k})}$$
where the Cantor discontinuum $2^{m}$ is endowed with the left zero multiplication;
\item[\textup{(2)}] If $q(X,C_{2^\infty})=q(X,Q_{2^\infty})=0$ and $\TI\cap\wht\K=\emptyset$, then some minimal left ideal of $\lambda(X)$ is topologically isomorphic to the compact topological semigroup 
$$2^m\times \prod_{1\le k<\infty} C_{2^k}^{\;q(X,C_{2^k})}\times \prod_{3\le k<\infty} Q_{2^k}^{\;q(X,Q_{2^k})}.$$
\item[\textup{(3)}] each maximal subgroup of the minimal ideal $\IK(\lambda(X))$ of $\lambda(X)$ is algebraically  isomorphic to the  group  
$$\prod_{1\le k\le\infty} C_{2^k}^{\;q(X,C_{2^k})}\times\prod_{3\le k\le\infty} Q_{2^k}^{\;q(X,Q_{2^k})}.$$
\item[\textup{(4)}] If $q(X,C_{2^\infty})=q(X,Q_{2^\infty})=0$, then each maximal subgroup of the minimal ideal $\IK(\lambda(X))$ of $\lambda(X)$ is topologically isomorphic to the compact topological group  
$$\prod_{1\le k<\infty} C_{2^k}^{\;q(X,C_{2^k})}\times\prod_{3\le k<\infty} Q_{2^k}^{\;q(X,Q_{2^k})}.$$
\end{enumerate} 
\end{theorem}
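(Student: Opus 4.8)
The plan is to assemble the statement from the structural results of this section, the real work having already been carried out in Corollary~\ref{c18.5}, Theorem~\ref{t18.7} and Proposition~\ref{p18.10}; what remains is essentially bookkeeping — grouping the factors $\HH(K)$ by isomorphism type and computing the cardinality (and topological type) of the left-zero band. I would fix a $[\wht\K]$-selector $\wtd\K\subset\wht\K$ and a minimal idempotent $\E\in\lambda(X)$, so that $\wtd\K$ meets each orbit $[K]\in[\wht\K]$ exactly once. The group factor comes first: by Theorem~\ref{t18.7}(1)(a) the maximal subgroup $\mathsf H_\E$ is algebraically isomorphic to $\prod_{K\in\wtd\K}\HH(K)$, and by Theorem~\ref{t8.2} each $\HH(K)$ is isomorphic to $C_{2^k}$ ($1\le k\le\infty$) or $Q_{2^k}$ ($3\le k\le\infty$). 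Since, by the very definition of $q(X,G)$, the number of $K\in\wtd\K$ with $\HH(K)\cong G$ equals $q(X,G)$, grouping the factors by isomorphism type yields
\[
\prod_{K\in\wtd\K}\HH(K)\;\cong\;\prod_{1\le k\le\infty}C_{2^k}^{\,q(X,C_{2^k})}\times\prod_{3\le k\le\infty}Q_{2^k}^{\,q(X,Q_{2^k})},
\]
which is precisely statement (3).

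For statement (1), Theorem~\ref{t18.7}(2)(f) gives that $\mathsf L_\E$ is algebraically isomorphic to $\prod_{K\in\wtd\K}\HH(K)\times[\Tau_K]$, and as each $[\Tau_K]$ carries the left-zero multiplication this product splits as $\big(\prod_K\HH(K)\big)\times\big(\prod_K[\Tau_K]\big)$, the second factor being a left-zero semigroup. By Proposition~\ref{p15.2}(3) we have $|[\Tau_K]|=2^{|X/K^\pm|}/|\HH(K)|$; when $\HH(K)$ is finite its order is a power of $2$ and $|X/K^\pm|$ is finite, so $|[\Tau_K]|$ is a finite power of $2$, while for $K$ of infinite index $|[\Tau_K]|=2^{|X/K^\pm|}$. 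In every case $|[\Tau_K]|=2^{a_K}$ for a suitable cardinal $a_K$, whence $|\prod_K[\Tau_K]|=2^m$ with $m=\sum_{K\in\wtd\K}a_K$. Since a left-zero semigroup is determined up to isomorphism by its cardinality, $\prod_K[\Tau_K]$ is isomorphic to the Cantor discontinuum $2^m$ with left-zero multiplication, which completes (1).

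The topological parts use the standing assumption $q(X,C_{2^\infty})=q(X,Q_{2^\infty})=0$, which says that no orbit has an infinite characteristic group, i.e. every $\HH(K)$ is finite. For (4), Theorem~\ref{t18.7}(1)(c) then shows $\mathsf H_\E$ is a compact topological group, and by Theorem~\ref{t18.7}(1)(b) it is topologically isomorphic to $\prod_K\HH(A_K)$; finiteness forces the $T_1$ groups $\HH(A_K)$ to be discrete, so $\HH(A_K)=\HH(K)$ topologically and the grouping above exhibits the profinite group of (4), the products now terminating at $k<\infty$. For (2) I additionally assume $\TI\cap\wht\K=\emptyset$; then Proposition~\ref{p18.10} (implication $(4)\Ra(1)$) furnishes a minimal left ideal $\mathsf L_\E$ that is a topological semigroup, and Theorem~\ref{t18.7}(2)(g) makes it topologically isomorphic to $\prod_K\HH(K)\times[\Tau_K]\cong G\times\prod_K[\Tau_K]$, where $G$ is the profinite group just described. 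It then remains to identify the compact left-zero band $\prod_K[\Tau_K]$ with the Cantor cube $2^m$ for the same cardinal $m$ as in (1): each $[\Tau_K]$ is compact, Hausdorff and zero-dimensional by Proposition~\ref{p12.2}, and assembling the finite factors and the Cantor cubes $2^{|X/K^\pm|}$ (which, via Proposition~\ref{p12.2}, absorb the finite groups $\HH(K)$) yields a single Cantor cube of the required weight, consistently with Theorem~\ref{t18.7}(2)(e).

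The genuinely delicate point is this last one: verifying that $\prod_{K\in\wtd\K}[\Tau_K]$ is not merely of cardinality $2^m$ but is \emph{homeomorphic} to the Cantor cube $2^m$, and that the resulting homeomorphism respects the splitting of $\mathsf L_\E$ into the group $G$ times the left-zero band. This needs the standard Cantor-cube absorption facts (a product of compact zero-dimensional spaces, mixing finite factors of cardinality a power of $2$ with cubes $2^{|X/K^\pm|}$, is again a Cantor cube whose weight is the cardinal sum $m=\sum_K a_K$) applied uniformly across the family $\wtd\K$. Everything else — the grouping of isomorphic characteristic groups and the cardinal arithmetic for $m$ — is routine once Theorem~\ref{t18.7} and Proposition~\ref{p18.10} are in hand.
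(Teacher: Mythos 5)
Your treatment of (1), (3) and (4) is correct and is essentially the paper's own argument: both rest on Theorem~\ref{t18.7}(a),(b),(c),(f), on Theorem~\ref{t8.2}, on the definition of $q(X,G)$, on the fact that a left-zero semigroup is determined up to isomorphism by its cardinality, and (for (4)) on the observation that the finite $T_1$ right-topological groups $\HH(A_K)$ are discrete. Your route to (2) through Proposition~\ref{p18.10} and Theorem~\ref{t18.7}(g) is also the paper's route in thin disguise, since Proposition~\ref{p18.10} is itself deduced from Proposition~\ref{p14.5}(1), Theorem~\ref{t14.3} and Corollary~\ref{c18.6}.

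The gap sits exactly at the point you flag as delicate, and the fact you cite does not close it. For $K\in\wtd\K$ of infinite index, Proposition~\ref{p12.2} gives only that $[\Tau_K]\times\HH(K)\cong\Tau_K\cong 2^{X/K^\pm}$ with $\HH(K)$ finite discrete; it does \emph{not} say that $[\Tau_K]$ itself is a Cantor cube. Your ``absorption fact'' (a product of finite discrete factors of power-of-two cardinality with Cantor cubes is a Cantor cube) presupposes that the infinite factors already are cubes, so it cannot be applied across $\wtd\K$ before each infinite-index orbit space $[\Tau_K]$ has been identified as one. What is actually needed is the division statement: if $Y$ is compact and $Y\times F\cong 2^\kappa$ for a finite discrete $F$, then $Y\cong 2^\kappa$. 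For $\kappa=\w$ this is Brouwer's characterization of the Cantor set, but for uncountable $\kappa$ it is not a formal consequence of absorption; and the identification genuinely matters, because any topological isomorphism of semigroups of the form $(\text{left-zero band})\times(\text{group})$ carries the idempotent band homeomorphically onto the idempotent band, so the band $\prod_{K\in\wtd\K}[\Tau_K]$ must itself be homeomorphic to $2^m$. The paper closes this step with Shchepin's theorem: $[\Tau_K]$ is a retract of the Cantor cube $\Tau_K$ (via $s\circ q$, where $s$ is the section from Proposition~\ref{p12.2}), all of whose points have character $m_K$, whence $[\Tau_K]\cong 2^{m_K}$. An elementary alternative would also do: $Y\cong Y\times\{pt\}$ embeds as a nonempty clopen subset of $2^\kappa$; by compactness such a set has the form $V\times 2^{\kappa\setminus J}$ for a finite set $J$ and $V\subset 2^J$, hence is a finite disjoint union of copies of $2^\kappa$, and $n\times 2^\kappa\cong 2^\kappa$ since $2^\kappa\sqcup 2^\kappa\cong 2^{1+\kappa}=2^\kappa$; therefore $Y\cong 2^\kappa$. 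With either supplement your proof of (2) is complete; as written, the appeal runs in the wrong direction and leaves the key topological identification unproved.
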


\begin{proof} 1. Fix any $[\wht\K]$-selection $\wtd\K\subset\wht\K$. For every $K\in\wtd\K$ put $m_K=|X/K^\pm|=\{K^\pm x:x\in X\}$ if the index of $K$ in $X$ is infinite and $m_K=2^{|X/K^\pm|}/|H(K)|$ otherwise. It follows that $|[\Tau_K]|=\frac{|\Tau_K|}{|H(K)|}=2^{m_K}$ and $[\Tau_K]$ is homeomorphic to the Cantor cube $2^{m_K}$ if the characteristic group $\HH(K)$ is finite. Let $m=\sum_{K\in\wtd\K}m_K$. 

By Theorem~\ref{t18.7}(f), any minimal left ideal $\mathsf L$ of $\lambda(X)$ is algebraically isomorphic to the semigroup $\prod_{K\in\wtd\K}\HH(K)\times [\Tau_K]$, where the orbit spaces $[\Tau_K]$ are endowed with the left zero multiplication. By Theorem~\ref{t8.2}, for every $K\in\wtd\K$ the characteristic group $\HH(K)$ is isomorphic to $C_{2^k}$ or $Q_{2^k}$ for some $k\in\IN\cup\{\infty\}$. According to the definition, for $k\in\{1,2\}$ the group $Q_{2^k}$ is isomorphic to the quaternion group $Q_8$.
By the definition of the number $q(X,G)$, for any group $G\in\{C_{2^k},Q_{2^k}:k\in\IN\cup\{\infty\}\}$ we get $q(X,G)=\{K\in\wtd\K:\HH(K)\cong G\}$ where $\cong$ denotes the (semi)group isomorphism. 

Now we see that 
$$\mathsf L\cong \prod_{K\in\wtd\K}\HH(K)\times [\Tau_K]
\cong\prod_{K\in\wtd\K}\HH(K)\times 2^{m_K}\cong
\prod_{1\le k\le\infty}C_{2^k}^{\;q(X,C_{2^k})}\times \prod_{3\le k\le\infty}Q_{2^k}^{\;q(X,Q_{2^k})}\times 2^m.$$
\smallskip

2. If $q(X,C_{2^\infty})=q(X,Q_{2^\infty})=0$, then for every $K\in\wtd\K$ the characteristic group $\HH(K)$ is finite and the orbit space $[\Tau_K]$ is a zero-dimensional compact Hausdorff space. In this case the space $\Tau_K$ is homeomorphic to $[\Tau_K]\times \HH(K)$.  If 
$K$ has finite index in $X$, then $\Tau_K$ has cardinality $2^{m_K}$ and hence is homeomorphic to the finite cube $2^{m_K}$. If $K$ has infinite index in $X$, then the space $\Tau_K$ is homeomorphic to the Cantor cube $2^{m_K}$ by Proposition~\ref{p12.1}.  It follows from the topological equivalence of $\Tau_K$ and $[\Tau_K]\times \HH(K)$ that $[\Tau_K]$ is a retract of the Cantor cube $\Tau_K$ and each point of $[\Tau_K]$ has character $m_K$. Now Shchepin's characterization of Cantor's cubes \cite{Shch} implies that the space $[\Tau_K]$ is homeomorphic to the Cantor cube $2^{m_K}$. Then the product $\prod_{K\in\wtd\K}[\Tau_K]$ is homeomorphic to the Cantor cube $2^m=\prod_{K\in\wtd\K}2^{m_K}$.

If $\TI\cap\wht\K=\emptyset$, then for every $K\in\wtd\K$ the endomorphism monoid $\End^\TI(\Tau_K)$ contains a continuous minimal idempotent $f_K$ according to Proposition~\ref{p14.5}(1). By Theorem~\ref{t14.3}, the minimal left ideal $\mathsf L_{f_K}=\End^\TI(\Tau_K)\circ f_K$ is topologically isomorphic to the compact topological semigroup $\HH(K)\times[\Tau_K]$ where the space $[\Tau_K]$ is endowed with the left zero multiplication. By (the proof of) Corollary~\ref{c18.6}, the minimal idempotent $\IK(\lambda(X))$ contains a maximal linked system $\E$ such that the minimal left ideal $\mathsf L_\E=\lambda(X)\circ\E$ is topologically isomorphic to the minimal left ideal $\prod_{K\in\wtd\K}\mathsf L_{f_K}$, which is topologically isomorphic to the compact topological semigroups
$$
\prod_{K\in\wtd\K}\HH(K)\times[\Tau_K]\mbox{ \ and \ }\prod_{1\le k< \infty}C_{2^k}^{\;q(X,C_{2^k})}\times \prod_{3\le k<\infty}Q_{2^k}^{\;q(X,Q_{2^k})}\times 2^m.$$
\smallskip

3. By Theorem~\ref{t18.7}(b) each maximal subgroup $H$ of the minimal ideal $\IK(\lambda(X))$ is topologically isomorphic to the right topological group $G=\prod_{K\in\wtd\K}\HH(A_K)$ for some twin sets $A_K\in\Tau_K$, $K\in\wtd\K$. The latter right-topological group is algebraically isomorphic to the group 
$$\prod_{1\le k\le \infty}C_{2^k}^{\;q(X,C_{2^k})}\times\prod_{3\le k\le\infty}Q_{2^k}^{\;q(X,Q_{2^k})}.$$

4. If $q(X,C_{2^\infty})=q(X,Q_{2^\infty})=0$, then all characteristic groups $\HH(K)$, $K\in\wtd\K$, are finite and then the group $G$ is topologically isomorphic to the compact topological group    
$$\prod_{1\le k<\infty}C_{2^k}^{\;q(X,C_{2^k})}\times\prod_{3\le k<\infty}Q_{2^k}^{\;q(X,Q_{2^k})}.$$
\end{proof}

\section{The structure of the superextensions of abelian groups}

In this section we consider the structure of the superextension of abelian groups. In this case some results of the preceding section can be simplified. In this section we assume that $X$ is an abelian group. By Theorem~\ref{t6.2}, $X$ is twinic and has trivial twinic ideal $\TI=\{\emptyset\}$. Let us recall that for a group $G$ by $q(X,G)$ we denote the number of orbits $[K]$, $K\in\wht\K$, such that for each $K\in[K]$ the characteristic group $\HH(K)$ is isomorphic to the group $G$. It is clear that $q(X,Q_{2^k})=0$ for all $k\in\IN\cup\{\infty\}$. On the other hand, the numbers $q(X,C_{2^k})$ can be easily calculated using the following proposition.

\begin{proposition}\label{p19.1} If $X$ is an abelian group, then for every $k\in\IN\cup\{\infty\}$ the cardinal $q(X,C_{2^k})$ is equal to the number of subgroups $H\subset X$ such that the quotient group $X/H$ is isomorphic to $C_{2^k}$. If $k\in\IN$, then
$$q(X,C_{2^k})=\frac{|\hom(X,C_{2^k})|-|\hom(X,C_{2^{k-1}})|}{2^{k-1}},$$
where $\hom(X,C_{2_k})$ is the group of all homomorphisms from $X$ to $C_{2^k}$. 
\end{proposition}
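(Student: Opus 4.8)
The plan is to first exploit commutativity to trivialize the group actions involved, then set up an explicit bijection between maximal $2$-cogroups and subgroups with cyclic $2$-quotient, and finally translate the counting of subgroups into a counting of surjective homomorphisms.

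First I would record the simplifications available when $X$ is abelian. Since conjugation is trivial, every $2$-cogroup $K$ satisfies $\Stab(K)=X$, so each orbit $[K]=\{xKx^{-1}:x\in X\}$ is a singleton and $[\wht{\K}]$ may be identified with $\wht{\K}$ itself; moreover the characteristic group becomes $\HH(K)=\Stab(K)/KK=X/KK$. By Theorem~\ref{t8.2}, $\HH(K)$ is isomorphic to $C_{2^n}$ or $Q_{2^n}$, but being a quotient of the abelian group $X$ it is abelian, hence equal to some $C_{2^n}$ (all the groups $Q_{2^n}$, including $Q_8$, are non-abelian). This already yields $q(X,Q_{2^k})=0$ for every $k$, and reduces the first claim to proving that the assignment $K\mapsto KK$ is a bijection from $\{K\in\wht{\K}:\HH(K)\cong C_{2^k}\}$ onto $\{H\subset X:X/H\cong C_{2^k}\}$.

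Next I would construct this bijection using Proposition~\ref{p6.2}. Given a maximal $2$-cogroup $K$, set $H=KK$; then $X/H=\HH(K)\cong C_{2^k}$, and for any $x\in K$ the image $\pi(x)$ under the quotient map $\pi:X\to X/H$ is the unique element $d$ of order $2$ in $C_{2^k}$, so that $K=\pi^{-1}(d)$ is recovered from $H$ alone, giving injectivity. Conversely, for a subgroup $H$ with $X/H\cong C_{2^k}$, letting $d$ be the unique order-$2$ element of $X/H$ and $K=\pi^{-1}(d)$, one checks directly that $KK=\pi^{-1}(d^2)=H$ and $xK=H$ for $x\in K$, so $K$ is a $2$-cogroup with $\HH(K)=X/H\cong C_{2^k}$. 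The step requiring care is the maximality of this $K$: if $K\subset K_1$ for a $2$-cogroup $K_1$, then $KK\subset K_1K_1$, the induced map $X/H\to X/K_1K_1$ is a surjection of cyclic $2$-groups carrying $d$ to the image of $K_1$, and since any proper such quotient (including the quasicyclic case $k=\infty$, where one invokes that every nontrivial subgroup of $C_{2^\infty}$ contains $d$) would send the order-$2$ element $d$ to the identity, one is forced to conclude $K_1K_1=H$ and hence $K_1=K$. I expect this maximality and uniqueness argument to be the main obstacle.

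Finally I would convert the subgroup count into the homomorphism formula for $k\in\IN$. Each subgroup $H$ with $X/H\cong C_{2^k}$ is the kernel of a surjective homomorphism $X\to C_{2^k}$, and two surjections have equal kernel precisely when they differ by an automorphism of $C_{2^k}$; since post-composition by $\Aut(C_{2^k})$ acts freely on the set of surjections, the number of such subgroups equals the number of surjective homomorphisms divided by $|\Aut(C_{2^k})|=|(\IZ/2^k)^\times|=2^{k-1}$. A homomorphism $X\to C_{2^k}$ fails to be surjective exactly when its image lies in the unique index-$2$ subgroup $C_{2^{k-1}}$, i.e. when it factors through the inclusion $C_{2^{k-1}}\hookrightarrow C_{2^k}$; hence the surjections number $|\hom(X,C_{2^k})|-|\hom(X,C_{2^{k-1}})|$, understood as the cardinality of the corresponding set difference. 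Combining these identities yields the displayed formula, the cardinal division being well defined precisely because the $\Aut(C_{2^k})$-action on surjections is free.
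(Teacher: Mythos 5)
Your proof is correct and takes essentially the same route as the paper's: reduce $q(X,C_{2^k})$ to counting maximal 2-cogroups $K$ with $X/KK\cong C_{2^k}$ (orbits being singletons by commutativity), show $K\mapsto KK$ bijects these onto the subgroups $H$ with $X/H\cong C_{2^k}$ via the unique involution of the quotient, and then count such subgroups as epimorphisms $X\to C_{2^k}$ modulo the free action of $\Aut(C_{2^k})$, of order $2^{k-1}$. Your explicit verification that the preimage $\pi^{-1}(d)$ is a \emph{maximal} 2-cogroup (via the observation that every nontrivial quotient of $C_{2^k}$, $k\le\infty$, kills the involution $d$) fills in a step the paper merely asserts in its surjectivity argument.
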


\begin{proof} Since each maximal 2-cogroup $K\subset X$ is normal,  each orbit $[K]\in[\wht{\mathcal K}]$ consists of a single maximal 2-cogroup. Consequently, $q(X,H)$ is equal to the number of maximal 2-cogroups $K\subset X$ whose characteristic group $\HH(K)=\Stab(K)/KK=X/KK$ is isomorphic to $H$. In other words, $q(X,H)$ equals to cardinality of the set
$$\wht{\K}_H=\{K\in\wht{\K}:X/KK\cong H\},$$
where $\cong$ stands for the group isomorphism.

Let $\mathcal G_H$ be the set of all subgroups $G\subset X$ such that the quotient group $X/G$ is isomorphic to $H$.
 The proposition will be proved as soon as we check that the function
$$f:\wht{\K}_H\to\mathcal G_H,\;\;f:K\mapsto KK,$$ is bijective.

To show that $f$ is injective, take any two maximal 2-cogroups $K,C$ with $KK=f(K)=f(C)=CC$. The quotient group $X/KK=X/CC$, being isomorphic to $H$, contains a unique element of order 2. Since $K$ and $C$ are cosets of order 2 in $X/KK=X/CC$, we conclude that $K=C$.

To show that $f$ is surjective, take any subgroup $G\in\mathcal G_H$. The quotient group $X/G$ is isomorphic to $H$ and thus contains a unique element $K$ of order 2. This element $K$ is a maximal 2-cogroup such that $f(K)=KK=G$.
\smallskip

To prove the second part of the proposition, observe that for a subgroup $H\subset X$ the quotient group $X/H$ is isomorphic to $C_{2^k}$ if and only if $H$ coincides with the kernel of some epimorphism $f:X\to C_{2^k}$. Observe that two epimorphisms $f,g:X\to C_{2^k}$ have the same kernel if and only if $g=\alpha\circ f$ for some automorphism of the group $C_{2^k}$. The group $C_{2^k}$ has exactly $2^{k-1}$ automorphisms determined by the image of the generator $a=e^{i\pi 2^{-k+1}}$ of $C_{2^k}$ in the 2-cogroup $aC_{2^{k-1}}$. A homomorphism $h:X\to C_{2^k}$ is an epimorphism if and only if $h(X)\not\subset C_{2^{k-1}}$. Consequently, $$q(X,C_{2^k})=\frac{|\hom(X,C_{2^k})\setminus \hom(X,C_{2^{k-1}})|}{2^{k-1}}.$$
\end{proof}

\begin{theorem}\label{t19.2} If $X$ is an abelian group, then  
\begin{enumerate}
\item[\textup{(1)}] each maximal subgroup of the minimal ideal $\IK(\lambda(X))$ is algebraically isomorphic to $\prod\limits_{1\le k\le\infty}C_{2^k}^{\;q(X,C_{2^k})}$.
\item[\textup{(2)}]  each minimal left ideal of $\lambda(X)$ is 
homeomorphic to the Cantor cube $(2^\w)^{q(X,C_{2^\infty})}\times\prod\limits_{1\le k<\infty}(2^{2^{k-1}})^{q(X,C_{2^k})}$ and is algebraically isomorphic to the semigroup $$\prod_{1\le k\le \infty}(C_{2^k}\times Z_k)^{q(X,C_{2^k})}$$where the cube $Z_k=2^{2^{k-1}-k}$ (equal to $2^{\w}$ if $k=\infty$) is endowed with the left zero multiplication.
\item[\textup{(3)}]  the semigroup $\lambda(X)$ contains a principal left ideal, which is algebraically isomorphic to the semigroup
$$\prod_{1\le k\le \infty}(C_{2^k}\wr Z_k^{Z_k})^{q(X,C_{2^k})}.$$
\end{enumerate}
\end{theorem}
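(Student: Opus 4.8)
The plan is to obtain all three statements by specializing the general structure results of Section~\ref{s18} to the abelian case through Proposition~\ref{p19.1}. The initial observations are that an abelian group $X$ is twinic with trivial twinic ideal $\TI=\{\emptyset\}$ (so $\TI\cap\wht\K=\emptyset$ holds automatically, since $\emptyset$ is not a $2$-cogroup), that $q(X,Q_{2^k})=0$ for every $k$, and that every maximal $2$-cogroup $K\subset X$ is normal, whence its orbit $[K]=\{K\}$ is a singleton and $\HH(K)=X/KK$. It follows that a $[\wht\K]$-selector $\wtd\K$ equals $\wht\K$ and contains exactly $q(X,C_{2^k})$ cogroups $K$ with $\HH(K)\cong C_{2^k}$ for each $k\in\IN\cup\{\infty\}$, and that $C_{2^k}$ are the only characteristic groups occurring. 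Statement~(1) then drops out of Theorem~\ref{t18.11}(3): each maximal subgroup of $\IK(\lambda(X))$ is algebraically isomorphic to $\prod_{1\le k\le\infty}C_{2^k}^{\;q(X,C_{2^k})}\times\prod_{3\le k\le\infty}Q_{2^k}^{\;q(X,Q_{2^k})}$, and the second product collapses because $q(X,Q_{2^k})=0$.

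The heart of the matter is the cardinal bookkeeping behind statement~(2). Fix $K\in\wtd\K$ with $\HH(K)\cong C_{2^k}$. Since $K$ is normal, $K^\pm/KK\cong C_2$ sits inside $X/KK\cong C_{2^k}$ as its unique subgroup of order $2$, so by the third isomorphism theorem the index is $m_K:=|X/K^\pm|=2^{k-1}$ when $k<\infty$, and $m_K=\aleph_0$ when $k=\infty$ (using $C_{2^\infty}/C_2\cong C_{2^\infty}$). The homeomorphism assertion then follows from Theorem~\ref{t18.7}(e), which identifies $\mathsf L_\E$ with $\prod_{K\in\wtd\K}\Tau_K$, combined with Proposition~\ref{p12.1} giving $\Tau_K\cong 2^{m_K}$; regrouping by isomorphism type of $\HH(K)$ yields $(2^\w)^{q(X,C_{2^\infty})}\times\prod_{1\le k<\infty}(2^{2^{k-1}})^{q(X,C_{2^k})}$.

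For the algebraic half of~(2) I would apply Theorem~\ref{t18.7}(f), which presents $\mathsf L_\E$ as $\prod_{K\in\wtd\K}\HH(K)\times[\Tau_K]$ with each orbit space $[\Tau_K]$ carrying the left-zero multiplication. By Proposition~\ref{p15.2}(3) the orbit space has cardinality $|[\Tau_K]|=2^{m_K}/|\HH(K)|=2^{2^{k-1}-k}$ for $k<\infty$, and $2^{\aleph_0}$ for $k=\infty$. Since any two left-zero semigroups of the same cardinality are isomorphic, $[\Tau_K]$ is isomorphic as a left-zero semigroup to $Z_k=2^{2^{k-1}-k}$ (equal to $2^\w$ when $k=\infty$), and regrouping by type gives $\prod_{1\le k\le\infty}(C_{2^k}\times Z_k)^{q(X,C_{2^k})}$.

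Statement~(3) I would deduce from Corollary~\ref{c18.5}(3): as $\TI\cap\wht\K=\emptyset$, there is an idempotent $\E\in\lambda^\TI(X)$ whose principal left ideal $\lambda(X)\circ\E$ is isomorphic to $\prod_{K\in\wtd\K}\End(\Tau_K)$, and by Theorem~\ref{t14.1}(3) each factor satisfies $\End(\Tau_K)\cong\HH(K)\wr[\Tau_K]^{[\Tau_K]}\cong C_{2^k}\wr Z_k^{Z_k}$, using the same identifications $\HH(K)\cong C_{2^k}$ and $[\Tau_K]\cong Z_k$; regrouping produces $\prod_{1\le k\le\infty}(C_{2^k}\wr Z_k^{Z_k})^{q(X,C_{2^k})}$. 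I expect the only genuine subtlety to be the careful isolation of the $k=\infty$ case in the cardinal arithmetic: verifying $m_K=\aleph_0$ and $|[\Tau_K]|=2^{\aleph_0}=|Z_\infty|$, so that the left-zero semigroup $[\Tau_K]$ remains isomorphic to $Z_\infty=2^\w$ even though, by Proposition~\ref{p12.2}, the orbit space is then no longer Hausdorff and only the algebraic (not the topological) identification is available.
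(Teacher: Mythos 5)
Your proposal is correct and follows essentially the same route as the paper: the paper likewise reduces to the fact that in the abelian case every maximal $2$-cogroup is normal with $\HH(K)=X/KK\cong C_{2^k}$, so $\wht\K$ itself is the unique $[\wht\K]$-selector with $|\wht\K_k|=q(X,C_{2^k})$, then invokes Theorem~\ref{t18.7} (the paper cites part (a) where you cite Theorem~\ref{t18.11}(3), itself a consequence of Theorem~\ref{t18.7}(b)), the same cardinal computations $|X/K^\pm|=2^{k-1}$ and $|[\Tau_K]|=2^{2^{k-1}-k}$ (resp.\ $\aleph_0$ and $2^{\aleph_0}$ for $k=\infty$), and Corollary~\ref{c18.5}(3) with Theorem~\ref{t14.1}(3) for statement~(3). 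If anything, your treatment is slightly more complete, since you verify the homeomorphism claim of~(2) explicitly via Theorem~\ref{t18.7}(e) and Proposition~\ref{p12.1}, a step the paper's written proof passes over.
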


\begin{proof} Since $X$ is abelian, each 2-cogroup $K\in\wht\K$ is normal in $X$ and hence has one-element orbit $[K]=\{xKx^{-1}:x\in X\}$. Then the family $\wht\K$ is a unique $[\wht\K]$-selector. Since $\Stab(K)=X$, the characteristic group $\HH(K)=\Stab(X)/KK$ is equal to the quotient group $X/KK$ and is abelian. By Theorem~\ref{t8.2}, $\HH(K)$ is isomorphic to the (quasi)cyclic 2-group $C_{2^n}$ for some $n\in\IN\cup\{\infty\}$. 
Consequently, $\wht\K=\bigcup_{1\le n\le\infty}\wht\K_n$ where $\wht\K_n$ is the subset of $\wht\K$ that consists of all maximal 2-cogroups $K$ whose characteristic group $\HH(K)=X/KK$ is isomorphic to the group $C_{2^k}$. By the definition of the numbers $q(X,G)$, we get $q(X,C_{2^n})=|\wht\K_n|$ for all $n\in\IN\cup\{\infty\}$.
\smallskip

1. By Theorem~\ref{t18.7}(a), each maximal subgroup of $\IK(\lambda(X))$ is algebraically isomorphic to $\prod_{K\in\wht\K}\HH(K)$ and the latter group is isomorphic to $$\prod_{1\le n\le\infty}C_{2^n}^{|\wht\K_n|}=  \prod_{1\le n\le\infty}C_{2^n}^{\;q(X,C_{2^n})}.$$
\smallskip

2. By Theorem~\ref{t18.7}(f), each minimal left ideal of $\lambda(X)$ is algebraically isomorphic to $\prod_{K\in\wht\K}(\HH(K)\times[\Tau_K])$ where the orbit spaces $[\Tau_K]$ are endowed with the left zero multiplication. Let $Z_\infty=2^\w$ and $Z_n=2^{2^{n-1}-n}$ for every $n\in\IN$. The cubes $Z_n$, $n\in\IN\cup\{\infty\}$, are endowed with the left zero multiplication. We claim that $|[\Tau_K]|=|Z_n|$ for each $n\in\IN\cup\{\infty\}$. If $n$ is finite, then $|X/K^\pm|=\frac12|X/KK|=\frac12|\HH(K)|=2^{n-1}$ and
$$|[\Tau_K]|=\frac{|\Tau_K|}{|\HH(K)|}=\frac{2^{X/K^\pm}}{2^n}=2^{2^{n-1}-n}=|Z_n|.$$
If $n$ is infinite, then the quotient group $\HH(K)=X/KK$ is isomorphic to $C_{2^\infty}$ and then $|X/K^\pm|=\w$. By Proposition~\ref{p12.1}, the space $\Tau_K$ is homeomorphic to the Cantor cube $2^\w$ and hence has cardinality of continuum. Since the group $\HH(K)$ is countable, the orbit space $[\Tau_K]$ also has cardinality of continuum and hence $|[\Tau_K]|=|2^\w|=|Z_\infty|$. 
Now we see that $\prod_{K\in\wht\K}(\HH(K)\times[\Tau_K])$ is algebraically isomorphic to $\prod_{1\le k\le\infty}(C_{2^k}\times Z_k)^{q(X,C_{2^k})}$.
\smallskip

3. Since $X$ has trivial twinic ideal, $\TI\cap\wht\K=\emptyset$ and by Corollary~\ref{c18.5}(c) and Theorem~\ref{t14.1}(3), the semigroup $\lambda(X)$ contains a principal left ideal that is algebraically isomorphic to the semigroup
$\prod_{K\in\wht\K}\HH(K)\wr [\Tau_K]^{[\Tau_K]}$, which is algebraically isomorphic to $\prod_{1\le k\le \infty}(C_{2^k}\wr Z_k^{Z_k})^{q(X,C_{2^k})}$.
\end{proof}

The following theorem characterizes the groups $X$ for which the algebraic isomorphism in Theorem~\ref{t19.2} are topological.

\begin{theorem}\label{t19.3} For an abelian group $X$ the following conditions are equivalent\textup{:}
\begin{enumerate}
\item[\textup{(1)}] The group $X$ admits no homomorphism onto the quasicyclic 2-group $C_{2^\infty}$.
\item[\textup{(2)}] Each maximal subgroup in the minimal ideal $\IK(\lambda(X))$ of $\lambda(X)$ is topologically isomorphic to the compact topological group
$\prod\limits_{k\in\IN}C_{2^k}^{\;q(X,C_{2^k})}$.
\item[\textup{(3)}] Each maximal subgroup in $\IK(\lambda(X))$ is a topological group.
\item[\textup{(4)}] Some maximal subgroup of $\IK(\lambda(X))$ is compact.
\item[\textup{(5)}] Each minimal left ideal of $\lambda(X)$ is topologically isomorphic to the compact topological semigroup
$$\prod\limits_{k\in\IN}(C_{2^k}\times Z_k)^{q(X,C_{2^k})}$$where the finite cube $Z_k=2^{2^{k-1}-k}$ is endowed with the left zero multiplication.
\item[\textup{(6)}] $\lambda(X)$ contains a principal left ideal, which is topologically isomorphic to the compact topological semigroup
$$\prod\limits_{k\in\IN}(C_{2^k}\wr Z_k^{Z_k})^{q(X,C_{2^k})}.$$
 \end{enumerate}
\end{theorem}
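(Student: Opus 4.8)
The plan is to collapse all six conditions onto the single requirement that every maximal $2$-cogroup of $X$ have finite characteristic group, and then to read off the explicit topological descriptions from the structural results of Section~\ref{s18}. First I would record the arithmetic reformulation of condition~(1). Since $X$ is abelian, every maximal $2$-cogroup $K\subset X$ is normal, so $\HH(K)=X/KK$ is a cyclic $2$-group and, by Proposition~\ref{p15.2}(4), $|\HH(K)|=|X/K|$; in particular $\HH(K)$ is finite exactly when $K$ has finite index in $X$. By Theorem~\ref{t8.2} the group $\HH(K)$ is isomorphic to $C_{2^k}$ for some $k\in\IN\cup\{\infty\}$, and it is infinite precisely when it is isomorphic to $C_{2^\infty}$. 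On the other hand Proposition~\ref{p19.1} identifies $q(X,C_{2^\infty})$ with the number of subgroups $H\subset X$ with $X/H\cong C_{2^\infty}$, that is, with the number of kernels of epimorphisms $X\to C_{2^\infty}$. Hence condition~(1), the equality $q(X,C_{2^\infty})=0$, the finiteness of $\HH(K)$ for every $K\in\wht\K$, and the finiteness of the index of every maximal $2$-cogroup are all equivalent; call this common condition $(\star)$.

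Next I would invoke Proposition~\ref{p18.10}. As $X$ is abelian it is twinic with trivial twinic ideal $\TI=\{\emptyset\}$, so $\TI\cap\wht\K=\emptyset$ and the proposition applies. Its condition~(4) is exactly $(\star)$, while its conditions~(2) and~(3) are verbatim our conditions~(3) and~(4). Thus Proposition~\ref{p18.10} already yields $(1)\Leftrightarrow(\star)\Leftrightarrow(3)\Leftrightarrow(4)$, and it remains only to upgrade $(\star)$ to the concrete topological isomorphisms in~(2), (5), (6) and to close the cycle with easy reverse implications.

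Assuming $(\star)$, we have $q(X,Q_{2^k})=0$ for all $k$ by abelianness and $q(X,C_{2^\infty})=0$, so Theorem~\ref{t18.11}(4) delivers~(2), and $(2)\Rightarrow(3)$ is immediate since a topological isomorphism onto a topological group makes the domain a topological group. For~(5) I would use that $(\star)$ forces every maximal $2$-cogroup to have finite index, whence Proposition~\ref{p18.9} shows that \emph{each} minimal left ideal of $\lambda(X)$ is a topological semigroup; Theorem~\ref{t18.7}(g), together with the cardinal count $|[\Tau_K]|=|Z_k|$ from Theorem~\ref{t19.2}(2), then identifies each such ideal topologically with $\prod_{1\le k<\infty}(C_{2^k}\times Z_k)^{q(X,C_{2^k})}$, giving~(5); and $(5)\Rightarrow(4)$ holds because this limit semigroup is compact with compact maximal subgroups. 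For~(6) I would apply Corollary~\ref{c18.5}(3), legitimate since $\TI\cap\wht\K=\emptyset$, to obtain a principal left ideal topologically isomorphic to $\prod_{K\in\wht\K}\End(\Tau_K)$, and then use Theorem~\ref{t14.1}(3) to rewrite each finite factor $\End(\Tau_K)$ as $C_{2^k}\wr Z_k^{Z_k}$; again $(6)\Rightarrow(4)$ follows from compactness. Combined with $(1)\Leftrightarrow(3)\Leftrightarrow(4)$, this makes all six conditions equivalent.

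I expect the main obstacle to be the passage, for condition~(5), from ``some'' to ``every'' minimal left ideal. In the general twinic setting Theorem~\ref{t18.11}(2) produces only \emph{one} topological-semigroup minimal left ideal, because a maximal $2$-cogroup with finite characteristic group but infinite index carries discontinuous minimal idempotents by Proposition~\ref{p14.5}, whose minimal left ideals fail to be topological semigroups. What rescues the abelian case is precisely the identity $|\HH(K)|=|X/K|$ of Proposition~\ref{p15.2}(4): finiteness of the characteristic group now coincides with finiteness of the index, so $(\star)$ makes every $\Tau_K$ finite, eliminates discontinuous idempotents, and lets Proposition~\ref{p18.9} promote the conclusion to all minimal left ideals at once. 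Verifying this alignment, and confirming that the resulting finite Rees products carry exactly the discrete (hence compact) topology claimed in~(5) and~(6), is where the genuine content lies; the remaining cardinal bookkeeping and the reverse implications are routine.
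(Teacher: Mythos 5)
Your proof is correct, and it reaches the theorem by a genuinely different decomposition than the paper's. The paper proves the equivalences as a cycle built from scratch: it shows $(1)\Rightarrow(5),(6)$ directly via Theorem~\ref{t18.7}(g), Theorem~\ref{t14.1}(3) and Corollary~\ref{c18.5}(3); gets $(5)\Rightarrow(2)\Rightarrow(3)$ trivially; and then re-derives the hard reverse implications, proving $(3)\Rightarrow(1)$ by explicitly constructing a non-topological maximal subgroup from a Sorgenfrey-type twin-generated topology (Example~\ref{e9.3}(2) together with Theorem~\ref{t18.8}), and $(4)\Rightarrow(1)$ from Theorem~\ref{t18.7}(c). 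You instead collapse everything onto the condition $(\star)$ that all characteristic groups $\HH(K)$ are finite, and outsource the hard implications $(1)\Leftrightarrow(3)\Leftrightarrow(4)$ to Proposition~\ref{p18.10}, which the paper had already proved for general twinic groups with $\TI\cap\wht\K=\emptyset$ (using the very same Sorgenfrey construction internally). This buys economy: no argument is duplicated. Your route also has two further merits. First, your use of Proposition~\ref{p18.9} to pass from ``some'' to ``every'' minimal left ideal in condition (5) makes explicit a step the paper glosses over: the paper's citation of Theorem~\ref{t18.7}(g) for $(1)\Rightarrow(5)$ tacitly requires knowing that the minimal left ideal in question \emph{is} a (semi)topological semigroup, which is exactly what the finite-index property, via Proposition~\ref{p15.2}(4) in the abelian case, guarantees. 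Second, deriving (2) from Theorem~\ref{t18.11}(4) rather than from (5) is a clean alternative. One small caution: Proposition~\ref{p18.10}'s conditions (2) and (3) are stated for maximal subgroups ``of $\lambda(X)$'' rather than ``of $\IK(\lambda(X))$,'' so they are not quite \emph{verbatim} your conditions (3) and (4); however, the proof of that proposition establishes precisely the minimal-ideal versions (its witnesses and its use of Theorem~\ref{t18.7}(b),(c) all live in $\IK(\lambda(X))$), so your citation is sound once this is noted.
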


\begin{proof} Let the subfamilies $\wht\K_n\subset\wht\K$ and the Cantor cubes $Z_n=2^{2^{n-1}-n}$, $n\in\IN\cup\{\infty\}$, be defined as in the proof of Theorem~\ref{t19.2}. Then $q(X,C_{2^n})=|\wht\K_n|$.
\smallskip

 $(1)\Ra(5,6)$ If $X$ admits no homomorphism onto $C_{2^\infty}$, then 
$q(X,C_{2^\infty})=0$ and $\wht\K_\infty=\emptyset$. In this case  $\wht\K=\bigcup_{n\in\IN}\wht\K_n$. For every $n\in\IN$ and $K\in\wht\K_n$ the characteristic group $\HH(K)$ is isomorphic to $C_{2^n}$ and the orbit space $[\Tau_K]$ is homeomorphic to the cube $Z_k$. By Theorem~\ref{t14.1}(3), the endomorphism monoid $\End(\Tau_K)$ is (topologically) isomorphic to $\HH(K)\wr[\Tau_K]^{[\Tau_K]}$ and the latter semigroup is topologically isomorphic to $C_{2^n}\wr Z_n^{\,Z_n}$. 

By Theorem~\ref{t18.7}(g), each minimal left ideal of $\lambda(X)$ is topologically isomorphic to $$\prod_{K\in\wht\K}\HH(K)\times[\Tau_K]=\prod_{n\in\IN}\prod_{K\in\wht\K_n}\HH(K)\times[\Tau_K]$$ and the latter semigroup is topologically isomorphic to the compact topological semigroup  $$\prod\limits_{k\in\IN}(C_{2^k}\times Z_k)^{q(X,C_{2^k})}.$$

By Corollary~\ref{c18.5}(3) and Theorem~\ref{t14.1}(3), the semigroup $\lambda(X)$ contains a principal left ideal that is topologically isomorphic to the compact topological semigroup  $\prod_{K\in\wtd\K}\HH(K)\wr[\Tau_K]^{[\Tau_K]}=\prod_{n\in\IN}\prod_{K\in\wht\K_n}\HH(K)\wr[\Tau_K]^{[\Tau_K]}$, which is topologically isomorphic to the compact topological semigroup
$$\prod\limits_{k\in\IN}(C_{2^k}\wr Z_k^{Z_k})^{q(X,C_{2^k})}.$$
\smallskip

The implication $(5)\Ra(2)\Ra(3)$ are trivial.
\smallskip

$(3)\Ra(1)$ If the group $X$ admits a homomorphism onto $C_{2^\infty}$, then the family $\wht\K$ contains a 2-cogroup $K_\infty$ whose characteristic group $\HH(K_\infty)$ is isomorphic to $C_{2^\infty}$.
It follows from Example~\ref{e9.3}(2) that for some twin set $A_{K_\infty}\in\Tau_{K_\infty}$  the twin-generated group $\HH(A_{K_\infty})$ is not a topological group. Now choose a sequence $(A_K)_{K\in\wht\K}\in\prod_{K\in\wht\K}\Tau_K$ of twin sets such that $A_K=A_{K_\infty}$ if $K=K_\infty$. Then the right-topological group $\prod_{K\in\wht\K}\HH(A_K)$ is not a topological group. By Corollary~\ref{c18.6}, this right-topological group is topologically isomorphic to some maximal subgroup of the minimal ideal $\IK(\lambda(X))$. So, $\IK(\lambda(X))$ contains a maximal subgroup, which is not a topological group. 
\smallskip

$(6)\Ra(4)$ If $\lambda(X)$ contains a left ideal, which is a topological semigroup, then $\lambda(X)$ contains a minimal left ideal, which is a topological semigroup. Any maximal subgroup of this minimal left ideal is a compact topological group. 
\smallskip

$(4)\Ra(1)$ If $K(\lambda(X))$ contains a compact maximal subgroup, then by  Theorem~\ref{t18.7}(c), each characteristic group $\HH(K)$, $K\in\wht\K$, is finite and hence $q(X,C_{2^\infty})=0$.
\end{proof}

Finally, we shall characterize abelian groups  whose superextension contains metrizable minimal left ideals. The characterization involves the notion of the free rank and 2-rank, see \cite[\S16]{Fu} or \cite[\S4.2]{Rob}.

Let us recall that a subset $A\not\ni e$ of an abelian group $G$ with neutral element $e$ is called {\em independent} if for any disjoint subsets $B,C\subset A$ the subgroups $\la B\ra$ and $\la C\ra$ generated by $B,C$ intersect by the trivial subgroup. The cardinality of a maximal independent subset $A\subset G$ that consists of element of infinite order (resp. of order that is a power of 2) is called the {\em free rank} (resp. the {\em 2-rank}) of $G$ and is denoted by $r_0(G)$ (resp. $r_2(G)$).    

\begin{theorem}\label{t19.4} For an abelian group $X$ the following conditions are equivalent:
\begin{enumerate}
\item[\textup{(1)}] each minimal left ideal of $\lambda(X)$ is metrizable;
\item[\textup{(2)}] the family $\wht\K$ of maximal 2-cogroups is at most countable;
\item[\textup{(3)}] the group $X$ admits no epimorphism onto the group $C_{2^\infty}\oplus C_{2^\infty}$ and $X$ has finite ranks $r_0(X)$ and $r_2(X)$.  
\end{enumerate}
\end{theorem}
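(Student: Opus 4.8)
The plan is to prove the cycle of implications by first settling the topological equivalence $(1)\Leftrightarrow(2)$ from the structural results of Section~\ref{s18}, and then reducing the group-theoretic equivalence $(2)\Leftrightarrow(3)$ to a counting problem for homomorphisms via Proposition~\ref{p19.1}.

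\textbf{The equivalence $(1)\Leftrightarrow(2)$.} Since $X$ is abelian, every maximal 2-cogroup is normal, so $\wht\K$ is its own $[\wht\K]$-selector and each orbit is a singleton. By Theorem~\ref{t18.7}(e) every minimal left ideal of $\lambda(X)$ is homeomorphic to $\prod_{K\in\wht\K}2^{X/K^\pm}$. The key observation is that each factor is metrizable: because $\HH(K)=X/KK$ is isomorphic to $C_{2^n}$ for some $1\le n\le\infty$ (Theorem~\ref{t8.2} and Proposition~\ref{p19.1}) and $[K^\pm:KK]=2$, the index $|X/K^\pm|=|X/KK|/2=|\HH(K)|/2$ is at most countable. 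Hence $2^{X/K^\pm}$ is a second countable (metrizable) compact space, and it is nontrivial since $X/K^\pm\ne\emptyset$. A product of nontrivial compact metrizable spaces is metrizable if and only if its index set is countable; thus the minimal left ideal is metrizable precisely when $\wht\K$ is at most countable.

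\textbf{Reducing $(2)\Leftrightarrow(3)$ to counting.} By the bijection of Proposition~\ref{p19.1}, $\wht\K$ corresponds to the set of subgroups $G\subset X$ with $X/G\cong C_{2^n}$ for some $1\le n\le\infty$, so that $|\wht\K|=\sum_{1\le n\le\infty}q(X,C_{2^n})$. As this sum has only countably many terms, $\wht\K$ is countable if and only if every $q(X,C_{2^n})$ is at most countable. For finite $n$ I would use the formula of Proposition~\ref{p19.1} together with the estimates $|\hom(X,C_2)|\le|\hom(X,C_{2^n})|\le|\hom(X,C_2)|^{\,n}$, which follow from the inclusion $2^{n-1}C_{2^n}\cong C_2\hookrightarrow C_{2^n}$ and the reduction epimorphism $C_{2^n}\to C_2$; these show that all finite-$n$ numbers are countable exactly when $\hom(X,C_2)$ is countable, i.e. exactly when $\dim_{\mathbb F_2}(X/2X)$ is finite (the cardinality $|\hom(X,C_2)|=2^{\dim_{\mathbb F_2}(X/2X)}$ jumps from finite to $\mathfrak c$). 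For $n=\infty$ one counts kernels of epimorphisms onto the injective group $C_{2^\infty}$ modulo $\Aut(C_{2^\infty})$, and the plan is to prove that $q(X,C_{2^\infty})$ is uncountable precisely when $X$ admits an epimorphism onto $C_{2^\infty}\oplus C_{2^\infty}$.

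\textbf{The final translation and the main obstacle.} The last step is to convert ``$\dim_{\mathbb F_2}(X/2X)<\infty$ and $q(X,C_{2^\infty})\le\aleph_0$'' into the rank conditions of $(3)$ using the primary decomposition of the $2$-component $X_2=\big(\bigoplus_\mu C_{2^\infty}\big)\oplus R$ into its divisible and reduced parts. Here $r_2(X)=\dim_{\mathbb F_2}X[2]=\mu+\dim_{\mathbb F_2}R[2]$, the finiteness of $\dim_{\mathbb F_2}(X/2X)$ controls the reduced $2$-rank and the torsion-free contribution, and the absence of an epimorphism onto $C_{2^\infty}\oplus C_{2^\infty}$ should amount exactly to the bound $\mu\le 1$. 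I expect the genuine difficulty to lie precisely in this translation. First, showing that $q(X,C_{2^\infty})$ jumps from countable to continuum exactly at the second quasicyclic summand requires analysing $\hom(X,C_{2^\infty})$ as a module over the $2$-adic integers and counting distinct kernels under $\Aut(C_{2^\infty})$. Second, and more delicately, $\dim_{\mathbb F_2}(X/2X)$ does not see divisible subgroups at all, whereas $r_0(X)$ and $r_2(X)$ do (a quasicyclic summand contributes to $r_2$ but not to $X/2X$, and a divisible torsion-free subgroup contributes to $r_0$ but not to $X/2X$); so reconciling the mod-$2$ invariant with the ranks $r_0(X)$ and $r_2(X)$ forces one to track the maximal divisible subgroup separately, and this bookkeeping is where the heart of the argument resides.
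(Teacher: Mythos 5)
Your treatment of $(1)\Leftrightarrow(2)$ is correct and is essentially the paper's argument: both rest on Theorem~\ref{t18.7}(e) (equivalently Theorem~\ref{t19.2}(2)), the observation that each factor $2^{X/K^\pm}$ is a nontrivial metrizable compactum (since for abelian $X$ one has $|X/K^\pm|=|\HH(K)|/2\le\aleph_0$ by Theorem~\ref{t8.2}), and the fact that a product of nontrivial compact Hausdorff spaces is metrizable iff there are countably many factors, each metrizable.

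The equivalence $(2)\Leftrightarrow(3)$, however, is not established, and the route you propose contains a step that is false as stated. Your plan hinges on the claim that $q(X,C_{2^\infty})$ is uncountable \emph{precisely when} $X$ admits an epimorphism onto $C_{2^\infty}\oplus C_{2^\infty}$. The ``if'' direction is fine (pull back the graphs of the uncountably many automorphisms of $C_{2^\infty}$, as in Lemma~\ref{l19.6}), but the ``only if'' direction fails: take $X=\big(\oplus^\w C_2\big)\oplus C_{2^\infty}$. For every homomorphism $f:\oplus^\w C_2\to C_2\subset C_{2^\infty}$ the map $\phi_f:(v,z)\mapsto f(v)+z$ is an epimorphism of $X$ onto $C_{2^\infty}$, and distinct $f$ have distinct kernels $\{(v,f(v)):v\in\oplus^\w C_2\}$; hence $q(X,C_{2^\infty})\ge|\hom(\oplus^\w C_2,C_2)|=\mathfrak c$. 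Yet $X$ admits no epimorphism onto $C_{2^\infty}\oplus C_{2^\infty}$: the image of the summand $\oplus^\w C_2$ lies in the socle $C_2\oplus C_2$, the image of the summand $C_{2^\infty}$ is a divisible subgroup $D$ isomorphic to $C_{2^\infty}$ or trivial, and $(C_{2^\infty}\oplus C_{2^\infty})/D$ is infinite, so the total image $D+F$ (with $F$ finite) is always proper. So uncountability of $q(X,C_{2^\infty})$ is \emph{not} governed by the existence of such an epimorphism alone; it is governed by it only in conjunction with the rank hypotheses, which is exactly why the paper cannot avoid a structural analysis.

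Even if one reads your lemma conditionally (under your standing hypothesis $\dim_{\mathbb F_2}(X/2X)<\infty$, which excludes the example above), that conditional statement is precisely the hard implication $(3)\Rightarrow(2)$ in disguise, and your proposal does not prove it: you explicitly defer both it and the ``final translation'' between $\dim_{\mathbb F_2}(X/2X)$ and the ranks $r_0(X),r_2(X)$, calling them the main obstacle. The paper fills this gap by a concrete construction: it passes to $Y=X/X_{\odd}$ (countable), runs over the countably many maximal independent sets $M$ of infinite-order elements, splits each torsion quotient $G_M=Y/\la M\ra$ as $O_M\oplus D_M$ with $D_M$ a $2$-group of finite $2$-rank, and uses Pr\"ufer's theorem together with the no-epimorphism-onto-$C_{2^\infty}\oplus C_{2^\infty}$ hypothesis to show each $D_M$ has only countably many subgroups, every $H\in\mathcal H$ being a preimage of one of them; the converse implication $(2)\Rightarrow(3)$ is handled by heredity of $\wht\K$-countability under subgroups and quotients (Lemma~\ref{l19.5}) applied to $\oplus^\w C_2$, $\oplus^\w\IZ$ and $C_{2^\infty}\times C_{2^\infty}$ (Lemma~\ref{l19.6}). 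Some replacement for this structural work — not merely a homomorphism count — is needed before your outline becomes a proof.
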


\begin{proof} $(1)\Leftrightarrow(2)$ By Theorem~\ref{t19.2}(2), each minimal left ideal is homeomorphic to the cube $$(2^\w)^{q(X,C_{2^\infty})}\times \prod_{1\le k<\infty}(2^{2^{k-1}})^{q(X,C_{2^k})},$$ which  is metrizable if and only if $|\wht\K|=\sum_{1\le k\le\infty}q(X,C_{2^k})\le\aleph_0$.  
\smallskip

For the proof of the equivalence $(2)\Leftrightarrow(3)$ we need two lemmas. We define a group $G$ to be {\em $\wht\K$-countable} if the family of maximal 2-cogroups in $G$ is at most countable.

\begin{lemma}\label{l19.5} Each subgroup and each quotient group of a $\wht\K$-countable group is $\wht\K$-countable.
\end{lemma}

\begin{proof} Assume that a group $G$ is $\wht\K$-countable. 
To prove that any subgroup $H\subset G$ is $\wht\K$-countable, observe that by Proposition~\ref{p7.3}(2), each 2-cogroup $K\subset H$ can be enlarged to a maximal 2-cogroup $\bar K$ in $G$. The maximality of $K$ in $H$ guarantees that $K=\bar K\cap H$. This implies that the number of maximal 2-cogroups in $H$ does not exceed the number of maximal 2-cogroups in $G$.

To prove that any quotient group $G/H$ of $G$ by a normal subgroup $H\subset G$ is $\wht\K$-countable, observe that for each maximal 2-cogroup $K\subset G/H$ the preimage $q^{-1}(K)$ under the quotient homomorphism $q:G\to G/H$ is a maximal 2-cogroup in $G$. This implies that the number of maximal 2-cogroups in $G/H$ does not exceed the number of maximal 2-cogroups of $G$.
\end{proof}

For a group $G$ with neutral element $e$ and a set $A$ by $$\oplus^A G=\{(x_\alpha)_{\alpha\in A}\in G^A:|\{\alpha\in A:x_\alpha\ne e\}|<\aleph_0\}$$ we denote the direct sum of $|A|$  many copies of $G$.

\begin{lemma}\label{l19.6} The groups $\oplus^\w C_2$, $\oplus^\w \IZ$ and $C_{2^\infty}\times C_{2^\infty}$ are not  $\wht\K$-countable.
\end{lemma} 

\begin{proof} Observe that for any abelian group $X$ the number $q(X,C_2)$ is equal to the number of subgroups having index 2 and is equal to the number of non-trivial homomorphisms $h:X\to C_2$.

Each (non-empty) subset $A\subset \w$ determines a (non-trivial) homomorphism $$h_A:\oplus^\w C_2\to C_2,\;\;h_A:(x_i)_{i\in\w}\mapsto \prod_{i\in A}x_i.$$
For any distinct subsets $A,B\subset\w$ the homomorphisms $h_A$ and $h_B$ are distinct. Consequently, for the group $X=\oplus^\w C_2$, the family $\wht\K$ of maximal 2-cogroups has cardinality $|\wht\K|\ge \hom(X,C_2)=2^\w$ and hence this group is not $\wht\K$-countable.  

Since $\oplus^\w C_2$ is a quotient group of $\oplus^\w\IZ$, the latter group is not $\wht\K$-countable.

Finally, we show that the group $X=\C_{2^\infty}\times C_{2^\infty}$ is not $\wht\K$-countable. It is well-known (see \cite[\S43]{Fu}) that the quasicyclic group $C_{2^\infty}$ has uncountable automorphism group $\Aut(C_{2^\infty})$.
For any automorphism $h:C_{2^\infty}\to C_{2^\infty}$ its graph $\Gamma_h=\{(x,h(x)):x\in C_{2^\infty}\}$ is a subgroup of $X=C_{2^\infty}\times C_{2^\infty}$ such that the quotient group $X/\Gamma_h$ is isomorphic to $C_{2^\infty}$. Consequently, $q(X,C_{2^\infty})\ge\mathrm{Auth}(C_{2^\infty})>\aleph_0$ and hence the group $X=C_{2^\infty}\oplus C_{2^\infty}$ is not $\wht\K$-countable.
\end{proof}

Now we are able to prove the equivalence $(2)\Leftrightarrow (3)$.
The implication $(2)\Ra(3)$ follows from Lemmas~\ref{l19.5} and \ref{l19.6}.

To prove the implication $(3)\Ra(2)$ of Theorem~\ref{t19.4}, assume that an abelian group $X$ has finite free and 2-ranks and $X$ admits no homomorphism onto the group $C_{2^\infty}\oplus C_{2^\infty}$. By Proposition~\ref{p19.1}, the cardinality  of the set $\wht\K$ of maximal 2-cogroups in $X$ is equal to the cardinality of the family $\mathcal H$ of subgroups $H\subset X$ such that the quotient group $X/H$ is isomorphic to $C_{2^k}$ for some $1\le k\le\infty$.
So, it suffices to prove that $|\mathcal H|\le\aleph_0$.

Consider the subgroup $X_{\odd}\subset X$ consisting of the elements of odd order. Since $X$ has finite free and 2-ranks, so does the quotient group $X/X_{\odd}$. Then quotient group $Y=X/X_{\odd}$ is at most countable (because it contains no elements of odd order and has finite free and 2-ranks). Let $q:X\to Y$ be the quotient homomorphism. 

Let $\mathcal M$ be the family of maximal independent subsets consisting of elements of infinite order in the group $Y=X/X_{\odd}$. 
Since the free rank of $Y$ is finite, each (independent) set $M\in\M$ is finite and hence $\M$ is at most countable.   

For each $M\in\M$ consider the free abelian subgroup $\la M\ra\subset Y$ generated by $M$. Let $G_M=Y/\la M\ra$ be the quotient group and $q_M:Y\to G_M$ be the quotient homomorphism. The maximality of $M$ implies that $G_M$ is a torsion group. Since the free and 2-ranks of the group $Y$ are finite, the quotient group $G_M$ has finite 2-rank. The group $G_M$ is the direct sum $G_M=O_M\oplus D_M$ of the subgroup $O_M$ of elements of odd order and  the maximal 2-subgroup $D_M\subset G_M$. Let $p_M:G_M\to D_M=G_M/O_M$ be the quotient homomorphism. 

We claim that the group $D_M$ has at most countably many subgroups.
Since $D_M$ is a quotient group of $X$ and $X$ admits no homomorphism onto the group $(C_{2^\infty})^2$ the group $D_M$ also admits no homomorphism onto $C_{2^\infty}^2$. Two cases are possible.

1) The group $D_M$ contains no subgroup isomorphic to $C_{2^\infty}$. In this case the Pr\"ufer's Theorem~17.2 \cite{Fu} guarantees that $D_M$ is a direct sum of cyclic 2-groups. Since $D_M$ has finite 2-rank, it is finite, being a finite sum of cyclic 2-groups. Then $D_M$ has finitely many subgroups. 

2) The group $D_M$ contains a subgroup $D\subset M$ isomorphic to $C_{2^\infty}$. Being divisible, the subgroup $D$ is complemented in  $D_M$, which means that $D_M=D\oplus F$ for some subgroup $F\subset D_M$. Since $D_M$ admits no homomorphism onto $(C_{2^\infty})^2$, the subgroup $F$ contains no subgroup isomorphic to $C_{2^\infty}$ and hence is finite by the preceding case. Taking into account that the quasicyclic 2-group $D$ has countably many subgroups, we conclude that the group $D_M=D\oplus F$ also has countably many subgroups.

In both cases the family $\mathcal D_M$ of subgroups of $D_M$ is at most countable. Then the family $\mathcal H_{M}=\{(p_M\circ q_M\circ q)^{-1}(H):H\in \mathcal D_M\}$ also is at most countable. It remains to check that $\mathcal H\subset\bigcup_{M\in\mathcal M}\mathcal H_M$.

Fix any subgroup $H\in\mathcal H$. By the definition of $\mathcal H$, the quotient group $X/H$ is a 2-group, which implies $X_{\odd}\subset H$. 
Then $H=q^{-1}(H_Y)$ where $H_Y=q(H)$. Let $M$ be a maximal independent subset of $H_Y$ that consists of elements of infinite order. Since $Y/H_Y=X/H$ is a torsion group, the set $M$ is maximal in $Y$ and hence belongs to the family $\M$. It follows that $\la M\ra\subset H_Y$ and hence $H_Y=q_M^{-1}(H_M)$ where $H_M=q_M(H_Y)\subset G_M$. Since $G_M/H_M=Y/H_Y=X/H$ is a 2-group, the subgroup $H_M$ contains the subgroup $O_M$ of elements of odd order in $G_M$. Then $H_M=p_M^{-1}(G_M)$ where $G_M=p_M(H_M)\subset D_M$. Since $G_M\in\mathcal D_M$, we conclude that the group $H=(p_M\circ q_M\circ q)^{-1}(G_M)$ belongs to the family $\mathcal H_M\subset\mathcal H$.
\end{proof}

\section{Compact reflexions of groups}\label{s18}

In this section $X$ is an arbitrary group. 
Till this moment our strategy in describing the minimal left ideals of the semigroups $\lambda(X)$ consisted in finding a relatively small subfamily $\mathsf F\subset\mathsf P(X)$ such that the function representation $\Phi_{\mathsf F}:\lambda(X)\to\Enl(\mathsf F)$ is injective on all minimal left ideals of $\lambda(X)$. Now we shall simplify the group $X$ preserving the minimal left ideals of $\lambda(X)$ unchanged.

We shall describe three such simplifying procedures. One of them is the factorization of $X$ by the subgroup $$\Odd=\bigcap_{K\in\wht\K}KK.$$ Here we assume that $\Odd=X$ if the set $\wht\K$ is empty.

The following proposition explains the choice of the notation for the subgroup $\Odd$. We recall that a group $G$ is called {\em odd} if each element of $G$ has odd order.

\begin{proposition}\label{p20.1} $\Odd$ is the largest normal odd subgroup of $X$. If $X$ is Abelian, then $\Odd$ coincides with the set of all elements having odd order in $X$.
\end{proposition}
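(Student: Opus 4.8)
The plan is to exploit the description of $2$-cogroups from Proposition~\ref{p6.2} together with the extension property of Proposition~\ref{p7.3}(2). First observe that $\Odd=\bigcap_{K\in\wht\K}KK$ is an intersection of subgroups (each $KK$ is a subgroup of index $2$ in $K^\pm$ by Proposition~\ref{p6.2}), hence is itself a subgroup. For normality I would note that conjugation by any $x\in X$ is an automorphism of the poset $(\K,\subset)$, so it carries $\wht\K$ onto $\wht\K$ and satisfies $x(KK)x^{-1}=(xKx^{-1})(xKx^{-1})$; thus conjugation merely permutes the family $\{KK:K\in\wht\K\}$ and so fixes its intersection, giving $x\Odd x^{-1}=\Odd$.

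For oddness, suppose some $g\in\Odd$ fails to have odd (finite) order. Then the cyclic subgroup $\langle g\rangle$ is either infinite or of even order, so $\langle g^2\rangle$ has index $2$ in $\langle g\rangle$ and $g\notin\langle g^2\rangle$. Setting $H^\pm=\langle g\rangle$ and $H=\langle g^2\rangle$, Proposition~\ref{p6.2} shows that $K_0=\langle g\rangle\setminus\langle g^2\rangle$ is a $2$-cogroup with $g\in K_0$ and $K_0K_0=\langle g^2\rangle$. By Proposition~\ref{p7.3}(2) we may enlarge $K_0$ to a maximal $2$-cogroup $\wht K\in\wht\K$; then $g\in K_0\subset\wht K$, so $g\notin\wht K\wht K$ since $\wht K$ is disjoint from $\wht K\wht K$. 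This contradicts $g\in\Odd\subset\wht K\wht K$. Hence every element of $\Odd$ has odd order, so $\Odd$ is a normal odd subgroup. (When $\wht\K=\emptyset$ the same construction shows $X$ itself is odd, matching the convention $\Odd=X$.)

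The crux is showing that $\Odd$ contains every normal odd subgroup $N$, i.e. that $N\subset KK$ for each $K\in\wht\K$. The key observation is that oddness of $N$ forces $K^\pm\cap N\subset KK$: any $y\in K^\pm\cap N$ has odd order, so its image in the quotient $K^\pm/KK\cong C_2$ is trivial. In particular $K\cap N=\emptyset$, which I would use to prove, via Proposition~\ref{p6.2}, that $\tilde K=NK$ is again a $2$-cogroup, with $\tilde K\tilde K=NKK$ and $\tilde K^\pm=NK^\pm$; the two nontrivial verifications are that $NKK$ and $NK$ are disjoint (so $NKK$ has index $2$ in $NK^\pm$) and that $(NK)(NK)=NKK$, both of which reduce to $K\cap N=\emptyset$ and the normality of $N$. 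Since $K\subset NK=\tilde K$ and $K$ is maximal, we get $K=NK$, whence $Nk\subset K$ for any fixed $k\in K$ and therefore $N\subset Kk^{-1}=KK$. This is the step I expect to be the main obstacle, as it requires the careful coset bookkeeping indicated above.

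Finally, the abelian case follows for free: in an abelian group the set $X_{\odd}$ of all odd-order elements is a (normal) odd subgroup, so $X_{\odd}\subset\Odd$ by the maximality just established, while $\Odd\subset X_{\odd}$ because $\Odd$ is odd; hence $\Odd=X_{\odd}$.
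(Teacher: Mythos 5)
Your proof is correct and takes essentially the same route as the paper's: normality because conjugation permutes $\wht\K$, oddness by enlarging the 2-cogroup $\langle g\rangle\setminus\langle g^2\rangle=g^{2\IZ+1}$ to a maximal one, and the maximality step by showing that $NK$ is a 2-cogroup containing $K$ and invoking maximality of $K$ to get $N\subset KK$. The only cosmetic difference is that the paper establishes the disjointness of $NK$ and $NKK$ by extracting an element of $K\cap N$ and contradicting oddness directly, which is your $C_2$-quotient observation in different packaging; the paper also leaves the abelian case implicit, exactly as you dispatch it.
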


\begin{proof}
The normality of the subgroup $\Odd=\bigcap_{K\in\wht\K}KK$ follows from the fact that $xKx^{-1}\in\wht\K$ for every $K\in\wht\K$ and $x\in X$. Next, we show that the group $\Odd$ is odd. Assuming the converse, we could find an element $a\in\Odd$ such that the sets $a^{2\IZ}=\{a^{2n}:n\in\IZ\}$ and $a^{2\IZ+1}=\{a^{2n+1}:n\in\IZ\}$ are disjoint. Then the 2-cogroup $a^{2\IZ+1}$ of $X$  can be enlarged to a maximal 2-cogroup $K\in\wht\K$. It follows that $a\in K\subset X\setminus KK$ and thus $a\notin\Odd$, which is a contradiction.

It remains to prove that $\Odd$ contains any normal odd subgroup $H\subset X$. It suffices to check that for every maximal 2-cogroup $K\in\wht\K$ the subgroup  $H\subset X$ lies in the group $KK$. Let $K^\pm=K\cup KK$. Since the subgroup $H$ is normal in $X$, the sets $KKH=HKK$ and $K^\pm H=HK^\pm$ are subgroups. We claim that the sets $KH=HK$ and $KKH=HKK$ are disjoint. 
Assuming that $KH\cap KKH\ne\emptyset$, we can find a point $x\in K$ such that $x\in KKH$. Since $KK=xK$, there are points $z\in K$ and $h\in H$ such that $x=xzh$. Then $z=h^{-1}\in K\cap H$. 
Now consider the cyclic subgroup $z^{2\IZ}=\{z^{2n}:n\in\IZ\}$. Since $z\in K$, the subgroup $z^{2\IZ}$ does not intersect the set $z^{2\IZ+1}=\{z^{2n+1}:n\in\IZ\}$. On the other hand, since $H$ is odd, there is an integer number $n\in\IZ$ with $z^{2n+1}=z^0\in z^{2\IZ+1}\cap z^{2\IZ}$.
This contradiction shows that $KH$ and $KKH$ are disjoint. Consequently, the subgroup $KKH$ has index 2 in the group $K^\pm H$ and hence $KH=K^\pm H\setminus KKH$ is a 2-cogroup in $X$ containing $H$. The maximality of $K$ in $\K$ guarantees that $K=KH$ and hence $H\subset KK$.
\end{proof}

The quotient homomorphism $q_{\mathrm{odd}}:X\to X/\Odd$ generates a continuous semigroup homomorphism $\lambda(q_{\mathrm{odd}}):\lambda(X)\to\lambda(X/\Odd)$.

The following theorem was proved in \cite[3.3]{BG3}.

\begin{theorem}\label{t20.2} The homomorphism $\lambda(q_{\odd}):\lambda(X)\to\lambda(X/\Odd)$ is injective on each minimal left ideal of $\lambda(X)$.
\end{theorem}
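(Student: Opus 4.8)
The plan is to pass to the function representation $\Phi:\lambda(X)\to\Enl(\mathsf P(X))$ of Theorem~\ref{t4.1} and to exploit two complementary facts: that $\lambda(q_{\odd})$ remembers the values of $\Phi_\LL$ on the $\Odd$-saturated subsets of $X$, and that inside a single minimal left ideal a function is already determined by those values. Since $\Phi$ is a topological isomorphism it carries minimal left ideals of $\lambda(X)$ to minimal left ideals of $\Enl(\mathsf P(X))$, so I may argue entirely on the function side.

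First I would record how $\lambda(q_\odd)$ looks through $\Phi$. Writing $q=q_\odd:X\to X/\Odd$, the induced homomorphism is $\lambda(q)(\LL)=\{B\subseteq X/\Odd : q^{-1}(B)\in\LL\}$; this is a maximal linked system because $q^{-1}$ is an injective Boolean embedding preserving complements. Using $x^{-1}q^{-1}(B)=q^{-1}(q(x)^{-1}B)$ one then gets the commutation relation
\[
\Phi_\LL\big(q^{-1}(B)\big)=q^{-1}\big(\Phi_{\lambda(q)(\LL)}(B)\big)\qquad(B\subseteq X/\Odd).
\]
Call a subset $A\subseteq X$ \emph{$\Odd$-saturated} if $\Odd\subseteq\Fix(A)$; since $\Odd$ is normal (Proposition~\ref{p20.1}) this is equivalent to $A=q^{-1}(q(A))$. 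The displayed identity shows that $\lambda(q)(\LL)$ completely determines the restriction of $\Phi_\LL$ to the family of $\Odd$-saturated sets, so if $\lambda(q)(\LL)=\lambda(q)(\LL')$ then $\Phi_\LL$ and $\Phi_{\LL'}$ agree on every $\Odd$-saturated set.

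Second, I would show that two functions lying in one minimal left ideal $L=\Enl(\mathsf P(X))\circ e$, with $e$ a minimal idempotent, that agree on all $\Odd$-saturated sets must coincide. The key is that every $f\in L$ satisfies $f\circ e=f$, whence $f(A)=f(e(A))$ for all $A$. Thus once one knows that the image $e(\mathsf P(X))$ consists of $\Odd$-saturated sets, each $f\in L$ is recovered from its values on $\Odd$-saturated sets; agreement there forces $f=f'$. Combined with the first step this gives $\LL=\LL'$ and proves that $\lambda(q_\odd)$ is injective on each minimal left ideal.

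The main obstacle is therefore the remaining lemma: the image of a minimal idempotent $e\in\Enl(\mathsf P(X))$ consists of $\Odd$-saturated sets, i.e. every value $e(A)$ is fixed by $\Odd$. The natural route is to prove that each $e(A)$ is either $\emptyset$, $X$, or a twin set whose negative stabilizer $K=\Fix^-(e(A))$ is a maximal $2$-cogroup; for such a set $\Fix(e(A))=KK\supseteq\Odd$, since $\Odd=\bigcap_{K\in\wht\K}KK$. When $X$ is twinic this is immediate from Theorem~\ref{t17.1} together with Proposition~\ref{p18.1}, because there the minimal idempotents take their values in $\{\emptyset,X\}\cup\wht\Tau$ and every member of $\wht\Tau$ is $\Odd$-saturated. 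For an arbitrary group the twinic machinery is unavailable, and the crux is to establish the $\Odd$-invariance of the values of a minimal idempotent directly, using the oddness of $\Odd$ from Proposition~\ref{p20.1} (an odd group has no nontrivial $2$-cogroups, so no element of $\Odd$ can move a value of a minimal idempotent across a maximal $2$-cogroup); this is exactly the content imported from \cite[3.3]{BG3}.
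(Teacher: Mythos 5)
The first thing to say is that the paper itself contains no proof of Theorem~\ref{t20.2}: it is imported wholesale, with the sentence ``The following theorem was proved in \cite[3.3]{BG3}'', so there is no internal argument to compare yours against. Judged on its own terms, your reduction is correct as far as it goes: the formula $\lambda(q)(\LL)=\{B\subseteq X/\Odd: q^{-1}(B)\in\LL\}$, the commutation relation $\Phi_\LL(q^{-1}(B))=q^{-1}\bigl(\Phi_{\lambda(q)(\LL)}(B)\bigr)$, the identification of $\Odd$-saturated sets with preimages $q^{-1}(B)$ (using normality of $\Odd$ from Proposition~\ref{p20.1}), and the observation that every $f$ in a minimal left ideal $\Enl(\mathsf P(X))\circ e$ satisfies $f=f\circ e$, hence is determined by its values on $e(\mathsf P(X))$, are all sound. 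Combined with Theorem~\ref{t17.1} (which gives $e(\mathsf P(X))\subseteq\{\emptyset,X\}\cup e(\wht\Tau)$, $e(\wht\Tau)\subseteq\wht\Tau$, and $\Fix(A)=KK\supseteq\Odd$ for $A\in\wht\Tau$ with $K=\Fix^-(A)\in\wht\K$), this yields a complete and rather clean proof of the theorem \emph{for twinic groups}.

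The genuine gap is the general case. Section~20 explicitly declares that $X$ is an \emph{arbitrary} group, and Theorem~\ref{t20.2} is used at that level of generality (e.g.\ to reduce $\lambda(D_{2n})$ to $\lambda(D_{2^k})$, which only needs amenable groups, but the statement itself is unrestricted). Your key lemma --- that every value of a minimal idempotent of $\Enl(\mathsf P(X))$ is $\Odd$-saturated --- is obtained only from Theorem~\ref{t17.1}, whose proof rests on the idempotents $e_{\wtd\Tau}$ of Theorem~\ref{t16.3}/Corollary~\ref{c16.4}, and those constructions use a twinic ideal in an essential way; nothing in the paper controls $e(A)$ for $A\notin\wht\Tau$ when $X$ is not twinic, and you give no independent argument (the parenthetical remark that odd groups have no $2$-cogroups is a true observation but not a proof that elements of $\Odd$ fix the values of a minimal idempotent). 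Worse, deferring the lemma to \cite[3.3]{BG3} is circular: that citation \emph{is} Theorem~\ref{t20.2} itself, not your lemma, so you are invoking the theorem to prove the theorem. A genuinely general proof must bypass the twinic machinery altogether --- for instance, by Corollary~\ref{c2.2} it suffices to exhibit a single minimal left ideal of $\lambda(X)$ on which $\lambda(q_\odd)$ is injective, and that is the kind of direct argument the cited reference supplies; your write-up does not. So the proposal proves the statement only under the additional hypothesis that $X$ is twinic.
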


Next, we define two compact topological groups called the first and second profinite reflexions of the group $X$. To define the first profinite reflexion, consider the family $\N$ of all normal subgroups of $X$ with finite index in $X$. For each subgroup $H\in\N$ consider the quotient homomorphism $q_H:X\to X/H$. The diagonal product of those homomorphisms determines the homomorphism $q:X\to\prod_{H\in\N}X/H$ of $X$ into the compact topological group $\prod_{H\in\N}X/H$. The closure of the image $q(X)$ in $\prod_{H\in\N}X/H$ is denoted by $\bar X$ and is called the {\em profinite reflexion} of $X$.

The second profinite reflexion $\bar X_2$ is defined in a similar way with help of the subfamily $$\N_2=\Big\{\bigcap_{x\in X}xKKx^{-1}:K\in\wht\K,\; |X/K|<\aleph_0\Big\}$$ of $\N$. The quotient homomorphisms $q_H:X\to X/H$, $H\in\N_2$, compose a homomorphism $q_2:X\to\prod_{H\in\N_2}X/H$. The closure of the image $q_2(X)$ in $\prod_{H\in\N_2}X/H$ is denoted by $\bar X_2$ and is called the {\em second profinite reflexion} of $X$. Since $\mathrm{Ker}(q_2)=\bigcap\N_2\supset\bigcap_{K\in\wht{\K}}KK\supset \Odd$, the homomorphism $q_2:X\to\bar X_2$ factorizes through the group $X/\Odd$ in the sense that there is a unique homomorphism $q_{\even}:X/\Odd\to \bar X_2$ such that $q_2=q_\even\circ q_\odd$.

Thus we get the following commutative diagram:
$$
\xymatrix{
X\ar[r]^-{q_{\odd}}\ar[dr]^{q_2}\ar[d]^{q} & {X/\Odd}\ar[d]^{q_\even}\\
\bar X \ar[r]_-{\pr} &\bar X_2
}
$$
Applying to this diagram the functor $\lambda$ of superextension we get the diagram
$$
\xymatrix{
\lambda(X)\ar[r]^-{\lambda(q_{\odd})}\ar[dr]^{\lambda(q_2)}\ar[d]^{\lambda(q)} & {\lambda(X/\Odd)}\ar[d]^{\lambda(q_\even)}\\
\lambda(\bar X) \ar[r]_-{\lambda(\pr)} &\lambda(\bar X_2)
}
$$
In this diagram $\lambda(\bar X)$ and $\lambda(\bar X_2)$ are the superextensions of the compact topological groups $\bar X$ and $\bar X_2$. We recall that the superextension $\lambda(K)$ of a compact Hausdorff space $K$ is the closed subspace of the second exponent $\exp(\exp(K))$ that consists of the maximal linked systems of closed subsets of $K$, see \cite[\S2.1.3]{TZ}.

\begin{theorem}\label{t18.3} If each maximal 2-cogroup $K$ of a twinic  group $X$ has finite index in $X$, then the homomorphism $\lambda(q_2):\lambda(X)\to\lambda(\bar X_2)$ is injective on each minimal left ideal of $\lambda(X)$.
\end{theorem}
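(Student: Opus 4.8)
The plan is to reduce, via the function representation developed earlier, the injectivity of $\lambda(q_2)$ to the already-established injectivity of the operator $\Phi_{\wtd\K}$ on minimal left ideals. Fix a $[\wht\K]$-selector $\wtd\K\subset\wht\K$. Since $X$ is twinic, Corollary~\ref{c18.5}(2) tells us that the homomorphism $\Phi_{\wtd\K}:\LL\mapsto(\Phi_\LL|\Tau_K)_{K\in\wtd\K}$ maps each minimal left ideal of $\lambda(X)$ isomorphically onto a minimal left ideal, and in particular is injective there. Hence it suffices to show that the value $\lambda(q_2)(\LL)$ determines the whole family $(\Phi_\LL|\Tau_K)_{K\in\wtd\K}$; combined with Corollary~\ref{c2.2} this yields injectivity of $\lambda(q_2)$ on every minimal left ideal.

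The key geometric observation is that, under the hypothesis that every maximal $2$-cogroup has finite index, the relevant twin sets are exactly preimages of clopen subsets of $\bar X_2$. Indeed, fix $K\in\wtd\K$; since $|X/K|<\aleph_0$, the subgroup $KK$ has finite index, and its normal core $H_K=\bigcap_{x\in X}xKKx^{-1}$ lies in $\N_2$ and is normal of finite index. For any $A\in\Tau_K$ we have $\Fix(A)=\Fix^-(A)\cdot\Fix^-(A)=KK$, so $A$ is a union of right cosets of $KK$, and since $H_K\subset KK$ these are in turn unions of cosets of $H_K$. Thus $A=q_{H_K}^{-1}(q_{H_K}(A))$, whence $A=q_2^{-1}(C_A)$ for the clopen set $C_A=\pr_{H_K}^{-1}(q_{H_K}(A))\subset\bar X_2$, where $\pr_{H_K}:\bar X_2\to X/H_K$ is the (continuous) coordinate projection. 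Using the normality of $H_K$, the same computation shows that every left translate $x^{-1}A$, $x\in X$, is again a union of cosets of $H_K$, so $x^{-1}A=q_2^{-1}(C)$ for a suitable clopen $C\subset\bar X_2$.

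Next I would record the compatibility of the functor $\lambda$ with preimages of clopen sets: for every clopen $C\subset\bar X_2$ one has $C\in\lambda(q_2)(\LL)$ if and only if $q_2^{-1}(C)\in\LL$. This follows from the description of $\lambda(q_2)(\LL)$ as the maximal linked system generated by the closures $\overline{q_2(L)}$, $L\in\LL$, together with the density of $q_2(X)$ in $\bar X_2$: if $q_2^{-1}(C)\in\LL$ then $C=\overline{q_2(q_2^{-1}(C))}$ is a generator and hence lies in $\lambda(q_2)(\LL)$; while if $q_2^{-1}(C)\notin\LL$ then $q_2^{-1}(\bar X_2\setminus C)\in\LL$ by maximality, forcing $\bar X_2\setminus C\in\lambda(q_2)(\LL)$, so $C\notin\lambda(q_2)(\LL)$ by linkedness. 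Granting this, the value $\Phi_\LL(A)=\{x\in X:x^{-1}A\in\LL\}$ is completely determined by $\lambda(q_2)(\LL)$, since each membership $x^{-1}A\in\LL$ is equivalent to $C\in\lambda(q_2)(\LL)$ for the corresponding clopen $C$ produced in the previous step. Running this over all $A\in\Tau_K$ and all $K\in\wtd\K$ shows that $\lambda(q_2)(\LL)$ determines $\Phi_{\wtd\K}(\LL)$, which is exactly what was needed.

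The main obstacle I anticipate is the bookkeeping in the second and third steps rather than any conceptual difficulty: one must verify cleanly that $\lambda$ interacts with $q_2$ as claimed on clopen cylinder sets (the clopen membership lemma) and that left translates of twin sets in $\Tau_K$ remain unions of cosets of the normal core $H_K$. Both facts rest only on the normality and finite index of $H_K$ and on the density of $q_2(X)$ in $\bar X_2$, so no genuinely new idea is required; the care lies entirely in aligning the discrete subsets of $X$ with clopen subsets of the profinite group $\bar X_2$.
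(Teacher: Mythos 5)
Your proposal is correct and is essentially the paper's own argument: both hinge on Corollary~\ref{c18.5}(2) (injectivity of $\Phi_{\wtd\K}$ on each minimal left ideal of $\lambda(X)$) together with the observation that every $A\in\Tau_K$ and each of its left translates is saturated by the normal core $H=\bigcap_{x\in X}xKKx^{-1}\in\N_2$, hence is the $q_2$-preimage of a clopen cylinder in $\bar X_2$. The paper packages the final step as passing to the finite quotient $\lambda(X/H)$ (leaving the verification implicit), while you verify the equivalent clopen-membership statement for $\lambda(q_2)$ directly; these are interchangeable formulations of the same step.
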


\begin{proof} The injectivity of the homomorphism $\lambda(q_2)$ on a minimal left ideal  $\mathsf L$ of $\lambda(X)$ will follow as soon as for any distinct maximal linked systems $\A,\mathcal B\in\mathsf L$ we find a subgroup $H\in\mathcal N_2$ such that $\lambda q_H(\A)\ne\lambda q_H(\mathcal B)$. Fix any $[\wht\K]$-selector $\wtd\K\subset\wht\K$.

By Corollary~\ref{c18.5}, the homomorphism $\Phi_{\wtd{\Tau}}:\lambda(X)\to\prod_{K\in\wtd\K}\Enl(\Tau_K)$, $\Phi_{\wtd\Tau}:\LL\mapsto (\Phi_\LL|\Tau_K)_{K\in\wtd\K}$ is injective on the minimal left ideal $\mathsf L$. Consequently, $\Phi_\A|\Tau_K\ne \Phi_{\mathcal B}|\Tau_K$ for some $K\in\wtd\K$ and we can find a set $T\in\Tau_K$ such that $\Phi_\A(T)\ne \Phi_{\mathcal B}(T)$.

Since the 2-cogroup $K$ has finite index in $X$, the normal subgroup $H=\bigcap_{x\in X}xKKx^{-1}$ has finite index in $X$ and belongs to the family $\mathcal N_2$. Consider the finite quotient group $X/H$ and let $q_H:X\to X/H$ be the quotient homomorphism. Since $H\subset KK$, the set $T=KKT$ coincides with the  preimage $q_H^{-1}(T')$ of some twin set $T'\in X/H$. This fact can be used to show that $\lambda q_H(\A)\ne \lambda q_H(\mathcal B)$.
\end{proof}

\begin{remark} For each finite abelian group $X$ the group $X/\Odd$ is a 2-group. For non-commutative groups it is not always true: for the group $X=A_4$ of even permutations of the set $4=\{0,1,2,3\}$ the group $X/\Odd$ coincides with $X$, see Section~\ref{s21.5}. Also $X/\Odd$ coincides with $X$ for any simple group.
\end{remark}

\section{Some examples}

Now we consider the superextensions of some concrete groups. 

\subsection{The infinite cyclic group $\IZ$}  In order to compare the algebraic properties of the semigroups $\lambda(\IZ)$ and $\beta(\IZ)$ let us recall a deep result of E.~Zelenyuk \cite{Zel} (see also \cite[\S7.1]{HS}) who proved that each finite subgroup in the subsemigroup $\beta(\IZ)\subset\lambda(\IZ)$ is trivial. It turns out that the semigroup $\lambda(\IZ)$ has a totally different property.

\begin{theorem}\label{t21.1} 
\begin{enumerate}
\item[\textup{(1)}] The semigroup $\lambda(\IZ)$ contains a principal left ideal topologically isomorphic to $\prod\limits_{k=1}^\infty C_{2^k}\wr Z_k^{Z_k}$ where $Z_k=2^{2^{k-1}-k}$.
\item[\textup{(2)}] Each minimal left ideal of $\lambda(\IZ)$ is topologically isomorphic  to $2^\w\times \prod_{k=1}^\infty C_{2^k}$ where the Cantor cube $2^\w$ is endowed with the left-zero multiplication.
\item[\textup{(3)}] each maximal group of the minimal ideal $\IK(\lambda(\IZ))$ is topologically isomorphic to $\prod\limits_{k=1}^\infty C_{2^k}$.
\item[\textup{(4)}] The semigroup $\lambda(\IZ)$ contains a topologically isomorphic copy of each second countable profinite topological semigroup.
\end{enumerate}
\end{theorem}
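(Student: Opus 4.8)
Parts (1)--(3) are immediate specialisations of the results of the previous section. The plan is first to compute the relevant cardinals: since $\IZ$ is abelian with trivial twinic ideal, Proposition~\ref{p19.1} identifies $q(\IZ,C_{2^k})$ with the number of subgroups $H\subset\IZ$ for which $\IZ/H\cong C_{2^k}$. For each finite $k$ there is exactly one such subgroup, namely $2^k\IZ$, so $q(\IZ,C_{2^k})=1$, whereas $q(\IZ,C_{2^\infty})=0$ because the finitely generated group $\IZ$ admits no epimorphism onto the non-finitely-generated group $C_{2^\infty}$. Feeding these values into Theorem~\ref{t19.2}(3) yields (3), and, since $\IZ$ satisfies condition (1) of Theorem~\ref{t19.3}, the algebraic isomorphisms become topological; thus Theorem~\ref{t19.3}(5) gives (2) and Theorem~\ref{t19.3}(6) gives (1). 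The only cosmetic step is to rewrite the cube factor $\prod_{k}Z_k\cong 2^{\sum_k(2^{k-1}-k)}=2^{\w}$ (with the left-zero multiplication, which passes to products), turning $\prod_k(C_{2^k}\times Z_k)$ into $2^\w\times\prod_k C_{2^k}$.

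For the substantial statement (4), the plan is to embed every second countable profinite topological semigroup into the principal left ideal $L\cong\prod_{k\in\IN}\bigl(C_{2^k}\wr Z_k^{Z_k}\bigr)$ furnished by part (1), where $Z_k=2^{2^{k-1}-k}$ is a \emph{finite} set. Being a left ideal, $L$ is a closed subsemigroup of $\lambda(\IZ)$, so it suffices to embed into $L$. I would first record the key numerical fact that $|Z_k|=2^{2^{k-1}-k}\to\infty$, so that the full transformation monoids $Z_k^{Z_k}$ occurring as wreath factors have unbounded degree.

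The embedding itself is a chain of maps. A second countable profinite topological semigroup $S$ is, by definition and metrizability, the inverse limit $S=\varprojlim_n S_n$ of a countable inverse sequence of finite semigroups, hence embeds as a closed subsemigroup of $\prod_n S_n$ via the canonical projections. Next, the (left regular) Cayley representation $s\mapsto\lambda_s$, $\lambda_s(x)=sx$, embeds each $S_n$ into the full transformation monoid $m_n^{m_n}$ of the set $S_n^1$, where $m_n=|S_n|+1$. Choosing a strictly increasing sequence $k(1)<k(2)<\cdots$ with $|Z_{k(n)}|\ge m_n$ (possible since $|Z_k|\to\infty$) and extending each transformation of $m_n$ points by the identity off an injected copy of $m_n$ inside $Z_{k(n)}$, one embeds $m_n^{m_n}\hookrightarrow Z_{k(n)}^{Z_{k(n)}}$; on the coordinates $k$ outside the subsequence one sends everything to a fixed constant (idempotent) map. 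Finally, on each factor $f\mapsto(\mathbf 1,f)$, where $\mathbf 1$ is the constant function at the identity of $C_{2^k}$, embeds $Z_k^{Z_k}$ into $C_{2^k}\wr Z_k^{Z_k}$, because the wreath multiplication formula gives $(\mathbf 1,f)*(\mathbf 1,f')=(\mathbf 1,f\circ f')$. Composing,
\[
S=\varprojlim_n S_n\hookrightarrow\prod_n S_n\hookrightarrow\prod_n m_n^{m_n}\hookrightarrow\prod_{k\in\IN}Z_k^{Z_k}\hookrightarrow\prod_{k\in\IN}\bigl(C_{2^k}\wr Z_k^{Z_k}\bigr)\cong L\subset\lambda(\IZ).
\]
Each arrow is a continuous injective semigroup homomorphism, and since the domains are compact and the targets Hausdorff, every arrow is a topological isomorphism onto its (closed) image; hence so is the composite, proving (4).

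The routine verifications are that each displayed map is genuinely a homomorphism (most notably the wreath inclusion, handled by the constant-$\mathbf 1$ computation above) and that injectivity is preserved along the chain. The one genuinely load-bearing point is the unboundedness $|Z_k|\to\infty$, which lets arbitrarily large full transformation monoids appear as wreath factors and thereby absorb the Cayley images of the finite quotients $S_n$; everything else is standard inverse-limit and compactness bookkeeping. I expect no real obstacle beyond organising these steps cleanly.
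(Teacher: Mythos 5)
Your proposal is correct and follows essentially the same route as the paper: parts (1)--(3) are obtained by computing $q(\IZ,C_{2^k})=1$ and $q(\IZ,C_{2^\infty})=0$ and invoking Theorem~\ref{t19.3} (your citation ``Theorem~\ref{t19.2}(3)'' for item (3) should read Theorem~\ref{t19.3}(2), a harmless slip), while part (4) is exactly the paper's intended argument---the paper merely cites the Cayley-type Lemma~\ref{emb} together with item (1), and your inverse-limit decomposition, Cayley embedding into $m_n^{m_n}\hookrightarrow Z_{k(n)}^{Z_{k(n)}}$ (using $|Z_k|\to\infty$), the section $f\mapsto(\mathbf 1,f)$ into the wreath product, and the compactness bookkeeping are precisely the details that citation suppresses.
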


\begin{proof} The group $\IZ$ is abelian and hence has trivial twinic ideal according to Theorem~\ref{t6.2}. It is easy to see that $q(\IZ,C_{2^k})=1$ for all $k\in\IN$, while $q(\IZ,C_{2^\infty})=0$.
\smallskip

1. By Theorem~\ref{t19.3}(6), the semigroup $\lambda(\IZ)$ contains a principal left ideal that is topologically isomorphic  to $\prod_{k=1}^\infty C_{2^k}\wr Z_k^{Z_k}$ where $Z_k=2^{2^{k-1}-k}$.
\smallskip

2. By Theorem~\ref{t19.3}(5), each minimal left ideal $\mathsf L$ of $\lambda(\IZ)$ is topologically isomorphic to $\prod_{k=1}^\infty C_{2^k}\times Z_k$ where each cube $Z_k=2^{2^{k-1}-k}$ is endowed with the left zero multiplication. It is easy to see that the left zero semigroup $\prod_{k=1}^\infty Z_k$ is topologically isomorphic to the Cantor cube $2^\w$ endowed with the left zero multiplication. Consequently, $\mathsf L$ is topologically isomorphic to $2^\w\times \prod_{k=1}^\infty C_{2^k}$.
\smallskip

3. The preceding item implies that each maximal group of the minimal ideal $\IK(\lambda(\IZ))$ is topologically isomorphic to $\prod\limits_{k=1}^\infty C_{2^k}$.

4. The fourth item follows from the first item and the following well-known fact, see \cite[I.1.3]{CP}.
\end{proof}

\begin{lemma}\label{emb} Each semigroup $S$ is algebraically isomorphic to a subsemigroup of the semigroup $A^A$ of all self-maps of a set $A$ of cardinality $|A|\ge |S^1|$ where $S^1$ is $S$ with attached unit.
\end{lemma}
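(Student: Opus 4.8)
This is the classical Cayley-type embedding theorem for semigroups, and the plan is to reproduce its standard proof. First I would reduce to the case $A=S^1$, where $S^1$ denotes $S$ with an identity $1$ adjoined (and $S^1=S$ when $S$ already has an identity); the general case of an arbitrary set $A$ with $|A|\ge|S^1|$ will follow at the end by a trivial padding argument. On $A=S^1$ I would represent each element $s\in S$ by its \emph{left translation} $L_s:S^1\to S^1$, $L_s:x\mapsto sx$, which is a well-defined self-map of $S^1$ because $S^1$ is closed under multiplication.

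The key computation is that $s\mapsto L_s$ is a semigroup homomorphism into $(S^1)^{S^1}$ equipped with composition. Using the convention $(f\circ g)(x)=f(g(x))$ fixed in Section~2, one checks that $L_s\circ L_t(x)=s(tx)=(st)x=L_{st}(x)$, so $L_s\circ L_t=L_{st}$; it is precisely the use of left translations (rather than right translations, which would instead give an anti-homomorphism) that matches the order of composition. Injectivity is the place where the adjoined unit is essential: evaluating at $1$ gives $L_s(1)=s\cdot 1=s$, so $L_s=L_{s'}$ forces $s=s'$. Hence $s\mapsto L_s$ is an isomorphism of $S$ onto a subsemigroup of $(S^1)^{S^1}$.

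Finally, to pass from $A=S^1$ to an arbitrary set $A$ with $|A|\ge|S^1|$, I would fix an injection $S^1\hookrightarrow A$, identify $S^1$ with its image, and extend each $L_s$ to a map $\widetilde L_s\in A^A$ acting as the identity on $A\setminus S^1$. Since every $L_s$ already sends $S^1$ into $S^1$ and each $\widetilde L_s$ fixes $A\setminus S^1$ pointwise, the relation $\widetilde L_s\circ\widetilde L_t=\widetilde L_{st}$ holds separately on $S^1$ and on its complement, and injectivity is inherited from the previous step. There is no serious obstacle in this argument; the only two points demanding care are the choice of handedness of the translations, needed to obtain a genuine homomorphism under the paper's composition convention, and the observation — well illustrated by right-zero semigroups, where all left translations would otherwise collapse to the identity — that adjoining a unit is genuinely required to ensure injectivity.
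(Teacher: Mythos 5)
Your proof is correct, and it is essentially the argument behind the paper's treatment: the paper does not prove this lemma itself but cites it as the classical Cayley-type representation theorem from Clifford--Preston \cite[I.1.3]{CP}, whose standard proof is exactly your extended regular representation (translations on $S^1$, injectivity via evaluation at the adjoined unit, then padding by the identity on $A\setminus S^1$). Your handedness check (left translations to match the convention $(f\circ g)(x)=f(g(x))$) and the right-zero counterexample showing the unit is needed are both accurate.
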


\subsection{The (quasi)cyclic 2-groups $C_{2^n}$}

For a cyclic 2-group $X=C_{2^n}$ the number 
$$q(X,C_{2^{k}})=\begin{cases}1&\mbox{if $k\le n$}\\
0&\mbox{otherwise}.
\end{cases}
$$ Applying Theorem~\ref{t19.3} we get:

\begin{theorem}For every $n\in\IN$ 
\begin{enumerate}
\item[\textup{(1)}] The semigroup $\lambda(C_{2^n})$ contains a principal left ideal isomorphic to $\prod\limits_{k=1}^n C_{2^k}\wr Z_k^{Z_k}$ where $Z_k=2^{2^{k-1}-k}$.
\item[\textup{(2)}] Each minimal left ideal of $\lambda(C_{2^n})$ is isomorphic  to $ \prod_{k=1}^n C_{2^k}\times Z_k$ where each cube $Z_k=2^{2^{k-1}-k}$ is endowed with left-zero multiplication.
\item[\textup{(3)}] Each maximal group of the minimal ideal $\IK(\lambda(C_{2^n}))$ is isomorphic to $\prod\limits_{k=1}^n C_{2^k}$.
\item[\textup{(4)}] The semigroup $\lambda(C_{2^n})$ contains an isomorphic copy of each semigroup $S$ of cardinality $|S|<2^{2^{n-1}-n}$.
\end{enumerate}
\end{theorem}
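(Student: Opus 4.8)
The plan is to deduce statements (1)--(3) directly from Theorem~\ref{t19.3} applied to the finite abelian group $X=C_{2^n}$, once the cardinals $q(X,C_{2^k})$ have been evaluated, and then to extract the embedding statement (4) from the explicit principal left ideal produced in part~(1).

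First I would record that $X=C_{2^n}$ is abelian, hence twinic with trivial twinic ideal $\TI=\{\emptyset\}$ by Theorem~\ref{t6.2} (equivalently, by Proposition~\ref{p9.4}); in particular $\TI\cap\wht\K=\emptyset$ holds trivially. Being finite, $X$ admits no epimorphism onto the quasicyclic group $C_{2^\infty}$, so condition~(1) of Theorem~\ref{t19.3} is satisfied and therefore all of its equivalent conditions hold. It remains only to compute the $q$-cardinals. By Proposition~\ref{p19.1}, $q(X,Q_{2^k})=0$ for all $k$, while $q(X,C_{2^k})$ equals the number of subgroups $H\subset C_{2^n}$ with $C_{2^n}/H\cong C_{2^k}$. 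Since the subgroups of $C_{2^n}$ form the chain $C_{2^0}\subset C_{2^1}\subset\dots\subset C_{2^n}$ with $C_{2^n}/C_{2^j}\cong C_{2^{n-j}}$, there is exactly one subgroup with quotient $C_{2^k}$ when $1\le k\le n$ and none otherwise; thus $q(X,C_{2^k})=1$ for $1\le k\le n$ and $q(X,C_{2^k})=0$ for $k>n$ (including $k=\infty$), as claimed just before the statement. Substituting these values into Theorem~\ref{t19.3}(6), (5), (2) collapses each product $\prod_{k}(\cdots)^{q(X,C_{2^k})}$ to $\prod_{k=1}^n(\cdots)$ and yields statements (1), (2), (3) respectively. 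Because $C_{2^n}$ is finite, every maximal $2$-cogroup $K\subset X$ has finite index, so each $\Tau_K$ and each cube $Z_k$ is finite; consequently the topological isomorphisms supplied by Theorem~\ref{t19.3} are isomorphisms of finite discrete semigroups, which is exactly the (formally weaker) ``isomorphic'' asserted here.

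For statement (4) I would exploit the principal left ideal $L$ of part~(1), isomorphic to $\prod_{k=1}^n C_{2^k}\wr Z_k^{\,Z_k}$, which in particular has the monoid $C_{2^n}\wr Z_n^{\,Z_n}$ as its $k=n$ factor, where $|Z_n|=2^{2^{n-1}-n}$. The full transformation monoid $Z_n^{\,Z_n}$ embeds into this factor via $f\mapsto(\mathbf 1,f)$, where $\mathbf 1\in C_{2^n}^{\,Z_n}$ is the constant map onto the neutral element: this is a homomorphism since in the wreath operation the new $C_{2^n}$-coordinate is $\mathbf 1(f'(\alpha))\cdot\mathbf 1(\alpha)=e$. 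Each such monoid factor in turn embeds into the product $L$ by padding the remaining coordinates with the identities $(\mathbf 1,\id)$. Now given any semigroup $S$ with $|S|<2^{2^{n-1}-n}=|Z_n|$, we have $|S^1|\le|S|+1\le|Z_n|$, so Lemma~\ref{emb} embeds $S$ into $Z_n^{\,Z_n}$. Composing the three embeddings places an isomorphic copy of $S$ inside $L\subset\lambda(C_{2^n})$, proving (4).

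I do not anticipate a serious obstacle, since the theorem is in essence a specialization of Theorem~\ref{t19.3}. The only points that genuinely require care are the subgroup count giving $q(X,C_{2^k})$, and, for (4), the verification that the full transformation semigroup $Z_n^{\,Z_n}$ sits inside the wreath-product factor together with the elementary cardinal arithmetic $|S^1|\le|Z_n|$; the passage from topological isomorphisms to plain isomorphisms is harmless here precisely because the finiteness of $C_{2^n}$ renders all the relevant spaces finite and discrete.
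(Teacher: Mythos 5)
Your proposal is correct and follows essentially the same route as the paper, which derives this theorem by computing $q(C_{2^n},C_{2^k})=1$ for $1\le k\le n$ (and $0$ otherwise) and invoking Theorem~\ref{t19.3}, with item (4) obtained from item (1) together with Lemma~\ref{emb} exactly as in the analogous Theorem~\ref{t21.1} for $\IZ$. Your write-up merely fills in details the paper leaves implicit (the subgroup count via Proposition~\ref{p19.1}, the hypothesis check for Theorem~\ref{t19.3}, and the explicit chain of embeddings $S\hookrightarrow Z_n^{Z_n}\hookrightarrow C_{2^n}\wr Z_n^{Z_n}\hookrightarrow\prod_{k=1}^n C_{2^k}\wr Z_k^{Z_k}$), all of which are correct.
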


The superextension $\lambda(C_{2^\infty})$ has even more interesting properties.

\begin{theorem}\label{t21.4}
\begin{enumerate}
\item[\textup{(1)}] Minimal left ideals of the semigroup $\lambda(C_{2^\infty})$ are not   topological semigroups.
\item[\textup{(2)}] each minimal left ideal of $\lambda(C_{2^\infty})$ is homeomorphic to the Cantor cube $2^\w$ and is algebraically isomorphic to $\mathfrak c\times (C_{2^\infty})^\w$ where the cardinal $\mathfrak c=2^{\aleph_0}$ is  endowed with left zero multiplication;
\item[\textup{(3)}] the semigroup $\lambda(C_{2^\infty})$ contains a principal left ideal, which is algebraically isomorphic to $(C_{2^\infty}\wr \mathfrak c^\mathfrak c)^\w$;
\item[\textup{(4)}] $\lambda(C_{2^\infty})$ contains an isomorphic copy of each semigroup of cardinality $\le \mathfrak c$;
\item[\textup{(5)}] each maximal subgroup of the minimal ideal $\IK(\lambda(C_{2^\infty}))$ of $\lambda(C_{2^\infty})$ is algebraically isomorphic to $(C_{2^\infty})^\w$;
\item[\textup{(6)}] each maximal subgroup of the minimal ideal $\IK(\lambda(C_{2^\infty}))$ is topologically isomorphic to the countable product $\prod_{n=1}^\infty (C_{2^\infty},\tau_n)$ of quasicyclic 2-groups endowed with twin-generated topologies;
\item[\textup{(7)}] for any twin-generated topologies $\tau_n$, $n\in\IN$, on $C_{2^\infty}$ the right-topological group $\prod_{n=1}^\infty(C_{2^\infty},\tau_n)$ is topologically isomorphic to a maximal subgroup of $\IK(\lambda(C_{2^\infty}))$.
\end{enumerate}
\end{theorem}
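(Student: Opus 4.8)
The plan is to specialize the general machinery of Sections~\ref{s18}--\ref{s17} and the abelian results of Section~\ref{s18} to the group $X=C_{2^\infty}$, so that items (1)--(5) become routine translations and the effort concentrates on the topological refinements (6) and (7). First I would record that $C_{2^\infty}$ is abelian, hence twinic with trivial twinic ideal $\TI=\{\emptyset\}$ by Proposition~\ref{p9.4}; in particular $\TI\cap\wht\K=\emptyset$, so all ``topological'' conclusions of Corollary~\ref{c18.5} and Theorem~\ref{t18.7} apply. The key preliminary computation is that of the cardinals $q(C_{2^\infty},G)$: since $X$ is abelian, $q(C_{2^\infty},Q_{2^k})=0$ for all $k$, and by Proposition~\ref{p19.1} the cardinal $q(C_{2^\infty},C_{2^k})$ equals the number of subgroups $H\subset C_{2^\infty}$ with $C_{2^\infty}/H\cong C_{2^k}$. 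The proper subgroups of $C_{2^\infty}$ are exactly the finite cyclic groups $C_{2^m}$, $m\in\w$, and each has quotient $C_{2^\infty}/C_{2^m}\cong C_{2^\infty}$; hence no finite quotient ever occurs, giving $q(C_{2^\infty},C_{2^k})=0$ for $1\le k<\infty$ and $q(C_{2^\infty},C_{2^\infty})=\aleph_0$. Equivalently, the maximal $2$-cogroups are the (normal) cogroups $K_m=C_{2^{m+1}}\setminus C_{2^m}$ of elements of order exactly $2^{m+1}$, with $\HH(K_m)=X/K_mK_m\cong C_{2^\infty}$, so that $\wht\K=\{K_m:m\in\w\}$ is countable and is its own $[\wht\K]$-selector.

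With these values, items (1)--(5) follow by substitution and elementary cardinal arithmetic. Item (1) follows from Proposition~\ref{p18.9} because each $K_m$ has infinite index in $X$; item (5) is Theorem~\ref{t18.7}(a), giving $\mathsf H_\E\cong\prod_{m\in\w}\HH(K_m)=(C_{2^\infty})^\w$. For item (2) I would invoke Theorem~\ref{t19.2}(2): each minimal left ideal is homeomorphic to $(2^\w)^{q(X,C_{2^\infty})}=(2^\w)^\w\cong 2^\w$ and algebraically isomorphic to $\prod_{m\in\w}(C_{2^\infty}\times Z_\infty)$ with $Z_\infty=2^\w$; since a countable product of left-zero cubes $2^\w$ is again a left-zero semigroup of cardinality $\mathfrak c$, this collapses to $\mathfrak c\times(C_{2^\infty})^\w$. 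Item (3) is Theorem~\ref{t19.2}(3) with $Z_\infty^{Z_\infty}=\mathfrak c^{\mathfrak c}$, and item (4) then follows from item (3) together with Lemma~\ref{emb}: any semigroup $S$ with $|S|\le\mathfrak c$ embeds into $\mathfrak c^{\mathfrak c}$, hence into the principal left ideal $(C_{2^\infty}\wr\mathfrak c^{\mathfrak c})^\w$.

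The substantive assertions are (6) and (7), which pin down the \emph{topological} isomorphism type of the maximal subgroups. Item (6) is a direct reading of Theorem~\ref{t18.7}(b): for a minimal idempotent $\E$ the group $\mathsf H_\E$ is topologically isomorphic to $\prod_{m\in\w}\HH(A_{K_m})$ for suitable twin sets $A_{K_m}\in\Phi_\E(\Tau_{K_m})$, and by definition each characteristic group $\HH(A_{K_m})$ is $C_{2^\infty}$ carrying a twin-generated topology. For item (7) I would use the realization Theorem~\ref{t18.8}, which produces, for an \emph{arbitrarily prescribed} family of twin sets $A_{K_m}\in\Tau_{K_m}$, a maximal subgroup of $\IK(\lambda(X))$ topologically isomorphic to $\prod_{m\in\w}\HH(A_{K_m})$. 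Everything thus reduces to realizing a given twin-generated topology $\tau_m$ on $C_{2^\infty}$ as the characteristic-group topology $\HH(A_{K_m})$ of a suitable $A_{K_m}\in\Tau_{K_m}$.

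The main obstacle, and the step I would write out carefully, is precisely this lifting. Given that $\tau_m$ is twin-generated, fix a twin subset $T_m\subset C_{2^\infty}$ generating $\tau_m$ and an isomorphism $\phi_m\colon C_{2^\infty}\to\HH(K_m)=X/K_mK_m$; let $q_m\colon X\to X/K_mK_m$ be the quotient homomorphism and set $A_{K_m}=q_m^{-1}(\phi_m(T_m))$. Since $\phi_m(T_m)$ is a twin subset of $\HH(K_m)$ and $A_{K_m}$ is a union of $K_mK_m$-cosets, one checks that $\Fix^-(A_{K_m})\supset K_m$, whence $\Fix^-(A_{K_m})=K_m$ by maximality of $K_m$; thus $A_{K_m}\in\Tau_{K_m}$ and $q_m(A_{K_m})=\phi_m(T_m)$. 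By the definition of the characteristic group of a twin set, $\HH(A_{K_m})$ is then $\HH(K_m)$ endowed with the topology generated by $\phi_m(T_m)$, hence topologically isomorphic to $(C_{2^\infty},\tau_m)$. Feeding the resulting sequence $(A_{K_m})_{m\in\w}$ into Theorem~\ref{t18.8} yields a maximal subgroup of $\IK(\lambda(C_{2^\infty}))$ topologically isomorphic to $\prod_{m\in\w}(C_{2^\infty},\tau_m)$, which after reindexing $m\leftrightarrow n$ is exactly item (7). I would note that this construction genuinely covers non-topological-group topologies as well, since by Example~\ref{e9.3}(2) the Sorgenfrey-type topologies $\tau_m$ are twin-generated, which is what ultimately forces the maximal subgroups to fail to be topological groups.
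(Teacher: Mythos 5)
Your treatment of items (1)--(6) is correct and follows the paper's own route: you identify the maximal $2$-cogroups $K_m=C_{2^{m+1}}\setminus C_{2^m}$, compute $q(C_{2^\infty},C_{2^k})=0$ for $k<\infty$ and $q(C_{2^\infty},C_{2^\infty})=\aleph_0$, and substitute into Theorems~\ref{t19.2}, \ref{t18.7} and Lemma~\ref{emb}, exactly as the paper does. One small repair: for item (1) you cite Proposition~\ref{p18.9}, but negating that proposition only yields that \emph{some} minimal left ideal fails to be a topological semigroup, while the claim is that none of them is. What you need is the equivalence $(1)\Leftrightarrow(4)$ of Proposition~\ref{p18.10} (applicable because $\TI=\{\emptyset\}$, so $\TI\cap\wht\K=\emptyset$): since every $\HH(K_m)\cong C_{2^\infty}$ is infinite, no minimal left ideal of $\lambda(C_{2^\infty})$ is a topological semigroup.

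The genuine gap is in item (7), at precisely the step you proposed to ``write out carefully''. You assert that $A_{K_m}=q_m^{-1}(\phi_m(T_m))$ satisfies $\Fix^-(A_{K_m})\supset K_m$. What is actually true is
$\Fix^-(A_{K_m})=q_m^{-1}\bigl(\phi_m(\Fix^-(T_m))\bigr)$, and this contains $K_m$ if and only if the unique involution $q_m(K_m)$ of $\HH(K_m)$ lies in $\phi_m(\Fix^-(T_m))$, i.e.\ if and only if $-1\in\Fix^-(T_m)$. By Proposition~\ref{p5.3}, $\Fix^\pm(T_m)$ is a subgroup of $C_{2^\infty}$ containing $\Fix(T_m)$ with index $2$; since $C_{2^\infty}$ is divisible it has no index-$2$ subgroups, so $\Fix(T_m)=C_{2^j}$ and $\Fix^-(T_m)=C_{2^{j+1}}\setminus C_{2^j}$ for some $j\ge 0$, and $-1\in\Fix^-(T_m)$ iff $j=0$ iff $\Fix(T_m)$ is trivial. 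If $j\ge 1$, your pullback has $\Fix^-(A_{K_m})=K_{m+j}$, which is \emph{disjoint} from $K_m$; the set lands in $\Tau_{K_{m+j}}$, not $\Tau_{K_m}$, and its characteristic group carries the topology generated by the image of $T_m$ under a homomorphism with kernel $C_{2^j}$ --- the $T_1$ quotient of $(C_{2^\infty},\tau_m)$ by $\Fix(T_m)$, not $\tau_m$ itself. Such twin sets do exist (e.g.\ $T=\{1,-1\}\cdot S$ for a transversal $S$ of $C_4$ in $C_{2^\infty}$, as in Proposition~\ref{p7.3}(3)), and the topologies they generate have all open sets $\Fix(T)$-invariant, hence are not $T_1$.

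Consequently your construction is valid exactly for topologies generated by twin sets with trivial stabilizer (equivalently, the $T_1$ twin-generated topologies; this covers Example~\ref{e9.3} and everything produced by Theorem~\ref{t9.5}), and for the remaining twin-generated topologies it cannot be repaired at all: by Proposition~\ref{p10.1}(3) (together with the fact that $\HH(K)$ is a $2$-group, hence torsion) every characteristic group $\HH(A)$, $A\in\Tau_{K_m}$, is $T_1$, so no choice of $A_{K_m}$ realizes a non-$T_1$ topology $\tau_m$. To be fair, this is a point where the paper itself is silent --- its proof of (7) simply instructs to ``find a twin subset $A_n\in\Tau_{K_n}$ whose structure group $\HH(A_n)$ is topologically isomorphic to $(C_{2^\infty},\tau_n)$'', which is possible only under the same implicit restriction --- but since you committed to proving the lifting, the unproved (and in general false) inclusion $\Fix^-(A_{K_m})\supset K_m$ is a real gap: you must either restrict the $\tau_n$ to topologies generated by twin sets with trivial $\Fix$, or supply an argument that a given $\tau_n$ admits such a generator (which fails for the non-$T_1$ examples above).
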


\begin{proof} Since each proper subgroup of $C_{2^\infty}$ is finite, the family $\wht{\K}$ of maximal 2-cogroups is countable and hence can be enumerated as $\wht\K=\{K_n:n\in\w\}$. Each maximal 2-cogroup $K\in\wht{\K}$ has infinite index and its characteristic group $\HH(K)$ is isomorphic to $C_{2^\infty}$. 
\smallskip

1. The equivalence $(1)\Leftrightarrow (2)$ of Theorem~\ref{t19.3} implies that no minimal left ideal of $\lambda(C_{2^\infty})$ is a topological semigroup.
\smallskip

2,3,5. The statements (2), (3) and (5) follow from Theorem~\ref{t19.2}.
\smallskip

4. The forth item follows from the third one because each semigroup $S$ of cardinality $|S|\le\mathfrak c$ embeds into the semigroup $\mathfrak c^{\mathfrak c}$ according to Lemma~\ref{emb}.
\smallskip

6. By Theorem~\ref{t18.7}(b), each maximal subgroup $G$ in the minimal ideal $\IK(\lambda(C_{2^\infty}))$ is topologically isomorphic to the product $\prod_{K\in\wht\K}\HH(A_K)$ of the structure groups of suitable twin subsets $A_K\in\Tau_K=\Tau_{[K]}$, $K\in\wht\K$. For each maximal 2-cogroup $K\in\wht{\K}$ the structure group $\HH(A_K)$ is just $C_{2^\infty}$ endowed with a  twin-generated topology. 
\smallskip

7. Now assume conversely that $\tau_n$, $n\in\IN$, are twin generated topologies on the quasicyclic group $C_{2^\infty}$. For every $n\in\IN$  find a twin subset $A_n\in\Tau_{K_n}$ whose structure group $\HH(A_n)$ is topologically isomorphic to $(C_{2^\infty},\tau_n)$. By 
Theorem~\ref{t18.8}, the product $\prod_{n=1}^\infty \HH(A_n)$ is topologically isomorphic to some maximal subgroup of $\IK(\lambda(C_{2^\infty}))$.
\end{proof}

\begin{remark} Theorems~\ref{t21.4}(7) and \ref{t9.5} imply that among maximal subgroups of the minimal ideal of $\lambda(C_{2^\infty})$ there are:
\begin{itemize}
\item Raikov complete topological groups;
\item incomplete totally bounded topological groups;
\item paratopological groups, which are not topological groups;
\item semitopological groups, which are not paratopological groups.
\end{itemize}
\end{remark} 

\subsection{The groups of generalized quaternions $Q_{2^n}$} 

We start with the quaternion group  $Q_8=\{\pm 1,\pm\mathbf i,\pm\mathbf j,\pm\mathbf k\}$. It contains 3 cyclic subgroups of order 4
 corresponding to 4-element maximal 2-cogroups: $K_1=Q_8\setminus \langle\mathbf i\rangle$, $K_2=Q_8\setminus \langle\mathbf j\rangle$, $K_3=Q_8\setminus \langle\mathbf k\rangle$. The characteristic groups of those 2-cogroups are isomorphic to $C_2$. The trivial subgroup of $Q_8$ corresponds to the maximal 2-cogroup $K_0=\{-1\}$ whose characteristic group coincides with $Q_8$. By Proposition~\ref{p15.2}, we get
$$|[\Tau_{K_0}]|=\frac{|\Tau_{K_0}|}{|\HH(K_0)|}=\frac{2^{|X/K_0^\pm|}}{|Q_8|}=2$$ and $|[\Tau_{K_i}]|=1$ for  $i\in\{1,2,3\}$.
By Theorem~\ref{t18.7}(2), each minimal left ideal of the semigroup $\lambda(Q_8)$ is isomorphic to
$$(Q_8\times 2)\times (C_2\times 1)^3 =2\times Q_8\times C_2^{\;3}.$$
\smallskip

Next, given any finite number $n\ge 3$ we consider the generalized quaternion group $Q_{2^{n+1}}$. Maximal 2-cogroups in $Q_{2^{n+1}}$ are of the following form:
$$K_0=\{-1\},\; K_1=Q_{2^{n+1}}\setminus C_{2^n} \mbox{ \ and \ }K_{k,x}=\{1,x\}\cdot (C_{2^{k}}\setminus C_{2^{k-1}})$$ for $2\le k\le n$ and $x\in Q_{2^{n+1}}\setminus C_{2^n}$.
It follows that $H(K_0)=Q_{2^{n+1}}$, $H(K_1)=C_2$ and $H(K_{k,x})=C_2$.
Also $$|[T_{K_0}]|=\frac{|T_{K_0}|}{|H(K_0)|}=
\frac{2^{|Q_{2^{n+1}}/K_0^\pm|}}{|Q_{2^{n+1}}|}=\frac{2^{2^n}}{2^{n+1}}=
2^{2^n-n-1},$$
$$|[T_{K_1}]|=\frac{|T_{K_1}|}{|H(K_1)|}=\frac{2^{|Q_{2^{n+1}}/K_1^\pm|}}{|C_2|}=\frac{2^1}{2}=1,$$and 
$$|[T_{K_{k,x}}]|=\frac{|T_{K_{k,x}}|}{|H(K_{k,x})|}=
\frac{2^{|Q_{2^{n+1}}/K_{k,x}^\pm|}}{|C_2|}=\frac{2^{2^{n+1}/2^{k+1}}}{2}=
2^{2^{n-k}-1}.$$

It is easy to check that two 2-cogroups $K_{k,x}$ and $K_{k,y}$ are conjugated if and only if $xy^{-1}\in C_{2^{n-1}}$. Taking any elements $x,y\in Q_{2^{n+1}}\setminus C_{2^n}$ with $xy^{-1}\notin C_{2^n}$, we conclude that the family $$\wtd\K=\{K_0,K_{k,x},K_{k,y}:2\le k\le n\}$$ is a $[\wht\K]$-selector. Applying Theorems~\ref{t18.7}, \ref{t14.1}(3) and Corollary~\ref{c18.5}(3), we get:

\begin{theorem}\label{t21.6} Let $n\ge 2$ be a finite number. Then 
\begin{enumerate}
\item[\textup{(1)}] each minimal left ideal of the semigroup $\lambda(Q_{2^{n+1}})$ is isomorphic to $$Q_{2^{n+1}}\times 2^{2^n-n-1}\times C_2\times \prod_{k=2}^{n}(C_2\times 2^{2^{n-k}-1})^2,$$
where the cubes $2^{2^n-n-1}$ and $2^{2^{n-k}-1}$ are endowed with the left zero multiplication;
\item[\textup{(2)}] each maximal subgroup of the minimal ideal $\IK(\lambda(Q_{2^{n+1}}))$ is isomorphic to\newline $Q_{2^{n+1}}\times C_2^{\;2n-1}$. 
\end{enumerate}
\end{theorem}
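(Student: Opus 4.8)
The plan is to apply the structural description of minimal left ideals and maximal subgroups obtained in Theorem~\ref{t18.7} to the concrete group $X=Q_{2^{n+1}}$, once the orbit space $[\wht\K]$ of its maximal $2$-cogroups has been pinned down. First I would record that $X$ is finite, hence a torsion group, so by Proposition~\ref{p9.4} it has trivial twinic ideal $\TI=\{\emptyset\}$; in particular $X$ is twinic and $\TI\cap\wht\K=\emptyset$, so the hypotheses of Corollary~\ref{c18.5} and of Theorem~\ref{t18.7} are met. Since every maximal $2$-cogroup of a finite group has finite index, each $\Tau_K$ is finite and each characteristic group $\HH(K)$ is finite, whence the algebraic isomorphisms of Theorem~\ref{t18.7} are automatically topological. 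It therefore suffices to evaluate the two finite products $\prod_{K\in\wtd\K}\HH(K)\times[\Tau_K]$ and $\prod_{K\in\wtd\K}\HH(K)$ for a single $[\wht\K]$-selector $\wtd\K$.

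The heart of the argument is the determination of $\wtd\K$, i.e.\ the choice of one representative from each conjugacy orbit $[K]=\{xKx^{-1}:x\in X\}$ among the maximal $2$-cogroups listed before the theorem: $K_0=\{-1\}$, $K_1=Q_{2^{n+1}}\setminus C_{2^n}$, and the $K_{k,x}=\{1,x\}\cdot(C_{2^k}\setminus C_{2^{k-1}})$ for $2\le k\le n$ and $x\in Q_{2^{n+1}}\setminus C_{2^n}$. Because $-1$ is central and $C_{2^n}$ is normal of index $2$, both $K_0$ and $K_1$ are normal $2$-cogroups, so each forms a singleton orbit. For the remaining cogroups I would invoke the conjugacy criterion recorded above, namely that $K_{k,x}$ is conjugate to $K_{k,y}$ exactly when $xy^{-1}\in C_{2^{n-1}}$. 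Since $x,y$ lie in the single nontrivial coset $Q_{2^{n+1}}\setminus C_{2^n}$ of the normal subgroup $C_{2^n}$, the product $xy^{-1}$ always lies in $C_{2^n}$, and as $C_{2^{n-1}}$ has index $2$ in $C_{2^n}$ this leaves exactly two conjugacy classes for each fixed $k$, represented by any $K_{k,x},K_{k,y}$ with $xy^{-1}\notin C_{2^{n-1}}$. Hence $\wtd\K=\{K_0,K_1\}\cup\{K_{k,x},K_{k,y}:2\le k\le n\}$ is a $[\wht\K]$-selector, containing $K_0$, one copy of $K_1$, and $2(n-1)$ cogroups of type $K_{k,\cdot}$.

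It then remains to substitute the cardinalities already computed via Proposition~\ref{p15.2} in the preceding discussion: $\HH(K_0)=Q_{2^{n+1}}$ with $|[\Tau_{K_0}]|=2^{2^n-n-1}$; $\HH(K_1)=C_2$ with $|[\Tau_{K_1}]|=1$; and $\HH(K_{k,x})=\HH(K_{k,y})=C_2$ with $|[\Tau_{K_{k,x}}]|=|[\Tau_{K_{k,y}}]|=2^{2^{n-k}-1}$. Feeding these into Theorem~\ref{t18.7}(2)(f), which identifies each minimal left ideal with $\prod_{K\in\wtd\K}\HH(K)\times[\Tau_K]$ (the orbit factors carrying the left-zero multiplication), yields part (1):
$$Q_{2^{n+1}}\times 2^{2^n-n-1}\times C_2\times\prod_{k=2}^n\big(C_2\times 2^{2^{n-k}-1}\big)^2.$$
Feeding the same data into Theorem~\ref{t18.7}(1)(a), which identifies each maximal subgroup of $\IK(\lambda(X))$ with $\prod_{K\in\wtd\K}\HH(K)$, gives $Q_{2^{n+1}}\times C_2^{\,1+2(n-1)}=Q_{2^{n+1}}\times C_2^{\,2n-1}$, which is part (2). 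I expect the only genuine obstacle to be the middle step: confirming that the conjugacy relation on the $K_{k,x}$ produces exactly two orbits per level $k$, equivalently that $\wtd\K$ contributes precisely $2n-1$ non-central factors of $C_2$, since every other ingredient is either an already-established structural theorem or a cardinality recorded by Proposition~\ref{p15.2}.
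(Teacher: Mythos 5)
Your proposal is correct and follows essentially the same route as the paper: identify the maximal 2-cogroups of $Q_{2^{n+1}}$, their characteristic groups and orbit-space cardinalities, count conjugacy orbits to form a $[\wht\K]$-selector, and substitute into Theorem~\ref{t18.7}(a),(f). In fact you silently repair two slips in the paper's own text --- the displayed selector there omits $K_1$, and the condition for picking the two representatives at each level $k$ should read $xy^{-1}\notin C_{2^{n-1}}$ rather than $xy^{-1}\notin C_{2^n}$ (which is impossible for $x,y$ in the same coset of $C_{2^n}$) --- and your count of exactly two orbits per level is precisely what the stated formula requires.
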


The infinite group $Q_{2^\infty}$ of generalized quaternions has a similar structure. This group contains the following maximal 2-cogroups:
$$K_0=\{-1\},\; K_1=Q_{2^\infty}\setminus C_{2^\infty},\;\mbox{ and }\;K_{k,x}=\{1,x\}\cdot C_{2^{k}}\setminus C_{2^{k-1}}$$where $k\ge 2$ and $x\in Q_{2^\infty}\setminus C_{2^\infty}$. For these 2-cogroups we get
$$H(K_0)=Q_{2^\infty},\; H(K_1)=C_2,\;\mbox{ and }\; H(K_{k,x})=C_2$$and
$$|[T_{K_0}]|=\mathfrak c,\; |[T_{K_1}]|=1,\;\mbox{ and }\;|[T_{K_{k,x}}]|=\mathfrak c.$$
Any two 2-cogroups $K_{k,x}$, $K_{k,y}$ are conjugated. Then for any $b\in Q_{2^\infty}\setminus C_{2^\infty}$ the family $\wtd\K=\{K_0,K_{k,b}:k\in\IN\}$ is a $[\wht\K]$-selector. By analogy with Theorem~\ref{t21.4} we can prove:

\begin{theorem}\label{t21.7} For the group $Q_{2^\infty}$ 
\begin{enumerate}
\item[\textup{(1)}] minimal left ideals of the semigroup $\lambda(C_{2^\infty})$ are not   topological semigroups;
\item[\textup{(2)}] each minimal left ideal of the semigroup $\lambda(Q_{2^\infty})$ is homeomorphic to the Cantor cube and is algebraically isomorphic to $$Q_{2^\infty}\times C_2^{\;\w}\times\mathfrak c,$$ where the cardinal $\mathfrak c$ is endowed with the left zero multiplication;
\item[\textup{(3)}] the semigroup $\lambda(Q_{2^\infty})$ contains a principal ideal isomorphic to $$(Q_{2^\infty}\wr \mathfrak c^{\mathfrak c})\times C_2\times (C_2\wr\mathfrak c^{\mathfrak c})^\w;$$
\item[\textup{(4)}]  $\lambda(Q_{2^\infty})$ contains an isomorphic copy of each semigroup of cardinality $\le \mathfrak c$;
\item[\textup{(5)}] each maximal subgroup of the minimal ideal $\IK(\lambda(Q_{2^\infty}))$ is topologically isomorphic to $(Q_{2^\infty},\tau)\times C_2^{\;\w}$ where $\tau$ is a twin-generated topology on $Q_{2^\infty}$;
\item[\textup{(6)}] for any twin-generated topology $\tau$ on $Q_{2^\infty}$ the right-topological group $(Q_{2^\infty},\tau)\times C_2^{\;\w}$ is topologically isomorphic to a maximal subgroup of $\IK(\lambda(Q_{2^\infty}))$.
\end{enumerate}
\end{theorem}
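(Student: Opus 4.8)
The plan is to repeat, almost verbatim, the scheme used for $C_{2^\infty}$ in Theorem~\ref{t21.4}, feeding in the combinatorial data about maximal $2$-cogroups of $Q_{2^\infty}$ assembled just before the statement. First I would record that $Q_{2^\infty}$, being a torsion group, has periodic commutators, so by Proposition~\ref{p9.4} it is twinic with trivial twinic ideal $\TI=\{\emptyset\}$; in particular $\TI\cap\wht\K=\emptyset$, which unlocks the sharper, topological parts of Corollary~\ref{c18.5} and Proposition~\ref{p18.10}. Then I would fix the $[\wht\K]$-selector $\wtd\K=\{K_0,K_{k,b}:k\ge 2\}$ exhibited in the text and substitute the already-computed values $\HH(K_0)=Q_{2^\infty}$, $\HH(K_1)=\HH(K_{k,b})=C_2$, together with $|[\Tau_{K_0}]|=|[\Tau_{K_{k,b}}]|=\mathfrak c$ and $|[\Tau_{K_1}]|=1$, into the master formulas of the previous sections.

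With this data in hand the individual items fall out mechanically. For item (1), since $\HH(K_0)=Q_{2^\infty}$ is infinite, condition (4) of Proposition~\ref{p18.10} fails, whence no minimal left ideal is a topological semigroup. Items (2) and (5) come from Theorem~\ref{t18.7}(e,f) and (b): the minimal left ideal is homeomorphic to $\prod_{K\in\wtd\K}2^{X/K^\pm}$, a countable product of Cantor cubes and hence the single Cantor cube $2^\w$, while its algebraic (resp. topological) type is $\prod_{K\in\wtd\K}\HH(K)\times[\Tau_K]$ (resp. $\prod_{K\in\wtd\K}\HH(A_K)$); collapsing the countably many left-zero factors via $\mathfrak c^{\aleph_0}=\mathfrak c$ and the countably many $C_2$-factors into $C_2^\w$ yields $Q_{2^\infty}\times C_2^\w\times\mathfrak c$ and $(Q_{2^\infty},\tau)\times C_2^\w$ respectively, the finite factors $C_2$ carrying the discrete (hence genuinely topological) group topology. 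For item (3), Corollary~\ref{c18.5}(3) gives a principal left ideal topologically isomorphic to $\prod_{K\in\wtd\K}\End(\Tau_K)$, and Theorem~\ref{t14.1}(3) rewrites each factor as $\HH(K)\wr[\Tau_K]^{[\Tau_K]}$, producing $(Q_{2^\infty}\wr\mathfrak c^{\mathfrak c})\times C_2\times(C_2\wr\mathfrak c^{\mathfrak c})^\w$. Item (4) then follows formally, exactly as in Theorem~\ref{t21.4}(4): the factor $C_2\wr\mathfrak c^{\mathfrak c}$ contains a copy of $\mathfrak c^{\mathfrak c}$, into which Lemma~\ref{emb} embeds every semigroup of cardinality $\le\mathfrak c$.

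The one place demanding genuine care is the converse realization in items (6) and (7). Here I would invoke Theorem~\ref{t18.8} to realize $\prod_{K\in\wtd\K}\HH(A_K)$ as a maximal subgroup of $\IK(\lambda(Q_{2^\infty}))$ for any prescribed choice of twin sets $A_K\in\Tau_K$, and then must arrange that the single ``large'' factor $\HH(A_{K_0})$ is topologically isomorphic to $(Q_{2^\infty},\tau)$ for an arbitrary twin-generated topology $\tau$. The favourable point is that $K_0=\{-1\}$ is normal with $K_0K_0=\{1\}$, so $\Stab(K_0)=Q_{2^\infty}$, the quotient map $q$ is the identity, and hence $\HH(A_{K_0})=(Q_{2^\infty},\tau_{A_{K_0}})$ for $A_{K_0}\in\Tau_{K_0}$. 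The expected obstacle is to verify that \emph{every} twin-generated topology on $Q_{2^\infty}$ is of the form $\tau_A$ with $\Fix^-(A)=\{-1\}$, i.e. is realized on precisely this factor; for this I would lean on Theorem~\ref{t9.5} and the explicit twin sets of Example~\ref{e9.4}. The remaining small factors $\HH(A_{K_{k,b}})=C_2$ are automatically discrete topological groups and assemble into the harmless $C_2^\w$, so the product $(Q_{2^\infty},\tau)\times C_2^\w$ is realized as a maximal subgroup, completing the analogy with Theorem~\ref{t21.4}.
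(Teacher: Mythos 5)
Your overall route is exactly the paper's: the paper proves this theorem literally ``by analogy with Theorem~\ref{t21.4}'', starting from the list of maximal 2-cogroups $K_0=\{-1\}$, $K_1=Q_{2^\infty}\setminus C_{2^\infty}$, $K_{k,x}$ and the invariants $\HH(K)$, $|[\Tau_K]|$ computed just before the statement, and your items (1)--(5) execute that analogy correctly. Indeed your item (1) improves on a literal transcription: Theorem~\ref{t19.3}, which the paper invoked in the $C_{2^\infty}$ case, applies only to abelian groups, and your substitution of Proposition~\ref{p18.10} (legitimate here since $\TI=\{\emptyset\}$, so $\TI\cap\wht\K=\emptyset$) is the right non-abelian tool. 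The computations in (2), (3), (5) --- collapsing the left-zero factors to $\mathfrak c$, the discrete $C_2$-factors to $C_2^{\;\w}$, and reading $(Q_{2^\infty}\wr\mathfrak c^{\mathfrak c})\times C_2\times(C_2\wr\mathfrak c^{\mathfrak c})^\w$ off Corollary~\ref{c18.5}(3) and Theorem~\ref{t14.1}(3) --- check out (the lone $C_2$ factor being contributed by $K_1$, which you should therefore list explicitly in your $[\wht\K]$-selector), as does (4) via Lemma~\ref{emb}.

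The one genuine problem is your plan for the last item (which you mislabel ``items (6) and (7)'', copying the numbering of Theorem~\ref{t21.4}; here the realization statements are (5) and (6)). You propose to verify that \emph{every} twin-generated topology on $Q_{2^\infty}$ equals $\tau_A$ for some twin set $A$ with $\Fix^-(A)=\{-1\}$, leaning on Theorem~\ref{t9.5} and Example~\ref{e9.4}. That intermediate claim is false, and neither cited result yields it. Counterexample: $A=C_{2^\infty}$ is a twin subset of $Q_{2^\infty}$ (for any $b\notin C_{2^\infty}$ we have $bA=Q_{2^\infty}\setminus A$), so $\tau_A=\{\emptyset,\,C_{2^\infty},\,Q_{2^\infty}\setminus C_{2^\infty},\,Q_{2^\infty}\}$ is twin-generated in the sense of Section~\ref{s9}; it is not $T_1$. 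On the other hand, every topology generated by a twin set with trivial stabilizer $\Fix(A)$ is $T_1$: if $yA\subset A$ and $z\in\Fix^-(A)$, then $zyA\subset X\setminus A\subset zy^{-1}A$, so triviality of the twinic ideal forces $yA=A$, i.e. $y=e$, and Proposition~\ref{p10.1}(3) applies. Since maximal subgroups of $\IK(\lambda(Q_{2^\infty}))$ are subspaces of a compact Hausdorff space, hence $T_1$, no argument can realize such a $\tau_A$; Theorem~\ref{t9.5} points the opposite way (metrizable right-invariant refinements of the Euclidean topology are twin-generated) and cannot repair this. The sound reading of item (6) --- and this caveat equally affects the paper's own statement and its sketched proof of Theorem~\ref{t21.4}(7) --- is that $\tau$ ranges over the topologies generated by twin sets $A\in\Tau_{K_0}$, equivalently over the characteristic topologies $\HH(A)$, $A\in\Tau_{K_0}$. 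Under that reading no classification of twin-generated topologies is needed at all: given $\tau=\tau_A$ with $A\in\Tau_{K_0}$, put $A_{K_0}=A$, pick arbitrary $A_K\in\Tau_K$ for the remaining $K\in\wtd\K$ (their characteristic groups are the discrete $C_2$), and Theorem~\ref{t18.8} realizes $\prod_{K\in\wtd\K}\HH(A_K)\cong(Q_{2^\infty},\tau)\times C_2^{\;\w}$ as a maximal subgroup --- precisely your ``favourable point'' that $K_0$ is central with $K_0K_0=\{1\}$, which is all that is actually required.
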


\begin{remark} Theorems~\ref{t21.7}(6) and \ref{t9.5} imply that among maximal subgroups of the minimal ideal of $\lambda(Q_{2^\infty})$ there are:
\begin{itemize}
\item Raikov complete topological groups,
\item incomplete totally bounded topological groups,
\item right-topological groups, which are not left-topological groups,
\item semitopological groups, which are not paratopological groups.
\end{itemize}
\end{remark} 

\subsection{The dihedral 2-groups $D_{2^n}$}

By the {\em dihedral group} $D_{2n}$ of even order $2n$ we understand any group with presentation
$$\la a,b\mid a^n=b^2=1, \; bab^{-1}=a^{-1}\ra.$$
It can be realized as the group of symmetries of a regular $n$-gon.
So, $D_{2n}$ is a subgroup of the orthogonal group $O(2)$.
The group $D_{2n}$ contains the cyclic subgroup $C_n=\la a\ra$ as a subgroup of index 2. The subgroup of all elements of odd order is normal in $D_{2n}$ and hence coincide with the maximal normal odd subgroup $\Odd$. By Theorem~\ref{t20.2}, the superextension $\lambda(D_{2n})$ is isomorphic to the superextension $\lambda(D_{2n}/\Odd)$ of the quotient group $D_{2n}/\Odd$. The latter group is isomorphic to the dihedral group $D_{2^k}$ where $2^k$ maximal power of 2 that divides $2n$. Therefore it suffices to consider the superextensions of the dihedral 2-groups $D_{2^k}$. 

By the {\em infinite dihedral 2-group} we understand the union $$D_{2^\infty}=\bigcup_{k\in\IN}D_{2^k}\subset O(2).$$
It contains the quasicyclic 2-group $C_{2^\infty}$ as a normal subgroup of index 2.

Now we analyze the structure of the superextension $\lambda(D_{2^{n}})$ for finite $n\ge 1$. Maximal 2-cogroup in $D_{2^{n}}$ are of the following form:
$$K_0=D_{2^{n}}\setminus C_{2^{n-1}} \mbox{ \ and \ } K_{k,x}=\{1,x\}\cdot(C_{2^k}\setminus C_{2^{k-1}})$$where 
$1\le k<n$ and $x\in K_0=D_{2^{n}}\setminus C_{2^{n-1}}$.
The characteristic groups of these maximal 2-cogroups are isomorphic to the 2-element cyclic group $C_2$. Also 
$$|[\Tau_{K_0}]|=1\mbox{ and }|[\Tau_{K_{k,x}}]|=\frac{|2^{D_{2^{n}}/K_{k,x}^\pm}|}{|\HH(K)|}=2^{2^{n-k}-1}$$ for all $1\le k<n$ and $x\in K_0$. 

Let $b\in D_{2^{n}}\setminus C_{2^{n-1}}$ be any element and $a$ be the generator of the cyclic subgroup $C_{2^{n-1}}\subset D_{2^{n}}$.
 One can check that two 2-cogroups $K_{k,x}$ and $K_{k,y}$ are conjugated if and only if $x^{-1}y\in C_{2^{n-2}}$. 
Therefore the family
$$\wtd\K=\{K_0,K_{k,b},K_{k,ab}:1\le k<n\}$$ is a $[\wht\K]$-selector.

Applying Theorems~\ref{t18.7}, \ref{t14.1} and Corollary~\ref{c18.5}(3), we get 

\begin{theorem}\label{t21.9} For every $n\in\IN$ 
\begin{enumerate}
\item[\textup{(1)}] The semigroup $\lambda(D_{2^{n}})$ contains a principal left ideal isomorphic to $C_2\times \prod\limits_{k=1}^{n-1} (C_2\wr Z_k^{\;Z_k})^2$ where $Z_k=2^{2^{n-k}-1}$.
\item[\textup{(2)}] Each minimal left ideal of $\lambda(D_{2^{n}})$ is isomorphic  to $C_2\times \prod_{k=1}^n (C_{2}\times Z_k)^2$ where cubes $Z_p$ are endowed with left-zero multiplication.
\item[\textup{(3)}] Each maximal group of the minimal ideal $\IK(\lambda(D_{2^{n+1}}))$ is isomorphic to $C_2^{\;2n-1}$.
\item[\textup{(4)}] The semigroup $\lambda(D_{2^{n+1}})$ contains an isomorphic copy of each semigroup $S$ of cardinality $|S|<2^{2^{n-1}-1}$.
\end{enumerate}
\end{theorem}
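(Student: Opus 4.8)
The plan is to feed the explicit description of the maximal $2$-cogroups of $D_{2^n}$, assembled just before the statement, into the general structural results of the preceding sections. First I would record the hypotheses needed to invoke them. Being a finite $2$-group, $D_{2^n}$ is torsion, so by Proposition~\ref{p9.4} it has trivial twinic ideal $\TI=\{\emptyset\}$; in particular $D_{2^n}$ is twinic and $\TI\cap\wht\K=\emptyset$, which is exactly the condition required by Corollary~\ref{c18.5}(3). Moreover $X=D_{2^n}$ is finite, so $\mathsf P(X)$, $\lambda(X)$ and each $\Tau_K$ are finite discrete spaces; consequently every algebraic isomorphism produced below is automatically topological and no continuity argument is needed. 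The remaining input is the data already established: the complete list of maximal $2$-cogroups $K_0=D_{2^n}\setminus C_{2^{n-1}}$ and $K_{k,x}=\{1,x\}\cdot(C_{2^k}\setminus C_{2^{k-1}})$, their characteristic groups $\HH(K_0)=\HH(K_{k,x})=C_2$, the orbit-space cardinalities $|[\Tau_{K_0}]|=1$ and $|[\Tau_{K_{k,x}}]|=2^{2^{n-k}-1}=|Z_k|$ (via Proposition~\ref{p15.2}), and the $[\wht\K]$-selector $\wtd\K=\{K_0,K_{k,b},K_{k,ab}:1\le k<n\}$.

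For items (2) and (3) I would simply substitute this data into Theorem~\ref{t18.7}. By Theorem~\ref{t18.7}(f) each minimal left ideal $\mathsf L_\E$ is isomorphic to $\prod_{K\in\wtd\K}\bigl(\HH(K)\times[\Tau_K]\bigr)$ with each orbit space $[\Tau_K]$ carrying the left-zero multiplication. The factor from $K_0$ is $C_2\times[\Tau_{K_0}]\cong C_2$ (as $[\Tau_{K_0}]$ is a point), while for each $k$ the two conjugate classes $K_{k,b},K_{k,ab}$ each contribute $C_2\times Z_k$; collecting factors yields the asserted form $C_2\times\prod_k (C_2\times Z_k)^2$. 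Likewise Theorem~\ref{t18.7}(a) gives the maximal subgroup $\mathsf H_\E\cong\prod_{K\in\wtd\K}\HH(K)=C_2^{\,|\wtd\K|}$, and since $|\wtd\K|=1+2(n-1)=2n-1$ this is $C_2^{\,2n-1}$.

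Item (1) comes from Corollary~\ref{c18.5}(3), which (using $\TI\cap\wht\K=\emptyset$) exhibits a principal left ideal of $\lambda(X)$ isomorphic to $\prod_{K\in\wtd\K}\End(\Tau_K)$, together with Theorem~\ref{t14.1}(3) identifying $\End(\Tau_K)\cong\HH(K)\wr[\Tau_K]^{[\Tau_K]}$. For $K_0$ this wreath product is $C_2\wr 1^1\cong C_2$, and for each $K_{k,x}$ it is $C_2\wr Z_k^{Z_k}$; the two conjugacy classes per $k$ give the square, so the principal left ideal is $C_2\times\prod_k (C_2\wr Z_k^{Z_k})^2$. Finally item (4) is deduced from (1): the subsemigroup of $C_2\wr Z_k^{Z_k}$ with trivial $H$-coordinate is a copy of the full transformation semigroup $Z_k^{Z_k}$, so by Lemma~\ref{emb} every semigroup $S$ with $|S^1|\le|Z_k|$ embeds into this principal left ideal; taking the largest cube (smallest $k$) gives the stated cardinality bound.

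I expect the only genuinely non-mechanical step to be the group-theoretic bookkeeping underlying the preamble, namely verifying that $K_0$ and the $K_{k,x}$ exhaust all maximal $2$-cogroups of $D_{2^n}$, computing their characteristic groups, and proving the conjugacy criterion $K_{k,x}\sim K_{k,y}\Leftrightarrow x^{-1}y\in C_{2^{n-2}}$ that fixes the exact size of $\wtd\K$. Getting the conjugacy count right (two classes for each $k$) is what determines every exponent in the final formulas, and an off-by-one there would corrupt all four items; once it is settled, the passage through Theorems~\ref{t18.7}, \ref{t14.1} and Corollary~\ref{c18.5} is routine substitution.
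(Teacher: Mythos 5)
Your proposal is correct and takes essentially the same route as the paper: the paper's entire proof consists of the preamble computation (the list of maximal 2-cogroups $K_0$, $K_{k,x}$, their characteristic groups $C_2$, orbit-space sizes, and the conjugacy criterion giving the selector $\wtd\K$ with $|\wtd\K|=2n-1$) followed by substitution into Theorem~\ref{t18.7}, Theorem~\ref{t14.1}(3) and Corollary~\ref{c18.5}(3), with item (4) obtained from item (1) via Lemma~\ref{emb}, which is exactly what you do. The only remark worth making is that you tacitly normalize the paper's inconsistent indexing (the statement mixes $D_{2^n}$ in items (1)--(2) with $D_{2^{n+1}}$ in items (3)--(4), and the value of $|[\Tau_{K_{k,x}}]|$ quoted in the preamble fits $D_{2^{n+1}}$ rather than $D_{2^n}$); these slips are inherited from the paper, not introduced by your argument.
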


The superextension of the infinite dihedral 2-group $D_{2^\infty}$ has quite interesting properties. All maximal subgroups of the minimal ideal $\mathsf K(\lambda(D_{2^\infty}))$ are compact topological groups. On the other hand, in the semigroup $\lambda(D_{2^\infty})$ there are minimal left ideals, which are  (or are not) topological semigroups.

\begin{theorem} For the group $D_{2^\infty}$ 
\begin{enumerate}
\item[\textup{(1)}] each minimal left ideal of the semigroup $\lambda(D_{2^\infty})$ is homeomorphic to the Cantor cube $2^\w$ and is algebraically isomorphic to the compact topological semigroup $C_2^{\;\w}\times 2^\w$ where the Cantor cube $2^\w$ is endowed with the left zero multiplication;
\item[\textup{(2)}] each maximal subgroup of the minimal ideal $\IK(\lambda(D_{2^\infty}))$ is topologically isomorphic to the compact topological group $C_2^{\;\w}$;
\item[\textup{(3)}]  $\lambda(D_{2^\infty})$ contains a minimal left ideal, which is topologically isomorphic to the compact topological semigroup 
$C_2^{\;\w}\times 2^\w;$
\item[\textup{(4)}] $\lambda(D_{2^\infty})$ contains a minimal left ideal, which is not a semitopological semigroup.
\item[\textup{(5)}] the semigroup $\lambda(D_{2^\infty})$ contains a principal ideal isomorphic to $C_2\times (C_2\wr \mathfrak c^{\mathfrak c})^\w;$
\item[\textup{(6)}]  $\lambda(D_{2^\infty})$ contains an isomorphic copy of each semigroup of cardinality $\le \mathfrak c$.
\end{enumerate}
\end{theorem}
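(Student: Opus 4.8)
The plan is to feed the concrete $2$-cogroup structure of $D_{2^\infty}$ into the general machinery of Section~\ref{s18}, exactly as was done for the finite dihedral groups before Theorem~\ref{t21.9} and for $Q_{2^\infty}$ in Theorem~\ref{t21.7}. First I would record that $D_{2^\infty}=\bigcup_{k\in\IN}D_{2^k}$ is a $2$-group: every element of its index-$2$ subgroup $C_{2^\infty}$ has order a power of $2$ and every reflection has order $2$. Hence its maximal normal odd subgroup is trivial (so no reduction via Theorem~\ref{t20.2} is needed) and, being a torsion group, $D_{2^\infty}$ has periodic commutators; by Proposition~\ref{p9.4} it is therefore twinic with trivial twinic ideal $\TI=\{\emptyset\}$. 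In particular $\TI\cap\wht\K=\emptyset$, so the hypotheses of Theorem~\ref{t18.11}(2) and Corollary~\ref{c18.5}(3) hold automatically.

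The heart of the argument --- and the step I expect to require the most care --- is the explicit description of the family $\wht\K$ of maximal $2$-cogroups together with the orbit data. Using Proposition~\ref{p6.2} and the subgroup lattice of $D_{2^\infty}$ (in which $C_{2^\infty}$ is the unique cyclic subgroup of index $2$), I would show that the maximal $2$-cogroups are precisely $K_0=D_{2^\infty}\setminus C_{2^\infty}$ and $K_{k,x}=\{1,x\}\cdot(C_{2^k}\setminus C_{2^{k-1}})$ for $k\in\IN$ and $x\in K_0$. The points to verify are: (a) each characteristic group is $C_2$, since $\Stab(K_0)=D_{2^\infty}$ with $K_0K_0=C_{2^\infty}$, while $K_{k,x}^\pm$ is the finite dihedral subgroup $\{1,x\}C_{2^k}$ with $\Stab(K_{k,x})=K_{k,x}^\pm$ and $K_{k,x}K_{k,x}$ of index $2$ in it; (b) $K_0$ is normal, so $[K_0]=\{K_0\}$; (c) crucially, because $C_{2^\infty}$ is $2$-divisible, conjugation by $c\in C_{2^\infty}$ sends a reflection $x$ to $c^2x$, which ranges over all reflections, so for each fixed $k$ the cogroups $K_{k,x}$ form a single conjugacy class. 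Thus a $[\wht\K]$-selector may be taken as the countable set $\wtd\K=\{K_0\}\cup\{K_k:k\in\IN\}$ with one representative $K_k$ of each orbit $[K_{k,x}]$. Finally, Proposition~\ref{p15.2} gives $|[\Tau_{K_0}]|=1$ (as $|X/K_0^\pm|=1$) and, since each $K_{k,x}^\pm$ is finite whence $K_k$ has infinite index, $|[\Tau_{K_k}]|=2^{|X/K_k^\pm|}=\mathfrak c$.

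With this data the six assertions become bookkeeping. Since every characteristic group equals $C_2$, the invariants of Section~\ref{s18} specialize to $q(X,C_2)=|\wtd\K|=\aleph_0$ and $q(X,G)=0$ for every other $G\in\{C_{2^k},Q_{2^k}:1\le k\le\infty\}$; in particular $q(X,C_{2^\infty})=q(X,Q_{2^\infty})=0$. Statement (1) then follows from Theorem~\ref{t18.11}(1), whose left-zero factor $\prod_{K\in\wtd\K}[\Tau_K]$ has cardinality $1\cdot\mathfrak c^{\aleph_0}=\mathfrak c$ and so is the Cantor cube $2^\w$ with left-zero multiplication, the homeomorphism with $2^\w$ coming from Theorem~\ref{t18.7}(e) via $\prod_{K\in\wtd\K}\Tau_K\cong 2\times(2^\w)^{\aleph_0}\cong 2^\w$; statement (2) is Theorem~\ref{t18.11}(4); and statement (3) is Theorem~\ref{t18.11}(2), giving a minimal left ideal topologically isomorphic to the compact semigroup $C_2^{\;\w}\times 2^\w$. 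For (4) I would exploit the infinite-index cogroup $K_1$: applying Corollary~\ref{c14.6} with the right-invariant ideal $\TI=\{\emptyset\}$, failure of the finite-index condition forces $\End^\TI(\Tau_{K_1})$ to possess a minimal left ideal that is not semitopological, and transporting it through Corollary~\ref{c18.6} yields a non-semitopological minimal left ideal of $\lambda(D_{2^\infty})$. Statement (5) combines Corollary~\ref{c18.5}(3) with Theorem~\ref{t14.1}(3): the principal left ideal is topologically isomorphic to $\prod_{K\in\wtd\K}\HH(K)\wr[\Tau_K]^{[\Tau_K]}$, which collapses to $C_2\times(C_2\wr\mathfrak c^{\mathfrak c})^\w$ because the $K_0$-factor is $C_2\wr 1^1=C_2$ and each of the countably many $K_k$-factors is $C_2\wr\mathfrak c^{\mathfrak c}$. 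Finally (6) follows from (5) and Lemma~\ref{emb}: any semigroup of cardinality $\le\mathfrak c$ embeds into $\mathfrak c^{\mathfrak c}$, hence into the subsemigroup $C_2\wr\mathfrak c^{\mathfrak c}$ occurring as a factor of the principal left ideal of (5), and therefore into $\lambda(D_{2^\infty})$.
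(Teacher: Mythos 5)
Your proposal is correct and follows essentially the same route as the paper's proof: you identify the maximal 2-cogroups $K_0=D_{2^\infty}\setminus C_{2^\infty}$ and $K_{k,x}=\{1,x\}\cdot(C_{2^k}\setminus C_{2^{k-1}})$, compute their characteristic groups (all $\cong C_2$), orbit spaces and the selector $\wtd\K$, and then feed this data into the general machinery of Sections~\ref{s18} and the endomorphism-monoid results (Theorems~\ref{t18.7}, \ref{t18.11}, \ref{t14.1}, Corollaries~\ref{c18.5}, \ref{c18.6}, Lemma~\ref{emb}), exactly as the paper does. The only cosmetic differences are that you cite Theorem~\ref{t18.11} where the paper applies Theorem~\ref{t18.7} and Proposition~\ref{p18.10} directly, and for item (4) you unpack Proposition~\ref{p18.9} into its ingredients (Corollary~\ref{c14.6} plus Corollary~\ref{c18.6}), which in fact justifies the ``not semitopological'' claim more precisely than the paper's citation of Proposition~\ref{p18.9}, whose statement only mentions topological semigroups.
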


\begin{proof} First note that by Theorem~\ref{t6.2} the torsion group $X=D_{2^\infty}$ is twinic and has trivial twinic ideal.

 Maximal 2-cogroup in $D_{2^{\infty}}$ are of the following form:
$$K_0=D_{2^{\infty}}\setminus C_{2^\infty} \mbox{ \ and \ } K_{k,x}=\{1,x\}\cdot(C_{2^k}\setminus C_{2^{k-1}})$$where 
$k\in\IN$ and $x\in K_0$.
The characteristic groups of these maximal 2-cogroups are isomorphic to the 2-element cyclic group $C_2$. Consequently, for any twin set $A\in\wht\Tau$ its characteristic group $\HH(A)$ is topologically isomorphic to $C_2$. Observe that 
$$|[\Tau_{K_0}]|=1\mbox{ and }|[\Tau_{K_{k,x}}]|=2^\w$$ for all $k\in\IN$ and $x\in K_0$. Since the characteristic group $H(K_{k,x})=C_2$ is finite, the orbit space $[\Tau_{K_{p,x}}]$ is a compact Hausdorff space, homeomorphic to the Cantor cube $2^\w$.
 One can check that any two 2-cogroups $K_{k,x}$ and $K_{k,y}$ are conjugated. 
Therefore for any $b\in D_{2^\infty}\setminus C_{2^\infty}$ the family
$\wtd\K=\{K_0,K_{k,b}:k\in\IN\}$ is a $[\wht\K]$-selector.
\smallskip

1. By Theorem~\ref{t18.7}(e) and Proposition~\ref{p12.1}, each minimal left ideal of $\lambda(X)$ is homeomorphic to the product $\prod_{K\in\wtd\K}\Tau_K$, which is homeomorphic to $2^{X/K_0^\pm}\times\prod_{k\in\IN}2^{X/K_{k,b}^\pm}$. The latter space is homeomorphic to the Cantor cube $2^\w$.

By Theorem~\ref{t18.7}(e), each minimal left ideal of $\lambda(X)$ is algebraically isomorphic to $\prod_{K\in\wtd\K}\HH(K)\times [\Tau_K]$ and the latter semigroup is isomorphic to $C_2^\w\times 2^\w$ where the Cantor cube $2^\w$ is endowed with the left zero multiplication.
\smallskip

2. Taking into account that each characteristic group $\HH(A)$, $A\in\wht\Tau$, is topologically isomorphic to $C_2$ and applying Theorem~\ref{t18.7}(b), we conclude that each maximal subgroup in the minimal ideal $\IK(\lambda(X))$ is topologically isomorphic to the compact topological group $C_2^\w$.
\smallskip

3. Since each characteristic group $\HH(K)$, $K\in\wht\K$, is finite (being isomorphic to $C_2$), Proposition~\ref{p18.10} implies that some minimal left ideal of $\lambda(X)$ is a topological semigroup, which is topologically isomorphic to the compact topological semigroup $$\prod_{K\in\wtd\K}\HH(K)\times [\Tau_K]=\HH(K_0)\times[\Tau_{K_0}]\times\prod_{k\in\IN}(\HH(K_{k,b})\times[\Tau_{K_{k,b}}])$$ by Theorem~\ref{t18.7}(g). The latter topological semigroup is topologically isomorphic to $C_2^{\;\w}\times 2^\w$.
\smallskip

4. Since the maximal 2-cogroups $K_{k,b}$, $k\in\IN$, have infinite index in $D_{2^\infty}$, Proposition~\ref{p18.9} implies that the semigroup $\lambda(D_{2^\infty})$ contains a minimal left ideal, which is not a semitopological semigroup.
\smallskip

5. By Corollary~\ref{c18.5}(3) and Theorem~\ref{t14.1}(3), the semigroup $\lambda(D_{2^\infty})$ contains a principal left ideal that is algebraically isomorphic to the semigroup $\prod_{K\in\wtd\K}(\HH(K)\wr[\Tau_K]^{[\Tau_K]})$, which is isomorphic to $C_2\times (C_2\wr \mathfrak c^{\mathfrak c})^\w$.
\smallskip

6. By the preceding item, $\lambda(D_{2^\infty})$ contains a subsemigroup, isomorphic to the semigroup $\mathfrak c^{\mathfrak c}$ of all self-mapping of the continuum $\mathfrak c$. By Lemma~\ref{emb}, the latter semigroup contains an isomorphic copy of each semigroup of cardinality $\le\mathfrak c$.
\end{proof}

\subsection{Superextensions of finite groups of order $<16$}\label{s21.5}

Theorem~\ref{t19.3} and Proposition~\ref{p19.1} give us an algorithmic way of calculating the minimal left ideals of the superextensions of finitely-generated abelian groups. For non-abelian groups the situation is a bit more complicated. In this section we shall describe the minimal left ideals of finite groups $X$ of order $|X|<16$.

In fact, Theorem~\ref{t20.2} helps us to reduce the problem to studying superextensions of groups $X/\Odd$. The group $X/\Odd$ is trivial if the order of $X$ is odd. So, it suffices to check non-abelian groups of even order. If $X$ is a 2-group, then the subgroup $\Odd$ of $X$ is trivial and hence $X/\Odd=X$. Also the subgroup $\Odd$ is trivial for simple groups.

The next table describes the structure of minimal left ideals of the superextensions of groups $X=X/\Odd$ of order $|X|\le 15$. In this table $\mathcal E$ stands for a minimal idempotent of $\lambda(X)$, which generates the principal left ideal $\lambda(X)\circ\mathcal E$ and lies in the maximal subgroup $H(\mathcal E)=\mathcal E\circ\lambda(X)\circ\mathcal E$.
Below the cubes $2^n$ are considered as semigroups of left zeros.
\vskip20pt

\begin{center}
\begin{tabular}{|c|c|c|c|c|c|c|c|c|c|c|c|}
\hline
\phantom{$|^|_|$}$X$\phantom{$|^|_|$} & $|E(\lambda(X)\circ\mathcal E)|$ & $\mathcal E\circ\lambda(X)\circ\mathcal E$ & $\lambda(X)\circ\mathcal E$\cr
\hline
$C_2$&\phantom{$|^|_|$}$1$\phantom{$|^|_|$}& $C_2$          &$C_2$\cr
\hline
$C_4$&\phantom{\large$o^|$}$1$\phantom{\large $o^|$} & $C_2\times C_4$&$C_2\times C_4$\cr
$C_2^{\;2}$&\phantom{\Large$o_|$}$1$\phantom{\Large$o_|$}& $C_2^{\;3}$        &$C_2^{\;3}$\cr
\hline
$C_2^{\;3}$&\phantom{\Large$o^|$}$1$\phantom{\Large $o^|$}& $C_2^{\;7}$   & $C_2^{\;7}$ \cr
$C_2\oplus C_4$&$1$ & $C_2^{\;2}\times C_4^{\;2}$ & $C_2^{\;3}\times C_4^{\;2}$\cr
$C_8$&\phantom{\Large$o_|$}$2$\phantom{\Large$o_|$}& $C_2\times C_4\times C_8$ & $2\times C_2\times C_4\times C_8$\cr
\hline
$D_8$&\phantom{\Large$o^|$}$2$\phantom{\Large $o^|$}& $C_2^{\;5}$& $2^2\times C_2^{\;5}$\cr
$Q_8$&\phantom{\Large$o_|$}$2$\phantom{\Large$o_|$}& $C_2^{\;3}\times Q_8$& $2\times C_2^{\;3}\times Q_8$\cr
\hline
$A_4$&\phantom{\large$o^|_|$}$2^6$\phantom{\large $o^|_|$}& $C^{\;3}_2$ & $2^6\times C^{\;3}_2$\cr
\hline
\end{tabular}
\end{center}

\medskip

For abelian groups the entries of this table are calculated with help of Theorem~\ref{t19.3} and Proposition~\ref{p19.1}. Let us illustrate this in the example of the group $C_2\oplus C_4$.

 By Proposition~\ref{p19.1}, for the group $X=C_2\oplus C_4$ we get
\begin{itemize}
\item $q(X,C_2)=|\hom(X,C_2)|-|\hom(X,C_1)|=2\cdot 2-1=3$;
\item $q(X,C_4)=\frac12(|\hom(X,C_4)|-|\hom(X,C_2)|)=\frac12(2\cdot 4-2\cdot 2)=2$;
\item $q(X,C_{2^k})=0$ for $k>2$.
\end{itemize}
Then each minimal left ideal of $\lambda(C_2\oplus C_4)$ is isomorphic to $$(C_2\times 2^{2^{1-1}-1})^{q(X,C_2)}\times(C_4\times 2^{2^{2-1}-2})^{q(X,C_4)}=(C_2\times 2^0)^3\times (C_4\times 2^0)^2=C_2^{\;3}\times C_4^{\;2}.$$

Next, we consider the non-abelian groups. In fact, the groups $Q_8$ and $D_8$ have been treated in Theorems~\ref{t21.6} and \ref{t21.9}. So, it remains to consider the alternating group $A_4$.

This group has order 12, contains a normal subgroup isomorphic to $C_2\times C_2$ and contains no subgroup of order 6. This implies that all 2-cogroups of $A_4$ lie in $C_2\times C_2$ and consequently, $A_4$ contains 3 maximal 2-cogroups. Each maximal 2-cogroup $K\subset A_4$ contains two elements and has characteristic group $\HH(K)$ isomorphic to $C_2$. Since $|X/K^\pm|=3$,
Proposition~\ref{p15.2} guarantees that $|[\Tau_K]|=2^{|X/K^\pm|}/|\HH(K)|=2^{3-1}=2^2$.
Applying Theorem~\ref{t18.7}, we see that each minimal left ideal of the semigroup $\lambda(A_4)$ is isomorphic to $(C_2\times 2^2)^3=2^6\times C_2^{\;3}$.

\section{Some Open Problems}

\begin{problem} Describe the structure of (minimal left ideals) of superextensions of the simple groups $A_n$ for $n\ge 5$.
\end{problem}

\begin{problem} Describe the structure of (minimal left ideals) of superextensions of the finite groups of order 16.
\end{problem}

Since the free group $F_2$ with two generators is not twinic, the results obtained in this paper cannot be applied to this group.

\begin{problem} What can be said about the structure of the superextension $\lambda(F_2)$ of the free group $F_2$? 
\end{problem}

\begin{problem} Investigate the permanence properties of the class of twinic groups. Is this class closed under taking subgroups? products?
\end{problem}


\end{document}